\title{Platonic solids and symmetric solutions of the $N$-vortex problem on the sphere}
\author{ Carlos~Garc\'ia-Azpeitia and Luis C.~Garc\'ia-Naranjo }
\date{\today}
\numberwithin{equation}{section}
\numberwithin{table}{section}
\numberwithin{figure}{section}
\newtheorem{theorem}{Theorem}[section]
\newtheorem{lemma}[theorem]{Lemma}
\newtheorem{proposition}[theorem]{Proposition}
\newtheorem{corollary}[theorem]{Corollary}
\theoremstyle{definition}
\newtheorem{definition}[theorem]{Definition}
\newtheorem{remark}[theorem]{Remark}
\newtheorem*{remarks*}{Remarks}
\providecommand{\customgenericname}{}
\newcommand{\newcustomtheorem}[2]{%
  \newenvironment{#1}[1]
  {%
   \renewcommand\customgenericname{#2}%
   \renewcommand\theinnercustomgeneric{##1}%
   \innercustomgeneric
  }
  {\endinnercustomgeneric}
}
\newcommand{\defn}[1]{{\bfseries\itshape{#1}}}
\def\headcolor{\color{Grey}}
\headcolor\textsc{ }]{\headcolor\textsc{ }}
\def\restr#1{\,\vrule height1.2ex width.4pt
  depth0.8ex\lower0.4ex\hbox{\scriptsize $\,#1$}}
\newcommand{\R}{\mathbb{R}}
\newcommand{\so}{\mathfrak{so}}
\newcommand{\SO}{\mathrm{SO}}
\begin{document}

\maketitle

\begin{abstract}
We consider the $N$-vortex problem on the sphere assuming that all vortices
have equal strength. We develop a theoretical framework to analyse solutions
of the equations of motion with prescribed symmetries. Our construction relies
on the discrete reduction of the system by twisted subgroups of the full
symmetry group that rotates and permutes the vortices. Our approach formalises and extends
ideas outlined previously by Tokieda (\emph{C. R. Acad. Sci., Paris I} 333
(2001)) and Souli\`{e}re and Tokieda (\emph{J. Fluid Mech.} 460 (2002)) and
allows us to prove the existence of several 1-parameter families of periodic
orbits. These families either emanate from equilibria or converge to
collisions possessing a specific symmetry. Our results are applied to show
existence of families of small nonlinear oscillations emanating from the
platonic solid equilibria.

\end{abstract}

\rightline{\em Dedicated to J. Ize}
\vspace{0.5cm}
\noindent
{\em Keywords:} $N$-vortex problem, symmetries, periodic solutions, platonic solids, discrete reduction. \\
{\em 2020 MSC}:  70K42; 70K75;  76M60.

\section{Introduction}

In the last three decades, a growing number of publications have considered
the $N$-vortex problem on the sphere. A (necessarily incomplete) list of
references is \cite{Polveni, Kidambi-Newton-integrability,
Borisov-Lebedev, Borisov-Pavlov, Marsden-Pekarsky, LimMontaldi,Tokieda, Laurent-Polz,
SouliereTokieda,Bo03,Bo05,Mo13,Va13,CGA,Wa20}. The first reference to the
problem seems to go back to Zermelo \cite{Zermelo} while the equations of
motion were presented by Gromeka \cite{Gromeka} and Bogomolov
\cite{Bogolmonov}. The importance of the model is usually associated with
geophysical fluid dynamics since it describes the interaction of hurricanes on
the Earth. Interestingly, similar models of vortices are also relevant in the
study of Bose-Einstein condensates \cite{Pe}, while the steady solutions of the
problem have applications in semiconductors \cite{Br} and
reaction-diffusion models \cite{Ward}. We refer the reader to the papers by
Aref et al \cite{ArNe02}, Aref \cite{Aref2007} and the book by
Newton~\cite{Ne01} for a general overview of the $N$-vortex problem and for an
extensive bibliography on the subject.

The investigation of the $N$-vortex problem on the sphere is interesting from
a mathematical point of view since the system is Hamiltonian and invariant
under rotations by the group $\SO(3)$. Symplectic reduction leads to the conclusion that the problem is integrable if $N\leq3$, and also for
$N=4$ if the \emph{centre of vorticity} (momentum map) vanishes
\cite{Borisov-Lebedev, Kidambi-Newton-integrability, Modin-Viviani}. A strong
indication that the problem is non-integrable for $N=4$ for a general centre
of vorticity is given by Bagrets \& Bagrets \cite{Bagrets}, and hence it is
natural to expect that the system is fully chaotic for $N>4$. The dynamics of
the problem in the integrable case $N=3$ was considered in
\cite{Borisov-Lebedev,
Kidambi-Newton-integrability,Kidambi-Newton-collapse,Borisov-Mamaev-Killin-2017}%
.

Much effort has been devoted to the investigation of particular solutions of
the problem for $N\geq4$. Following the pioneering work of Lim, Montaldi \&
Roberts \cite{LimMontaldi}, several publications have considered the
existence, stability and bifurcations of fixed equilibria and relative
equilibria e.g. \cite{Laurent-Polz, Bo03, Mo11}. On the other hand, relative
periodic solutions have been found by Laurent-Polz \cite{Laurent-PolzREPO} and
Garc\'ia-Azpeitia \cite{CGA}. Periodic solutions with prescribed symmetry are
determined in Tokieda \cite{Tokieda} and Souli\`ere \& Tokieda
\cite{SouliereTokieda}. Choreographies were found by Borisov, Mamaev \& Kilin
\cite{Bo05} for $N=4$ and by Garc\'ia-Azpeitia \cite{CGA} for general $N$.
Some of these choreographies were computed numerically by Calleja, Doedel \&
Garc\'ia-Azpeitia \cite{CaDoGa2}.

In this paper we consider the case  in which all vortices have equal strength for
$N\geq4$. With the appropriate normalisations, the governing equations for the
motion of the vortices become
\begin{equation}
\dot{v}_{j}=\sum_{i=1(i\neq j)}^{N}\frac{v_{i}\times v_{j}}{\left\vert
v_{j}-v_{i}\right\vert ^{2}},\qquad j=1,\dots,N,\label{E}%
\end{equation}
where $v_{j}(t)$ belongs to the unit sphere $S^{2}$ in $\R^{3}$ and denotes
the position of the $j^{th}$ vortex and $\times$ denotes the vector product.
The derivation of the equations may be found in Newton's book~\cite{Ne01}. The
assumption that the vortices have equal strength results in the invariance of
the system under the action of the permutation group $S_{N}$ on the vortices  and
 this extra symmetry is essential in our analysis. The Hamiltonian
structure of Eqs. \eqref{E} is described in Section \ref{sec:Prelim} below. An
interesting Lagrangian interpretation of the system is given in Vankerschaver
\& Leok \cite{Va13}.

The most fundamental solutions of Eqs. \eqref{E} are the equilibria, and in
particular, the ground states of the Hamiltonian, but their determination is
difficult for large $N$. Actually, determining the ground state of the
Hamiltonian is a special case of one of Smale's open problems, generalising
the Thomson problem from the Coulomb potential to more general ones. The
ground states for different number of vortices exhibit many symmetries that have been
established rigorously only for $N\leq5$. On the other hand, the five platonic
solids are natural equilibrium solutions for $N=4,6,8,12,20$. It was proved by
Kurakin \cite{K} that the tetrahedron, octahedron and icosahedron are
nonlinearly stable, while the cube and the dodecahedron are unstable.

\subsection*{Goals of the paper}

The original motivation of this paper was to prove the existence of small
nonlinear oscillations near the platonic solid equilibria. This is a
non-trivial task since the Liapunov Centre Theorem \cite{Liapunov} and its
extensions obtained by Weinstein \cite{Weinstein} and Moser \cite{Moser76} do
not apply. The reason is that the equilibria in question are not isolated, but
rather form $\SO(3)$ orbits on the phase space. The symmetric extension 
of these theorems by Montaldi, Roberts \& Stewart \cite{Mo88} does not apply for
the exact same reason. Other extensions relying on topological methods developed by Ize \& Vignoli \cite{Ize} and
Strzelecki  \cite{Strzelecki} also do not apply because they require the phase space to be a euclidean space and the action to be linear.


Our results are summarised below. We succeeded in finding several (but not all) families of
periodic solutions emanating from the platonic solids by developing a general
framework used to analyse solutions of Eqs. \eqref{E} with prescribed
symmetries. Apart from the nonlinear oscillations around the platonic solids,
our construction proves the existence of many other families of periodic
solutions of Eqs. \eqref{E}   which either emanate from an equilibrium or converge to
collisions possessing a specific symmetry.

\subsection*{Summary of results}

Let $\mathcal{P}$ be one of the platonic solids and $N$ be the number of its
vertices. Our approach to prove the existence of periodic solutions of
\eqref{E} in which the vortices oscillate around the vertices of $\mathcal{P}$
is to restrict our attention to the family of solutions that are
$K$-symmetric, where $K<\SO(3)$ is a discrete subgroup that leaves
$\mathcal{P}$ invariant. The crucial point is to choose $K$ in such a way that
the aforementioned family of solutions forms a 1-degree of freedom integrable
Hamiltonian subsystem of \eqref{E}, and hence its generic orbits are periodic.
The idea of the method goes back to Tokieda \cite{Tokieda} and Souli\`ere \&
Tokieda \cite{SouliereTokieda}. A similar   approach is applied by Fusco,
Gronchi \& Negrini \cite{Fu11} to the $N$-body problem.

In this paper we proceed with a degree of generality beyond the case of the
platonic solids described above. Our main contribution is to develop a general
framework, valid for arbitrary $N\geq4$, for the analysis of the $K$-symmetric
solutions of \eqref{E} that form a 1-degree of freedom integrable Hamiltonian
subsystem of \eqref{E}, where $K<\SO(3)$ is one of the following groups: the
dihedral group $\mathbb{D}_{n}$, tetrahedral group $\mathbb{T}$, octahedral
group $\mathbb{O}$ or icosahedral group $\mathbb{I}$.  Our
construction relies on the concept of \defn{$(K,F)$-symmetric solutions}, that
are solutions of Eqs. \eqref{E} of the form
\begin{equation}
\label{eq:KF-symm-sol-intro}(v_{1}(t),\dots, v_{N}(t))= (u(t),g_{2}%
u(t),\dots,g_{m}u(t),f_{m+1},\dots,f_{N}),
\end{equation}
where $K_{o}=(g_{1}=e,g_{2},\dots,g_{m})$ is an ordering of $K$ ($e$ is the
identity element in $\SO(3)$) and $F_{o}=\left(  f_{m+1},...,f_{N}\right)  $
is an ordering of a certain set $F\subset\mathcal{F}[K]$, which is assumed to
be $K$-invariant. Here $\mathcal{F}[K]$ denotes the set of points in $S^{2}$
having non-trivial $K$-isotropy. 

In Theorem \ref{th-main-symmetry} we prove that \eqref{eq:KF-symm-sol-intro}
is a solution of Eqs. \eqref{E} if and only if $u(t)$ is a solution of the
\defn{reduced system}
\begin{equation}
\label{eq:reduced-syst-intro}%
\begin{split}
&  \dot{u} =-\frac{1}{m}u\times\nabla_{u}h_{(K,F)}(u),\\
&  h_{(K,F)}(u) =-\frac{m}{4}\sum_{j=2}^{m}\ln\left\vert u-g_{j}u\right\vert
^{2}-\frac{m}{2}\sum_{j=m+1}^{N}\ln\left\vert u-f_{j}\right\vert ^{2}.
\end{split}
\end{equation}
We call \eqref{eq:reduced-syst-intro} the reduced system since it is obtained
by the discrete reduction (see e.g. Marsden \cite{Marsden-LOM}) of Eqs. \eqref{E} by an appropriate
 twisted subgroup $\hat K< S_{N}\times\SO(3)$ which
is isomorphic to $K$. The smooth function $h_{(K,F)}:S^{2}\setminus
\mathcal{F}[K]\to\R$ is the \defn{reduced Hamiltonian}. In Theorem
\ref{th-main-symmetry} we also specify the symmetries of the reduced system in
terms of the normaliser group $N(K)$ of $K$ in $\SO(3)$ and the invariance
properties of $F$. Furthermore, we show that the centre of vorticity of every
$(K,F)$-symmetric solution vanishes. 

After proving Theorem \ref{th-main-symmetry}, we systematically analyse the
properties of the reduced system \eqref{eq:reduced-syst-intro} and determine
the implications about the corresponding $(K,F)$-symmetric solutions of
\eqref{E}. We first work with general $K$ and $F$. The reduced system
\eqref{eq:reduced-syst-intro} is a smooth, 1-degree of freedom, integrable
Hamiltonian system on $S^{2}\setminus\mathcal{F}[K]$. We show that points in
$\mathcal{F}[K]$ are in one-to-one correspondence with $(K,F)$-symmetric
collisions of Eqs. \eqref{E} and we propose a smooth regularisation of the
reduced system to all of $S^{2}$. This is done in terms of the
\defn{regularised Hamiltonian} $\tilde h_{(K,F)}$ that is the smooth function
on $S^{2}$ given by
\[
\tilde{h}_{(K,F)}(u)=\exp(-2h_{(K,F)}(u)).
\]
The resulting regularised system is a 1-degree of freedom, integrable
Hamiltonian system on the compact manifold $S^{2}$ whose dynamics consists of
equilibrium points, periodic solutions and heteroclinic/homoclinic orbits. In
particular, we conclude that all regular level sets of $\tilde{h}_{(K,F)}$
(and hence also of ${h}_{(K,F)}$) are periodic solutions. This allows us to
prove the existence of 1-parameter families of periodic solutions near the
extrema of ${h}_{(K,F)}$ and the $(K,F)$-symmetric collisions of Eqs.
\eqref{E} (Corollary \ref{cor:periodic-orbits-general}).

We then proceed to analyse the reduced system \eqref{eq:reduced-syst-intro} in
detail for specific choices of $K$ and $F$. Our choices include all
possibilities for which the corresponding $(K,F)$-symmetric solutions contain
the platonic solids as equilibria. In Theorems \ref{th:dihedral} and
\ref{th:dihedral-poles} we classify all the equilibria and collisions for
$K=\mathbb{D}_{n}$ for the cases in which the set $F$ is, respectively, empty
and consists of the north and south poles. The corresponding $(K,F)$-symmetric
equilibria and collisions of Eqs. \eqref{E} are respectively illustrated in
Figures \ref{fig:dihedral-equilibria} and \ref{fig:dihedral-equilibria-poles}.
The equilibria are equatorial polygons, prisms and anti-prisms (with and
without a pair of perpendicular antipodal vortices) for which we give the
explicit dimensions for arbitrary even $N$. The collisions are polygonal (a
binary collision occurring at each vertex) and a polar collision in which half
of the vortices occupy the north and south poles. Using this classification,
and applying our theoretical framework, we establish the existence of the
1-parameter families of periodic orbits emanating from the stable equilibria
of the reduced system and the collisions (Corollaries \ref{cor:dihedral-osc}
and \ref{cor:dihedral-osc-poles}). These periodic solutions are respectively
illustrated in Figures \ref{fig:dihedral-osc} and \ref{fig:dihedral-cube-osc}.
For particular values of $n$, the periodic solutions emanating from the stable
equilibria of the reduced system prove the existence of the following
1-parameter families of periodic solutions of \eqref{E} near the platonic
solid equilibria:

\begin{enumerate}
\item a $\mathbb{D}_{2}$-symmetric family of 4 vortices emanating from the tetrahedron;

\item a $\mathbb{D}_{2}$-symmetric family of 6 vortices emanating from the
octahedron in which two antipodal vortices remain fixed;

\item a $\mathbb{D}_{3}$-symmetric family of 6 vortices emanating from the octahedron;

\item a $\mathbb{D}_{3}$-symmetric family of 8 vortices emanating from the
cube in which two antipodal vortices remain fixed;

\item a $\mathbb{D}_{5}$-symmetric family of 12 vortices emanating from the
icosahedron in which two antipodal vortices remain fixed.
\end{enumerate}

Next we consider the case $K=\mathbb{T}$. Theorems \ref{th:tetrahedral} and
\ref{th:tetrahedral-cube} respectively classify the equilibria and collisions
of the reduced system for $F$ empty and $F$ consisting of two antipodal
tetrahedra that make up a cube. These equilibria and collisions are
respectively illustrated in Figures \ref{fig:n12-eq} and \ref{fig:n20-eq}. Our
theoretical framework applied to this classification proves the existence of
the 1-parameter families of periodic solutions described in Corollaries
\ref{cor:tet-ico-osc} and \ref{cor:tet-ico-osc-cube} and respectively
illustrated in Figures \ref{fig:ico-osc} and \ref{fig:ico-osc-cube}. In
particular we determine the existence of:

\begin{enumerate}
\item[(vi)] a $\mathbb{T}$-symmetric family of periodic solutions of 12
vortices emanating from the icosahedron;

\item[(vii)] a $\mathbb{T}$-symmetric family of periodic solutions of 20
vortices emanating from the dodecahedron in which eight vortices remain fixed
at the vertices of a cube.
\end{enumerate}
We also present the phase portrait of the reduced system
\eqref{eq:reduced-syst-intro} obtained numerically for all choices of $K$ and
$F$ described above that have a platonic solid as a $(K,F)$-symmetric
equilibria. These are given in Figures \ref{fig:Dn-F-empty},
\ref{fig:Dn-F-poles}, \ref{fig:Spheren12T} and \ref{fig:Sphere-n20-T}.

\subsection*{Future work}
\label{sec:future-work}

A natural continuation of this work is to apply the theoretical framework of
Section \ref{sec:symmetric-framework} to different choices of the group $K$
and the $K$-invariant set $F\subset\mathcal{F}[K]$. The cases treated in Sections \ref{sec:Dn}
through \ref{sec:Tet-cube} are only a few possibilities that we chose to work
with because they allowed us to prove the existence of nonlinear small
oscillations around the platonic solids.  It turns out that  for the
subgroups $K=\mathbb{D}_{n},\mathbb{T},\mathbb{O},\mathbb{I}$, the set
$\mathcal{F}[K]$ is finite and  has been classified in \cite[Table
1]{LimMontaldi}. Based upon this classification one concludes that for each
dihedral group $\mathbb{D}_{n}$ and for the tetrahedral group $\mathbb{T}$
there are $6$ distinct choices of $F$, whereas for $\mathbb{O}$ and
icosahedral group $\mathbb{I}$ there are $8$ such possibilities. Some
interesting cases are:

\begin{enumerate}
\item $24$ vortices with octahedral symmetry $\mathbb{O}$ (this case contains
a truncated octahedron as equilibrium).

\item $60$ vortices with icosahedral symmetry $\mathbb{I}$ (this case contains
a truncated icosahedron or Fullerene as equilibrium).
\end{enumerate}

It is also of interest to investigate the persistence of
the periodic, equilibrium and heteroclinic/homoclinic solutions that we found,
and of the invariant sets $M_{(K_{o},F_{o})}$, under
perturbations. Such perturbations will in general destroy the $S_{N}%
\times\SO(3)$ equivariance of the system and the fate of these objects is
unclear. Possible sources of this perturbation may be:

\begin{enumerate}
\item a variation of the strength of some of the vortices. The
\textquotedblleft twisters" of Souli\`{e}re \& Tokieda \cite{SouliereTokieda}
are an indication that persistence may indeed be expected in some cases.

\item a variation of the underlying Riemannian metric on $S^{2}$ as considered
by Boatto \& Koiller \cite{BoKo15}. It was recently found by Wang \cite{Wa20}
that the system has infinitely many periodic orbits.
\end{enumerate}

Another interesting extension of this work is to generalise Theorem
\ref{th-main-symmetry} considering   larger values of $N$ such that
the reduced system is no longer integrable for a subgroup $K<\SO(3)$. For instance, one could look for
$(K,F)$-symmetric solutions generated by two vortices replacing the ansatz
\eqref{eq:KF-symm-sol-intro} with
\[
(v_{1}(t),\dots, v_{N}(t))= (u(t), g_{2}u(t), \dots, g_{m} u(t), w(t),
g_{2}w(t), \dots, g_{m} w(t),f_{2m+1}, \dots, f_{N}),
\]
where, as usual, $K_{o}=\{g_{1}=e,g_{2},\dots g_{m}\}$ and $F_{o}=\{f_{2m+1},
\dots, f_{N}\}$ are orderings of $K$ and $F$. The corresponding reduced system
for $(u(t), w(t))$ is a 2-degree of freedom Hamiltonian system on (an open
dense set of) $S^{2}\times S^{2}$. Although we expect the reduced dynamics to
be non-integrable, one may apply the Lyapunov Centre Theorem or KAM techniques
to prove the existence of periodic and quasi-periodic solutions of the system.
We plan to pursue this research direction in a future publication. Some
interesting cases of the above setup which contain platonic solids as
equilibria are:

\begin{enumerate}
\item $12$ vortices with symmetry $K=\mathbb{D}_{3}$ (the icosahedron is an
equilibrium).

\item $20$ vortices with symmetry $K=\mathbb{D}_{5}$ (the dodecahedron is an
equilibrium).

\end{enumerate}

One could also  apply the techniques  followed by Garc\'ia-Azpeitia \cite{CGA}  to prove the existence of
 relative periodic solutions near the $\SO(3)$-orbit of a platonic solid. In such approach one looks for periodic solutions
 in a rotating frame of reference and performs a stereographic projection. The solutions in question then  correspond to 
 critical points of an $\SO(2)\times S^1$ equivariant gradient map on $\mathbb{R}^{2N}$, where the $\SO(2)$ action is linear.
 Given that $\SO(2)$ is abelian,  one may apply the equivariant degree theory of Ize \& Vignoli  \cite{Ize} to prove the existence
 of a global family of such periodic solutions in the rotating frame  which are the sought relative periodic solutions of the system.  
Alternatively, the local existence of the family of relative periodic solutions may be established using equivariant Conley index as in \cite{Strzelecki} or Poincar\'e maps as in \cite{Doedel}.
 It is important to notice that these solutions have a non-vanishing centre of vorticity and, therefore, in contrast to the solutions found in this 
 paper, do not remain close to a platonic solid configuration but only to its  $\SO(3)$-orbit. Finally, we mention that 
 the  approach of introducing a rotating
 frame of reference is of interest because the resulting equations coincide with those describing the motion of  $N$-vortices on a rotating sphere
 which is a problem with a natural physical relevance. Existence of relative equilibria and quasi-periodic solutions for this system has been respectively
 considered by Laurent-Polz \cite{Laurent-PolzRotSphere} and Newton \& Shokraneh \cite{NeSh}.

\subsection*{Structure of the paper}

We begin by introducing some preliminary material in Section \ref{sec:Prelim}.
All of this material is known except perhaps for Proposition
\ref{prop:complete-flow} that states that, under our hypothesis that all
vortices have equal strength, the system cannot evolve into collision. Our
theoretical framework for the analysis of $(K,F)$-symmetric solutions is
developed in Section \ref{sec:symmetric-framework}. We begin by giving some
basic definitions in Subsection \ref{sec:defns} and then formulate and prove
our main Theorem \ref{th-main-symmetry} on the reduction of the dynamics in
Subsection \ref{sec:reduction}. The regularisation of the collisions is
treated in Subsection \ref{sec:collisions} and the qualitative properties of
$(K,F)$-symmetric solutions is described in Subsection \ref{sec:qualitative}.
In Sections \ref{sec:Dn} through \ref{sec:Tet-cube} we apply the results of
Section \ref{sec:symmetric-framework} to analyse $(K,F)$-symmetric solutions
for specific choices of $K$ and $F$ as described above. Section \ref{sec:Dn}
deals with $K=\mathbb{D}_{n}$ and $F=\emptyset$. Section \ref{sec:Dn-poles}
with $K=\mathbb{D}_{n}$ and $F$ consisting of the north and south poles. In
Section \ref{sec:Tet} we take $K=\mathbb{T}$ and $F=\emptyset$ and in Section
\ref{sec:Tet-cube} we consider $K=\mathbb{T}$ and $F$ consisting of two
antipodal tetrahedra that make up a cube. 

\section{Preliminaries: the equations of motion and their symmetries}
\label{sec:Prelim}

Let $M=S^2 \times \dots \times S^2$, the product of $N$ copies of the unit sphere $S^2$ on $\R^3$. The motion of $N$ vortices on the sphere is described by the Hamiltonian system
on $M$ where the Hamilton function $H$ and symplectic form $\Omega$ are given by
\begin{equation*}
H(v)=-\frac{\Gamma_i \Gamma_j}{4\pi}\sum_{i<j}\ln\left(  \left\vert v_{j}-v_{i}\right\vert^{2}\right), \qquad \Omega= \sum_{i=1}^N \Gamma_j\pi_i^*\omega_{S^2}.
\end{equation*}
Here $v=(v_1,\dots, v_N)\in M$ and $v_j$ is the position of the $j$th vortex whose vorticity is assumed to be $\Gamma_j$, and 
 $\pi_i$ is the Cartesian projection on to the $i$th factor with $\omega_{S^2}$ denoting the usual area form on $S^2$. 

In this work we assume that all vortices have the same  vorticity.   After suitable re-scalings,  the system is  described by the Hamiltonian system on $M$ with 
\begin{equation}
\label{eq:Ham}
H(v)=-\frac{1}{2}\sum_{i<j} \ln\left(  \left\vert v_{j}-v_{i}\right\vert ^{2}\right), \qquad  \Omega= \sum_{i=1}^N \pi_i^*\omega_{S^2}.
\end{equation}
 The corresponding equations of motion take the form
\begin{equation}
 \label{eq:motion}
\dot{v}_{j}=-v_{j}\times\nabla_{v_{j}}H(v)=\sum_{i=1(i\neq j)}^{N}\frac
{v_{i}\times v_{j}}{\left |v_{j}-v_{i} \right | ^{2}}, \qquad j=1,\dots, N,
\end{equation}
where here, and throughout `$\times$' denotes the vector product in $\R^3$.
One may check that the above equations  indeed define a vector field $X$ on $M$ as follows: consider them as a system on $(\R^3)^N$ and notice that $|v_j|^2$ is a first  integral for $j=1,\dots , N$. Then the
equations may be restricted to the level set $M$ where all these integrals take the value 1. The vector field $X$ satisfies $\Omega(X,\cdot ) =dH$.

\paragraph{Collisions}  Both $H$ and the equations of motion are undefined at the {\em collision set } $\Delta \subset M$ where at least two vortices occupy the same position, i.e.
\begin{equation*}
\Delta=\{ (v_1, \dots, v_n)\in M \, : \, v_i=v_j \quad \mbox{for some} \quad i\neq j \}.
\end{equation*}
It is usual to remove these points from the phase space to work with smooth objects. In our approach we will often find it convenient not to do this (in fact we work
with a regularisation of the equations of motion ahead). In any case, it is convenient to have in mind
 that  any collision-free configuration $v\in M\setminus \Delta$ {\em cannot} evolve into a collision  as we show in the following proposition.

\begin{proposition} 
\label{prop:complete-flow}
The flow of \eqref{eq:motion} on $M\setminus \Delta$ is complete. Namely, 
if $v_0\in M\setminus \Delta$ and  $t\mapsto v(t)$ denotes the solution of \eqref{eq:motion} with initial condition $v_0$, then $v(t)$ is defined for all time $t$. (In particular  $v(t)\nin \Delta$ for all
$t\in \R$.)
\end{proposition}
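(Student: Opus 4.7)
The plan is to combine conservation of the Hamiltonian $H$ with the compactness of the phase space $M$ to exclude finite-time blow-up. Since $M$ is a compact manifold and $M\setminus\Delta$ is an open subset, the standard ODE extensibility criterion applied to the smooth vector field $X$ on $M\setminus\Delta$ yields the following dichotomy: the maximal solution $v(t)$ starting at $v_0\in M\setminus\Delta$ either exists for all $t\in\R$, or its trajectory must accumulate on $\Delta$ as $t$ approaches one of the finite endpoints of its maximal interval of existence. Because $M$ itself is compact, the solution cannot escape to infinity, so the only possible obstruction to completeness is approach to a collision.

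The key step is to rule out this second alternative by means of energy conservation, exploiting the assumption that all vortices have the same strength. Every pairwise term of $H$ enters with the same positive coefficient $1/2$, and since $|v_i-v_j|\leq 2$ for all $v\in M$, one has
\[
-\tfrac{1}{2}\ln|v_i-v_j|^2 \;\geq\; -\ln 2,
\]
which gives the uniform lower bound $H(v)\geq -\binom{N}{2}\ln 2$ on $M\setminus\Delta$. Suppose for contradiction that there exist $t_n$ tending to a finite endpoint $t^*$ of the maximal interval with $v(t_n)\to v^*\in\Delta$. By definition of $\Delta$, there is at least one pair $(i_0,j_0)$ with $v^*_{i_0}=v^*_{j_0}$, so the term $-\tfrac{1}{2}\ln|v_{i_0}(t_n)-v_{j_0}(t_n)|^2$ diverges to $+\infty$, while all remaining terms are bounded below by $-\ln 2$. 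Consequently $H(v(t_n))\to +\infty$, contradicting the conservation law $H(v(t_n))=H(v_0)<+\infty$.

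The argument is essentially classical and I do not foresee any substantial obstacle. The only mildly delicate point is the manifold version of the extensibility criterion, which is justified either by a standard chart argument or by observing that the restriction of $X$ to any closed subset of $M\setminus\Delta$ staying a positive distance away from $\Delta$ is smooth and bounded (using compactness of $M$). The equal-vorticity hypothesis enters crucially through the uniform positivity of the pairwise coefficients in $H$: if some coefficients had opposite signs, repulsive terms could compensate divergent attractive ones, and energy conservation alone would not suffice to prevent collisions.
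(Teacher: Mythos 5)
Your argument is correct and follows essentially the same route as the paper's proof: the uniform lower bound on the pairwise terms coming from $|v_i-v_j|\leq 2$ forces $H\to+\infty$ along any approach to $\Delta$, which together with conservation of $H$ keeps the trajectory away from collisions, and the extensibility criterion on the bounded set $M\setminus\Delta$ then yields completeness. Your additional remarks on the manifold version of the extensibility criterion and on the role of equal vorticities are sound but not substantively different from the paper's reasoning.
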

\begin{proof}
Since the sphere $S^2$ is a bounded set, for any $i,j$ we have $|v_i-v_j|\leq 2$ , so $-\ln |v_i-v_j|^2$ is    bounded from below.  As a consequence, 
if $\{v^{(k)}\}_{k\in \mathbb{N}}$ is a sequence in $M\setminus \Delta$ with $v^{(k)}\to \Delta$ as $k\to \infty$, then necessarily $H(v^{(k)})\to \infty$. Considering that $H(v(t))= H(v_0)<\infty$  we conclude that $v(t)$ stays away from $\Delta$ at all time at which it is defined.  However, since $M\setminus \Delta$ is a bounded set, standard theorems on extensibility of
 of  solutions of differential equations imply that $v(t)$ can only cease to exist if it approaches the boundary of $M\setminus \Delta$. But this boundary is precisely $\Delta$. Therefore $v(t)$ is defined for all time
 $t\in \R$.
\end{proof}

\begin{remark} Note that if the vortex strengths  are not identical and  have different signs, collisions may indeed occur in finite time \cite{Kidambi-Newton-integrability,Kidambi-Newton-collapse}. 
On the other hand, the above property of the $N$ vortex problem on the sphere
 is a fundamental difference with the $N$-body problem on the sphere \cite{Borisov-2bodySphere} where collisions may indeed take place. 
 This is due to dependence of the Hamiltonian on the velocities in the latter problem.
When going to collision,  the  kinetic energy approaches infinity and the potential energy approaches minus infinity while their sum remains constant.
\end{remark}

\subsection{Symmetries}

\paragraph{Rotational symmetries.} The group $\SO(3)$ acts diagonally on $M=S^2\times \dots \times S^2$ and  is easy to check that the action is symplectic and the Hamiltonian $H$ is invariant.  As a consequence, the 
equations of motion \eqref{eq:Ham} are  $\SO(3)$-equivariant. Moreover, Noether's theorem applies and  we have a conservation law: the quantity
\begin{equation*}
J:M\to \R^3, \qquad J(v_1, \dots , v_N)= \sum_{i=1}^Nv_i,
\end{equation*}
is constant along the motion. This statement may be verified directly from the equations of motion \eqref{eq:motion}. In geometric terms, $J$ is the momentum map of the  $\SO(3)$ action on $M$, 
with the usual identification of $\so(3)^*$ with $\R^3$. We will refer to $J(v)$  as
the {\em centre of vorticity} of the configuration $v=(v_1, \dots, v_N)$.

\paragraph{Vortex relabelling symmetry.} Since all the vortices  have the same strength, the system is also invariant under relabelling of the vortices. This may be represented by the action of the permutation group $S_N$ on 
$M$,
\begin{equation*}
 \sigma : (v_1, \dots, v_N) \mapsto (v_{\sigma^{-1}(1)},\dots,v_{\sigma^{-1}(N)}), \qquad \mbox{for} \quad \sigma \in S_N.
\end{equation*}
One may  check that this action is symplectic, that the  Hamiltonian $H$ is invariant and the equations
of motion  \eqref{eq:Ham} are $S_N$-equivariant.
\begin{remark}
\label{rmk:product-in-SN}
In our convention, the product of two permutations $\sigma_1, \sigma_2\in S_N$ is $\sigma_1\sigma_2:=\sigma_1\circ \sigma_2$.   The action above is defined with  $\sigma^{-1}$ in order to 
have a {\em left} action with respect to this product.  Other  papers on the subject
 (e.g. \cite{CGA} and references therein) consider instead the  action 
$\sigma:(v_{1},\dots,v_{N})\mapsto(v_{\sigma(1)},\dots,v_{\sigma(N)})$ which is a  left action only if the product on  $S_{N}$ is  defined according to the opposite convention,
 $\sigma_{1}\sigma_{2}:=\sigma_{2}\circ\sigma_{1}$.
\end{remark}

\paragraph{Full symmetries.} The two symmetries described above may be combined into a symplectic action of the direct product group $\hat G:=S_N\times \SO(3)$ on $M$. Throughout the paper this action will be denoted by
a {\em centre dot}  `$\cdot$' as follows:
\begin{equation*}
( \sigma,g)\cdot  (v_1, \dots, v_N)= (gv_{\sigma^{-1}(1)},\dots,gv_{\sigma^{-1}(N)}), \qquad (\sigma,g) \in \hat G, \quad (v_1,\dots, v_N)\in M.
\end{equation*}
Apart from the simplecticity of this action,  the  Hamiltonian $H$ is invariant and the equations
of motion  \eqref{eq:motion} are $\hat G$-equivariant. A key feature of this action is that it is not free and this will allow us to extract valuable information about the dynamics of   \eqref{eq:motion}.

\paragraph{Twisted subgroups. }
Suppose that $K<\SO(3)$ is a discrete subgroup and 
$\tau:K\to S_N$ is a group morphism. Then 
\begin{equation}
\label{eq:twisted}
\hat K_\tau:=\{(\tau(g),g)\, : \, g\in K\}
\end{equation}
 is a discrete subgroup of  $\hat G=S_N\times \SO(3)$ which is often called a \defn{twisted subgroup}. 
  In this work a special role is played by the twisted subgroups of $\hat G$ corresponding to  one-to-one group morphisms $\tau$.

\section{Symmetric solutions of the $N$-vortex problem on the sphere}
\label{sec:symmetric-framework}

\subsection{Symmetric configurations: definitions}
\label{sec:defns}

Let $K< \SO(3)$ be any subgroup. Then $K$ acts on $S^{2}$ as usual. The set of points in $S^2$ having non-trivial $K$-isotropy will be 
 denoted as
\[
\mathcal{F}[K]=\left\{  u\in S^{2}:K_{u}\neq\{e\}\right\}  .
\]
Here, and in what follows, $e$ denotes the identity element in $K$ and $K_u$ is the isotropy group of $u\in S^2$, namely, $K_u=\{g\in K \, : \, gu=u\}$.

\begin{remark}
\label{prop:finiteF}
For the (finite) subgroups $K=\mathbb{Z}_{n},\mathbb{D}_{n},\mathbb{T},\mathbb{O},\mathbb{I}$ the set
$\mathcal{F}[K]$ is finite. Actually, the sets $\mathcal{F}[K]$ are completely classified in Table 1 and the appendix of \cite{LimMontaldi} (that also gives a brief description of these groups).
\end{remark}

The following definitions are essential in our work.

\begin{definition}
\label{def:sphere-symmetric}
Let $K<\SO(3)$ be a discrete subgroup of order $m\leq N$ and $F\subset
\mathcal{F}[K]$ be a $K$-invariant subset of order $N-m$. Let 
$K_o=(g_{1}=e,g_{2},\dots,g_{m})$ be an ordering of  $K$ and $F_o=\left(
f_{m+1},...,f_{N}\right)$  an ordering of  $F$. We define $M_{(K_o,F_o)}\subset M$ as  the set of configurations $(v_{1},\dots,v_{N})\in M$ that satisfy 
\begin{align*}
v_{j}  &  =g_{j}v_{1},\qquad j=1,\dots,m,\\
v_{j}  &  =f_{j},\qquad j=m+1,\dots,N.
\end{align*}
\end{definition}

\begin{definition} 
\label{def:symmetric}
Let $K$ and $F$ be as  in Definition \ref{def:sphere-symmetric}, we will say that a configuration $v\in M$ is
\defn{$(K,F)$-symmetric}  if    $v\in M_{(K_o,F_o)}$ for certain orderings $K_o$ and $F_o$. If $F$ is empty and $m=N$ we will simply
say that the corresponding configuration is \defn{$K$-symmetric}.
\end{definition}

\begin{remark}
The above definitions require that the first $m$ entries of a $(K,F)$-symmetric configuration to be described  in terms of the   elements of $K$ and the remaining $N-m$ in terms
of the elements of $F$. This constraint in the ordering is artificial and could be removed  in view of the relabelling symmetry, but we keep it for clarity of the presentation. The same observation 
holds for our requirement that $g_1=e$.
\end{remark}

For the rest of the section, the symbol   
$K$ will always denote one of the groups $\mathbb{D}_{n},\mathbb{T},\mathbb{O},\mathbb{I}<\SO(3)$, and the symbol $F$ will always 
denote a $K$-invariant subset of $\mathcal{F}[K]$. Moreover, we will continue to denote  $m=|K|>0$ and  $N-m=|F|\geq 0$.

\subsection{Reduction  of  the dynamics of symmetric configurations}
\label{sec:reduction}

Suppose that  $v=(v_1, \dots, v_N)$ is a  $(K,F)$-symmetric configuration so that  $v\in M_{(K_o,F_o)}$ for certain orderings $K_o=(g_1=e,\dots, g_m)$ of $K$  and $F_o=(f_{m+1},\dots, f_N)$ of $F$. Let 
\begin{equation}
\label{eq:embedding}
\rho_{(K_o,F_o)}:S^2\to  M, \qquad \rho_{(K_o,F_o)}(u)=(u,g_{2}u,\dots,g_{m}u, f_{m+1},\dots, f_N).
\end{equation}
 In this section we will prove that the solution of equations \eqref{eq:motion} with initial condition $v$ is given by
$
t\mapsto \rho_{(K_o,F_o)}(u(t)),
$
where $u(t)$ is the solution to the \defn{reduced system} on $S^2$, 
\begin{equation}
\label{eq:reduced-system}
\dot u = -\frac{1}{m} u \times \nabla_u h_{(K,F)}(u),
\end{equation}
with initial condition $u(0)=v_1$. Here,  $h_{(K,F)}:S^2\to \R$ is  the \defn{ reduced Hamiltonian}, that  is defined in terms of the Hamiltonian \eqref{eq:Ham} by
\begin{equation}
\label{eq:red-Ham}
h_{(K,F)}(u):= H(\rho_{(K_o,F_o)}(u)).
\end{equation}
In particular, this shows that the evolution of a  $(K,F)$-symmetric configuration remains a $(K,F)$-symmetric configuration at all time, and, therefore, we may speak of \defn{ $(K,F)$-symmetric solutions}.
Note that along these solutions, the vortices located at $f_{m+1}, \dots, f_N$ remain fixed.

We will also show that the reduced system~\eqref{eq:reduced-system} possesses  a symmetry, and we will describe it in detail. Note that, in virtue of the invariance of $H$ under the relabelling of the vortices,
 the reduced Hamiltonian  $h_{(K,F)}$ is well defined    independently of the
specific orderings $K_o$ and $F_o$ of $K$ and $F$, and this  is reflected in our notation.

The properties described above follow from the first three items of the  following theorem  whose 
 proof relies on the concept of discrete reduction (see e.g. \cite{Marsden-LOM}). Applications of 
discrete reduction  to the study of the $N$-vortex problem on the sphere  already appear in \cite{LimMontaldi}.

\begin{theorem}
\label{th-main-symmetry}
Let $K$ be any of the groups  $\mathbb{D}_{n},\mathbb{T},\mathbb{O},\mathbb{I}$ and let  $F\subset \mathcal{F}[K]$ be a $K$-invariant set. Suppose that $|K|=m\leq N$ and $|F|=N-m\geq 0$. 
Let $K_o=(g_1=e,g_2,\dots, g_m)$ and $F_o=(f_{m+1},\dots, f_N)$ be orderings of $K$ and $F$. The following statements hold. 
\begin{enumerate}
\item The set $M_{(K_o,F_o)}$ is an embedded submanifold of $M$, diffeomorphic to $S^{2}$,  and
invariant under the flow of the equations of motion \eqref{eq:motion}.

\item The restriction of the flow of  \eqref{eq:motion} to $M_{(K_o,F_o)}$  is conjugated by $ \rho_{(K_o,F_o)}$ to the flow of
the integrable Hamiltonian system on $(S^2 , m \omega_{S^2})$, with (reduced) Hamiltonian $h_{(K,F)}:S^2\to \R$ defined by \eqref{eq:red-Ham}. That is, $t\to u(t)$ is a solution
of  \eqref{eq:reduced-system} if and only if $t\mapsto \rho_{(K_o,F_o)}(u(t))$ is a solution of \eqref{eq:motion}.

\item Let $N(K)$ be the  normaliser of $K$ in $\SO(3)$ and suppose that the subgroup   $ K_1<\SO(3)$ satisfies $K\leq  K_1 \leq N(K)$. If 
  $F$ is invariant with respect to the $K_1$-action on $S^2$, then the reduced Hamiltonian $h_{(K,F)}$ is $K_1$-invariant and the reduced system
    \eqref{eq:reduced-system} is $K_1$-equivariant. In particular, these conclusions always hold for $K_1=N(K)$ if $F=\emptyset$ and  for $ K_1=K$ for general $F$.

\item Up to the addition of a constant term, the reduced Hamiltonian $h_{(K,F)}:S^2\to \R$ satisfies
\begin{equation}
\label{eq:redHam-mainthm}
h_{(K,F)}(u)=-\frac{m}{4} \sum_{j=2}^m \ln \left \vert u-g_j u \right \vert^2 -\frac{m}{2} \sum_{j=m+1}^N\ln \left  \vert u-f_j \right \vert ^2 .
\end{equation}

\item The centre of vorticity of elements in $M_{(K_o,F_o)}$ is $0\in \R^3$, i.e.  $M_{(K_o,F_o)}\subset J^{-1}(0)$.

\end{enumerate}
\end{theorem}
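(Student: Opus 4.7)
The overall plan is to identify $M_{(K_o,F_o)}$ with the fixed-point set of a suitable twisted subgroup of $\hat G=S_N\times\SO(3)$ and to invoke discrete symplectic reduction (cf.\ \cite{Marsden-LOM, LimMontaldi}). First, for each $g\in K$, left multiplication by $g$ permutes $K_o$ and, by $K$-invariance, also $F$, so there is a unique $\tau(g)\in S_N$ with $g_{\tau(g)(j)}=gg_j$ for $j\leq m$ and $f_{\tau(g)(j)}=gf_j$ for $j>m$. This defines an injective group morphism $\tau:K\to S_N$, and a direct check shows that $M_{(K_o,F_o)}$ coincides with the fixed-point set of the twisted subgroup $\hat K_\tau=\{(\tau(g),g):g\in K\}$. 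For (i), $\rho_{(K_o,F_o)}$ is a smooth injective immersion with image $M_{(K_o,F_o)}$, so the latter is an embedded submanifold diffeomorphic to $S^2$; flow-invariance follows from $\hat G$-equivariance of the equations of motion and $\hat K_\tau<\hat G$. For (ii), $\rho_{(K_o,F_o)}^*H=h_{(K,F)}$ by definition, while $\rho_{(K_o,F_o)}^*\Omega=\sum_{i=1}^m(g_i\cdot)^*\omega_{S^2}=m\,\omega_{S^2}$ since each $g_i\in\SO(3)$ preserves the area form and the last $N-m$ entries of $\rho_{(K_o,F_o)}$ are constant. The restricted flow is therefore the Hamiltonian flow of $h_{(K,F)}$ on $(S^2,m\omega_{S^2})$, whose intrinsic expression on $S^2\subset\R^3$ is \eqref{eq:reduced-system}.

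For (iv), I would substitute the ansatz into \eqref{eq:Ham} and split the sum $\sum_{i<j}\ln|v_j-v_i|^2$ into the three blocks corresponding to $i,j\leq m$, mixed, and $i,j>m$; the third block is constant and is absorbed. Using that each $g_i$ is an isometry, $|g_iu-g_ju|^2=|u-g_i^{-1}g_ju|^2$, and noting that for every $g\in K\setminus\{e\}$ there are exactly $m$ ordered pairs $(i,j)$ with $g_i^{-1}g_j=g$, the first block becomes $\frac{m}{2}\sum_{j=2}^m\ln|u-g_ju|^2$. For the mixed block, $K$-invariance of $F$ yields $\sum_{f\in F}\ln|g_iu-f|^2=\sum_{f\in F}\ln|u-f|^2$ for every $i\leq m$, producing an overall factor of $m$; the stated formula then follows after multiplying by $-\frac{1}{2}$. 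Part (iii) is an immediate consequence of (iv): for $h\in K_1$, the isometry property gives $|hu-g_jhu|^2=|u-h^{-1}g_jhu|^2$ and $|hu-f|^2=|u-h^{-1}f|^2$, and since conjugation by $h$ permutes $K\setminus\{e\}$ (because $K_1$ normalises $K$) while $h^{-1}F=F$ (because $F$ is $K_1$-invariant), both sums in $h_{(K,F)}(hu)$ reduce to those defining $h_{(K,F)}(u)$; $K_1$-equivariance of the reduced vector field then follows from the $\SO(3)$-equivariance of $u\mapsto -\frac{1}{m}u\times\nabla h(u)$ on $S^2$.

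For (v), the centre of vorticity of $\rho_{(K_o,F_o)}(u)$ equals $\bigl(\sum_{g\in K}g\bigr)u+\sum_{f\in F}f$. The endomorphism $A=\sum_{g\in K}g$ of $\R^3$ satisfies $hA=A$ for every $h\in K$, so $Au$ is $K$-fixed; likewise $\sum_{f\in F}f$ is $K$-fixed since $F$ is $K$-invariant. Because the defining representation of each of $\mathbb{D}_n$ $(n\geq 2)$, $\mathbb{T}$, $\mathbb{O}$, $\mathbb{I}$ on $\R^3$ contains no trivial subrepresentation (the equatorial plane carries a faithful irreducible $2$-dimensional representation and the polar axis carries the sign representation for $\mathbb{D}_n$, while the three exceptional groups act irreducibly), both contributions vanish. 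The main anticipated difficulty lies in (iv): one must carefully account for the $m$ ordered pairs realising each non-identity element of $K$ and for the cancellation between $g$ and $g^{-1}$, after which (iii) and (v) reduce to short representation-theoretic arguments and the remaining statements follow from the discrete reduction setup.
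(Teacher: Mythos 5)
Your overall route is the paper's: build the one-to-one morphism $\tau:K\to S_N$ from the orderings, pass to the twisted subgroup $\hat K_\tau<S_N\times\SO(3)$, invoke discrete reduction for (i)--(ii) together with the pullback computation $\rho_{(K_o,F_o)}^*\Omega=m\,\omega_{S^2}$, compute $h_{(K,F)}$ block by block for (iv), deduce (iii) by conjugation, and handle (v) by symmetry of the sums. Two of your steps are genuinely cleaner than the paper's: in the mixed block of (iv) you sum over $f\in F$ first and use $g_i^{-1}F=F$ to get the factor $m$ at once, whereas the paper runs an orbit--stabiliser computation over each $K$-orbit of $F$; and in (v) you argue that $\bigl(\sum_{g\in K}g\bigr)u$ and $\sum_{f\in F}f$ are $K$-fixed vectors and that the representation of $K$ on $\R^3$ has no trivial subrepresentation, where the paper proves $\sum_{g\in K}g=0$ by an explicit generator calculation (its Lemma \ref{lemma:sumK}) and then repeats the orbit argument for $F$. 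Both of your variants are correct (for $\mathbb{D}_2$ the equatorial representation is reducible but still has no trivial summand, which is all you need).

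There is, however, one step that fails as stated: the claim that $M_{(K_o,F_o)}$ \emph{coincides} with $\mbox{Fix}(\hat K_\tau)$. This is false whenever $F\neq\emptyset$ admits a nontrivial $K$-equivariant relabelling inside $\mathcal{F}[K]$, which happens in the paper's own applications. For example, with $K=\mathbb{D}_n$ and $F=\{(0,0,\pm1)\}$, the configuration $(v_1,g_2v_1,\dots,g_mv_1,(0,0,-1),(0,0,+1))$, with the two fixed vortices interchanged relative to $F_o$, lies in $\mbox{Fix}(\hat K_\tau)$ but not in $M_{(K_o,F_o)}$; similarly for $K=\mathbb{T}$, $F=\mathcal{T}_1\cup\mathcal{T}_2$ one can swap the two tetrahedra. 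The correct statement, and the one the paper proves (Lemma \ref{lemma:exist-twisted-subgroup}), is that $M_{(K_o,F_o)}$ is a \emph{connected component} of $\mbox{Fix}(\hat K_\tau)$: one shows that the fixed-point condition forces $v_i=g_iv_1$ for $i\leq m$ and forces $(v_{m+1},\dots,v_N)$ to be a $K$-equivariant relabelling of $F_o$ inside the finite set $\mathcal{F}[K]$, so only finitely many values of the last $N-m$ entries occur and each choice is a separate component. This matters because your flow-invariance in (i) is deduced from the asserted equality; with only the inclusion $M_{(K_o,F_o)}\subset\mbox{Fix}(\hat K_\tau)$ in hand, invariance does not follow. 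The repair is exactly the component argument (components of a closed flow-invariant set are flow-invariant, and discrete reduction applies to each component of the symplectic fixed-point set), so the gap is localised and fixable, but as written the ``direct check'' would not go through.
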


\begin{remark}
\label{rmks:theorem-symmetry}
In trying to understand which are the symmetries of the reduced system \eqref{eq:reduced-system} it will be useful to keep in mind the following relations between the 
groups $\mathbb{D}_{n},\mathbb{T},\mathbb{O}$ and $\mathbb{I}$, and their normalisers in $\SO(3)$:
\begin{equation}
\label{eq:table-normalizers}
\begin{array}
[c]{|c|c|c|c|c|c|}%
\hline
K  & \mathbb{D}_{2} & \mathbb{D}_{n}, \, n\geq 3 & \mathbb{T} & \mathbb{O} &  \mathbb{I} \\ \hline
 N(K) & \mathbb{O} &  \mathbb{D}_{2n} &  \mathbb{O }& \mathbb{O} &  \mathbb{I} \\ \hline
\end{array}
\end{equation}
\end{remark}

\begin{remark} 
\label{rmks:theorem-symmetry2}
 To be precise, at this point of the paper, all statements about the flow of  \eqref{eq:motion} in items (i)-(v) of the theorem  only make sense away from collisions.  In fact,
the reduced system \eqref{eq:reduced-system} is only defined at those points of $S^2$ at which $h_{(K,F)}$ is smooth.
In Section~\ref{sec:collisions} ahead will show that the reduced system is well defined away from (finitely many) points in $\mathcal{F}[K]$ which are in one-to-one correspondence with the  collision
configurations within $M_{(K_o,F_o)}$. Moreover, we will 
 introduce a  regularisation  that extends the reduced system  \eqref{eq:reduced-system} to all of  $S^2$,
  the flow of    \eqref{eq:motion}  to all of  $M_{(K_o,F_o)}$, and  the   conclusions of the theorem are valid for this regularisation  without any restriction. We have decided to 
  oversee this detail in the statement of the theorem and in its proof to simplify the presentation.
\end{remark}
The proof of the theorem that we present relies on the following three lemmas whose proof is postponed until the end of the section.
\begin{lemma}
\label{lemma:exist-twisted-subgroup}
Let $K$, $F$, $K_o$ and $F_o$ be as  in the statement of Theorem~\ref{th-main-symmetry}. There exists a one-to-one group morphism $\tau:K\rightarrow S_{N}$ such that
$M_{(K_o,F_o)}$ is a connected component of  $\mbox{\em Fix}(\hat{K_\tau})\subset M$ where $\hat{K_\tau}<\hat G$ is the twisted subgroup  \eqref{eq:twisted}, and where
\begin{equation*}
\mbox{\em Fix}(\hat{K_\tau}) :=\{ v\in M \, : \, \hat g\cdot v =v \quad \mbox{for all} \quad \hat g\in \hat{K_\tau} \}.
\end{equation*}
\end{lemma}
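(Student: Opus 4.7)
The plan is to construct the morphism $\tau$ explicitly from the orderings $K_o$ and $F_o$, and then to characterise the twisted fixed set $\mbox{\em Fix}(\hat{K_\tau})$ and pinpoint $M_{(K_o,F_o)}$ as one of its connected components. The construction part is essentially algebraic; the component identification relies on the discreteness of $\mathcal{F}[K]$.

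First I would define $\tau:K\to S_N$ in two pieces. The ordering $K_o$ identifies $\{1,\dots,m\}$ with $K$ via $k\mapsto g_k$, and under this identification left multiplication by $g\in K$ yields a permutation $\tau_1(g)$ of $\{1,\dots,m\}$ characterised by $g_{\tau_1(g)(k)}=g\,g_k$. Similarly, the ordering $F_o$ identifies $\{m+1,\dots,N\}$ with $F$, and since $F$ is $K$-invariant, the restriction of the $K$-action to $F$ yields a permutation $\tau_2(g)$ of $\{m+1,\dots,N\}$ characterised by $f_{\tau_2(g)(k)}=g\,f_k$. Setting $\tau(g)$ to be the permutation of $\{1,\dots,N\}$ that agrees with $\tau_1(g)$ on $\{1,\dots,m\}$ and with $\tau_2(g)$ on $\{m+1,\dots,N\}$ gives the candidate morphism. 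That $\tau$ is a group morphism under the product convention $\sigma_1\sigma_2=\sigma_1\circ\sigma_2$ of Remark \ref{rmk:product-in-SN} follows because both $\tau_1$ and $\tau_2$ encode left $K$-actions, and $\tau$ is injective because the identity $\tau(g)(1)=1$ already forces $g\,g_1=g_1$, hence $g=e$.

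Next I would verify the inclusion $M_{(K_o,F_o)}\subset\mbox{\em Fix}(\hat{K_\tau})$. For $v\in M_{(K_o,F_o)}$ and $g\in K$, the condition $(\tau(g),g)\cdot v=v$ amounts to $g\,v_{\tau(g)^{-1}(j)}=v_j$ for every $j$. For $j\le m$ this unwinds, with $k=\tau_1(g)^{-1}(j)$, to $g\,g_k v_1=g_j v_1$ where $g_j=g\,g_k$, which is automatic; for $j>m$ it becomes, with $k=\tau_2(g)^{-1}(j)$, the identity $g\,f_k=f_j$ where $f_j=g\,f_k$, again automatic. So this inclusion is essentially a tautological consequence of the construction.

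The main technical point, and the step I expect to require the most care, is showing that $M_{(K_o,F_o)}$ is a full connected component of $\mbox{\em Fix}(\hat{K_\tau})$. To this end I would analyse a general $v\in\mbox{\em Fix}(\hat{K_\tau})$. Applying the fixed-point condition to $g=g_j$ for $j\le m$ forces $v_j=g_j v_1$, so the first $m$ coordinates of any fixed point are smoothly parametrised by $v_1\in S^2$. For each $j>m$, applying the condition to any $g$ in the isotropy $K_{f_j}$ forces $v_j\in\mbox{\em Fix}(K_{f_j})\cap S^2$; since $f_j\in\mathcal{F}[K]$, the isotropy $K_{f_j}$ is a non-trivial finite cyclic subgroup of $\SO(3)$, and its fixed-point set on $S^2$ is exactly the pair of antipodes $\{f_j,-f_j\}$. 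Hence the coordinates $v_j$ with $j>m$ are confined to a finite set, so the connected component of $\mbox{\em Fix}(\hat{K_\tau})$ through the ``all plus'' choice $v_j=f_j$ is precisely $M_{(K_o,F_o)}$, which is itself connected since it is the continuous image of $S^2$ under the map $\rho_{(K_o,F_o)}$ of \eqref{eq:embedding}. The remaining relations $v_j=g\,v_k$ with $f_j=g\,f_k$, $k\neq j$, simply couple the discrete choices across each $K$-orbit in $F$; verifying that this bookkeeping leaves the $S^2$ of continuous freedom intact is the only delicate point of the argument.
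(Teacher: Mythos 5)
Your proposal is correct and takes essentially the same route as the paper: the same definition of $\tau$ from the orderings (left multiplication on $K$, the $K$-action on $F$), the same tautological verification of $M_{(K_o,F_o)}\subset\mathrm{Fix}(\hat K_\tau)$, and the same strategy of forcing $v_j=g_jv_1$ for $j\le m$ and confining the tail coordinates $v_j$, $j>m$, to a finite set in order to identify $M_{(K_o,F_o)}$ as a connected component. The only minor difference is how finiteness of the tail is obtained: you use $v_j\in\mathrm{Fix}(K_{f_j})=\{f_j,-f_j\}$ (the antipodal fixed pair of the non-trivial isotropy), whereas the paper shows that $\{v_{m+1},\dots,v_N\}$ is a $K$-invariant subset of the finite set $\mathcal{F}[K]$; both are valid.
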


\begin{lemma}
\label{lemma:pull-back}
Let $K$, $F$, $K_o$ and $F_o$ be as  in the statement of Theorem~\ref{th-main-symmetry}, then $\rho_{(K_o,F_o)}^*\Omega =m\omega_{S^2}$.
\end{lemma}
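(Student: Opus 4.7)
The plan is to compute the pullback directly using the explicit decompositions of both $\Omega$ and the embedding $\rho_{(K_o,F_o)}$. By definition
\[
\rho_{(K_o,F_o)} = (\rho_1,\dots,\rho_N), \qquad \rho_i = g_i \text{ for } i=1,\dots,m, \quad \rho_i \equiv f_i \text{ for } i=m+1,\dots,N,
\]
so that $\pi_i\circ\rho_{(K_o,F_o)}$ coincides with the restriction of $g_i\in\SO(3)$ to $S^2$ for $i\leq m$, and is a constant map for $i>m$. Using the formula $\Omega=\sum_{i=1}^N \pi_i^*\omega_{S^2}$ together with functoriality of pullback, I would write
\[
\rho_{(K_o,F_o)}^*\Omega = \sum_{i=1}^m g_i^*\omega_{S^2} + \sum_{i=m+1}^N (\text{const}_{f_i})^*\omega_{S^2}.
\]

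Then I would dispose of each piece in turn. The constant maps $\mathrm{const}_{f_i}:S^2\to S^2$ have zero differential, so $(\mathrm{const}_{f_i})^*\omega_{S^2}=0$. For the rotations $g_i\in\SO(3)<\mathrm{O}(3)$, the standard area form $\omega_{S^2}$ on the unit sphere is induced by the round metric and the outward orientation, both of which are preserved by $\SO(3)$; hence $g_i^*\omega_{S^2}=\omega_{S^2}$ for every $i=1,\dots,m$. Substituting these two facts into the displayed expression immediately yields $\rho_{(K_o,F_o)}^*\Omega = m\,\omega_{S^2}$, as claimed.

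There is no real obstacle here; the only point worth being careful about is the orientation convention that ensures $\SO(3)$ (and not just $\mathrm{O}(3)$) preserves $\omega_{S^2}$ with sign, but this is built into the setup. If one preferred a coordinate-free verification, one could equivalently argue by evaluating both sides at an arbitrary pair $\xi,\eta\in T_uS^2$ and using $d\rho_{(K_o,F_o)}(u)\cdot\xi = (\xi, dg_2|_u\xi,\dots, dg_m|_u\xi, 0,\dots,0)$ together with the fact that each $dg_i|_u:T_uS^2\to T_{g_iu}S^2$ is an orientation-preserving linear isometry.
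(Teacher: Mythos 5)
Your argument is correct and is essentially the paper's own proof: both decompose $\Omega=\sum_i\pi_i^*\omega_{S^2}$, note that the components of $\rho_{(K_o,F_o)}$ are the rotations $g_i$ (for $i\leq m$) and constant maps (for $i>m$), and conclude via the $\SO(3)$-invariance of $\omega_{S^2}$ that the pullback is $m\omega_{S^2}$. Your closing coordinate-free remark, evaluating on tangent vectors with $T_u\rho_{(K_o,F_o)}(a)=(a,g_2a,\dots,g_ma,0,\dots,0)$, is exactly the computation the paper carries out.
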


\begin{lemma}
\label{lemma:sumK}
 For the groups $K=\mathbb{D}_{n},\mathbb{T},\mathbb{O}, \mathbb{I}<\SO(3)$, we have $\sum_{g\in K}g=0$.
\end{lemma}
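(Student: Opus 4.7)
My plan is to exploit the elementary rearrangement identity: if $S:=\sum_{g\in K}g\in \mathrm{End}(\R^3)$, then for any $h\in K$ left multiplication gives $hS=\sum_{g\in K}(hg)=\sum_{g'\in K}g'=S$, since $g\mapsto hg$ is a bijection of $K$. Consequently, for every vector $v\in\R^3$, the image $Sv$ satisfies $h(Sv)=Sv$ for all $h\in K$, i.e.\ $Sv$ lies in the subspace $V^K\subset\R^3$ of vectors fixed by every element of $K$.

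The entire argument therefore reduces to verifying that $V^K=\{0\}$ for each of the four groups in question. I would do this case by case using the geometric description of the groups as subgroups of $\SO(3)$:
\begin{itemize}
\item For $K=\mathbb{D}_n$ (with $n\geq 2$), the cyclic subgroup of rotations about the principal axis forces any fixed vector to lie along that axis, while the $\pi$-rotations about the perpendicular $2$-fold axes force it to be orthogonal to that axis. Hence $V^K=\{0\}$.
\item For $K=\mathbb{T},\mathbb{O},\mathbb{I}$, the group contains rotations about at least two non-parallel axes, so any commonly fixed vector must lie on their intersection, which is $\{0\}$.
\end{itemize}
Combining, $Sv=0$ for every $v\in\R^3$, so $S=0$ as claimed.

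This argument is essentially a special case of the standard fact that for a finite group $K$ acting linearly on a vector space, the averaging operator $\frac{1}{|K|}\sum_{g\in K}g$ is the projection onto the fixed subspace $V^K$; here the fixed subspace is trivial, so the projection vanishes. I do not anticipate any real obstacle; the only step requiring minor care is the case-by-case check that $V^K=\{0\}$, but this is immediate from the presence of non-collinear rotation axes in each group.
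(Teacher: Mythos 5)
Your proof is correct, and it takes a genuinely different route from the paper's. You use the standard averaging argument: since $hS=S$ for every $h\in K$, the image of $S=\sum_{g\in K}g$ lies in the fixed subspace $V^K$, and the lemma reduces to the purely geometric observation that none of these groups fixes a nonzero vector (non-collinear rotation axes). The paper instead computes directly: for $K=\mathbb{D}_n$ it sums the generator matrices explicitly, writing $A^j=e^{jJ\zeta}\oplus 1$ and $BA^j=e^{-jJ\zeta}\oplus(-1)$ and using $\sum_j e^{jJ\zeta}=0$, and then handles $\mathbb{T},\mathbb{O},\mathbb{I}$ by decomposing them into cosets of the subgroup $\mathbb{D}_2$, so that $\sum_{g\in K}g=\sum_l h_l\bigl(\sum_{g\in\mathbb{D}_2}g\bigr)=0$. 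Your approach is more conceptual and generalises immediately (it shows $\sum_{g\in K}g=0$ for any finite subgroup of $\mathrm{O}(3)$, indeed any finite linear group, with trivial fixed subspace), at the cost of a case-by-case check that $V^K=\{0\}$; the paper's approach is more computational but entirely self-contained, avoiding even the invariant-subspace language, and its coset trick cleanly reduces the three polyhedral groups to the single explicit dihedral calculation.
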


\begin{proof}[Proof of Theorem~\ref{th-main-symmetry}]

(i) The set $M_{(K_o,F_o)}$ is clearly an embedded submanifold of $M$ isomorphic to $S^2$ with the embedding given by \eqref{eq:embedding}. Indeed, we have $M_{(K_o,F_o)}=\rho_{(K_o,F_o)}(S^2)$. The invariance of $M_{(K_o,F_o)}$  under the flow of \eqref{eq:motion} is immediate in virtue of  Lemma~\ref{lemma:exist-twisted-subgroup}: since the system  \eqref{eq:motion} is $\hat G$-equivariant then $\mbox{ Fix}(\hat{K_\tau})$ is invariant by its flow and so are each of its connected components.

(ii) First note that for $\varphi: S^2\to \R$, the associated Hamiltonian vector field  $X_\varphi$  on $S^2$, determined by $\omega_{S^2}(X_\varphi,\cdot )=d\varphi$, defines the equations of motion 
$\dot u = -u\times \nabla_u\varphi(u)$. If the symplectic form $\omega_{S^2}$ is scaled by a factor of $m$, then the corresponding Hamiltonian vector field $X_\varphi$ inherits a rescaling by $1/m$ which
leads to an appearance of this factor on the right hand side of the equations of motion. This shows that
the Hamiltonian system on $(S^2,m\omega_{S^2})$ with Hamiltonian $h_{(K,F)}$ defines the equations \eqref{eq:reduced-system}, as required. Moreover, 
this system is trivially integrable in the Arnold-Liouville sense since  $S^2$ has dimension 2 and $h_{(K,F)}$  is a first integral.

Next,  since $\hat G$ acts symplectically on $(M,\Omega)$ and $H$ is $\hat G$-invariant,  it is known (e.g. \cite{Marsden-LOM})
  that $\mbox{Fix}(\hat{K_\tau})$ is a symplectic submanifold of $M$ and that the restriction of the  flow of $X$  to $\mbox{ Fix}(\hat{K_\tau})$ is Hamiltonian with respect to the restricted Hamiltonian and symplectic form. The
same is  true about each of its connected components. In particular, in view of Lemma \ref{lemma:exist-twisted-subgroup},
 this implies that $ M_{(K_o,F_o)}$ is a symplectic manifold equipped with the restriction $\Omega_0:=\left . \Omega \right   |_{M_{(K_o,F_o)}}$ 
 of the symplectic form $\Omega$, and that the restriction of the flow of \eqref{eq:motion} 
 to $ M_{(K_o,F_o)}$ is Hamiltonian with respect to $\Omega_0$ and the Hamilton function $H_0:= \left . H \right   |_{M_{(K_o,F_o)}}$.

The key point of the proof is to observe that  $\rho_{(K_o,F_o)}$ defined by \eqref{eq:embedding} is in fact a symplectomorphism between $(S^2, m\omega_{S^2})$ and 
$(M_{(K_o,F_o)},\Omega_0)$. This is an immediate consequence of Lemma~\ref{lemma:pull-back} together with the observation that $M_{(K_o,F_o)}=\rho_{(K_o,F_o)}(S^2)$.
As any symplectomorphism,
$\rho_{(K_o,F_o)}$ takes Hamiltonian vector fields into Hamiltonian vector fields (see e.g.~\cite{MaRa}). Considering that the reduced Hamiltonian \eqref{eq:red-Ham} and the restricted Hamiltonian $H_0$  
 are related by $h_{(K,F)} = H_0  \circ \rho_{(K_o,F_o)}=  \rho_{(K_o,F_o)}^*H_0$, it follows that
$\rho_{(K_o,F_o)}$ pulls back  the vector field  on  $ M_{(K_o,F_o)}$ defined by the  restriction of  \eqref{eq:motion} onto the vector field on $S^2$ defined by  \eqref{eq:reduced-system}. In particular, $\rho_{(K_o,F_o)}$
maps solutions of  \eqref{eq:reduced-system} into solutions of  \eqref{eq:motion} that are contained in $ M_{(K_o,F_o)}$. This correspondence between solutions is one-to-one since
 $\rho_{(K_o,F_o)}:S^2\to M_{(K_o,F_o)}$ is invertible.

(iv)  By definition of $h_{(K,F)}$ we have
\begin{equation}
\label{eq:aux1-proposition-simp-red-Ham}
h_{(K,F)}(u)=-\frac{1}{2}\sum_{1\leq i<j\leq m}\ln \left   \vert g_iu-g_j u \right \vert^2  - \frac{1}{2}\sum_{j=m+1}^N\sum_{i=1}^m \ln \left \vert g_iu-f_j \right \vert^2 
-\frac{1}{2}\sum_{m+1\leq i<j\leq N}\ln \left   \vert f_i - f_j \right \vert^2.
\end{equation}
Now, on  the one hand we have
\begin{equation}
\label{eq:aux2-proposition-simp-red-Ham}
\sum_{1\leq i<j\leq m} \ln \left \vert g_i u-g_ju\right \vert^2 = \frac{1}{2} \sum^m_{\substack{i, j=1 \\ i\neq j}} \ln \left \vert g_i u-g_ju\right \vert ^2
=  \frac{1}{2} \sum^m_{\substack{i, j=1 \\ i\neq j}} \ln \left \vert  u-g_i^{-1}g_ju\right \vert^2  = \frac{m}{2} \sum_{j=2}^m \ln \left \vert  u-g_ju\right \vert^2 .
\end{equation}
On the other hand,  fix $f \in F$ and let $I \subset \{ m+1,\dots, N\}$ be such that the $K$-orbit of $f$ satisfies 
$Kf=\left\{  f_{i}\,:\,i\in I\right\}  $.  For each  $i\in I$ let $h_{i}\in K$  such that  $h_if=f_i$.
Since the orbit $Kf$ is isomorphic
to $K/K_{f}$ we have $Kf=\left\{  h_{i}f \, :\, i\in I\right\}  $
 and $K=\cup_{i\in I}h_{i}K_{f}$. Thus 
\begin{equation*}
\sum_{i=1}^m \ln \left \vert g_iu-f \right \vert^2 = \sum_{g\in K} \ln \left \vert u-gf \right \vert^2= \sum_{i\in I}\sum_{h\in K_{f}} \ln \left \vert u-h_ihf \right \vert^2=
\vert K_{f} \vert \sum_{i\in I} \ln \left \vert u-f_i \right \vert^2.
\end{equation*}
Considering that the above formula holds when setting $f=f_j$ for all $j\in I$, and that  $|K_{f_j}|=m /\vert I\vert $ is constant for  $j\in I$, we have
\begin{equation*}
\sum_{j\in I}\sum_{i=1}^m \ln \left \vert g_iu-f_j \right \vert^2 = \vert I \vert 
\frac{m}{\vert I \vert} \sum_{i\in I} \ln \left \vert u-f_i \right \vert^2 =m  \sum_{j \in I} \ln \left \vert u-f_j \right \vert^2.
\end{equation*}
Therefore, breaking up the set of indices $\{m+1, \dots, N\}$ into the disjoint subsets $I_k$, each containing the indices of a $K$ orbit of $F$, we have
\begin{equation}
\label{eq:aux3-proposition-simp-red-Ham}
\sum_{j=m+1}^N\sum_{i=1}^m \ln \left \vert g_iu-f_j \right \vert^2 =\sum_k \sum_{j\in I_k} \sum_{i=1}^m \ln \left \vert g_iu-f_j \right \vert^2 =m \sum_k \sum_{j\in I_k}\ln \left \vert
 u-f_j \right \vert^2 = m\sum_{j=m+1}^N\ln \left \vert
 u-f_j \right \vert^2.
\end{equation}
Substituting \eqref{eq:aux2-proposition-simp-red-Ham} and \eqref{eq:aux3-proposition-simp-red-Ham} into \eqref{eq:aux1-proposition-simp-red-Ham} yields 
\eqref{eq:redHam-mainthm} since the third sum in  \eqref{eq:aux1-proposition-simp-red-Ham}
 is a constant independent of $u$.

(iii)  Let $g\in K_{1}$. Starting from  \eqref{eq:redHam-mainthm} and using the
$\SO(3)$-invariance of the euclidean norm we have%
\[
h_{(K,F)}(gu)=-\frac{m}{4}\sum_{j=2}^{m}\ln\left\vert u-g^{-1}g_{j}%
gu\right\vert ^{2}-\frac{m}{2}\sum_{j=m+1}^{N}\ln\left\vert u-g^{-1}%
f_{j}\right\vert ^{2}.
\]
Now, for any $g\in K_{1}\leq N(K)$ the map $k\in K\mapsto g^{-1}kg\in K$ is
bijective, and hence $(g_{1}^{\prime}=e,g_{2}^{\prime},\dots,g_{m}^{\prime})$
with $g_{j}^{\prime}=g^{-1}g_{j}g$ is a new ordering of $K$. Moreover, since,
by hypothesis, $F$ is $K_{1}$-invariant, then $(f_{m+1}^{\prime},\dots
,f_{N}^{\prime})$ with $f_{j}^{\prime}=g^{-1}f_{j}$ is a new ordering of $F$.
Therefore $h_{(K,F)}(gu)=h_{(K,F)}(u)$ showing that  $h_{(K,F)}$ is indeed $K_1$-invariant. Since the $K_1$ action on $(S^2, m\omega_{S^2})$ is symplectic,
it follows that the flow of the reduced system \eqref{eq:reduced-system} is $K_1$-equivariant.

 (v)  In view of Lemma~\ref{lemma:sumK}, if $v=(v_1,\dots v_N)\in M_{(K_o,F_o)}$, we have
\[
J(v)=\sum_{j=1}^{N}v_{j}=\sum_{j=1}^{m}g_{j}v_{1}+\sum_{j=m+1}^{N}f_{j} =\left ( \sum_{g\in K}g \right ) v_1 +\sum_{j=m+1}^{N}f_{j} =\sum_{j=m+1}^{N}f_{j} .
\]
To show that the remaining sum on the right also vanishes, we rely on the $K$-invariance of $F$.
 Proceeding as in the proof of item (iv) above, fix $f_{k}\in F$ and let $I_k\subset \{ m+1,\dots, N\}$ be such that the $K$-orbit of $f_{k}$ satisfies 
$Kf_k=\left\{  f_{j}:j\in I_{k}\right\}  $.  For each  $j\in I_{k}$ let $h_{j}\in K$  such that  $h_jf_k=f_j$.
Since the orbit $Kf_{k}$ is isomorphic
to $K/K_{f_{k}}$ we have $Kf_{k}=\left\{  h_{j}f_{k}:j\in I_{k}\right\}  $
 and $K=\cup_{j\in I_{k}%
}h_{j}K_{f_{k}}$. Thus 
\[
0=\left(  \sum_{g\in K}g\right)  f_{k}=\sum_{j\in I_{k}}\sum_{h\in K_{f_{k}}%
}h_{j}hf_{k}=\left\vert K_{f_{k}}\right\vert \sum_{j\in I_{k}}h_{j}%
f_{k}=\left\vert K_{f_{k}}\right\vert \sum_{f\in Kf_{k}}f\text{.}%
\]
This shows that the sum of the elements of the orbit of $f_{k}$ is zero. Since   $F$ is
$K$-invariant, then $\sum_{j=m+1}^{N}f_{j}$ is the sum of elements in disjoint orbits, each of which vanishes. Therefore,   $\sum_{j=m+1}^{N}f_{j}=0$ and $v\in J^{-1}(0)$.

\end{proof}

We finish this section with the proofs of Lemmas~\ref{lemma:exist-twisted-subgroup},  \ref{lemma:pull-back} and  \ref{lemma:sumK}.

\begin{proof}[Proof of Lemma~\ref{lemma:exist-twisted-subgroup}]
  Let $\Psi$ and $\Lambda$ denote the \emph{index} mappings associated to the given orderings $K_o=(g_1=e,g_2,\dots, g_m)$ and 
$F_o=(f_{m+1},\dots, f_N)$  of $K$ and $F$:
\[%
\begin{split}
&  \Psi:K\rightarrow\{1,\dots,m\},\qquad g_{i}\mapsto i,\\
&  \Lambda:F\rightarrow\{m+1,\dots,N\},\qquad f_{j}\mapsto j.
\end{split}
\]
Then $\Psi$ and $\Lambda$ are well defined bijections that satisfy
$g_{\Psi(\tilde{g})}=\tilde{g}$, $f_{\Lambda(\tilde{f})}=\tilde{f}$ for any
$\tilde{g}\in K$ and $\tilde{f}\in F$.

We define $\tau:K\rightarrow S_{N}$ by
\[
\tau(g_{i})(j)=%
\begin{cases}
\Psi(g_{i}g_{j})\quad\mbox{if}\quad j\in\{1,\dots,m\},\\
\Lambda(g_{i}f_{j})\quad\mbox{if}\quad j\in\{m+1,\dots,N\}.
\end{cases}
\]
It is a simple exercise to show that $\tau$ as defined above is indeed a
one-to-one group morphism with respect to our product convention in $S_N$ (see Remark~\ref{rmk:product-in-SN}).

We now show that $ M_{(K_o,F_o)}\subset\mbox{Fix}(\hat{K}_\tau)$. Let $(g_{1}%
v_{1},\dots,g_{m}v_{1},f_{m+1},\dots,f_{N})\in  M_{(K_o,F_o)}$ and for
$i\in\{1,\dots,m\}$ denote $w_{i}=g_{i}v_{1}$. Using the definition of the
action of $\hat{K}_\tau$ on $M$, we have, for any $j\in\{1,\dots,m\}$, that
\[%
\begin{split}
(\tau(g_{j}),g_{j})\cdot &  (g_{1}v_{1},\dots,g_{m}v_{1},f_{m+1},\dots
,f_{N})=\\
&  \qquad(g_{j}w_{\tau(g_{j}^{-1})(1)},\dots,g_{j}w_{\tau(g_{j}^{-1}%
)(m)},g_{j}f_{\tau(g_{j}^{-1})(m+1)},\dots,g_{j}f_{\tau(g_{j}^{-1})(N)}).
\end{split}
\]
However, using the definition of $\tau$, we find
\[
g_{j}w_{\tau(g_{j}^{-1})(i)}=g_{j}w_{\Psi(g_{j}^{-1}g_{i})}=g_{j}%
g_{\Psi(g_{j}^{-1}g_{i})}v_{1}=g_{j}g_{j}^{-1}g_{i}v_{1}=g_{i}v_{1},\quad
i=1,\dots,m,
\]
and
\[
g_{j}f_{\tau(g_{j}^{-1})(k)}=g_{j}f_{\Lambda(g_{j}^{-1}f_{k})}=g_{j}%
g_{j}^{-1}f_{k}=f_{k},\quad k=m+1,\dots,N,
\]
which shows that
\[
(\tau(g_{j}),g_{j})\cdot(g_{1}v_{1},\dots,g_{m}v_{1},f_{m+1},\dots
,f_{N})=(g_{1}v_{1},\dots,g_{m}v_{1},f_{m+1},\dots,f_{N}),
\]
and indeed $ M_{(K_o,F_o)}\subset\mbox{Fix}(\hat{K_\tau})$.

Now let $v=(v_{1},\dots,v_{N})\in\mbox{Fix}(\hat{K_\tau})$. The condition that
$(\tau(g_{j}),g_{j})\cdot v=v$ in particular implies that
\[
v_{j}=g_{j}v_{\tau(g_{j}^{-1})(j)}=g_{j}v_{\Psi(g_{j}^{-1}g_{j})}%
=g_{j}v_{\Psi(e)}=g_{j}v_{1},\qquad j=1,\dots,m,
\]
where the last identity uses that $g_{1}=e$ in the  ordering  $K_o$. Thus
$v_{i}=g_{i}v_{1}$ for all $i\in\{1,\dots,m\}$. Below we show that for any
$g\in K$ and $i,j\in\{m+1,\dots N\}$ we have $f_{j}=gf_{i}$ if and only if
$v_{j}=gv_{i}$. This implies that $\{v_{m+1},\dots,v_{N}\}$ is a $K$-invariant
subset of $S^2$ and, moreover, that  the $K$-isotropy of $f_{j}$ coincides with the $K$-isotropy of $v_{j}$. Thus $\{v_{m+1}%
,\dots,v_{N}\}$ is a $K$-invariant subset of $\mathcal{F}[K]$. In particular, considering that 
$\mathcal{F}[K]$ is finite (Remark~\ref{prop:finiteF}), we conclude that  there are finitely many possibilities for the last $N-m$ entries of of $v\in \mbox{Fix}(\hat{K_\tau})$.
It is not hard to see that each of these possibilities for  $\{v_{m+1},\dots,v_{N}\}$ defines a connected component of $\mbox{Fix}(\hat{K_\tau})$.
In particular, we conclude that $M_{(K_o,F_o)}$ is indeed a connected component of $\mbox{Fix}(\hat{K_\tau})$ as required.

Let $i,j\in\{m+1,\dots N\}$. We now show that indeed, for any $g\in K$ we have
$f_{j}=gf_{i}$ if and only if $v_{j}=gv_{i}$. Suppose first that $f_{j}%
=gf_{i}$. Using that $(\tau(g),g)\cdot v=v$ and the definition of $\tau$,
we find
\[
v_{j}=gv_{\tau(g^{-1})(j)}=gv_{\Lambda(g^{-1}f_{j})}=gv_{\Lambda(f_{i}%
)}=gv_{i}.
\]
Conversely, suppose that $v_{j}=gv_{i}$. Using again that $(\tau
(g),g)\cdot v=v$ we get
\[
v_{j}=gv_{\tau(g^{-1})(j)},
\]
and we conclude that $v_{i}=v_{\tau(g^{-1})(j)}$. Hence $\tau
(g^{-1})(j)=i$ which, in view of the definition of $\tau$, implies that
$f_{j}=gf_{i}$.
\end{proof}

\begin{remark}
The above proof, together with the observation that  $M_{(K_o,F_o)}$ is diffeomorphic to $S^2$,  shows that, in fact, each of the finitely many  connected components of  $\mbox{Fix}(\hat{K_\tau})$ is diffeomorphic to $S^2$.
\end{remark}

\begin{proof}[Proof of  Lemma \ref{lemma:pull-back}]
Let  $v=(v_1, \dots, v_N)\in M$. The tangent space $T_vM$ is given by
$T_{v}M=T_{v_{1}}S^{2}\times....\times T_{v_{N}}S^{2}.$
If $\alpha= (a_1, \dots, a_N)$ and $\beta= (b_1, \dots, b_N)\in T_vM$, then, by the definition of $\Omega$ in \eqref{eq:Ham}, we have
$
\Omega(v)(\alpha ,\beta)= \sum_{j=1}^N\omega_{S^2}(v_j)(a_j,b_j).
$
Now, let $u\in S^2$ and $a, b\in T_uS^2$. It is not difficult to compute
\begin{equation*}
T_u \rho_{(K_o,F_o)} (a)=  (a, g_2 a, \dots, g_ma, 0,\dots 0), \qquad T_u \rho_{(K_o,F_o)} (b)=  (b, g_2 b, \dots, g_mb , 0,\dots 0).
\end{equation*}
Therefore,
\begin{equation*}
\begin{split}
\Omega \left ( \rho_{(K_o,F_o)} (u) \right ) \left ( T_u \rho_{(K_o,F_o)} (a),T_u \rho_{(K_o,F_o)} (b) \right ) & = \sum_{j=1}^m\omega_{S^2}(g_ju)(g_ja,g_jb)+\sum_{j=m+1}^N\omega_{S^2}(f_j)(0,0) \\ &  = m\omega_{S^2}(u)(a,b),
\end{split}
\end{equation*}
where the last identity uses $\omega_{S^2}(g_ju)(g_ja,g_jb)=\omega_{S^2}(u)(a,b)$, which  follows from the fact that the $\SO(3)$ action on $S^2$ preserves the area form $\omega_{S^2}$. The above calculation
shows that $ \rho_{(K_o,F_o)}^*\Omega=m\omega_{S^2}$ as required.
\end{proof}

\begin{proof}[Proof of Lemma \ref{lemma:sumK}] 
For the subgroup $K=\mathbb{D}_{n}<\SO(3)$, $n\geq2$, we consider the
generator matrices $A=e^{J\zeta}\oplus1$ and $B=1\oplus-1\oplus-1$, where  $J$ is the symplectic $2\times 2$ matrix and 
 $\zeta=2\pi/n$. Then we have
\begin{align*}
\sum_{j=1}^{n}A^{j}  & =\sum_{j=1}^{n}\left(  e^{jJ\zeta}\oplus1\right)
=0\oplus0\oplus n,\\
\sum_{j=1}^{n}BA^{j}  & =\sum_{j=1}^{n}\left(  e^{-jJ\zeta}\oplus-1\right)
=0\oplus0\oplus-n.
\end{align*}
Thus $\sum_{g\in\mathbb{D}_{n}}g=0$. The groups $K=\mathbb{T}%
,\mathbb{O}$,$\mathbb{I}$ contain $\mathbb{D}_{2}$ as a subgroup, and since
$K=h_{1}\mathbb{D}_{2}\cup .....\cup h_{L}\mathbb{D}_{2}$, then
\[
\sum_{g\in K}g=\sum_{l=1}^{L}\sum_{g\in \mathbb{D}_{2}}h_{l}g=%
\sum_{l=1}^{L}h_{l}\left( \sum_{g\in \mathbb{D}_{2}}g\right) =0.
\]

\end{proof}

\subsection{Regularisation of collisions  of symmetric configurations}
\label{sec:collisions}

We now consider in more detail the collisions of  $(K,F)$-symmetric configurations. We begin with the following propositions that clarify the role of $\mathcal{F}[K]$.

\begin{proposition}
\label{prop:collision-description}
Let $K$, $F$, $K_o$ and $F_o$ be as  in the statement of Theorem~\ref{th-main-symmetry}. The following statements hold:
\begin{enumerate}
\item There is a one-to-one correspondence between $\mathcal{F}[K]$ and the collision configurations within $M_{(K_o,F_o)}$. In particular, $M_{(K_o,F_o)}$ contains  finitely many collision points.
\item If $u\in \mathcal{F}[K]$ then the point $\rho_{(K_o,F_o)}(u)$ is a $(K,F)$-symmetric collision configuration whose only collisions occur 
at the points of the orbit $K u$. Moreover, these are  all $k$-tuple  collisions  where $k=|K_{u}|$ if $u\nin F$ and  $k=|K_u|+1$ if $u\in F$.
\end{enumerate}
\end{proposition}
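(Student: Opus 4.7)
The plan is to leverage the fact that, by construction, $\rho_{(K_o,F_o)}:S^2\to M_{(K_o,F_o)}$ is a diffeomorphism (as established in the proof of Theorem~\ref{th-main-symmetry}(i)). Consequently, collision configurations within $M_{(K_o,F_o)}$ are in bijection with the points $u\in S^2$ for which $\rho_{(K_o,F_o)}(u)=(u,g_2u,\dots,g_mu,f_{m+1},\dots,f_N)$ has at least two coincident entries. Statement (i) will then follow once we identify this subset with $\mathcal{F}[K]$; the finiteness claim is immediate from Remark~\ref{prop:finiteF}.

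To carry out this identification, I would exhaust the three possible kinds of coincidences in $\rho_{(K_o,F_o)}(u)$ and show that each occurs precisely when $u\in\mathcal{F}[K]$. First, $g_iu=g_ju$ with $i\neq j$ is equivalent to $g_j^{-1}g_i\in K_u\setminus\{e\}$, i.e.\ $u\in\mathcal{F}[K]$. Second, $g_iu=f_j$ forces $u=g_i^{-1}f_j\in K\cdot F=F$ by $K$-invariance of $F$, and since $F\subset\mathcal{F}[K]$ we again have $u\in\mathcal{F}[K]$; conversely, if $u\in F$ then $g_iu\in F$ and thus $g_iu=f_j$ for some $j$. Third, $f_i=f_j$ with $i\neq j$ is impossible since $F_o$ is an ordering of the set $F$. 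The only nontrivial direction left is the forward one: if $u\in\mathcal{F}[K]$ there exists $g\in K_u\setminus\{e\}$; writing $g=g_j$ with $j>1$ gives $g_ju=u=g_1u$, a collision. This closes the proof of (i).

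For part (ii), I fix $u\in\mathcal{F}[K]$ and determine the collision structure of $\rho_{(K_o,F_o)}(u)$ point by point. The first $m$ entries partition according to the orbit map $K\to Ku$, $g_i\mapsto g_iu$, whose fibres all have cardinality $|K_u|$; thus at each $w\in Ku$ exactly $|K_u|$ of the first $m$ entries coincide, and no collisions occur outside the orbit $Ku$. If $u\nin F$, then $Ku\cap F=\emptyset$ (since $F$ is $K$-invariant, $Ku\subset F$ would force $u\in F$), so none of the $f_j$'s contributes to a collision and each point of $Ku$ is a $|K_u|$-tuple collision. If instead $u\in F$, then $Ku\subset F$ and each $w\in Ku$ coincides with exactly one $f_j$ (the $f_j$'s being distinct), so the multiplicity at $w$ increases by one, giving a $(|K_u|+1)$-tuple collision.

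The argument is essentially a careful bookkeeping exercise; the only point that requires care, and which I anticipate as the main (minor) obstacle, is to rule out additional coincidences between the $K$-orbit of $u$ and the fixed set $F$ when $u\nin F$, which is what makes the alternative $|K_u|$ vs.\ $|K_u|+1$ sharp. This is handled cleanly by invoking the $K$-invariance of $F$ and the inclusion $F\subset\mathcal{F}[K]$ in the manner indicated above.
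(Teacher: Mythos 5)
Your proposal is correct and follows essentially the same route as the paper: both establish $M_{(K_o,F_o)}\cap\Delta=\rho_{(K_o,F_o)}(\mathcal{F}[K])$ by exhausting the possible coincidences among the entries of $\rho_{(K_o,F_o)}(u)$ (using the bijectivity of $\rho_{(K_o,F_o)}$ onto its image and the $K$-invariance of $F\subset\mathcal{F}[K]$), and both count multiplicities in part (ii) via the fibres of the orbit map $K\to Ku$, with the dichotomy $Ku\cap F=\emptyset$ versus $Ku\subset F$ deciding between $|K_u|$- and $(|K_u|+1)$-tuple collisions. No gaps.
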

\begin{proof}
(i) We will prove that 
\begin{equation}
\label{eq:reduced-collisions}
M_{(K_o,F_o)} \cap \Delta = \rho_{(K_o,F_o)}(\mathcal{F}[K]),
\end{equation}
where  $\rho_{(K_o,F_o)}:S^2\to M_{(K_o,F_o)}$ is defined by \eqref{eq:embedding}. This completes the proof since, with this specified range,  $\rho_{(K_o,F_o)}$ is a bijection. Let $u\in \mathcal{F}[K]$. Then, by definition of $ \mathcal{F}[K]$, there exists $g_j\neq e$ such that $g_ju=u$. This implies that the first and  $j$th entries of $ \rho_{(K_o,F_o)}(u)$ coincide and hence
  $\rho_{(K_o,F_o)}(u)\in \Delta$.  Considering that $\rho_{(K_o,F_o)}(S^2)=M_{(K_o,F_o)}$, it follows that $ \rho_{(K_o,F_o)}(\mathcal{F}[K])\subset M_{(K_o,F_o)} \cap \Delta$. Now let $v=(v_1, g_2v_1,\dots, g_mv_1, f_{m+1},\dots, f_N)\in M_{(K_o,F_o)} \cap \Delta$. Then one of the two following possibilities necessarily holds:
  \begin{enumerate}
\item[(a)] $g_iv_1=g_jv_1$ for some $i\neq j \in \{1,\dots, m\}$. In this case we have $v_1=g_i^{-1}g_jv_1$ implying that $v_1\in  \mathcal{F}[K]$.
 \item[(b)] $g_iv_1=f_k$ for some $i\in \{1,\dots, m\}$, $k\in \{m+1, \dots, N\}$. Then we may write $v_1=g_i^{-1}f_k$. Since $F\subset \mathcal{F}[K]$ is $K$-invariant, this implies that $v_1\in  \mathcal{F}[K]$.
\end{enumerate}
Thus, in any case, if  $v\in M_{(K_o,F_o)} \cap \Delta$ we conclude that  $v_1\in  \mathcal{F}[K]$. Considering that  we may write $v= \rho_{(K_o,F_o)}(v_1)$ we conclude that  
  $v\in  \rho_{(K_o,F_o)}(\mathcal{F}[K])$ and hence  $M_{(K_o,F_o)} \subset \Delta = \rho_{(K_o,F_o)}(\mathcal{F}[K])$.
  
(ii)   The first $m$ entries of $v=(u,g_2u,\dots, g_mu,f_{m+1}, \dots, f_N)$ belong to the orbit $K u$, so it is  clear that  collisions can only occur at  points in this orbit. Since $K u$ is isomorphic to $K/K_u$, it follows
that there are only $m/|K_u|$ distinct points among the first $m$ entries of $v$, and that each of them is repeated exactly $|K_u|$ times. Now, if $u\nin F$ then,
 since $K$ is $F$-invariant,
 the last $m+1$ entries of $v$ are distinct from the first $m$ entries of $v$ and we indeed have $|K_u|$-tuple collisions. On the other hand, 
if $u\in F$, then, again by $K$-invariance of $F$, each point in the orbit $K u$ appears exactly once within the list $(f_{m+1}, \dots, f_N)$ and we have ($|K_u|+1$)-tuple collisions. 
\end{proof}

\begin{proposition}
\label{prop:red-syst-collisions}
The reduced Hamiltonian $h_{(K,F)}$ given by \eqref{eq:red-Ham} and the reduced system \eqref{eq:reduced-system} are well-defined and smooth away
from the finite set $\mathcal{F}[K]$. Moreover, the reduced system \eqref{eq:reduced-system} is complete on $S^2\setminus \mathcal{F}[K]$.
\end{proposition}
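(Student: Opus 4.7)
The plan is to dispatch the smoothness claim by inspecting the singular locus of the explicit formula \eqref{eq:redHam-mainthm} with the help of Proposition \ref{prop:collision-description}(i), and then to obtain completeness essentially for free via the conjugacy of Theorem \ref{th-main-symmetry}(ii) together with Proposition \ref{prop:complete-flow}.

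\textbf{Smoothness.} First I would observe that \eqref{eq:redHam-mainthm} expresses $h_{(K,F)}$ as a finite sum of terms of the form $-\tfrac{m}{4}\ln|u-g_j u|^2$ (with $g_j\in K\setminus\{e\}$) and $-\tfrac{m}{2}\ln|u-f_j|^2$ (with $f_j\in F$). Each such term is smooth on $S^2$ except at the zero set of its argument; for the first type this zero set is $\mathrm{Fix}(g_j)=\{u\in S^2:g_ju=u\}\subset\mathcal{F}[K]$, and for the second it is the single point $\{f_j\}\subset F\subset\mathcal{F}[K]$. Thus the singular set of $h_{(K,F)}$ is contained in $\mathcal{F}[K]$. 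Conversely, by Proposition \ref{prop:collision-description}(i), every $u_*\in\mathcal{F}[K]$ arises in case (a) or (b) of the proof of that proposition, so at least one term in \eqref{eq:redHam-mainthm} becomes singular at $u_*$; the sum cannot cancel this singularity because all terms carry the same sign and $-\ln|\cdot|^2\to+\infty$. Hence $h_{(K,F)}$ is smooth precisely on $S^2\setminus\mathcal{F}[K]$, and the vector field $-\tfrac{1}{m}u\times\nabla_u h_{(K,F)}(u)$ inherits this smoothness.

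\textbf{Completeness.} By Theorem \ref{th-main-symmetry}(ii), the map $\rho_{(K_o,F_o)}$ restricts to a diffeomorphism $S^2\setminus\mathcal{F}[K]\to M_{(K_o,F_o)}\setminus\Delta$ that conjugates the reduced flow to the restriction of the flow of \eqref{eq:motion} to $M_{(K_o,F_o)}\setminus\Delta$, the latter equality using \eqref{eq:reduced-collisions}. Proposition \ref{prop:complete-flow} asserts that every solution of \eqref{eq:motion} starting in $M\setminus\Delta$ is defined for all $t\in\R$, and the invariance of $M_{(K_o,F_o)}$ under the flow (Theorem \ref{th-main-symmetry}(i)) guarantees that solutions starting in $M_{(K_o,F_o)}\setminus\Delta$ remain in $M_{(K_o,F_o)}\setminus\Delta$ for all time. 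Pulling this back through $\rho_{(K_o,F_o)}^{-1}$ yields completeness of the reduced flow on $S^2\setminus\mathcal{F}[K]$.

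\textbf{Obstacle.} Neither step is genuinely difficult, but the one point to be careful about is the assertion that the singularities of the individual logarithmic terms in \eqref{eq:redHam-mainthm} cannot conspire to cancel. This is not a priori obvious had there been different signs, but here all coefficients are strictly negative, so the blow-up is unambiguous and $h_{(K,F)}(u)\to+\infty$ as $u\to\mathcal{F}[K]$. This monotone blow-up would also permit a self-contained direct energy argument paralleling the proof of Proposition \ref{prop:complete-flow}: conservation of $h_{(K,F)}$ along trajectories prevents them from approaching $\mathcal{F}[K]$, and compactness of $S^2$ then implies no finite-time escape is possible. I would however present the conjugacy argument as the main line, since it is both shorter and keeps the parallel structure with the preceding results explicit.
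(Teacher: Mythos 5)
Your proof is correct. The completeness step is exactly the paper's argument: invariance of $M_{(K_o,F_o)}$ and the conjugacy of Theorem \ref{th-main-symmetry}(ii) transfer the global-in-time existence of Proposition \ref{prop:complete-flow} to the reduced flow on $S^2\setminus\mathcal{F}[K]$. For smoothness you take a slightly different route: the paper simply notes that, by \eqref{eq:reduced-collisions} from Proposition \ref{prop:collision-description}(i), the point $\rho_{(K_o,F_o)}(u)$ avoids the collision set $\Delta$ whenever $u\notin\mathcal{F}[K]$, so $h_{(K,F)}=H\circ\rho_{(K_o,F_o)}$ is a composition of smooth maps there; you instead inspect the explicit logarithmic formula \eqref{eq:redHam-mainthm} and locate its singular set inside $\mathcal{F}[K]$. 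Both are valid (the additive constant by which \eqref{eq:redHam-mainthm} differs from \eqref{eq:red-Ham} is finite and irrelevant for smoothness and for the vector field, though it is worth saying so explicitly, and well-definedness of $h_{(K,F)}$ as $H\circ\rho_{(K_o,F_o)}$ on $S^2\setminus\mathcal{F}[K]$ still rests on Proposition \ref{prop:collision-description}(i), which you do invoke). Your argument additionally yields the sharper converse — that every point of $\mathcal{F}[K]$ is genuinely singular, with $h_{(K,F)}\to+\infty$ there because all logarithmic terms carry the same sign — which the proposition does not claim but which the paper needs later (e.g.\ in item (iii) of Proposition \ref{prop:reduced-periodic} and in Proposition \ref{prop:regul}), so this observation is a useful bonus rather than a detour.
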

\begin{proof}
Equation \eqref{eq:reduced-collisions} implies that  away from  $\mathcal{F}[K]$ we may write $h_{(K,F)}$ as a composition of smooth maps: $h_{(K,F)}= H\circ  \rho_{(K_o,F_o)}$. So 
 $h_{(K,F)}$ is smooth on $S^2\setminus \mathcal{F}[K]$, and, therefore, so is the reduced system \eqref{eq:reduced-system}. The completeness of the reduced flow on $S^2\setminus \mathcal{F}[K]$ follows
 from Proposition \ref{prop:complete-flow} and item (ii) of Theorem \ref{th-main-symmetry}.
\end{proof}

In view of Proposition~\ref{prop:red-syst-collisions}, the reduced system  \eqref{eq:reduced-system} is smooth away from the finite set $\mathcal{F}[K]$.  We wish to define a regularisation that extends the reduced  system  \eqref{eq:reduced-system} to the   points in $\mathcal{F}[K]$ and yields a complete flow on $S^2$.   Since,
again by Proposition~\ref{prop:red-syst-collisions},  the flow of   \eqref{eq:reduced-system}  is 
 complete on $S^2\setminus \mathcal{F}[K]$, then the  points in $\mathcal{F}[K]$  have to be added as equilibrium points.

The regularisation that we propose is built with the \defn{regularised reduced Hamiltonian} that is the smooth function 
$\tilde{h}_{(K,F)}:S^{2}\rightarrow\R$ given by
\begin{equation}
\label{eq:regul-red-Ham}
\tilde{h}_{(K,F)}(u)=\exp(-2h_{(K,F)}(u)) =\prod_{j=2}^m\left  \vert u-g_j u \right \vert^m \prod_{j=m+1}^N \left \vert u-f_j \right \vert^{2m}. 
\end{equation}
Finally, the \defn{regularised reduced system} is the (smooth) Hamiltonian
vector field on $(S^{2},m\omega_{S^{2}})$ with Hamilton function $\tilde
{h}_{(K,F)}$, i.e.,
\begin{equation}
\dot{u}=-\frac{1}{m}u\times\nabla_{u}\tilde{h}_{(K,F)}%
(u).\label{eq:reg-reduced-system}%
\end{equation}
The relationship between the the reduced system \eqref{eq:reduced-system} and
its regularisation \eqref{eq:reg-reduced-system} is given next.

\begin{proposition} The following statements hold.
\label{prop:regul}
\begin{enumerate}
\item The curve $t\mapsto u(t)$ is a solution of the reduced system  \eqref{eq:reduced-system} if and only if  $t\mapsto u(a t)$ is a solution of the regularised  reduced system  \eqref{eq:reg-reduced-system} not
contained in $\mathcal{F}[K]$, where $a= -2\exp(-2  h_{(K,F)} (u(0)))$.
\item The points in $\mathcal{F}[K]$ are stable equilibria of the  regularised  reduced system  \eqref{eq:reg-reduced-system}.
\end{enumerate}
\end{proposition}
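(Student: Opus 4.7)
The plan is to handle the two items separately: part (i) by a time-rescaling argument that falls out of the chain rule, and part (ii) by using $\tilde h_{(K,F)}$ itself as a Lyapunov function once its smoothness on all of $S^2$ has been confirmed.

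For part (i), I would start by differentiating \eqref{eq:regul-red-Ham} to obtain $\nabla_u \tilde h_{(K,F)}(u) = -2\exp(-2 h_{(K,F)}(u))\,\nabla_u h_{(K,F)}(u)$ on $S^2\setminus\mathcal{F}[K]$, so that the regularised vector field \eqref{eq:reg-reduced-system} equals $-2\exp(-2 h_{(K,F)}(u))$ times the reduced vector field of \eqref{eq:reduced-system}. Since $h_{(K,F)}$ is a first integral of both systems, this scalar factor is constant along each orbit and equals the constant $a=-2\exp(-2h_{(K,F)}(u(0)))$ of the statement. The desired equivalence between $u(t)$ and $u(at)$ then drops out of a one-line chain rule computation. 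The condition that the regularised orbit is not contained in $\mathcal{F}[K]$ is precisely what keeps $h_{(K,F)}$ finite along it and $a\neq 0$, so the reparameterisation works in both directions.

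For part (ii), the key structural observation is that $m=|K|$ is even for every group of interest ($|\mathbb{D}_n|=2n$, $|\mathbb{T}|=12$, $|\mathbb{O}|=24$, $|\mathbb{I}|=60$). This means each factor $|u-g_j u|^m = (|u-g_j u|^2)^{m/2}$ and $|u-f_j|^{2m}$ in \eqref{eq:regul-red-Ham} is a polynomial in the ambient coordinates of $u\in\R^3$, hence $\tilde h_{(K,F)}$ is a smooth non-negative function on $S^2$ whose zero set is precisely $\mathcal{F}[K]$ (using $F\subset\mathcal{F}[K]$ for one inclusion). Consequently each $u_0\in\mathcal{F}[K]$ is a global minimum of the conserved quantity $\tilde h_{(K,F)}$, and, since $\mathcal{F}[K]$ is finite by Remark~\ref{prop:finiteF}, an isolated one. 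This gives both $\nabla_u \tilde h_{(K,F)}(u_0)=0$ (so $u_0$ is an equilibrium of \eqref{eq:reg-reduced-system}) and Lyapunov stability via a standard sublevel-set argument: for small $\varepsilon>0$ the component of $\{\tilde h_{(K,F)}<c\}$ containing $u_0$ is trapped in the ball of radius $\varepsilon$ around $u_0$, where $c$ is the minimum of $\tilde h_{(K,F)}$ on the corresponding sphere.

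The step I expect to require the most care is the smoothness of $\tilde h_{(K,F)}$ at points of $\mathcal{F}[K]$, since $|u-g_j u|$ is only Lipschitz where it vanishes. The parity of $m$ rescues the situation by turning each factor into an even power of a polynomial; without this observation the vector field \eqref{eq:reg-reduced-system} would not even make sense at the equilibria, and the entire regularisation scheme would break down. Once this parity is recorded, the rest reduces to routine computation.
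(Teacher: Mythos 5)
Your proof is correct and follows essentially the same route as the paper: the same gradient identity $\nabla_u\tilde h_{(K,F)}=-2\exp(-2h_{(K,F)})\nabla_u h_{(K,F)}$ plus conservation of energy gives the time rescaling in (i), and (ii) rests on $0$ being the minimum value of $\tilde h_{(K,F)}$ with $\mathcal{F}[K]$ as its zero level set. Your additional observation that $m=|K|$ is even, which makes the product formula for $\tilde h_{(K,F)}$ manifestly smooth at the collision points, is a worthwhile justification of the smoothness that the paper simply asserts when defining the regularised Hamiltonian.
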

\begin{proof}
(i) For  $u\in S^2\setminus \mathcal{F}[K]$ one computes
\begin{equation*}
\nabla_u\tilde h_{(K,F)}(u) =-2 \exp(-2 h_{(K,F)}(u))\, \nabla _u h_{(K,F)}(u).
\end{equation*}
A simple calculation that uses conservation of energy verifies the result. (ii) This follows from the fact that $0$ is the minimum value of   $\tilde h_{(K,F)}$ and $\mathcal{F}[K]$ is the corresponding level set. 

\end{proof}

Based on the above proposition, the points in $\mathcal{F}[K]$  will be called \defn{collision equilibria} of the reduced system  \eqref{eq:reduced-system}  and its regularisation   \eqref{eq:reg-reduced-system}. It is important to remember that these are always stable.
Other equilibrium points of these systems will be called  \defn{non-collision equilibria}.

\begin{remark} To finish this section, we note that one may also define a regularisation of the unreduced system \eqref{eq:motion} by considering the Hamiltonian system 
on $(M,\Omega)$ with \defn{regularised Hamiltonian} $\tilde{H}:M\rightarrow\R$ defined by
\begin{equation*}
\tilde{H}(v):=\exp(-2H(v))=\prod_{i<j}\left\vert v_{i}-v_{j}\right\vert,
\end{equation*}
with $v=(v_{1},\dots,v_{N})\in M$
 (recall that $\Omega$ is defined by \eqref{eq:Ham}). This leads to the regularised equations of motion on $M$
\begin{equation*}
\dot{v}_{j}=-v_{j}\times\nabla_{v_{j}}\tilde H(v)=2\sum_{i=1(i\neq j)}^{N}
v_{i}\times v_{j}, \qquad j=1,\dots, N.
\end{equation*}
Since the regularised Hamiltonian  $\tilde H$ is also  $\hat G$-invariant, a version of Theorem \ref{th-main-symmetry} about the (discrete) reduction of the above system to the regularised reduced system \eqref{eq:reg-reduced-system} holds, and such result is valid also at the collision configurations 
(compare with  Remark \ref{rmks:theorem-symmetry2}).
\end{remark}

\subsection{Qualitative properties of  $(K,F)$-symmetric solutions}
\label{sec:qualitative}

We are now ready to state several facts about the qualitative properties of the reduced system  \eqref{eq:reduced-system}.

\begin{proposition}
\label{prop:reduced-periodic}
The following statements hold about the dynamics of the reduced system  \eqref{eq:reduced-system}.
\begin{enumerate}
\item  The non-collision equilibrium points are in one-to-one correspondence with 
the critical points of $ h_{(K,F)}:S^2\setminus \mathcal{F}[K]\to \R$. Moreover, local maxima and minima are (Lyapunov) stable equilibrium points surrounded by a 1-parameter
family
of  periodic orbits that may be parametrised by their energy, and
saddle points are unstable equilibrium points. 
\item  All regular  level sets of the reduced Hamiltonian $ h_{(K,F)}$ are periodic orbits.
\item  There exists  a 1-parameter family of periodic orbits, parametrised by their energy,  around each collision equilibrium point $u_0\in \mathcal{F}[K]$. The energy of these
  periodic orbits approaches $\infty$ and the period approaches $0$ as the orbits approach $u_0$.
\end{enumerate}
\end{proposition}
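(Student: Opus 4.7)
The strategy is to treat \eqref{eq:reduced-system} as a $1$-degree of freedom Hamiltonian system on $(S^{2}\setminus\mathcal{F}[K],\,m\omega_{S^{2}})$, using only energy conservation and the compactness of $S^{2}$, and to pass to the regularised system \eqref{eq:reg-reduced-system} when analysing the collision equilibria in part (iii). I would start with part (ii). Fix a regular value $c$ of $h_{(K,F)}$ and let $\gamma$ be a connected component of $h_{(K,F)}^{-1}(c)$. Because $h_{(K,F)}(u)\to+\infty$ as $u\to\mathcal{F}[K]$, the level set stays bounded away from $\mathcal{F}[K]$ and is therefore closed in the compact sphere $S^{2}$, hence compact. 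Regularity makes $\gamma$ an embedded $1$-manifold, and being compact and connected without boundary, it is a topological circle. The Hamiltonian vector field is tangent to $\gamma$ by energy conservation, nowhere zero on $\gamma$ by regularity, and the flow is complete on $S^{2}\setminus\mathcal{F}[K]$ by Proposition \ref{prop:red-syst-collisions}; a nowhere-vanishing complete flow on a circle is periodic, so $\gamma$ is a periodic orbit.

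For part (i), the Hamiltonian vector field of $h_{(K,F)}$ vanishes at $u$ exactly when the tangential gradient of $h_{(K,F)}$ at $u$ vanishes, giving the bijection between non-collision equilibria and critical points. Lyapunov stability of a strict local extremum $u_{0}$ follows from conservation of energy: for a small open ball $B$ around $u_{0}$ containing no other critical point, the sets $B\cap\{h_{(K,F)}\leq c\}$ (for a minimum, with $c>h_{(K,F)}(u_{0})$ close to $h_{(K,F)}(u_{0})$) form a neighbourhood basis of flow-invariant compact sets; the symmetric argument handles maxima. The $1$-parameter family of periodic orbits then comes from part (ii) applied to regular values $c$ near $h_{(K,F)}(u_{0})$: the component of $h_{(K,F)}^{-1}(c)$ contained in $B$ is a periodic orbit depending continuously on $c$. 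For a non-degenerate saddle, the linearisation of the Hamiltonian vector field is conjugate to $J\,\mathrm{Hess}\,h_{(K,F)}(u_{0})$ in a Darboux chart; its eigenvalues are $\pm\sqrt{-\det\mathrm{Hess}\,h_{(K,F)}(u_{0})}$, real and of opposite signs since $\det\mathrm{Hess}\,h_{(K,F)}(u_{0})<0$ at a saddle, so $u_{0}$ is unstable.

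For part (iii), I would pass to the regularised system \eqref{eq:reg-reduced-system}. By Proposition \ref{prop:regul}, each $u_{0}\in\mathcal{F}[K]$ is a strict global minimum of $\tilde{h}_{(K,F)}$ and hence a stable equilibrium. Since $\mathcal{F}[K]$ is finite (Remark \ref{prop:finiteF}), $u_{0}$ is isolated, and the argument of parts (i)--(ii) applied to $\tilde h_{(K,F)}$ produces a $1$-parameter family $\{\tilde\gamma_{\varepsilon}\}$ of periodic orbits of \eqref{eq:reg-reduced-system} encircling $u_{0}$ with regularised periods $\tilde T_{\varepsilon}$. By Proposition \ref{prop:regul}, each $\tilde\gamma_{\varepsilon}\setminus\{u_{0}\}$ is traversed by a periodic orbit of the reduced system \eqref{eq:reduced-system} of period $|a|\tilde T_{\varepsilon}$, where $a=-2\exp(-2h_{(K,F)}(u(0)))$. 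As $\tilde\gamma_{\varepsilon}\to u_{0}$ one has $\tilde h_{(K,F)}(u(0))\to 0$, equivalently $h_{(K,F)}(u(0))\to+\infty$; hence $|a|\to 0$ and the period of the corresponding reduced orbit tends to $0$. The energy parametrisation transfers directly from the parametrisation of the $\tilde\gamma_{\varepsilon}$.

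The main obstacle I anticipate is handling possible \emph{degeneracies}: the explicit formula \eqref{eq:regul-red-Ham} shows that $\tilde h_{(K,F)}$ vanishes to high order at each $u_{0}\in\mathcal{F}[K]$, so the Morse Lemma is unavailable at the collision equilibria; some non-collision extrema of $h_{(K,F)}$ may likewise be degenerate. The $1$-parameter family structure and the continuous energy parametrisation in (i) and (iii) can nonetheless be recovered by a standard $2$-dimensional topological argument (strict isolated extrema of a smooth function on a surface admit disc-like sub- or superlevel neighbourhoods, whose regular-value boundaries are the required closed orbits). For (iii), the conclusion $|a|\tilde T_{\varepsilon}\to 0$ still holds because $|a|=2\tilde h_{(K,F)}(u(0))$ vanishes to a definite polynomial order on $\tilde\gamma_{\varepsilon}$, overpowering any polynomial divergence of $\tilde T_{\varepsilon}$ that a degenerate minimum of $\tilde h_{(K,F)}$ could produce.
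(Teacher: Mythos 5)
Your argument is correct and follows essentially the same strategy as the paper: equilibria and the stability of extrema come from conservation of $h_{(K,F)}$, periodicity of regular level sets comes from compactness, and the collision equilibria are handled through the regularised system via Proposition \ref{prop:regul}. The only methodological difference is in (ii): the paper passes to $\tilde h_{(K,F)}$ and invokes the Arnold--Liouville theorem on the compact manifold $S^2$, together with the one-to-one correspondence between regular level sets of $\tilde h_{(K,F)}$ and of $h_{(K,F)}$, whereas you argue directly on $S^2\setminus\mathcal{F}[K]$, using that a regular level set avoids $\mathcal{F}[K]$ (because $h_{(K,F)}$ blows up there), is therefore a compact embedded circle, and carries a nowhere-vanishing complete flow by Proposition \ref{prop:red-syst-collisions}; both routes are fine and yours is more self-contained. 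One caution about the closing clause of your part (iii): on the level $\{\tilde h_{(K,F)}=\varepsilon\}$ the rescaling factor is exactly $|a|=2\tilde h_{(K,F)}(u(0))=2\varepsilon$, i.e.\ it vanishes only to first order in the regularised energy, so it cannot ``overpower any polynomial divergence'' of $\tilde T_\varepsilon$ --- if $\tilde T_\varepsilon$ grew like $\varepsilon^{-2}$ your product would diverge. What actually saves the argument is that $\tilde h_{(K,F)}$ vanishes at $u_0$ to a finite order $k$ (each vanishing factor in \eqref{eq:regul-red-Ham} is comparable to the distance to $u_0$), so the level $\varepsilon$ lies at distance of order $\varepsilon^{1/k}$ from $u_0$ where $|\nabla\tilde h_{(K,F)}|$ is of order $\varepsilon^{(k-1)/k}$, giving $\tilde T_\varepsilon=O(\varepsilon^{(2-k)/k})=o(\varepsilon^{-1})$ and hence $|a|\tilde T_\varepsilon\to0$. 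The paper's own proof of (iii) is a one-line appeal to Proposition \ref{prop:regul} and the blow-up of $h_{(K,F)}$, so you are not less rigorous than the source, but the justification as you phrased it is not a valid principle and should be replaced by this order-of-vanishing estimate.
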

The above proposition exhausts the possibilities of motion of the reduced system except for the possible existence of heteroclinic/homoclinic solutions emanating from the
unstable non-collision equilibrium points. 

\begin{proof}
(i) Since the reduced Hamiltonian $ h_{(K,F)}$ is a first integral, the result is standard for Hamiltonian systems on a symplectic manifold of dimension 2.

(ii) The regularised reduced system \eqref{eq:reg-reduced-system}  is an integrable, 1-degree of freedom, Hamiltonian system on the compact symplectic manifold $S^2$. By the 
Arnold-Liouville Theorem, all regular level sets of the regularised reduced Hamiltonian $\tilde h_{(K,F)}$ are periodic orbits. However, it is a simple exercise to show that the regular level sets of  $\tilde h_{(K,F)}$ are in one-to-one correspondence
with the regular level sets of  $ h_{(K,F)}$. 

(iii) This follows from Proposition~\ref{prop:regul} and the fact that $ h_{(K,F)} (u_k)\to\infty$ for any sequence $\{u_k\}_{k\in \mathbb{N}}$  
of points in $S^2\setminus  \mathcal{F}[K]$ that approaches $\mathcal{F}[K]$ as 
as $k\to \infty$.
\end{proof}

Proposition \ref{prop:reduced-periodic} may be combined   with Theorem~\ref{th-main-symmetry}  to prove the existence of several periodic 
solutions of the system \eqref{eq:motion} describing the full dynamics of the $N$-vortex problem on the sphere. The following corollary gives two particular instances.
The first of these will be used in the sections ahead to prove the existence of nonlinear oscillations in the vicinity of the platonic solid equilibrium configurations.

\begin{corollary}
\label{cor:periodic-orbits-general}
Let $K_o$ and $F_o$ be orderings of $K$ and $F$.
\begin{enumerate}
\item If $u_0\in S^2\setminus \mathcal{F}[K]$ is a local maximum or minimum of the reduced Hamiltonian  $ h_{(K,F)}$ given by \eqref{eq:red-Ham}, then 
$v_0=\rho_{(K_o,F_o)}(u_0)$ is an equilibrium  of \eqref{eq:motion}, and there exists a 1-parameter family of periodic solutions $v_h(t)$ of  \eqref{eq:motion},
emanating from $v_0$, and parametrised by their energy $h$. Moreover, these solutions  are  of the form
$v_h(t)=\rho_{(K_o,F_o)}(u_h(t))$,
where $u_h(t)$ is  the  family of periodic solutions of the reduced system \eqref{eq:reduced-system}  emanating from $u_0$ described in item (i) of Proposition \ref{prop:reduced-periodic}.

\item If $u_0\in\mathcal{F}[K]$, then  $v_0=\rho_{(K_o,F_o)}(u_0)$ is a collision configuration (described in detail in Proposition~\ref{prop:collision-description}) and there exists a 1-parameter
family of periodic solutions of \eqref{eq:motion}, which may be parametrised by their energy $h$, which approaches $v_0$ as $h\to \infty$, and whose period tends to zero
in this limit. These solutions  have the form $v_h(t)=\rho_{(K_o,F_o)}(u_h(t))$, where $u_h(t)$ is the 1-parameter family of periodic solutions of  \eqref{eq:reduced-system}
described in item (iii) of Proposition \ref{prop:reduced-periodic}.
\end{enumerate}

\end{corollary}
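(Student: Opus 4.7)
The plan is to combine the reduction result of Theorem \ref{th-main-symmetry} with the qualitative description of the reduced dynamics in Proposition \ref{prop:reduced-periodic}. Both items of the corollary are essentially pushforward statements: any qualitative feature of the reduced dynamics on $S^{2}$ (equilibria, periodic orbits, periods and energies) transfers to the dynamics on the invariant symplectic submanifold $M_{(K_o,F_o)}\subset M$ via the symplectomorphism $\rho_{(K_o,F_o)}$.

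For part (i), I would first apply Proposition \ref{prop:reduced-periodic}(i) to the local extremum $u_0$ of $h_{(K,F)}$ to conclude that $u_0$ is a Lyapunov stable, non-collision equilibrium of the reduced system \eqref{eq:reduced-system}, surrounded by a $1$-parameter family $u_h(t)$ of periodic orbits parametrised by their reduced energy $h$. Theorem \ref{th-main-symmetry}(ii) then asserts that $t\mapsto u(t)$ solves \eqref{eq:reduced-system} if and only if $t\mapsto\rho_{(K_o,F_o)}(u(t))$ solves \eqref{eq:motion}. Applied to the constant solution $u(t)\equiv u_0$ this shows $v_0=\rho_{(K_o,F_o)}(u_0)$ is an equilibrium of \eqref{eq:motion}, and applied to each $u_h(t)$ it yields a periodic solution $v_h(t)=\rho_{(K_o,F_o)}(u_h(t))$ of \eqref{eq:motion} with the same period. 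The energy parametrisation carries over via the very definition \eqref{eq:red-Ham} of the reduced Hamiltonian, since $H(v_h(t))=H(\rho_{(K_o,F_o)}(u_h(t)))=h_{(K,F)}(u_h(t))=h$.

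For part (ii), Proposition \ref{prop:collision-description}(ii) immediately identifies $v_0=\rho_{(K_o,F_o)}(u_0)$ as a $(K,F)$-symmetric collision configuration of \eqref{eq:motion} of the type described there. Proposition \ref{prop:reduced-periodic}(iii) then supplies a $1$-parameter family of periodic orbits $u_h(t)$ of the reduced system, entirely contained in $S^{2}\setminus\mathcal{F}[K]$, whose energy $h_{(K,F)}(u_h)=h$ tends to $\infty$ and whose period tends to $0$ as $u_h(t)\to u_0$. Transporting these orbits through $\rho_{(K_o,F_o)}$ and invoking once more the flow conjugacy of Theorem \ref{th-main-symmetry}(ii) produces the advertised family $v_h(t)=\rho_{(K_o,F_o)}(u_h(t))$ of periodic solutions of \eqref{eq:motion}, with the same periods and energies, approaching $v_0$ as $h\to\infty$.

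There is no genuine obstacle: the content of the corollary is precisely that the symplectomorphism $\rho_{(K_o,F_o)}$ transports the dynamical structure given by Proposition \ref{prop:reduced-periodic} from $S^{2}$ to $M_{(K_o,F_o)}$. The only minor subtlety worth flagging is that in part (ii) the periodic solutions $v_h(t)$ never actually reach the collision $v_0$, because the reduced orbits $u_h(t)$ lie in the open set $S^{2}\setminus\mathcal{F}[K]$ on which Proposition \ref{prop:complete-flow} (transferred through the conjugacy) guarantees completeness of the flow.
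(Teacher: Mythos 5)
Your argument is correct and is exactly the route the paper takes: the corollary is presented there as an immediate consequence of combining the flow conjugacy of Theorem \ref{th-main-symmetry}(ii) with Proposition \ref{prop:reduced-periodic} (and Proposition \ref{prop:collision-description} for the collision case), which is precisely what you do. Your additional remarks on the energy parametrisation via \eqref{eq:red-Ham} and on the orbits in part (ii) staying in $S^{2}\setminus\mathcal{F}[K]$ are accurate and consistent with the paper.
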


\section{$\mathbb{D}_n$-symmetric solutions of $N=2n$ vortices (with no fixed vortices)}
\label{sec:Dn}

We consider the dihedral subgroup $K=\mathbb{D}_n<\SO(3)$, $n\geq 2$,  generated by the matrices 
\begin{equation}
\label{eq:AB-dihedral}
A=\begin{pmatrix} \cos \zeta &  -\sin \zeta& 0 \\ \sin \zeta & \cos  \zeta & 0 \\ 0 & 0 & 1  \end{pmatrix} \qquad \mbox{and} \qquad 
B=\begin{pmatrix}  1 & 0 & 0 \\ 0 & -1  & 0 \\ 0 & 0 & -1  \end{pmatrix},
\end{equation}
where here, and throughout,  we denote  $\zeta := 2\pi /n$. The  
 set $\mathcal{F}[\mathbb{D}_n]$ is given by
\begin{equation}
\label{eq:colset-Dn}
\mathcal{F}[\mathbb{D}_n]= \left \{ \left (\cos \left  ( (j-1)\frac{\zeta}{2} \right ) ,\sin \left ( (j-1)\frac{\zeta}{2}   \right ), 0  \right ) \, : \, j=1,\dots, 2n \right  \} \cup \left \{ \left (0,0,\pm 1 \right) \right\}.
\end{equation}

\subsection{ Classification of $\mathbb{D}_n$-symmetric equilibrium configurations of $N=2n$ vortices}

We consider $K=\mathbb{D}_n$ and $F=\emptyset$ so $N=2n$ and analyse the  reduced system  \eqref{eq:reduced-system}  in detail.
We start by noting that, in view of item (iii) of  Theorem \ref{th-main-symmetry} and Table \eqref{eq:table-normalizers},
 the system is $\mathbb{O}$-equivariant if $n=2$ and $\mathbb{D}_{2n}$-equivariant for $n\geq 3$.
The following theorem gives the full classification of the collision and non-collision equilibria of the reduced system \eqref{eq:reduced-system} and describes
their stability. It also indicates the correspondence of these equilibria with the equilibrium configurations  of the
equations of motion \eqref{eq:motion}. 

In the statement of the theorem, and for the rest of the paper, $T_k$ and $U_k$ respectively 
denote the Chebyshev polynomials of the first and second kind of degree $k$. To simplify the presentation, the proof is postponed to Section 
\ref{sec:dihedral-proofs} that is devoted to it.

\begin{theorem}
\label{th:dihedral}
Let  $K=\mathbb{D}_n$,  $F=\emptyset$, $n\geq 2$ and $N=2n$. 
The classification and stability of the  equilibrium points of the reduced system  \eqref{eq:reduced-system} is 
as follows.
\begin{enumerate}
\item The only non-collision equilibria of  \eqref{eq:reduced-system} are: 
\begin{enumerate}

\item The   \defn{anti-prism  equilibrium configurations}  at the $4n$ points given by:
 \begin{equation*}
A_j^\pm:=\left (  \sqrt{1-z_a^2} \, \cos \left ( (2j-1)\zeta/4  \right ) \, , \,  \sqrt{1-z_a^2} \, \sin \left ( (2j-1)\zeta/4  \right )\, ,  \, \pm z_a \right ), \qquad j=1, \dots, 2n,
\end{equation*}
where $z_a=z_a(n)\in (0,1)$ is uniquely determined by  $z_a^2=1-1/\lambda^2_a$ where $\lambda_a=\lambda_a(n)$ 
is the unique root greater than $1$ of the polynomial 
\begin{equation*}
\mathcal{P}_a(\lambda):=(3n-1)T_{2n}(\lambda)-nU_{2n}(\lambda)+2n-1.
\end{equation*}

These are {\em stable} equilibria of  \eqref{eq:reduced-system}
which  correspond to equilibrium configurations of \eqref{eq:motion} where the $N=2n$ vortices occupy  the 
vertices of the $S^2$-inscribed  $n$-gon anti-prism of height $2z_a$    (see Fig.\ref{F:anti-prism}). 

%

\item The   \defn{prism  equilibrium configurations}  at  the $4n$ points given by:
 \begin{equation*}
P^\pm_j:=\left (  \sqrt{1-z_p^2} \, \cos \left ( (j-1) \zeta/2  \right ) \, , \,  \sqrt{1-z_p^2} \, \sin \left ( (j-1)  \zeta/2  \right )\, ,  \, \pm z_p \right ), \qquad j=1, \dots, 2n,
\end{equation*}
where $z_p=z_p(n)\in (0,1)$ is uniquely determined by  $z_p^2=1-1/\lambda_p^2$ where $\lambda_p=\lambda_p(n)$ 
is the unique root greater than $1$ of the polynomial
\begin{equation*}
\mathcal{P}_p(\lambda):= (3n-1)T_{2n}(\lambda)-nU_{2n}(\lambda)-2n+1.
\end{equation*}
These are {\em unstable} equilibria  (saddle points)  of   \eqref{eq:reduced-system} 
which  correspond to equilibrium configurations of \eqref{eq:motion} where the $N=2n$ vortices occupy  the 
vertices of the $S^2$-inscribed  $n$-gon prism of height $2z_p$    (see Fig.\ref{F:prism}). 

%
%
\item The  \defn{polygon equilibrium configurations} at  the $2n$ points given by:
\begin{equation*}
 Q_j:=\left (  \cos \left ( (2j-1) \zeta/4  \right ), \sin \left ( (2j-1) \zeta/4  \right ), 0 \right ), \qquad j=1, \dots, 2n.
\end{equation*}
These are {\em unstable} equilibria  (saddle points)  of   \eqref{eq:reduced-system} 
which  correspond to equilibrium configurations of \eqref{eq:motion} where the $N=2n$ vortices occupy  the vertices of a regular $2n$-gon at the equator   (see
Fig.\ref{F:polygon}).
\end{enumerate}

\item The only  collision equilibria of (the regularisation of)  \eqref{eq:reduced-system} are:
\begin{enumerate}
\item The \defn{polar collisions}  at the north and south poles $(0,0,\pm 1)$. These correspond to collision configurations of \eqref{eq:motion} having
two simultaneous $n$-tuple collisions at antipodal points (see
Fig.\ref{F:polar-coll}).
\item The \defn{polygonal collisions} at  the $2n$ points given by:
\begin{equation*}
 C_j:=\left (  \cos \left ( (j-1) \zeta/2  \right ), \sin \left ( (j-1)  \zeta/2  \right ), 0 \right ), \qquad j=1, \dots, 2n.
\end{equation*}
 These correspond to collision configurations of \eqref{eq:motion} having
$n$  simultaneous binary collisions at a regular $n$-gon at the equator   (see
Fig.\ref{F:polygon-coll}).  
\end{enumerate}
All collision configurations are stable equilibria of  (the regularisation of)  \eqref{eq:reduced-system}.
\end{enumerate}
\end{theorem}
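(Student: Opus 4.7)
The plan is to work in cylindrical coordinates $u=(\sqrt{1-z^2}\cos\phi,\sqrt{1-z^2}\sin\phi,z)$ on $S^2$, in which the symplectic form $n\omega_{S^2}$ of Theorem~\ref{th-main-symmetry}(ii) is proportional to $dz\wedge d\phi$, so the equilibria of~\eqref{eq:reduced-system} with $K=\mathbb{D}_n$, $F=\emptyset$, are the simultaneous zeros of $\partial_\phi h$ and $\partial_z h$, where $h=h_{(\mathbb{D}_n,\emptyset)}$. The generators~\eqref{eq:AB-dihedral} give the explicit squared distances
\begin{equation*}
|u-A^k u|^2=4(1-z^2)\sin^2(k\zeta/2), \qquad |u-BA^k u|^2=4\bigl[\sin^2(\phi+k\zeta/2)+z^2\cos^2(\phi+k\zeta/2)\bigr],
\end{equation*}
which substituted into~\eqref{eq:redHam-mainthm} express $h(z,\phi)$ as a sum of logarithms in the two variables.

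First, I would solve the azimuthal equation in closed form. Logarithmic differentiation of the identity $\prod_{k=0}^{n-1}(\mu-\cos(\psi+k\zeta))=2^{1-n}(T_n(\mu)-\cos(n\psi))$, with $\mu=(1+z^2)/(1-z^2)>1$ and $\psi=2\phi$, collapses $\partial_\phi h$ to a nonzero scalar multiple of $\sin(2n\phi)/(T_n(\mu)-\cos(2n\phi))$, whose denominator never vanishes on $S^2\setminus\mathcal{F}[\mathbb{D}_n]$. Hence $\partial_\phi h=0$ precisely on the $4n$ half-meridians $\phi\in(\zeta/4)\mathbb{Z}$, which by the $\mathbb{D}_{2n}$-equivariance from Theorem~\ref{th-main-symmetry}(iii) and Table~\eqref{eq:table-normalizers} split into the \emph{prism meridians} $\phi=k\zeta/2$ and the \emph{anti-prism meridians} $\phi=(2k+1)\zeta/4$. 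Restricted to either family, the vertical equation $\partial_z h=0$ is a one-variable equation in $z$; after the substitution $\lambda=1/\sqrt{1-z^2}>1$, repeated use of the partial-fraction identity $\sum_{k=0}^{n-1}(\mu-\cos(k\zeta))^{-1}=nU_{n-1}(\mu)/(T_n(\mu)-1)$, and the double-angle relations $T_n(2\lambda^2-1)=T_{2n}(\lambda)$ and $\lambda U_{2n-1}(\lambda)=U_{2n}(\lambda)-T_{2n}(\lambda)$, elementary algebra reduces the vertical equation to $\mathcal{P}_p(\lambda)=0$ on the prism meridians and, with $T_n-1$ replaced throughout by $T_n+1$, to $\mathcal{P}_a(\lambda)=0$ on the anti-prism meridians. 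The trivial solution $z=0$ of the vertical equation recovers the equatorial critical points: on the prism meridians these are the $C_j$, which lie in $\mathcal{F}[\mathbb{D}_n]$ and are the polygonal collisions by Proposition~\ref{prop:collision-description}; on the anti-prism meridians they are the non-collision equilibria $Q_j$.

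Existence of a root $\lambda>1$ of each polynomial follows from the intermediate value theorem using $\mathcal{P}_p(1)=-2n^2<0$, $\mathcal{P}_a(1)=-2(n-1)^2\leq 0$, and the strictly positive leading coefficient $(n-1)2^{2n-1}$ for $n\geq 2$; uniqueness is established by a monotonicity estimate on the combination of $T_{2n}$ and $U_{2n}$ that arises (both are positive and strictly increasing on $(1,\infty)$). The stability assertions are obtained from the Hessian: the involution $z\mapsto -z$ (an element of $\mathbb{D}_{2n}$) preserves each meridian and forces $\partial_{z\phi}^2h=0$ at every equilibrium in question, so the Hessian is diagonal. Differentiating the closed form of $\partial_\phi h$ gives $\partial_\phi^2 h>0$ at the anti-prism and polygon points and $\partial_\phi^2 h<0$ at the prism points, while a direct computation from the explicit Hamiltonian yields $\partial_z^2 h>0$ at the anti-prism and prism points and $\partial_z^2 h<0$ at the polygon points, producing a definite Hessian (hence stable equilibrium) at the anti-prism and an indefinite Hessian (hence saddle) at the prism and polygon, as claimed. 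The collision classification in item~(ii) is immediate from Proposition~\ref{prop:collision-description}(ii) combined with~\eqref{eq:colset-Dn}: the $\mathbb{D}_n$-isotropy of a pole has order $n$, giving two $n$-tuple polar collisions, and the isotropy of each $C_j$ has order $2$, giving $n$ simultaneous binary collisions on an equatorial regular $n$-gon. The main technical obstacle I anticipate is uniqueness of the root of $\mathcal{P}_a$, $\mathcal{P}_p$ in $(1,\infty)$, uniformly in $n\geq 2$: the Chebyshev bookkeeping above is essentially routine, but ruling out further sign changes will require a careful derivative bound exploiting $T_{2n}'=2nU_{2n-1}$ together with an estimate on $U_{2n}'$, or equivalently a convexity argument for the restriction of $h$ to each meridian as a function of $\lambda$.
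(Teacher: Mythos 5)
Your plan follows essentially the same route as the paper's proof: cylindrical coordinates, the Chebyshev product identity to factor the azimuthal derivative as a positive function times $\sin(2n\phi)$, restriction to the meridians $\phi=0$ and $\phi=\zeta/4$, reduction of the meridian equation to $\mathcal{P}_p(\lambda)=0$ and $\mathcal{P}_a(\lambda)=0$ via $T_n(2\lambda^2-1)=T_{2n}(\lambda)$ and $\lambda U_{2n-1}=U_{2n}-T_{2n}$, a diagonal-Hessian sign analysis, and Proposition~\ref{prop:collision-description} together with \eqref{eq:colset-Dn} for the collision equilibria. Your variants are sound and in places slightly slicker: you apply the product identity with $n$ factors in $\mu=(1+z^2)/(1-z^2)$ instead of the paper's $2n$ factors in $\sqrt{1-z^2}$ (Lemma~\ref{lemma:simpHam-Dn}), and you get existence of the roots by the intermediate value theorem from $\mathcal{P}_p(1)=-2n^2$, $\mathcal{P}_a(1)=-2(n-1)^2$ and the leading coefficient $(n-1)2^{2n-1}$, where the paper instead controls the endpoint behaviour of $h$ on each meridian, using the trigonometric identity of Lemma~\ref{lemma:trig-identity} to get $\partial_r h_n(1,\zeta/4)=n(n-1)^2>0$.

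The genuine gap is the one you flag yourself, and it is wider than just uniqueness of the root: besides ruling out further sign changes of $\mathcal{P}_a,\mathcal{P}_p$ on $(1,\infty)$ (monotonicity of ``the combination of $T_{2n}$ and $U_{2n}$'' does not work as stated, since $\mathcal{P}$ is a difference of two increasing functions), you also assert without proof that $\partial_z^2h>0$ at the prism and anti-prism points, which does not follow from anything you have established. The paper closes both at once with a one-line observation: writing $h$ along a meridian in the variable $r=\sqrt{1-z^2}$ as in \eqref{eq:hcyl}, one has $\partial_r^2h_n(r,\theta)>0$ for all $r\in(0,1)$ (see \eqref{eq:ineq-sign-sec-derivative-hn-r}), i.e.\ strict convexity in $r$. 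Since $z\mapsto r$ is a monotone reparametrisation, this gives at most one critical point with $z\in(0,1)$ on each meridian (hence uniqueness of the root $\lambda>1$), and, by the chain rule, $\partial_z^2h=(\partial_r^2h)(\partial_z r)^2>0$ at $(z_p,0)$ and $(z_a,\zeta/4)$, while $\partial_z^2h(0,\zeta/4)=-\partial_rh_n(1,\zeta/4)=-n(n-1)^2<0$. So your proposed ``convexity of $h$ restricted to each meridian'' is exactly the right fix, but carry it out in $r$ (equivalently $1/\lambda$), where it is immediate; convexity in $\lambda$ is not obvious. One smaller repair: the vanishing of the mixed partial at the prism and anti-prism points is not forced by the involution $z\mapsto-z$ alone (those points have $z\neq0$); it follows either from the reflection symmetry $\phi\mapsto-\phi$ of $h$, or, more directly in your own setup, from the fact that $\partial_\phi h$ carries the factor $\sin(2n\phi)$, so that $\partial_z\partial_\phi h$ also vanishes on the meridians — which is precisely the paper's argument.
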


\begin{figure}[ht]
\begin{subfigure}{.3\textwidth}
  \centering
  \includegraphics[width=.8\linewidth]{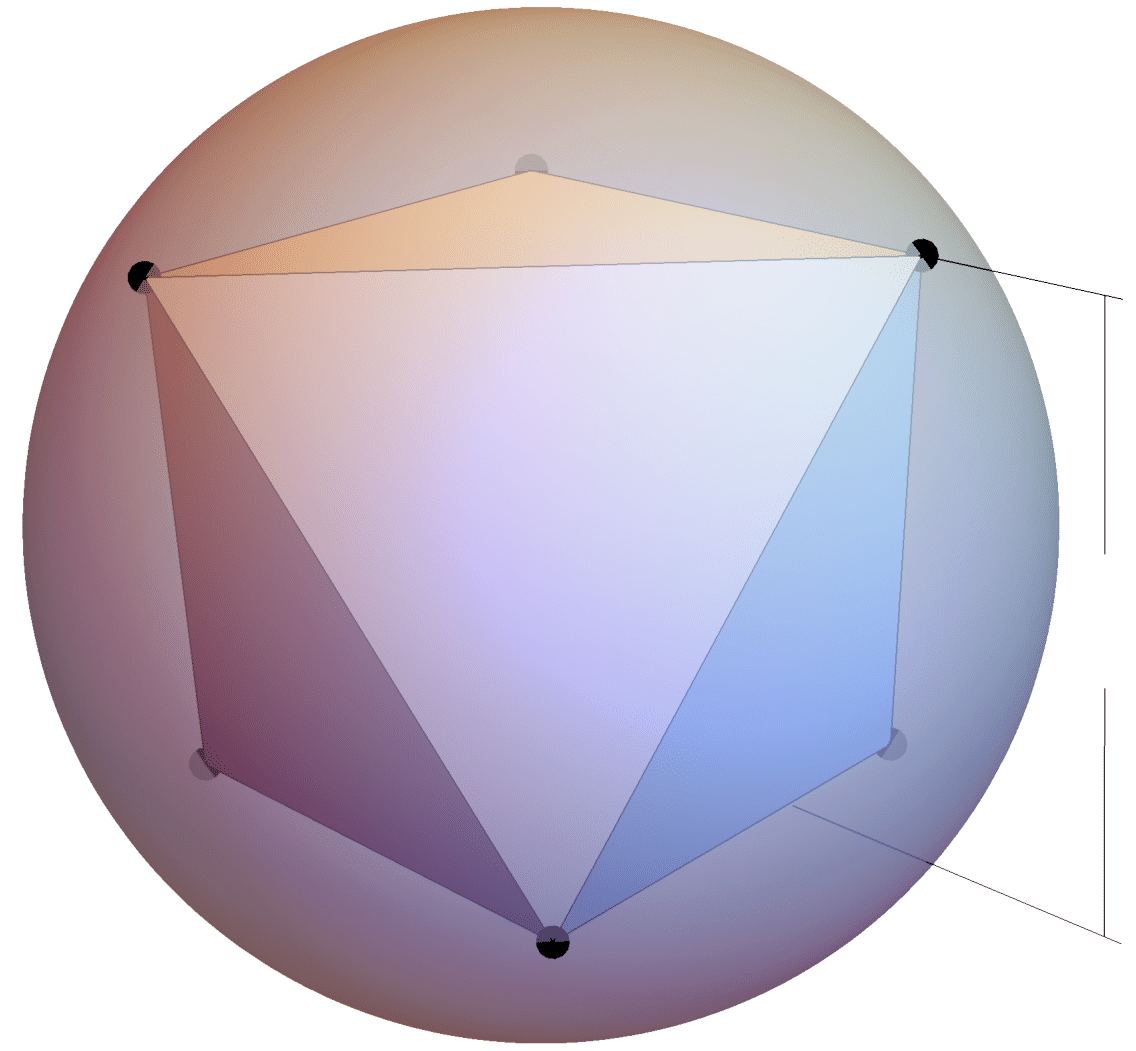}  
   \put (-6,38) {\small{$2z_a$}}
  \caption{Anti-prism equilibrium.}
  \label{F:anti-prism}
\end{subfigure}
\quad
\begin{subfigure}{.3\textwidth}
  \centering
  \includegraphics[width=.8\linewidth]{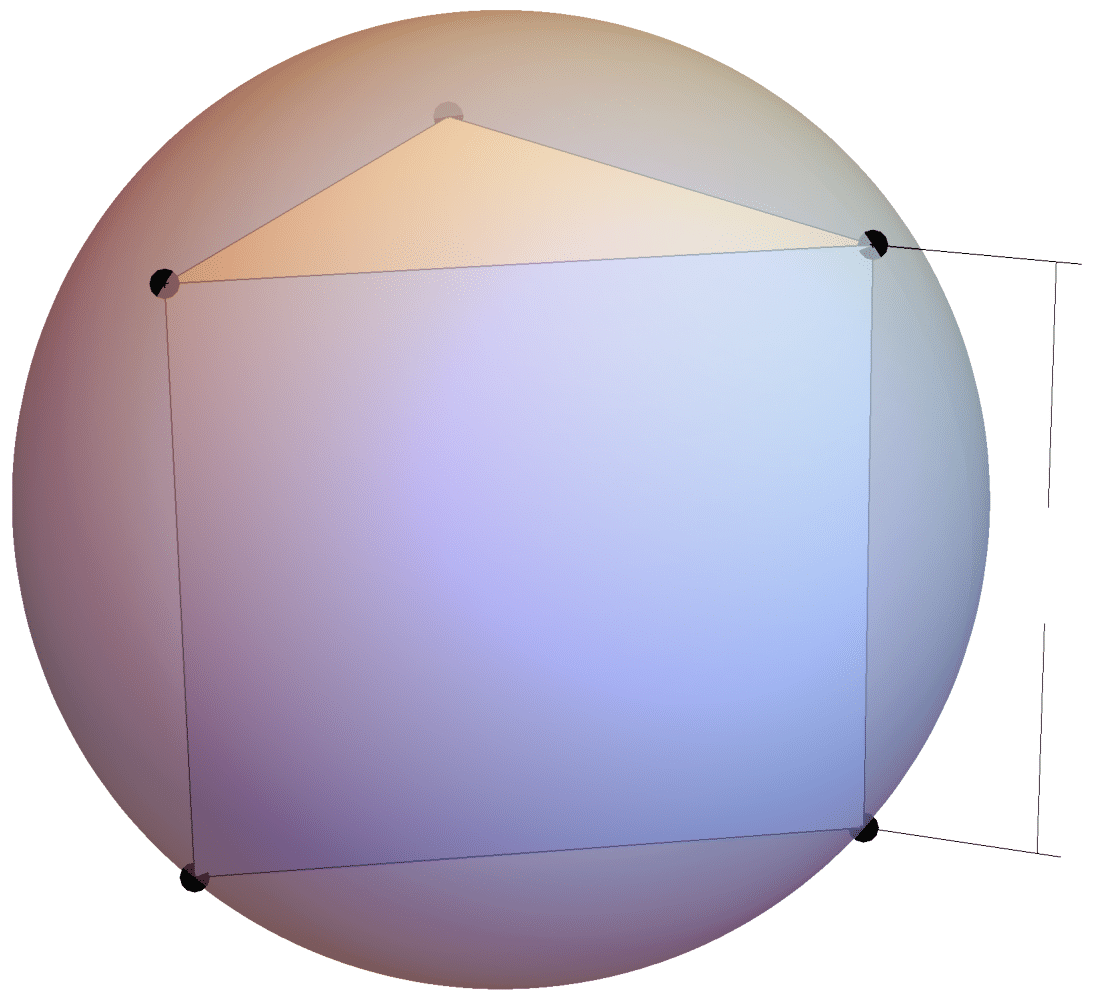}  
   \put (-8,39) {\small{$2z_p$}}
  \caption{Prism equilibrium.}
  \label{F:prism}
\end{subfigure}
\quad
\begin{subfigure}{.3\textwidth}
  \centering
  \includegraphics[width=.7\linewidth]{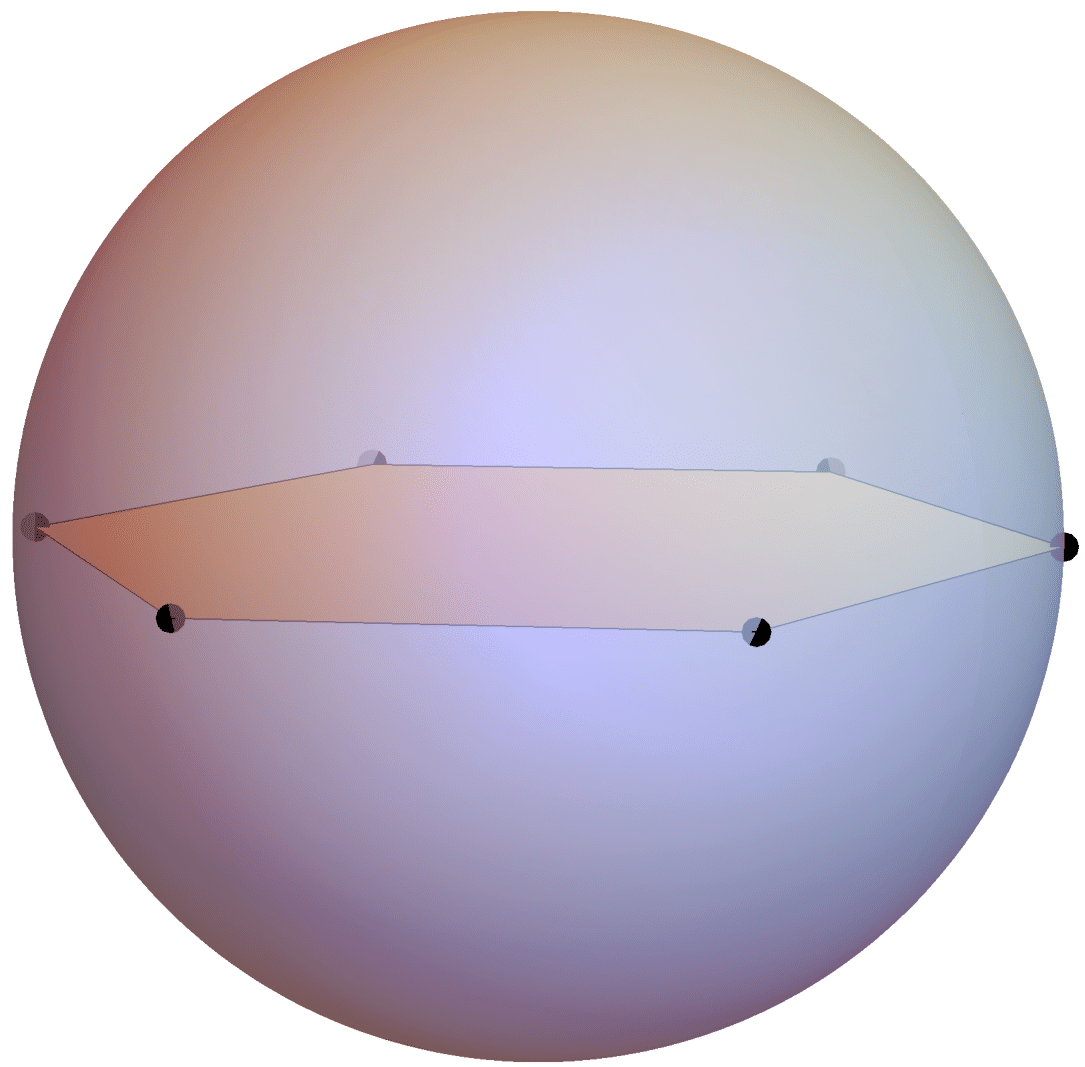}  
  \caption{Polygon equilibrium.}
  \label{F:polygon}
\end{subfigure} \\
\begin{subfigure}{.5\textwidth}
  \centering
  \includegraphics[width=.4\linewidth]{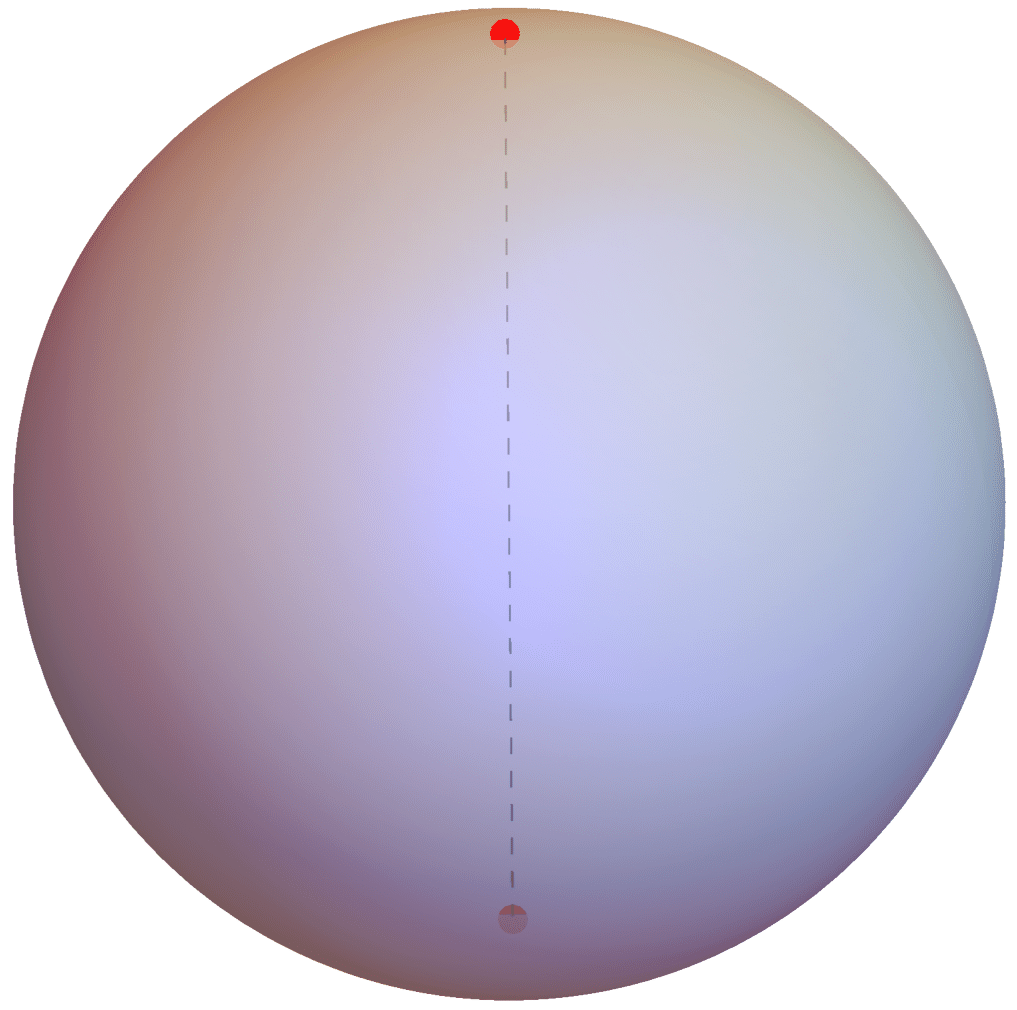}  
  \caption{Polar collision ($n$-tuple collision  at \\ antipodal points).}
  \label{F:polar-coll}
\end{subfigure} 
\begin{subfigure}{.5\textwidth}
  \centering
  \includegraphics[width=.4\linewidth]{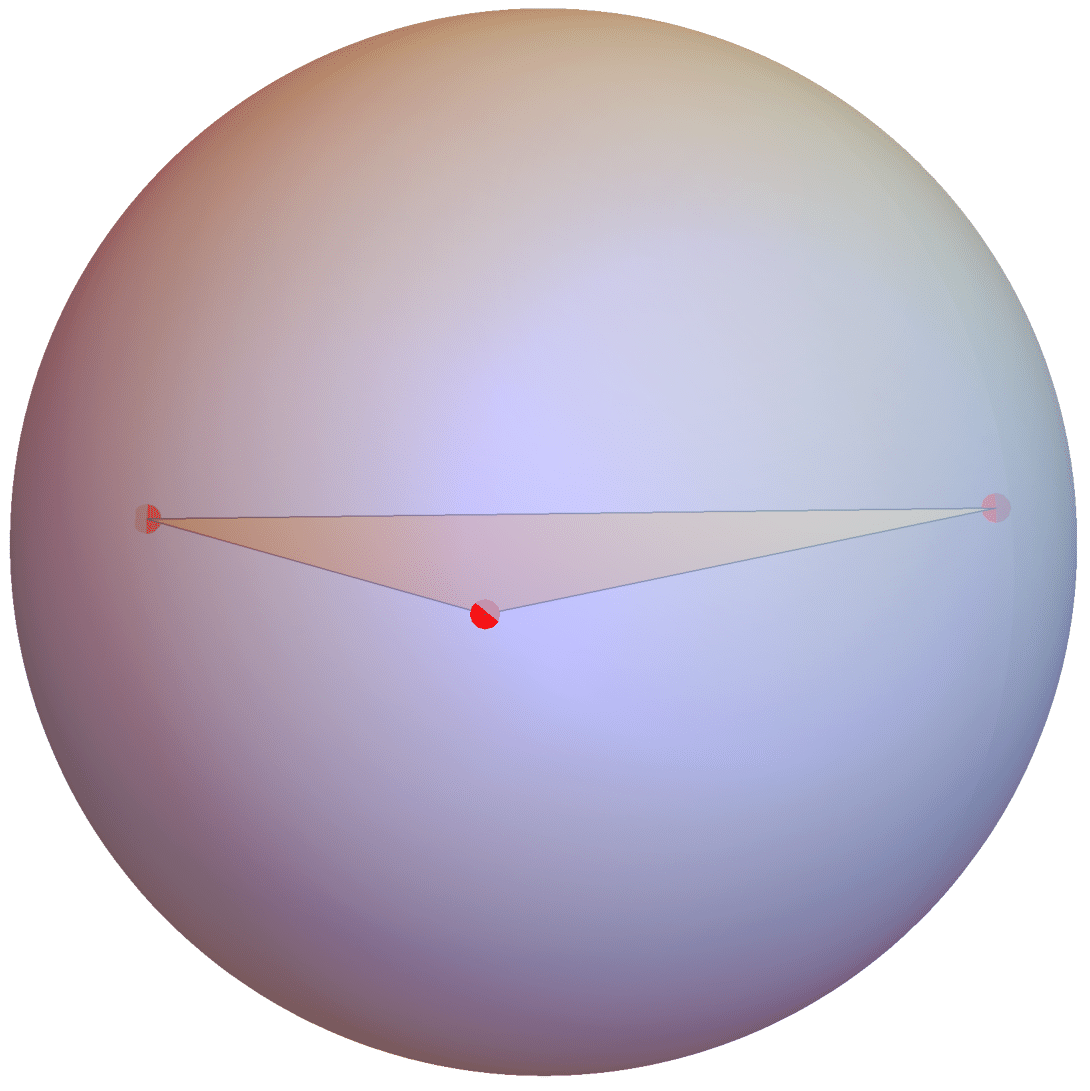}
  \caption{Polygonal collision (binary collisions at \\ vertices of a regular $n$-gon).}
  \label{F:polygon-coll}
\end{subfigure} 
\caption{Non-collision and collision equilibrium configurations described in Theorem \ref{th:dihedral} for $n=3$ and $N=2n=6$.}
\label{fig:dihedral-equilibria}
\end{figure}

In Tables \eqref{eq:table-D_n-za-roots} and \eqref{eq:table-D_n-zp-roots} below we give explicit expressions for the polynomials $\mathcal{P}_a(\lambda)$,  $\mathcal{P}_p(\lambda)$ 
and the numbers $\lambda_a$,  $z_a$, $\lambda_p$ and  $z_p$, appearing in the statement of the theorem for $n=2, \dots, 5$.

\begin{equation}
\label{eq:table-D_n-za-roots}
\small
\begin{array}
[c]{|c|c|c|c|}%
\hline
n  & \mathcal{P}_a(\lambda)  &\lambda_a &z_a \\ \hline
2  &8 \lambda ^4-16 \lambda ^2+6 &{\small \sqrt{\frac{3}{2}} }& {\small \frac{1}{\sqrt{3}} } \\ \hline
3  & 64 \lambda ^6-144 \lambda ^4+72 \lambda ^2 &\sqrt{\frac{3}{2}} &\frac{1}{\sqrt{3}} \\ \hline
4 & 384 \lambda ^8-1024 \lambda ^6+800 \lambda ^4-192
   \lambda ^2+14 &\frac{1}{2} \sqrt{\frac{1}{3}
   \left(10+\sqrt{58}\right)} &\sqrt{\frac{1}{7} \left(2 \sqrt{58}-13\right)}  \\ \hline
5&   2048 \lambda ^{10}-6400 \lambda ^8+6720 \lambda
   ^6-2800 \lambda ^4+400 \lambda ^2 &\frac{1}{4} \sqrt{15+\sqrt{65}} &\sqrt{\frac{1}{10} \left(\sqrt{65}-5\right)}  \\ \hline
\end{array}
\end{equation}

\begin{equation}
\label{eq:table-D_n-zp-roots}
\small
\begin{array}
[c]{|c|c|c|c|}%
\hline
n  & \mathcal{P}_p(\lambda)  &\lambda_p &z_p \\ \hline
2  &8 \lambda ^4-16 \lambda ^2 &\sqrt{2} &\frac{1}{\sqrt{2}}  \\ \hline
3  &64 \lambda ^6-144 \lambda ^4+72 \lambda ^2-10 &\frac{\sqrt{4+\sqrt{6}}}{2} &\sqrt{\frac{1}{5} \left(2 \sqrt{6}-3\right)} \\ \hline
4 & 384 \lambda ^8-1024 \lambda ^6+800 \lambda ^4-192
   \lambda ^2 &\sqrt{\frac{3}{2}} &\frac{1}{\sqrt{3}}  \\ \hline
5&  2048 \lambda ^{10}-6400 \lambda ^8+6720 \lambda
   ^6-2800 \lambda ^4+400 \lambda ^2-18 &\approx 1.20467...  &\approx 0.557613...  \\ \hline
\end{array}
\end{equation}

\subsection{ Dynamics of $\mathbb{D}_n$-symmetric configurations of $N=2n$ vortices}

Combining Theorem \ref{th:dihedral}   with  Corollary \ref{cor:periodic-orbits-general} we may establish the existence of three families 
of periodic orbits of the equations of motion \eqref{eq:motion} for $N$ even, $N\geq 4$.

\begin{corollary} 
\label{cor:dihedral-osc}
Let $n\geq 2$ and $N=2n$.
\begin{enumerate}
\item There exists a 1-parameter family of periodic solutions $v_h(t)$ of  the equations of motion \eqref{eq:motion}
emanating from the anti-prism equilibrium configurations   described in  Theorem \ref{th:dihedral}.
Along these solutions, each vortex travels around a small closed loop around a vertex of the $n$-gon anti-prism of height $2z_a(n)$ (see Fig. \ref{F:antiprism-osc}).

  \item There exists a 1-parameter family of periodic solutions $v_h(t)$ of  the equations of motion \eqref{eq:motion}
converging to the polar collision  described in  Theorem \ref{th:dihedral}.  
Along these solutions, $n$ vortices travel along a closed
loop around the north pole and the remaining  $n$ vortices   travel along a closed
loop around the south pole in the opposite direction (see Fig. \ref{F:polar-oscillations}). 

\item There exists a 1-parameter family of periodic solutions $v_h(t)$ of  the equations of motion \eqref{eq:motion}
converging to the polygonal collisions  described in  Theorem \ref{th:dihedral}.  Along these solutions, there is a pair of vortices that travels along a small closed
loop  around each of the vertices of the regular $n$-gon at the equator  (see Fig. \ref{F:polygonal-osc}).
\end{enumerate}
Each of these families may be  parametrised by the energy $h$. In cases (ii) and (iii) we have 
 $h\to \infty$ as the solutions  approach collision, and  the period approaches zero in this limit.

For each solution described above, the distinct  closed loops traversed by the vortices, and the position the vortices within the loop at each instant, may be obtained from a single one by the action of $\mathbb{D}_n$.
\end{corollary}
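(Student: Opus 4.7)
The plan is to invoke Corollary~\ref{cor:periodic-orbits-general} with the data supplied by Theorem~\ref{th:dihedral}, and then interpret the lifted solutions $v_h(t)=\rho_{(K_o,F_o)}(u_h(t))$ geometrically. The bulk of the work has already been done in establishing Theorem~\ref{th:dihedral}; the present corollary is largely an unpacking of the reduction map $\rho_{(K_o,F_o)}$ together with the orbit analysis of Proposition~\ref{prop:collision-description}.

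For part (i), the anti-prism points $A_j^\pm$ are stable non-collision equilibria of the reduced system by Theorem~\ref{th:dihedral}(i)(a), and hence part (i) of Corollary~\ref{cor:periodic-orbits-general} immediately produces a 1-parameter family of periodic solutions $u_h(t)$ of \eqref{eq:reduced-system} surrounding any such $u_0$, whose lifts $v_h(t)=\rho_{(K_o,F_o)}(u_h(t))$ are periodic solutions of \eqref{eq:motion}. Since $v_{h,j}(t)=g_j u_h(t)$ and $u_h(t)$ is a small loop around $u_0$, each vortex traces a small loop around the anti-prism vertex $g_j u_0$, giving Figure~\ref{F:antiprism-osc}.

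For parts (ii) and (iii), the relevant configurations are collision equilibria of the reduced system by Theorem~\ref{th:dihedral}(ii), so I would invoke part (ii) of Corollary~\ref{cor:periodic-orbits-general} to obtain the families $u_h(t)$ approaching $u_0$ with $h\to\infty$ and period tending to zero. The distribution of the $2n$ lifted vortices then reflects the $\mathbb{D}_n$-orbit structure of $u_0$ as in Proposition~\ref{prop:collision-description}: in (ii), the stabiliser of $(0,0,1)$ is $\langle A\rangle\cong\mathbb{Z}_n$, and $\mathbb{D}_n$ decomposes as $\langle A\rangle \sqcup B\langle A\rangle$, so the $n$ vortices $A^k u_h(t)$ remain near the north pole while the $n$ vortices $BA^k u_h(t)$ remain near the south pole; in (iii), the stabiliser of $C_1$ is $\{e,B\}$ of order $2$, so the $\mathbb{D}_n$-orbit of $C_1$ consists of the $n$ equatorial vertices of a regular $n$-gon, with two vortices accumulating at each.

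The most delicate point, and the only one that is not completely automatic from the reduction formalism, is the claim in (ii) that the loops near the two poles are traversed in opposite senses. This I would handle by observing that $B$ acts on $\R^3$ as $(x,y,z)\mapsto(x,-y,-z)$, which reverses the orientation of any small planar loop about the $z$-axis; consequently the loop $BA^k u_h(t)=A^{-k}Bu_h(t)$ near the south pole is traversed in the sense opposite to $A^k u_h(t)$ near the north pole. The concluding assertion of the corollary, that the distinct closed loops and the positions along them are related by the $\mathbb{D}_n$-action, is immediate from the explicit form $v_{h,j}(t)=g_j u_h(t)$ of $\rho_{(K_o,F_o)}$, and no further argument is needed.
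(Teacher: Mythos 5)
Your proposal is correct and follows essentially the same route as the paper, which simply combines Theorem \ref{th:dihedral} with Corollary \ref{cor:periodic-orbits-general} and reads off the geometry of the lifted solutions $v_h(t)=\rho_{(K_o,F_o)}(u_h(t))$ via the $\mathbb{D}_n$-orbit structure of Proposition \ref{prop:collision-description}. Your explicit isotropy computations ($\langle A\rangle$ at the poles, $\{e,B\}$ at $C_1$) and the orientation argument for the ``opposite direction'' claim in (ii) supply details the paper leaves implicit, and they are accurate.
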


\begin{figure}[ht]
\begin{subfigure}{.3\textwidth}
  \centering
  \includegraphics[width=.8\linewidth]{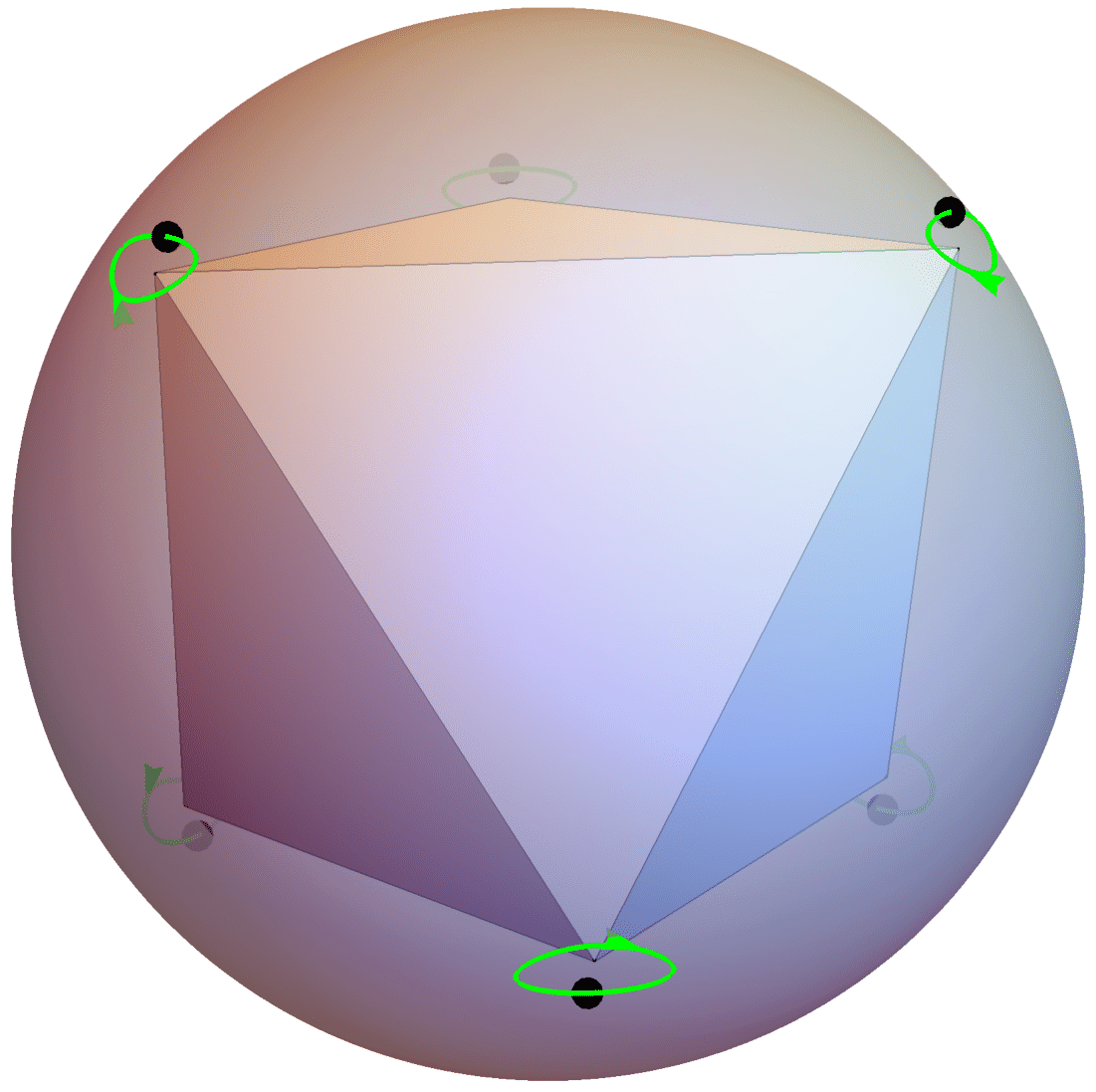}  
  \caption{Periodic solution near the anti-prism equilibrium.}
  \label{F:antiprism-osc}
\end{subfigure}
\quad
\begin{subfigure}{.3\textwidth}
 \centering
  \includegraphics[width=.7\linewidth]{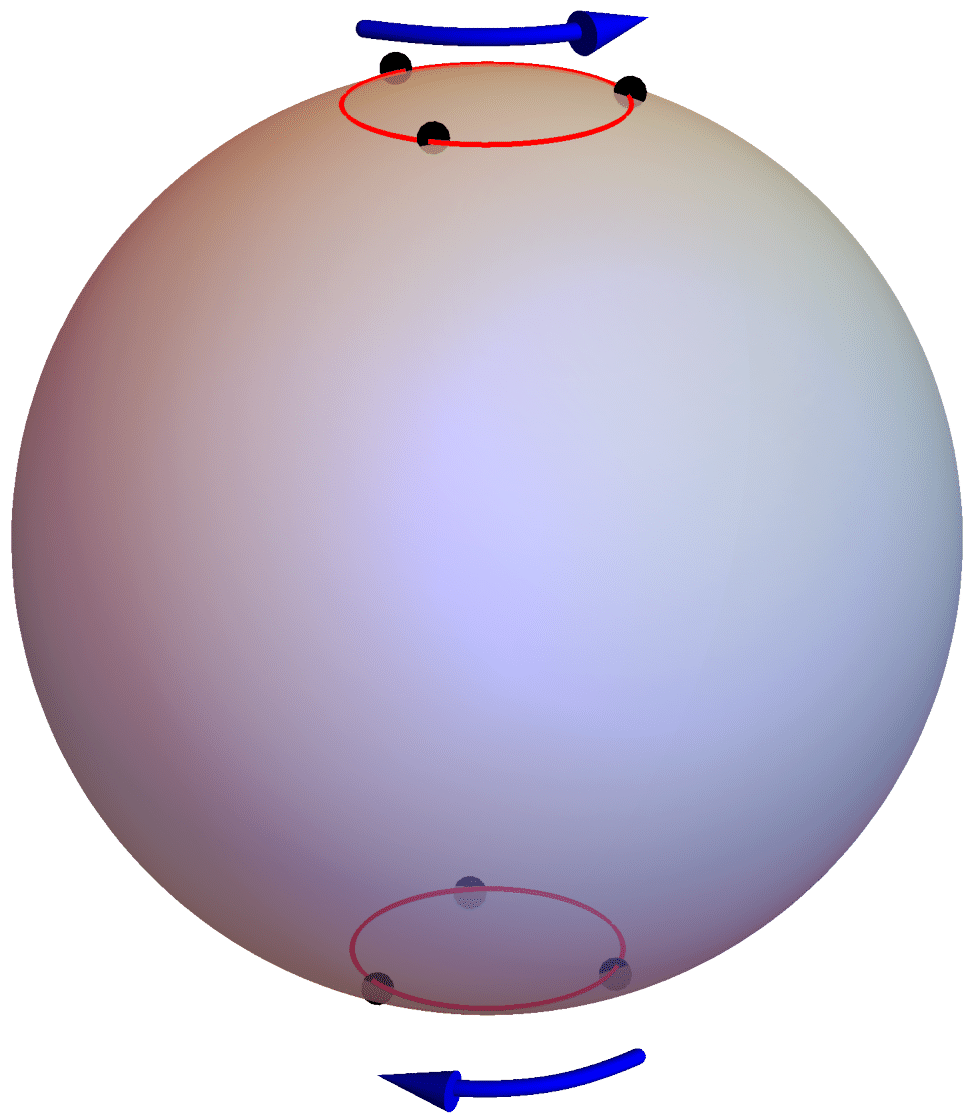}  
  \caption{Periodic solution near  the
  polar-collision.}
  \label{F:polar-oscillations}
\end{subfigure}
\quad
\begin{subfigure}{.3\textwidth}
  \centering
  \includegraphics[width=.8\linewidth]{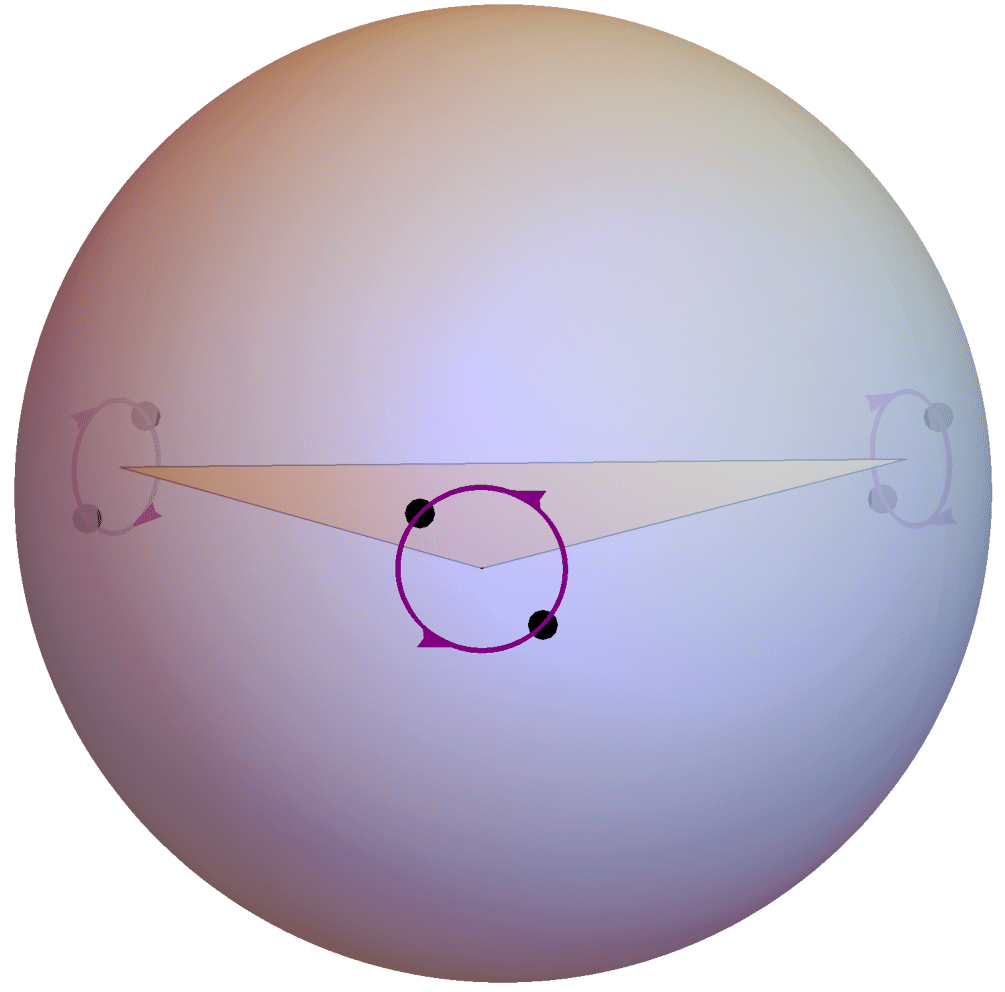}  
  \caption{Periodic solution near the 
  polygonal-collision.}
  \label{F:polygonal-osc}
\end{subfigure}
\caption{Periodic solutions  described in Corollary \ref{cor:dihedral-osc} for $n=3$, $N=6$.}
\label{fig:dihedral-osc}
\end{figure}

We now specialise our discussion to the cases $n=2, 3, 4$ which lead to appearance of  platonic solids as either prism or anti-prism equilibria.

\subsubsection*{Case $n=2$, $N=4$. Nonlinear small oscillations around the tetrahedron.} 

As we may read from Table \eqref{eq:table-D_n-za-roots}, the height of the anti-prism for $n=2$ is $2/\sqrt{3}$ and it is elementary to verify 
that the anti-prism is in fact a tetrahedron whose edges have length $2\sqrt{2/3}$. These configurations are known \cite{K}
to be stable equilibria of the unreduced dynamics \eqref{eq:motion} and in fact global minimisers of 
the Hamiltonian $H$. 
Item (i) of Corollary \ref{cor:dihedral-osc} shows the  existence
of small nonlinear oscillations of  \eqref{eq:motion} around these equilibria.

On the other hand, Table \eqref{eq:table-D_n-zp-roots} indicates that the prism configurations have height 
 $\sqrt{2}$. These (degenerate) prisms are in fact  squares of length $\sqrt{2}$. So, for $n=2$, the distinction between the prism and the polygonal equilibria  is artificial.
Similarly, since a $2$-gon on the equator degenerates to a diameter of the sphere, the distinction between the polar  and the polygonal collisions is artificial.

The phase space of the (regularised) reduced dynamics obtained numerically is illustrated in Figure~\ref{fig:n4-D2} below. 
The anti-prism equilibrium points $A_j^{\pm}$ are indicated in green,
the prism and polygonal equilibrium points, $P_j^\pm$ and $Q_j$, in black, and the polar and collision configurations $C_j$ in red. We note that the different families of
periodic orbits are separated by heteroclinic orbits connecting the unstable equilibria. 
Also, as  predicted by item (i) of Theorem~\ref{th:dihedral}, 
 we observe octahedral  symmetry in the reduced dynamics.

\subsubsection*{Case $n=3$, $N=6$. Nonlinear small oscillations around the octahedron.} 

For $n=3$, Table \eqref{eq:table-D_n-za-roots} indicates that  the height of the anti-prism  is again $2/\sqrt{3}$ and it is easy to verify 
that the anti-prism is in fact an octahedron whose edges have length $\sqrt{2}$ (see Fig.\ref{F:anti-prism}). Again, these configurations are known \cite{K}
to be stable equilibria of the unreduced dynamics \eqref{eq:motion} and  global minimisers of 
the Hamiltonian $H$.
Item (i) of Corollary \ref{cor:dihedral-osc} shows the the existence
of small nonlinear oscillations of  \eqref{eq:motion} around these equilibria. Also, as  predicted by item (i) of Theorem~\ref{th:dihedral}, 
 we observe $\mathbb{D}_6$  symmetry in the reduced dynamics.

The phase space of the (regularised) reduced dynamics obtained numerically is illustrated in Figure~\ref{fig:n6-D3} below. 
The anti-prism equilibrium points $A_j^{\pm}$ are indicated in green,
the prism equilibrium points $P_j^\pm$ in blue, polygonal  equilibrium points  $Q_j$ in black, polar  collisions in red and polygonal collisions  $C_j$ in purple. We have used
the same colour code to indicate either periodic orbits near the stable equilibria or heteroclinic orbits emanating from the unstable equilibria.
There is also a family  of periodic orbits that do not approach an equilibria or a collision that we have indicated in orange.

\subsubsection*{Case $n=4$, $N=8$. Instability of the cube.} 

For $n=4$, we read from Table \eqref{eq:table-D_n-zp-roots}  that  the height of the prism configuration is  $2/\sqrt{3}$ which corresponds to an
inscribed cube whose edges have this length. In contrast with the cases $n=2,3,$ treated above, our analysis does not lead to the existence of oscillations around a platonic solid, but rather to 
the conclusion that the cube is an unstable configuration of \eqref{eq:motion}. The instability of the cube had been reported before  \cite{K}.

On the other hand, we conclude from item (i) of Corollary \ref{cor:dihedral-osc} that there exists small nonlinear oscillations around the 
square anti-prism configuration of height  $\sqrt{(8\sqrt{58} -52)/7}$. These configurations are known \cite{K}
to be stable equilibria of the unreduced dynamics \eqref{eq:motion} and in fact global 
  minimisers of 
the Hamiltonian $H$.

The phase space of the (regularised) reduced dynamics obtained numerically is illustrated in Figure~\ref{fig:n8-D4} below. The colour 
code is identical to the one followed in the case $n=3$. This time  we observe $\mathbb{D}_8$  symmetry in the reduced dynamics.

\begin{figure}[ht]
\begin{subfigure}{.32\textwidth}
  \centering
  \includegraphics[width=.81\linewidth]{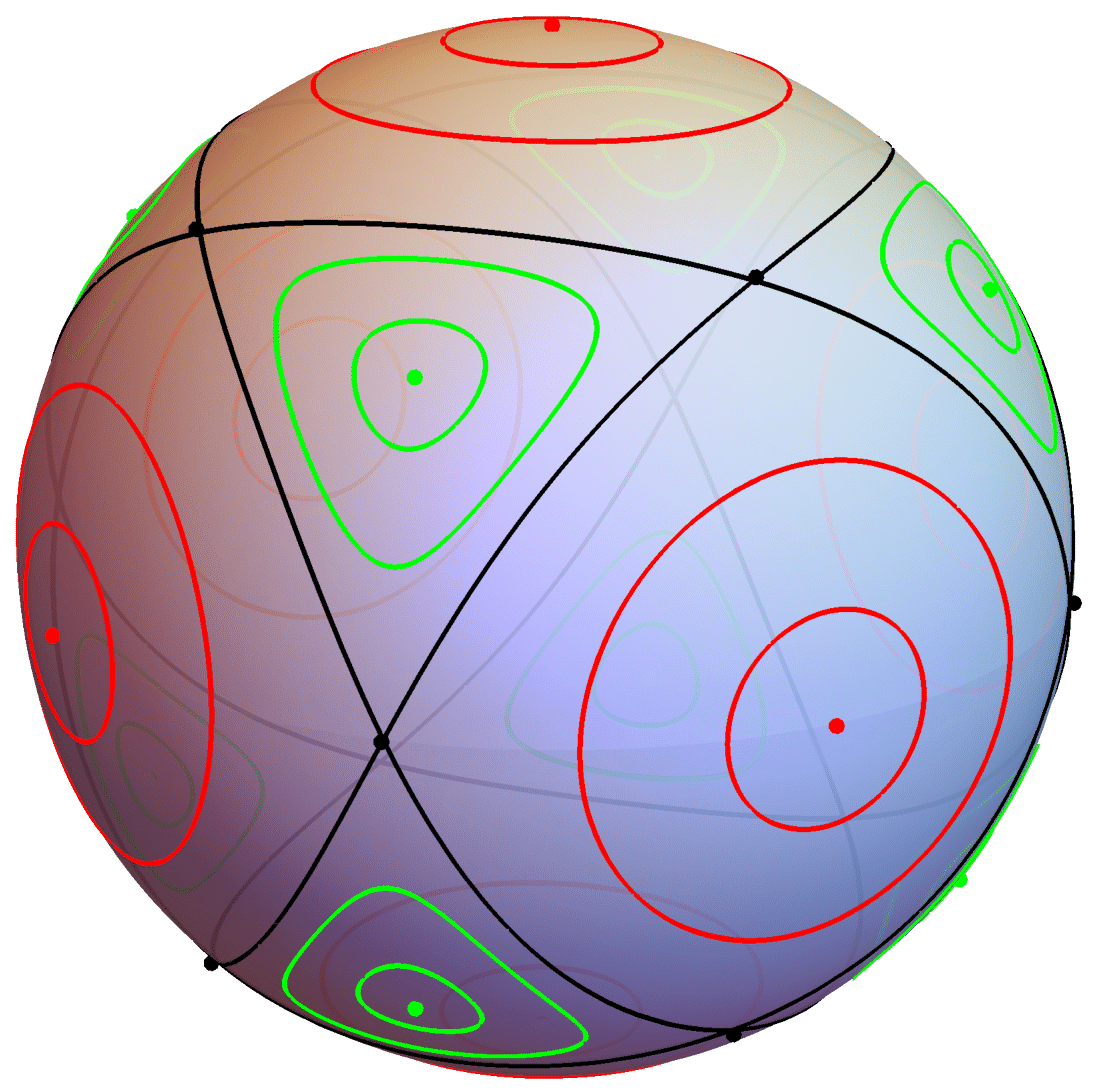}  
  \caption{$n=2$, $N=4$.}
  \label{fig:n4-D2}
\end{subfigure}
\;
\begin{subfigure}{.32\textwidth}
 \centering
  \includegraphics[width=.8\linewidth]{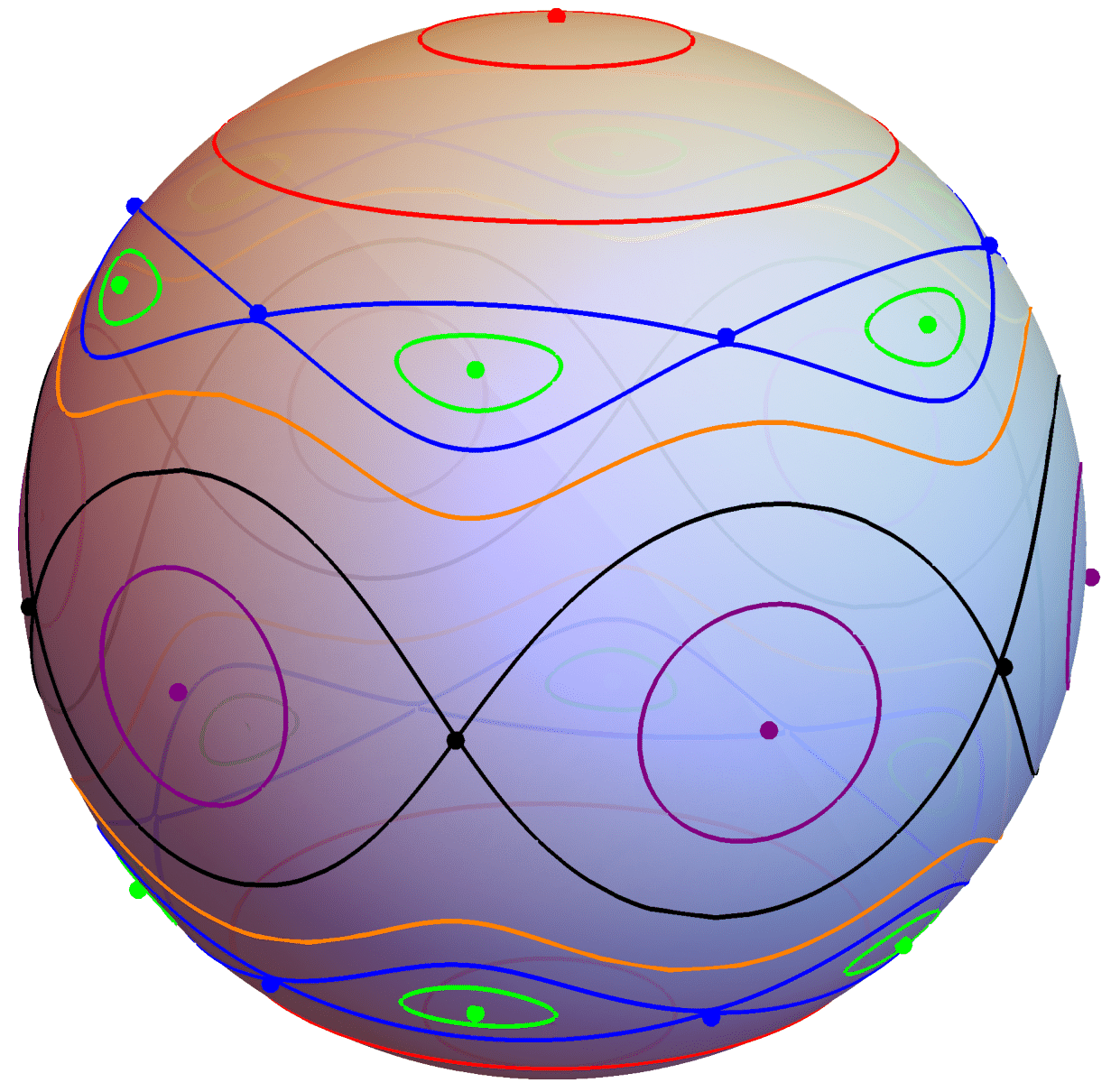}  
  \caption{$n=3$, $N=6$.}
  \label{fig:n6-D3}
\end{subfigure}
\;
\begin{subfigure}{.32\textwidth}
  \centering
  \includegraphics[width=.8\linewidth]{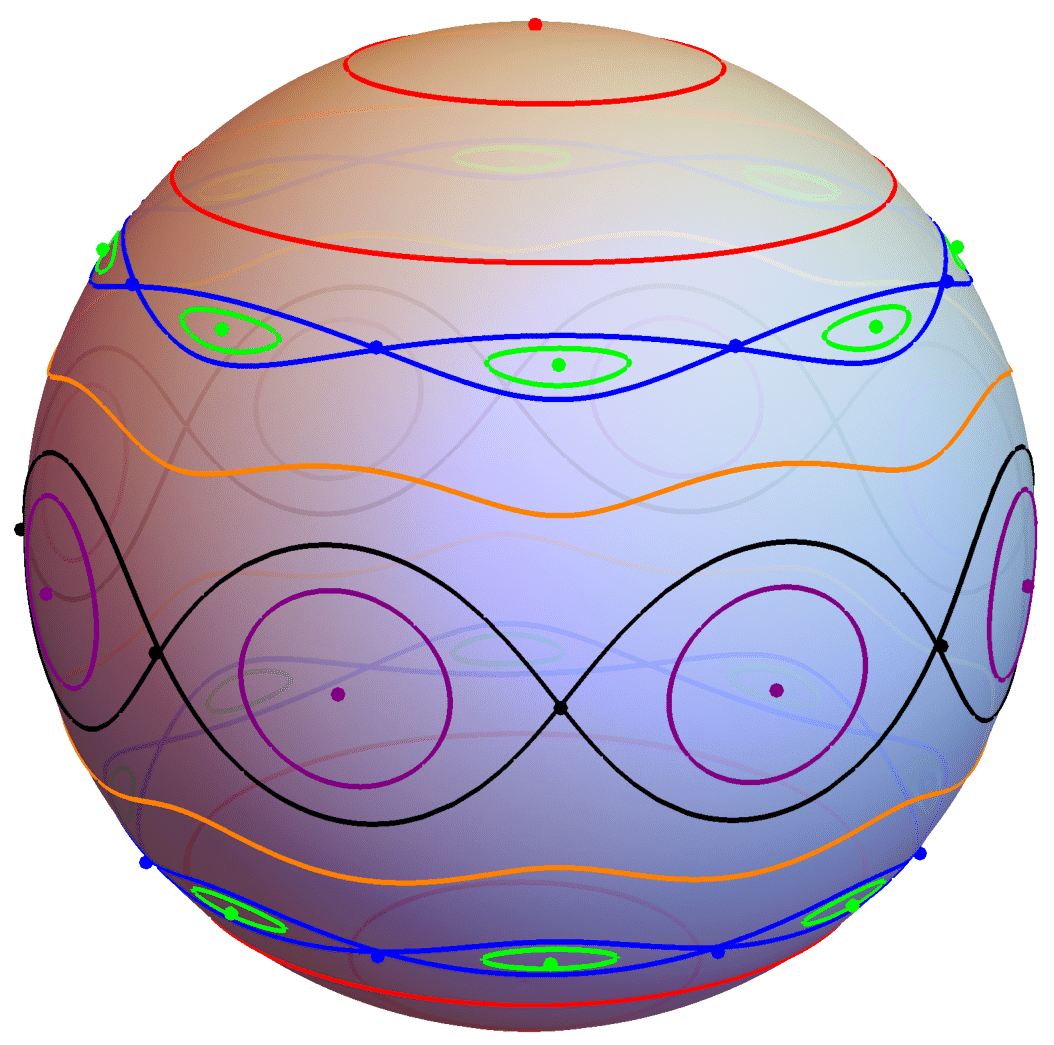}  
  \caption{$n=4$, $N=8$.}
  \label{fig:n8-D4}
\end{subfigure}
\caption{Phase space of the (regularised) reduced system \eqref{eq:reduced-system} for $K=\mathbb{D}_n$,  $F=\emptyset$ and $N=2n$ for  $n=2, 3, 4$. See text for explanations and description of the colour code. }
\label{fig:Dn-F-empty}
\end{figure}

\subsection{Proof of  Theorem \ref{th:dihedral}}
\label{sec:dihedral-proofs}

Our proof of Theorem \ref{th:dihedral} relies  on the following two lemmas that we state and prove first. The first  lemma gives us a 
working expression of  the reduced Hamiltonian $h_{(\mathbb{D}_n, \emptyset)}:S^2\to \R$ 
defined by \eqref{eq:red-Ham}, and the second one is a useful trigonometric identity.
To simplify notation, for the rest of this section we denote $h_{(\mathbb{D}_n, \emptyset)}$ simply by $h_n$.

\begin{lemma}
\label{lemma:simpHam-Dn}
In  cylindrical coordinates $(z,\theta)$ for $S^2$ defined by
\begin{equation}
\label{eq:cyl-coords}
x=\sqrt{1-z^2} \cos \theta, \quad y=\sqrt{1-z^2}\sin \theta, \quad z= z,
\end{equation}
we have, modulo  the addition of  constants:
\begin{equation}
\label{eq:hcyl}
h_n(z,\theta)=-\frac{n(n-1)}{2} \ln (1-z^2)  -\frac{n}{2} \, \sum_{j=1}^{2n} \,  \ln \left ( 1 - \sqrt{1-z^2} \cos \left ( \theta + \frac{j\zeta}{2} \right ) \right ), 
\end{equation}
and
\begin{equation}
\label{eq:hcyl-Cheb}
h_n(z,\theta)=-\frac{n(n-1)}{2} \ln (1-z^2) -\frac{n}{2} \,  \ln \left  ( \, q_{2n}(\sqrt{1-z^2}) -(1-z^2)^n \cos ( 2n\theta)  \, \right ), 
\end{equation}
 where $q_{2n}(\cdot )$ is the degree $2n$ polynomial defined by $q_{2n}(r)= r^{2n}T_{2n}(1/r)$. In particular we have
 \begin{equation}
 \label{eq:ineq-Cheb}
q_{2n}(\sqrt{1-z^2}) -(1-z^2)^n \cos (2n\theta)>0
\end{equation}
for all  $(z,\theta)$ corresponding to points on $S^2\setminus \mathcal{F}[\mathbb{D}_n]$.
%
%
%
%
%
\end{lemma}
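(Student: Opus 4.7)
The plan is to start from the explicit expression for the reduced Hamiltonian provided by \eqref{eq:redHam-mainthm}. Specialising to $K=\mathbb{D}_n$, $F=\emptyset$ and $m=2n$, and splitting the sum over $\mathbb{D}_n\setminus\{e\}$ into the rotations $A^k$ for $k=1,\ldots,n-1$ and the flips $BA^k$ for $k=0,\ldots,n-1$, one obtains
\[
h_n(u) = -\frac{n}{2}\sum_{k=1}^{n-1}\ln\bigl|u-A^k u\bigr|^2 - \frac{n}{2}\sum_{k=0}^{n-1}\ln\bigl|u-BA^k u\bigr|^2.
\]
Parametrising $u=(r\cos\theta,r\sin\theta,z)$ with $r=\sqrt{1-z^2}$, a direct calculation with \eqref{eq:AB-dihedral} gives $|u-A^k u|^2 = 4(1-z^2)\sin^2(k\zeta/2)$, which contributes $(n-1)\ln(1-z^2)$ plus a constant.

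For the flip terms, expanding the inner product yields $\langle u, BA^k u\rangle = r^2\cos(2\theta+k\zeta) - z^2$, whence, using $1+\cos 2\alpha = 2\cos^2\alpha$,
\[
|u-BA^k u|^2 = 2(1+z^2) - 2r^2\cos(2\theta+k\zeta) = 4\bigl(1 - r^2\cos^2(\theta+k\zeta/2)\bigr).
\]
The key algebraic move is to factor this as $4\bigl(1-r\cos(\theta+k\zeta/2)\bigr)\bigl(1+r\cos(\theta+k\zeta/2)\bigr)$ and to observe that, because $n\zeta/2 = \pi$, one has $1+r\cos(\theta+k\zeta/2) = 1 - r\cos(\theta+(k+n)\zeta/2)$. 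Summing the logarithms over $k = 0,\ldots,n-1$ thus combines, after reindexing by $j$, into the single sum $\sum_{j=1}^{2n}\ln\bigl(1-r\cos(\theta+j\zeta/2)\bigr)$ up to a constant, which yields \eqref{eq:hcyl}.

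The equivalence of \eqref{eq:hcyl} and \eqref{eq:hcyl-Cheb} modulo constants then reduces to the polynomial identity
\[
\prod_{j=1}^{2n}\bigl(1 - r\cos(\theta+j\zeta/2)\bigr) = \frac{1}{2^{2n-1}}\bigl(q_{2n}(r) - r^{2n}\cos(2n\theta)\bigr).
\]
I would establish this by temporarily setting $r = 1/\cos\psi$ and multiplying through by $\cos^{2n}\psi$; since $\cos^{2n}\psi\, q_{2n}(\sec\psi) = T_{2n}(\cos\psi) = \cos(2n\psi)$, the claim is reduced to $\prod_{j=1}^{2n}(\cos\psi - \cos(\theta+j\zeta/2)) = \tfrac{1}{2^{2n-1}}(\cos(2n\psi) - \cos(2n\theta))$. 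Expanding each factor via $\cos A - \cos B = -2\sin(\tfrac{A+B}{2})\sin(\tfrac{A-B}{2})$, applying the classical product formula $\prod_{k=0}^{M-1}\sin(\beta + k\pi/M) = \sin(M\beta)/2^{M-1}$ with $M = 2n$ to both resulting sine-products, and collapsing the outcome with $\sin X\sin Y = \tfrac12(\cos(X-Y) - \cos(X+Y))$ produces the identity; both sides being polynomials in $r$, it extends to all $r$ by analytic continuation. Positivity \eqref{eq:ineq-Cheb} is then immediate from \eqref{eq:hcyl}: each factor $1 - \sqrt{1-z^2}\cos(\theta + j\zeta/2)$ is non-negative, and vanishes only when $z = 0$ and $\theta + j\zeta/2 \in 2\pi\Z$, i.e.\ precisely at points of $\mathcal{F}[\mathbb{D}_n]$ listed in \eqref{eq:colset-Dn}. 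The main obstacle is the trigonometric product identity itself: while classical, tracking the sign coming from the endpoint $\pi$ in the sine product and the exact power of $2$ requires some care.
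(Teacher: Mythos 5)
Your proposal is correct and follows essentially the same route as the paper's proof: the same reduction from \eqref{eq:redHam-mainthm}, the same computation of $|u-A^k u|^2$ and $|u-BA^k u|^2$ in cylindrical coordinates, the same factor-and-shift-by-$\pi$ step giving \eqref{eq:hcyl}, and the same product identity underlying \eqref{eq:hcyl-Cheb} and the positivity \eqref{eq:ineq-Cheb}. The only deviation is that the paper cites the identity (with $\lambda=\cosh\mu\geq 1$) from Gradshteyn--Ryzhik 1.395(2), while you rederive it via the Chebyshev root factorization/sine-product formula and extend by polynomiality in $r$ --- a self-contained variant in which the endpoint signs indeed cancel, so the argument goes through.
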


\begin{proof}
We start by noticing that \eqref{eq:redHam-mainthm} yields
\begin{equation}
\label{eq:simp-red-ham-emptyset}
h_{(K,\emptyset)}(u)= -\frac{m}{4}  \sum_{j=2}^m \ln \left \vert  u-g_ju\right \vert^2 .
\end{equation}
Now we set $K=\mathbb{D}_n$ and  work with the following 
ordering of $\mathbb{D}_n$
\begin{equation*}
g_j=A^{j-1}, \quad j=1,\dots, n, \qquad g_j=B A^{j-n-1}, \quad j=n+1, \dots, 2n,
\end{equation*}
 where the matrices $A$ and $B$ are defined by \eqref{eq:AB-dihedral}. In view of \eqref{eq:simp-red-ham-emptyset} we have
 \begin{equation*}
h_n(u)=h_{(\mathbb{D}_n,\emptyset)}(u)= - \frac{n}{2}  \left ( \sum_{j=1}^{n-1}\ln  \left\vert u -A^ju \right \vert^2 + \sum_{j=1}^{n}\ln  \left\vert u -BA^ju \right \vert^2 \right ).
\end{equation*}
Writing $u$ in the  $(z,\theta)$-coordinates \eqref{eq:cyl-coords} we compute
\[
\left\vert u-A^{j}u \right\vert ^{2}=\left(  1-z^{2}\right)  \left\vert
1-e^{ij\zeta}\right\vert ^{2}=4\left(  1-z^{2}\right)  \sin^{2}\frac{j\zeta
}{2},
\]
and
\begin{align*}
\left\vert u -BA^{j}u \right\vert ^{2}  &  =\left(  1-z^{2}\right)
\left\vert 1-e^{i\left (  2\theta+j\zeta\right )  }\right\vert ^{2}+4z^{2}
 \\ & =4\left(  1-z^{2}\right)  \sin^{2}\left(  \theta+j\zeta /2 \right ) 
+4z^{2} =4-4\left(  1-z^{2}\right)  \cos^{2}\left(  \theta+j\zeta /2 \right ).
\end{align*}
Therefore, modulo the addition of  terms that are independent of $(z,\theta)$, we have 
\begin{equation*}
\ln \left \vert u-A^{j}u \right\vert ^{2} =\ln (1-z^2), \qquad \ln \left\vert u -BA^{j}u \right\vert ^{2} = \ln \left (1-\left(  1-z^{2}\right)  \cos^{2}\left(  \theta+j\zeta/2 \right )\right),
\end{equation*}
and hence,
\begin{equation*}
h_n(z,\theta)= - \frac{n(n-1)}{2} \ln (1-z^2)  - \frac{n}{2}  \sum_{j=1}^{n} \ln \left (1-\left(  1-z^{2}\right)  \cos^{2}\left(  \theta+j\zeta/2 \right )\right).
\end{equation*}
The proof that \eqref{eq:hcyl} holds follows by noting that 
\begin{align*}
&\sum_{j=1}^{n}\ln\left(  1-\left(  1-z^{2}\right)  \cos^{2}\left(
 \theta+j\zeta/2 \right)  \right)    \\
&  \qquad   =\sum_{j=1}^{n}\ln\left(  1-\left(
1-z^{2}\right)  ^{1/2}\cos\left(  \theta+j\zeta/2\right)  \right)
+\ln\left(  1+\left(  1-z^{2}\right)  ^{1/2}\cos\left(   \theta+j\zeta/2 \right)  \right) \\
& \qquad    =\sum_{j=1}^{n}\ln\left(  1-\left(
1-z^{2}\right)  ^{1/2}\cos\left(  \theta+j\zeta/2\right)  \right) +\ln\left(  1-\left(  1-z^{2}\right)
^{1/2}\cos\left( \theta+(n+j)\zeta/2 \right)  \right).
\end{align*}

In order to prove \eqref{eq:hcyl-Cheb} we begin with the identity
\begin{equation*}
\frac{1}{\lambda^k} \left ( \cosh (k\mu) -\cos (k\theta) \right ) = 2^{k-1} \prod_{j=1}^k\left ( 1 -\frac{1}{\lambda} \cos \left ( \theta + \frac{2j\pi}{k}\right ) \right ),
\end{equation*}
where $k\in \mathbb{N}$ and $\lambda =\cosh \mu \geq1$. This identity is a simple consequence of   \cite[Formula 1.395(2)]{Gradshteyn}.
Using the definition properties of the Chebyshev polynomials we may write $\cosh(k\mu)=T_k(\lambda)$, so, applying the above  identity with $k=2n$, we obtain
\begin{equation}
\label{eq:aux-ineq-proof}
\frac{1}{2^{2n-1}}\left ( q_{2n}(1/\lambda)- \frac{\cos 2n \theta}{\lambda^{2n} }\right ) =\prod_{j=1}^{2n} \left ( 1 - \frac{1}{\lambda} \cos \left ( \theta + j \zeta /2\right ) \right ).
\end{equation}
Setting $\lambda=(1-z^2)^{-1/2}$ and taking logarithms we obtain, modulo the addition of a constant,
\begin{equation*}
\sum_{j=1}^{2n}\ln\left(  1-\left(  1-z^{2}\right)^{1/2} \cos^{2}\left(
 \theta+j\zeta/2 \right)  \right) = \ln\left(   q_{2n}(\sqrt{1-z^2}) - (1-z^2)^n \cos 2n \theta \right ), 
\end{equation*}
which, in combination with  \eqref{eq:hcyl},  proves that \eqref{eq:hcyl-Cheb} indeed holds. 
Finally, note that, since $\lambda \geq 1$, the right hand side of \eqref{eq:aux-ineq-proof} 
is non-negative and can only vanish if $\lambda=1$ and $\theta= 2\pi -j\zeta/2$, $j=1,\dots, 2n$.
This observation shows that 
inequality \eqref{eq:ineq-Cheb} holds away from the points $C_j\in \mathcal{F}[\mathbb{D}_n]$.

\end{proof}

%
%

\begin{lemma}
\label{lemma:trig-identity} The following trigonometric identity holds
\[
\sum_{j=1}^{2n}\frac{\cos\left(  \zeta/4+j\zeta/2\right)  }{1-\cos\left(
\zeta/4+j\zeta/2\right)  }=2n(n-1).
\]
\end{lemma}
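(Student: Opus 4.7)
The plan is to reduce the identity to a standard trigonometric sum via an algebraic manipulation of the summand, and then evaluate the resulting cosecant-squared sum using a well-known partial-fractions identity.

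First I would use the elementary decomposition
\[
\frac{\cos\alpha}{1-\cos\alpha} = \frac{1}{1-\cos\alpha} - 1
\]
to rewrite the left-hand side as
\[
\sum_{j=1}^{2n} \frac{\cos(\zeta/4+j\zeta/2)}{1-\cos(\zeta/4+j\zeta/2)} = \sum_{j=1}^{2n}\frac{1}{1-\cos(\zeta/4+j\zeta/2)}  - 2n.
\]
Applying the half-angle identity $1-\cos\alpha = 2\sin^{2}(\alpha/2)$ reduces the problem to showing
\[
S_n := \sum_{j=1}^{2n}\csc^{2}\!\left(\frac{\zeta}{8} + \frac{j\zeta}{4}\right) = 4n^{2},
\]
since then the original sum equals $\tfrac{1}{2}S_n - 2n = 2n^{2}-2n = 2n(n-1)$.

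Next, recalling $\zeta = 2\pi/n$, the arguments become $\phi_j := \pi/(4n) + j\pi/(2n)$, that is, an arithmetic progression with common difference $\pi/(2n)$. Using the $\pi$-periodicity of $\csc^{2}$, I would reindex so that $j$ runs over $0,1,\dots,2n-1$, obtaining
\[
S_n = \sum_{j=0}^{2n-1}\csc^{2}\!\left(\phi_0 + \frac{j\pi}{2n}\right), \qquad \phi_0 = \frac{\pi}{4n}.
\]
The key step is then to invoke the classical identity
\[
\sum_{j=0}^{N-1}\csc^{2}\!\left(\phi + \frac{j\pi}{N}\right) = N^{2}\,\csc^{2}(N\phi),
\]
valid whenever $N\phi \notin \pi\mathbb{Z}$. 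With $N=2n$ and $\phi = \phi_0 = \pi/(4n)$, one has $N\phi = \pi/2$ and $\csc^{2}(\pi/2) = 1$, so $S_n = (2n)^{2} = 4n^{2}$, completing the proof.

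For completeness I would briefly justify the cotangent/cosecant identity by differentiating the logarithmic derivative of the well-known factorisation $\sin(N\phi) = 2^{N-1}\prod_{j=0}^{N-1}\sin(\phi + j\pi/N)$: taking $\log$ and differentiating once yields $N\cot(N\phi) = \sum_{j=0}^{N-1}\cot(\phi + j\pi/N)$, and differentiating once more produces the cosecant-squared identity. There is no real obstacle here; the only point requiring a little care is the reindexing of the sum from $j=1,\dots,2n$ to $j=0,\dots,2n-1$, which is justified by noting that $\phi_{2n} = \phi_0 + \pi$ and $\csc^{2}$ is $\pi$-periodic.
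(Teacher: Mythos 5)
Your proposal is correct, and its opening moves coincide with the paper's: both split $\cos\alpha/(1-\cos\alpha)=1/(1-\cos\alpha)-1$ and use $1-\cos\alpha=2\sin^{2}(\alpha/2)$, so both are left with the same sum $\tfrac12\sum\csc^{2}\bigl((2j+1)\pi/(4n)\bigr)$ (your shifted arithmetic progression $\pi/(4n)+j\pi/(2n)$ is exactly the odd multiples of $\pi/(4n)$). The difference lies in how this cosecant sum is evaluated. The paper cites an identity from Garc\'ia-Azpeitia--Ize, $\tfrac12\sum_{j=1}^{l-1}\sin^{2}(kj\pi/l)/\sin^{2}(j\pi/l)=\tfrac12 k(l-k)$, specialised to $l=4n$, $k=2n$, which isolates precisely the odd-index terms; you instead invoke the classical shifted identity $\sum_{j=0}^{N-1}\csc^{2}(\phi+j\pi/N)=N^{2}\csc^{2}(N\phi)$ with $N=2n$, $\phi=\pi/(4n)$, derived by logarithmically differentiating $\sin(N\phi)=2^{N-1}\prod_{j=0}^{N-1}\sin(\phi+j\pi/N)$ twice. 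Your route is self-contained (and, amusingly, rests on the same kind of product factorisation the paper itself uses, via Gradshteyn--Ryzhik 1.395, in the proof of Lemma \ref{lemma:simpHam-Dn}), whereas the paper's is shorter on the page at the cost of importing an external lemma tailored to sums of $\sin^{2}(kj\pi/l)/\sin^{2}(j\pi/l)$. The details you flag are indeed the only ones needing care and you handle them correctly: the reindexing is legitimate because $\phi_{2n}=\phi_{0}+\pi$ and $\csc^{2}$ is $\pi$-periodic, the nondegeneracy condition $N\phi=\pi/2\notin\pi\mathbb{Z}$ holds, and no term is singular since $(2j+1)\pi/(4n)$ is never a multiple of $\pi$.
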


\begin{proof}
We begin by recalling the following identity from  \cite[Proposition 26]{GaIz11} 
\[
\frac{1}{2}\sum_{j=1}^{{l-1}}\frac{\sin^{2}(kj\pi/l%
)}{\sin^{2}(j\pi/l)}=\frac{1}{2}k\left(  l-k\right),
\]
that holds for $l\in \mathbb{N}$ and $0\leq k\leq l$.
In particular, for $l$ even and  $k=l/2$, we obtain
\begin{equation}
\label{eq:triglemma-aux}
l^{2}/8=\frac{1}{2}\sum_{j=1}^{l%
-1}\frac{\sin^{2}(j\pi/2)}{\sin^{2}(j\pi/l)}=\frac{1}{2}%
\sum_{\substack{j=1 \\ (j\text{ odd)}}}^{l-1}\frac{1}{\sin^{2}(j\pi/l%
)}\text{.}%
\end{equation}
On the other hand, we have 
\begin{align*}
\sum_{j=1}^{2n}\frac{1}{1-\cos\left(  j\pi/n + \pi/  2n \right)}  &
=\sum_{j=1}^{2n}\frac{1}{2\sin^{2}\left(  (2j+1)\pi/4n\right)  } =\frac{1}{2}\sum_{\substack{j=1 \\ (j\text{ odd)}}}^{4n-1}\frac{1}{\sin^{2}\left(  j\pi
/4n\right)  }=2n^{2},
\end{align*}
where we have used \eqref{eq:triglemma-aux} in the last identity with $l=4n$.
The desired result is an immediate consequence of the above identity since we may write 
\[
\sum_{j=1}^{2n}\frac{\cos\left(  \zeta/4+j\zeta/2\right)  }{1-\cos\left(
\zeta/4+j\zeta/2\right)  }=\sum_{j=1}^{2n}\left( \frac{1}{1-\cos\left(  j\pi/n + \pi/  2n \right)} -1  \right)  =2n^2-2n\text{.}%
\]

\end{proof}

We are now ready to present:

\begin{proof}[Proof of Theorem  \ref{th:dihedral}] For  item (ii), recall that the collision equilibrium configurations  occur at the points in
 $\mathcal{F}[\mathbb{D}_n]$ and are always stable. The set $\mathcal{F}[\mathbb{D}_n]$ is described by \eqref{eq:colset-Dn} 
 and consists of the north and south poles,  and the points $C_j$.
 Moreover, one can verify that the isotropy group of each of the the poles has order $n$, and the isotropy group of $C_j$ has order 2. Moreover,
  $\mathcal{F}[\mathbb{D}_n]$  contains three different  $\mathbb{D}_n$-orbits which are $\{(0,0,\pm1)\}$,  $\{C_j, \; j \; \mbox{odd}\}$
  and $\{C_j, \; j \; \mbox{even}\}$, and the latter ones determine a regular $n$-gon at the equator. These observations, together with item (ii) of 
  Proposition \ref{prop:collision-description}, show that the collision equilibria described above indeed correspond to the collision configurations of \eqref{eq:motion}
  described in the statement of the theorem.

In order to prove item (i) about the non-collision equilibria, we rely on item (i) of Proposition \ref{prop:reduced-periodic},
and determine  the  critical points of $h_n$. We will prove that these critical points  are $A_j^\pm$, $P_j^\pm$ and $Q_j$, and  that $A_j^\pm$ are local minima while
$P_j^\pm$ and $Q_j$ are saddle points.
We will work with the coordinates $(z,\theta)$ defined by \eqref{eq:cyl-coords}. These 
coordinates cover the whole sphere except for the north and south poles which are collision equilibria by item (ii)(a).

In view of item (iii) of  Theorem \ref{th-main-symmetry} and Table \eqref{eq:table-normalizers},
we know that $h_n$ is  $\mathbb{D}_{2n}$-invariant (for $n=2$ the group  $\mathbb{D}_4$ is a subgroup of the full symmetry group $\mathbb{O}$). 
This symmetry  implies that $h_n$ is $\zeta/2$-periodic in $\theta$, i.e. $h_n(z,\theta)=h_n(z,\theta+\zeta/2)$, and
also that $h_n(z,\theta)=h_n(-z,-\theta)$. Therefore, in our analysis of the critical points of $h_n$, we may restrict our attention to $(z,\theta) \in [0,1)\times  [0,\zeta/2)$. 
Note that, out of the points $A_j^\pm$, $P_j^\pm$ and $Q_j$ in the statement of the theorem, only     $A_1^+$, $P_1^+$ and $Q_1$ 
lie on this region, and the remaining ones may be obtained as $\mathbb{D}_{2n}$-orbits
of $A_1^+$, $P_1^+$ and $Q_1$ respectively. Thus, we only need to prove that  $A_1^+$, $P_1^+$ and $Q_1$ have the aforementioned properties and that
$h_n$ has no other (regular) critical points on  $(z,\theta)\in  [0,1)\times  [0,\zeta/2)$. For the rest of the proof we write these latter points in terms of their 
$(z,\theta)$ coordinates, namely
\begin{equation*}
A_1^+=(z_a, \zeta/4), \qquad P_1^+=(z_p,0), \qquad Q_1=(0,\zeta/4).
\end{equation*}

Using Eq. \eqref{eq:hcyl-Cheb} from Lemma \ref{lemma:simpHam-Dn} we have $\partial_\theta h_n(z,\theta) = - G(z,\theta)\sin ( 2n\theta)$ where 
\begin{equation*}
G(\theta,z):= \frac{n^2(1-z^2)^n}{q_{2n}(\sqrt{1-z^2}) -(1-z^2)^n \cos ( 2n\theta)}.
\end{equation*}
The inequality \eqref{eq:ineq-Cheb} shows that $G$ is a 
positive function away from the collision-equilibria.
In particular, we conclude that $\partial_\theta h_n(z,\theta)=0$  if $\theta=0$ or $\theta=\zeta/4$
and that $\partial_\theta h_n(z,\theta)\neq 0$ for other values of $\theta\in [0,\zeta/4)$. Hence, equilibria of $h_n$ in the region of interest can only occur 
if $\theta=0$ or $\theta=\zeta/4$.
Next we note from  Lemma \ref{lemma:simpHam-Dn} that  $h_n(z,\theta)$ is an even function of $z$ and thus $\partial_z h_n(0,\theta)=0$.
Therefore, we have  $\partial_z h_n(0,\zeta/4) =\partial_\theta h_n(0,\zeta/4)=0$ which shows that    $Q_1$ is indeed a
 critical point of $h_n$ (the other critical point  $(0,0)$ corresponds to the collision equilibrium $C_1$ at which $h_n$ is undefined).

Now we prove that there is exactly one zero $z_{a},z_{p}\in(0,1)$ of
$\partial_{z}h_{n}\left(  z,\zeta/4\right)  =0$ and $\partial_{z}h_{n}\left(
z,0\right)  =0$, respectively. In order to simplify the proof we make
the change of variables $r(z)=\sqrt{1-z^{2}}:(0,1)\rightarrow(0,1)$. Since
$r^{\prime}(z)\neq0$, the existence of a unique critical point of
$h_{n}(z,\theta)$ for $\theta=0,\zeta/4$ is equivalent to the existence of a
unique critical point of $h_{n}(r,\theta)$ for $\theta=0,\zeta/4$.
Using Eq. \eqref{eq:hcyl} from Lemma \ref{lemma:simpHam-Dn} we have 
\begin{equation*}
h_n(r,\theta) = -n(n-1) \ln (r) -\frac{n}{2} \, \sum_{j=1}^{2n} \,  \ln \left ( 1 -  r\cos \left ( \theta + j\zeta/2 \right ) \right ).
\end{equation*}
Since
$\lim_{r\rightarrow0}h_{n}(r,\theta)=\lim_{r\rightarrow1}%
h_{n}(r,0)=+\infty$, there exists a minimum $r_{p}\in (0,1)$ of the function $r\mapsto h_{n}(r,0)$.
On the other hand, differentiating the above expression and using  Lemma \ref{lemma:trig-identity} we find
that  for $\theta=\zeta/4$, we have 
\begin{equation}
\label{eq:aux-thm-dihedral-partialhr}
\partial_{r}h_{n}(1,\zeta/4)   =-n\left(  n-1\right)  +\frac{n}{2}\sum
_{j=1}^{2n}\frac{\cos\left(  \zeta/4+j\zeta/2\right)  }{1-\cos\left(
\zeta/4+j\zeta/2\right)  }  =n\left(  n-1\right)
^{2}>0.
\end{equation}
Therefore, using again that $\lim_{r\rightarrow0}h_{n}(r,\theta)=+\infty$, we conclude that there exists a minimum $r_{a}\in (0,1)$ of 
the function $r\mapsto h(r ,\zeta/4)$. However, since
\begin{equation}
\label{eq:ineq-sign-sec-derivative-hn-r} 
\partial_{r}^{2}h_{n}(r,\theta)=n\left(  n-1\right)  \frac{1}{r^{2}}+\frac
{n}{2}\sum_{j=1}^{2n}\frac{\cos^{2}\left(  \theta+j\zeta/2\right)  }{\left(
1+r\cos\left(  \theta+j\zeta/2\right)  \right)  ^{2}}>0,
\end{equation}
then $h_{n}(r,\theta)\ $has at most one critical point for $r\in(0,1)$. We
conclude that $z_{p}=\sqrt{1-r_{p}^2}$ and $z_{a}=\sqrt{1-r_{a}^2}$, are, respectively,  the unique critical points of
$h_{n}(z,0)$ and   $h_{n}(z,\zeta/4)$  on the interval
$z\in(0,1)$.

It remains to prove 
 that $z_a$ and $z_p$ may indeed be determined
in terms of the zeros of the polynomials $\mathcal{P}_a$ and $\mathcal{P}_p$ given in the  statement of the theorem. For this purpose note that
 Eq. \eqref{eq:hcyl}   and the condition $\partial_r h_n(r_a,\zeta/4)=0$ yield
\begin{equation*}
(2n-1)r_a^{2n}+(n-1)q_{2n}(r_a)+\frac{r_a}{2}q_{2n}'(r_a)=0.
\end{equation*}
Using the definition of $q_{2n}$, and since $r_a>0$, this is equivalent to
\begin{equation*}
2n-1+(2n-1)T_{2n}(1/r_a)-\frac{1}{2r_a}T_{2n}'(1/r_a)=0.
\end{equation*}
Therefore,  $\lambda_a:=1/r_a$ satisfies 
\begin{equation*}
2n-1+(3n-1)T_{2n}(\lambda_a)-nU_{2n}(\lambda_a)=0,
\end{equation*}
where we have made use of the 
 Chebyshev polynomial identities:
\begin{equation*}
T'_{2n}(s) =2n U_{2n-1}(s), \qquad sU_{2n-1}(s)=U_{2n}(s)-T_{2n}(s).
\end{equation*}
This shows that $z_a$ is indeed determined by a root  $\lambda_a>1$ of $\mathcal{P}_a$ as explained in the theorem.
 The unicity of $z_a$ as a critical point of $z\mapsto h_n(z,\zeta/4)$ shown above proves that such root  of $\mathcal{P}_a$
is necessarily unique. The analogous conclusion for $z_p$ is obtained {\em mutatis mutandis} starting from the condition $\partial_r h_n(r_p,0)=0$.

Thus, we have shown that indeed $A_1^+$, $P_1^+$ and $Q_1$ are the unique (non-collision) critical points of $h_n$ on the region $(z,\theta)\in [0,1)\times [0,\zeta/2)$.  We will now  prove that  $A_1^+$ is a local minimum, whereas $P_1^+$ and $Q_1$ are saddle points of $h_n$.
Starting from the condition $\partial_{\theta}h_{n}(z,\theta)=-\sin(2n\theta)G(z,\theta)$ with $G$ positive we have
\begin{equation*}
\partial_{\theta}^2h_{n}(z,\theta)=-2n\cos(2n\theta)G(z,\theta)-\sin
(2n\theta)\partial_{\theta}G(z,\theta),
 \quad  \partial_{z}\partial_{\theta}h_{n}(z,\theta)=-\sin(2n\theta)\partial_{z}G(z,\theta),
\end{equation*}
and therefore
\begin{equation}
\label{eq:ineqaux1Hessian}
\partial_{\theta}^2h_{n}(z,0)<0, \quad  \partial_{\theta}^2h_{n}(z,\zeta/4)>0, \qquad  \partial_{z}\partial_{\theta}h_{n}(z,0)=\partial_{z}\partial_{\theta}h_{n}(z,\zeta/4)=0.  
\end{equation}
On the other hand, we show below that
\begin{equation}
\label{eq:ineqaux2Hessian}
 \partial^2_{z}h_n(z_p,0 )>0, \qquad \partial^2_{z}h_n(z_a,\zeta/4)>0, \qquad  \partial^2_{z}h_n(0,\zeta /4)<0.
\end{equation}
The relations in
 \eqref{eq:ineqaux1Hessian} and  \eqref{eq:ineqaux2Hessian} prove that the Hessian matrix of $h_n$ is positive definite at $A_1^+$ and indefinite at $P_1^+$ and $Q_1$. 

To prove that   the  inequalities in \eqref{eq:ineqaux2Hessian} indeed hold we note that
\begin{equation}
\label{eq:chain-rule-aux-thm-dihedral}
\partial_{z}^{2}h_{n}=\partial_{z}\left( \left (  \partial_{r}h_{n} \right ) \left ( \partial
_{z}r\right) \right ) = \left ( \partial_{r}^{2}h_{n} \right ) \left(  \partial_{z}r\right)
^{2}+ \left ( \partial_{r}h_{n} \right ) \left ( \partial_{z}^{2}r \right ).
\end{equation}
Evaluating at $(z_p,0)$ and  $(z_a,\zeta/4)$ yields
\[
\partial_{z}^{2}h_{n}(z_{p},0)=\partial_{r}^{2}h_{n}(r_{p},0)\left(  \partial
_{z}r(z_{p})\right)  ^{2}>0, \qquad \partial_{z}^{2}h_{n}(z_{a},\zeta/4)=\partial_{r}^{2}h_{n}(r_{a},\zeta/4)\left(  \partial
_{z}r(z_{a})\right)  ^{2}>0,
\]
where we have used \eqref{eq:ineq-sign-sec-derivative-hn-r} and $\partial_{r}h_{n}(r_{p},0)=\partial_{r}h_{n}(r_{a},\zeta/4)=0$.
On the other hand, taking the 
the limit as $z\rightarrow0$ with $\theta=\zeta/4$ in \eqref{eq:chain-rule-aux-thm-dihedral}, and considering that in this limit  $r\rightarrow1$, $\partial
_{z}r\rightarrow0$ and  $\partial_{z}^{2}r\rightarrow-1$,  we obtain
\[
\partial_{z}^{2}h_{n}(0,\zeta/4)  =-\partial_{r}h_{n}(1,\zeta/4)=-n\left(  n-1\right)
^{2}\text{,}%
\]
where the last identity follows from \eqref{eq:aux-thm-dihedral-partialhr}. In particular, this  shows  that the third  inequality of \eqref{eq:ineqaux2Hessian} also holds.

Finally, we show that $A_j^\pm$,  $P_j^\pm$ and $Q_j$ respectively correspond to anti-prism, prism and polygonal equilibrium configurations of  \eqref{eq:motion} with the stated properties.
We begin by noting that  the set $\{A_j^\pm : j=1,\dots,2n\}$ consists of two  $\mathbb{D}_n$-orbits given by
\begin{equation*}
\{ A_{\mbox{\tiny $j$ odd}}^+,  A_{\mbox{\tiny $j$ even}}^-\} \quad \mbox{and} \quad \{ A_{\mbox{\tiny $j$ even}}^+,  A_{\mbox{\tiny $j$ odd}}^-\}. 
\end{equation*}
Each of these orbits has $2n$ points that lie on the vertices of an $n$-gon anti-prism as described in item (ii)(a) of the theorem. It follows that, for any ordering of $\mathbb{D}_n$,
 the mapping $\rho_{(\mathbb{D}_n,\emptyset)}$ defined by \eqref{eq:embedding} maps each of the points $A_j^{\pm}$ into  an anti-prism configuration with the given properties. Item (ii) of Theorem 
 \ref{th-main-symmetry} implies that these are equilibrium configurations of \eqref{eq:motion}. The analogous conclusion about the prism configurations is obtained by the same reasoning but  noting this time that   the set $\{P_j^\pm : j=1,\dots,2n\}$ consists of two  $\mathbb{D}_n$-orbits given by
$\{ P_{\mbox{\tiny $j$ even}}^\pm \}$ and $\{ P_{\mbox{\tiny $j$ odd}}^\pm\}$. 
The conclusion about $Q_j$ is also analogous but it is reached at once since $\{Q_j\}$ consists of a single  $\mathbb{D}_n$-orbit whose points lie on a regular $n$-gon at the equator.

%
%
%

\end{proof}

\section{$\mathbb{D}_n$-symmetric solutions of $N=2n +2$ vortices (two   antipodal vortices remain fixed)}
\label{sec:Dn-poles}

We continue to consider $K=\mathbb{D}_n$ but now we take $F=\{(0,0,\pm 1)\}$ so $N=2n+2$.
Note that the set $F$ satisfies both requirements in our setup since it is  $\mathbb{D}_n$-invariant and is contained in 
 $\mathcal{F}[\mathbb{D}_n]$ (see \eqref{eq:colset-Dn}). 
We analyse the  reduced system  \eqref{eq:reduced-system}  in detail. Since the set $F$ is also  $\mathbb{D}_{2n}$-invariant then, 
 in view of item (iii) of  Theorem \ref{th-main-symmetry} and Table \eqref{eq:table-normalizers},
 the system is  $\mathbb{D}_{2n}$-equivariant for all $n\geq 2$ (note that, in contrast with the previous section, the set
  $F=\{(0,0,\pm 1)\}$ is {\em not} $\mathbb{O}$-invariant so we cannot expect that
 the reduced system is  $\mathbb{O}$-equivariant for $n=2$).

%
%
%

\subsection{ Classification of $\mathbb{D}_n$-symmetric equilibrium configurations of $N=2n+2$ vortices}

The  analogous version of Theorem \ref{th:dihedral} on the classification and stability of the collision and non-collision equilibria 
of the reduced system  \eqref{eq:reduced-system} in this case is given next.

\begin{theorem}
\label{th:dihedral-poles}
Let  $K=\mathbb{D}_n$,  $F=\{(0,0,\pm 1)\}$, $n\geq 2$ and $N=2n+2$. 
The classification and stability of the  equilibrium points of the reduced system  \eqref{eq:reduced-system} is 
as follows.
\begin{enumerate}
\item The only non-collision equilibria of  \eqref{eq:reduced-system} are: 
\begin{enumerate}

\item For $n\geq3$, the   \defn{anti-prism with poles equilibrium configurations}  at the $4n$ points given by:
 \begin{equation*}
\hat A_j^\pm:=\left (  \sqrt{1-\hat z_a^2} \, \cos \left ( (2j-1) \zeta/4  \right ) \, , \,  \sqrt{1- \hat z_a^2} \, \sin \left ( (2j-1)\zeta/4  \right )\, ,  \, 
\pm \hat z_a \right ), \qquad j=1, \dots, 2n,
\end{equation*}
where $\hat z_a=\hat z_a(n)\in (0,1)$ is uniquely determined as  $\hat z_a^2=1-1/\hat \lambda_a^2$ where $\hat \lambda_a=\hat \lambda_a(n)$ 
is the unique root greater than $1$ of the polynomial 
\begin{equation*}
\hat{\mathcal{P}}_a(\lambda):=(3n+1)T_{2n}(\lambda)-nU_{2n}(\lambda)+2n+1.
\end{equation*}

These are {\em stable} equilibria of  \eqref{eq:reduced-system}
which  correspond to equilibrium configurations of \eqref{eq:motion} where $2n$ vortices occupy  the vertices of the  $S^2$-inscribed $n$-gon-anti-prism of height 
$2 \hat z_a$, and the 2 remaining vortices are antipodal and 
determine the diameter that is perpendicular to the antiprism   (see
Fig.\ref{F:anti-prism-poles}).

\item For all $n\geq 2$, the   \defn{prism  with poles equilibrium configurations}  at  the $4n$ points given by:
 \begin{equation*}
\hat P^\pm_j:=\left (  \sqrt{1- \hat z_p^2} \, \cos \left ( (j-1) \zeta/2  \right ) \, , \,  \sqrt{1-\hat z_p^2} \, \sin \left ( (j-1) \zeta/2  \right )\, ,  \, \pm \hat 
z_p \right ), \qquad j=1, \dots, 2n,
\end{equation*}
where $\hat z_p=\hat z_p(n)\in (0,1)$ is uniquely determined as  $\hat z_p^2=1 -1/\hat \lambda_p^2$ where $\hat \lambda_p=\hat \lambda_p(n)$ 
is the unique root greater than $1$ of the polynomial
\begin{equation*}
\hat{\mathcal{P}}_p(\lambda):= (3n+1)T_{2n}(\lambda)-nU_{2n}(\lambda)-2n-1.
\end{equation*}
These are {\em unstable} equilibria  (saddle points)  of   \eqref{eq:reduced-system} 
which  correspond to equilibrium configurations of \eqref{eq:motion} where $2n$ vortices occupy  the vertices of the  $S^2$-inscribed $n$-gon-prism of height 
$2 \hat z_p$, and the 2 remaining vortices are antipodal and 
determine the diameter that is perpendicular to the prism   (see
Fig.\ref{F:prism-poles}).

\item For all $n\geq 2$, the  \defn{polygon with poles equilibrium configurations} at  the $2n$ points given by:
\begin{equation*}
\hat  Q_j:=\left (  \cos \left ( (2j-1)/4  \right ), \sin \left ( (2j-1)\zeta/4 \right ), 0 \right ), \qquad j=1, \dots, 2n.
\end{equation*}
These points are stable equilibria of \eqref{eq:reduced-system}  if $n=2$ and unstable (saddle points) if $n\geq 3$.   
Moreover, they  correspond to equilibrium configurations of \eqref{eq:motion}  where $2n$ vortices occupy  the vertices of a regular $2n$-gon at the equator 
and the 2 remaining vortices are antipodal and determine the diameter that is perpendicular to the polygon  (see
Fig.\ref{F:polygon-poles}).
\end{enumerate}

\item The only  collision equilibria of (the regularisation of)  \eqref{eq:reduced-system} are:
\begin{enumerate}
\item The \defn{polar collisions}  at the north and south poles $(0,0,\pm 1)$. These correspond to collision configurations of \eqref{eq:motion} having
two simultaneous $(n+1)$-tuple collisions at antipodal points (see
Fig.\ref{F:polar-coll}).
\item The \defn{polygonal  with poles collisions} at  the $2n$ points given by:
\begin{equation*}
\hat  C_j:=\left (  \cos \left ( (j-1)\zeta/2  \right ), \sin \left ( (j-1) \zeta/2 \right ), 0 \right ), \qquad j=1, \dots, 2n.
\end{equation*}
 These correspond to collision configurations of \eqref{eq:motion} having
$n$  simultaneous binary collisions at a regular $n$-gon at the equator  and the 2 remaining vortices are antipodal and determine the diameter that is perpendicular to the polygon 
 (see
Fig.\ref{F:polygon-coll}).  
\end{enumerate}
All collision configurations are stable equilibria of  (the regularisation of)  \eqref{eq:reduced-system}.
\end{enumerate}
\end{theorem}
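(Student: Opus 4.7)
The plan is to adapt the proof of Theorem~\ref{th:dihedral} by accounting for the contribution of the two fixed vortices at the poles. A direct computation from \eqref{eq:redHam-mainthm}, using $|u-(0,0,\pm 1)|^{2}=2(1\mp z)$ in the cylindrical coordinates \eqref{eq:cyl-coords}, shows that, modulo additive constants,
\begin{equation*}
\hat h_{n}(z,\theta):=h_{(\mathbb{D}_{n},F)}(z,\theta)=h_{n}(z,\theta)-n\ln(1-z^{2}),
\end{equation*}
where $h_{n}$ is the function analysed in Lemma~\ref{lemma:simpHam-Dn}. Equivalently, in the variable $r=\sqrt{1-z^{2}}$ the only change with respect to Section~\ref{sec:dihedral-proofs} is that the coefficient $n(n-1)$ multiplying $-\ln r$ gets replaced by $n(n+1)$, and the same substitution appears in the Chebyshev form \eqref{eq:hcyl-Cheb}. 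The set $F=\{(0,0,\pm 1)\}$ is $\mathbb{D}_{2n}$-invariant but not $\mathbb{O}$-invariant (even when $n=2$), so item (iii) of Theorem~\ref{th-main-symmetry} makes $\hat h_{n}$ only $\mathbb{D}_{2n}$-equivariant for every $n\geq 2$, and the resulting symmetries allow us to restrict the critical-point search to the fundamental domain $(z,\theta)\in[0,1)\times[0,\zeta/2)$.

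Item (ii) on the collision equilibria follows directly from Proposition~\ref{prop:collision-description}(ii): the set $\mathcal{F}[\mathbb{D}_{n}]$ in \eqref{eq:colset-Dn} splits into the same $\mathbb{D}_{n}$-orbits as in the case $F=\emptyset$, but the poles $(0,0,\pm 1)$ now belong to $F$, which contributes one additional fixed vortex at each pole and upgrades the polar collisions from $n$-tuple to $(n+1)$-tuple; the equatorial points $\hat C_{j}$ are not in $F$ and therefore still correspond to binary collisions, with the two fixed antipodal vortices sitting on the perpendicular diameter.

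For item (i) one follows the proof of Theorem~\ref{th:dihedral} essentially line by line. Since the $\theta$-dependence of $\hat h_{n}$ is identical to that of $h_{n}$, the factorisation $\partial_{\theta}\hat h_{n}=-G(z,\theta)\sin(2n\theta)$ persists, and both $\partial_{\theta}\hat h_{n}$ and $\partial_{z}\partial_{\theta}\hat h_{n}$ vanish precisely on the rays $\theta\in\{0,\zeta/4\}$. The strict convexity estimate \eqref{eq:ineq-sign-sec-derivative-hn-r} is preserved (the extra term $n(n+1)/r^{2}$ only strengthens it), so the map $r\mapsto\hat h_{n}(r,\theta)$ admits at most one interior critical point on each ray. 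For $\theta=0$ one has $\lim_{r\to 0}\hat h_{n}=\lim_{r\to 1}\hat h_{n}=+\infty$, yielding the unique prism minimum $\hat r_{p}$ for every $n\geq 2$; the resulting algebraic equation $(n+1)q_{2n}(r)+(2n+1)r^{2n}+(r/2)q_{2n}'(r)=0$, together with the identities $T_{2n}'=2nU_{2n-1}$ and $\lambda U_{2n-1}=U_{2n}-T_{2n}$, produces the polynomial $\hat{\mathcal{P}}_{p}$, and the analogous computation on $\theta=\zeta/4$ produces $\hat{\mathcal{P}}_{a}$.

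The decisive new feature, and the main bookkeeping obstacle of the argument, occurs on the ray $\theta=\zeta/4$. Using Lemma~\ref{lemma:trig-identity} one computes
\begin{equation*}
\partial_{r}\hat h_{n}(1,\zeta/4)=-n(n+1)+\tfrac{n}{2}\cdot 2n(n-1)=n(n^{2}-2n-1),
\end{equation*}
which is \emph{negative} for $n=2$ and \emph{positive} for $n\geq 3$. Combined with $\lim_{r\to 0}\hat h_{n}(r,\zeta/4)=+\infty$ and the convexity in $r$, this produces a unique interior minimum $\hat r_{a}\in(0,1)$ for $n\geq 3$ (the anti-prism $\hat A_{j}^{\pm}$) but no interior critical point on this ray for $n=2$. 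Applying the chain-rule identity \eqref{eq:chain-rule-aux-thm-dihedral} at $(z,\theta)=(0,\zeta/4)$, together with $\partial_{z}r(0)=0$ and $\partial_{z}^{2}r(0)=-1$, yields $\partial_{z}^{2}\hat h_{n}(0,\zeta/4)=-\partial_{r}\hat h_{n}(1,\zeta/4)=-n(n^{2}-2n-1)$, so the Hessian of $\hat h_{n}$ at $\hat Q_{1}$ is positive definite for $n=2$ (hence $\hat Q_{j}$ is stable) and indefinite for $n\geq 3$ (hence $\hat Q_{j}$ is a saddle). The Hessian computations at $\hat A_{1}^{+}$ and $\hat P_{1}^{+}$, and the identification of the $\mathbb{D}_{n}$-orbits of these critical points with the anti-prism, prism and polygon configurations of \eqref{eq:motion} described in the theorem, are then obtained exactly as in Section~\ref{sec:dihedral-proofs}, with the two polar vortices in $F$ appended to each configuration.
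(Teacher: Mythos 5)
Your proposal is correct and follows essentially the same route as the paper, which likewise reduces everything to $\hat h_n(z,\theta)=h_n(z,\theta)-n\ln(1-z^2)$ and reruns the argument of Theorem \ref{th:dihedral}, with the decisive computation $\partial_r\hat h_n(1,\zeta/4)=n(n-1)^2-2n=n(n^2-2n-1)$ accounting both for the absence of the anti-prism when $n=2$ and for the change of type of $\hat Q_j$ between $n=2$ and $n\geq 3$; indeed you supply more detail (collisions, Hessian at $\hat Q_1$, polynomial derivations) than the paper's own sketch. The only slip is a label/sign swap in your displayed algebraic equation: at $\theta=0$ one has $\cos(2n\theta)=1$, so the condition $\partial_r\hat h_n(\hat r_p,0)=0$ reads $(n+1)q_{2n}(r)-(2n+1)r^{2n}+\tfrac{r}{2}q_{2n}'(r)=0$ and yields $\hat{\mathcal{P}}_p$, whereas the equation you wrote with $+(2n+1)r^{2n}$ is the one for $\theta=\zeta/4$ and yields $\hat{\mathcal{P}}_a$.
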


\begin{proof}[Proof of Theorem  \ref{th:dihedral-poles}] 
In broad terms, the proof of the theorem is analogous to that of Theorem \ref{th:dihedral} so we only indicate the key differences. The main one is that 
the expressions for the reduced Hamiltonian in  Lemma \ref{lemma:simpHam-Dn} have to be modified 
 to account for the presence of the vortices at the poles. In view of \eqref{eq:redHam-mainthm}, such correction is given by the addition of the term
\begin{equation*}
-\frac{m}{2} \sum_{j=m+1}^N\ln \left \vert u - f_j \right \vert^2=
-n\left  (\ln \left \vert u - (0,0,1)\right  \vert^2   +  \ln \left \vert u - (0,0,-1)\right  \vert^2 \right  ).
\end{equation*}
Writing $u$ in the cylindrical coordinates \eqref{eq:cyl-coords} and performing elementary operations shows that, up to the addition of a constant,
 the above expression equals $-n \ln (1-z^2)$. Therefore, if we simplify the notation and  denote
 the reduced Hamiltonian $h_{(\mathbb{D}_n, \{(0,0,\pm 1)\} )}:S^2\to \R$ simply by $\hat h_n$, we conclude that
 \begin{equation}
 \label{eq:red-ham-dihedral-poles}
\hat h_n(z,\theta)=h_n(z,\theta) - n \ln (1-z^2),
\end{equation}
where $h_n(z,\theta)$ is given by \eqref{eq:hcyl}, \eqref{eq:hcyl-Cheb}. 

Using the expression \eqref{eq:red-ham-dihedral-poles}, one may proceed in direct analogy with the  
proof of Theorem \ref{th:dihedral} to prove the result.  One difference that is worth  pointing out is the computation of 
\[
\partial_{r}\hat{h}_{n}(1,\zeta/4)=\partial_{r}h_{n}(1,\zeta/4)-2n=n\left(
n-1\right)  ^{2}-2n=n\left(  n^{2}-2n-1\right)  ,
\] 
Thus $\partial_{r}\hat{h}_{n}(1,\zeta/4)=-2$ for $n=2$ and
$\partial_{r}\hat{h}_{n}(1,\zeta/4)>0$ for $n\geq3$. This implies that for $n\geq 3$  there is a unique 
$r_{a}\in(0,1)$ such that $\partial
_{r}\hat{h}_{n}(r_{a},\zeta/4)=0$, while for  $n=2$ there are no solutions. Another difference is that
 the polygonal equilibrium points $Q_j$ 
are local minima of  $\hat h_n$ for $n=2$ and saddle points of  $\hat h_n$  for $n\geq 3$. 
We omit this and all other details. 
\end{proof}

\begin{figure}[ht]
\begin{subfigure}{.3\textwidth}
  \centering
  \includegraphics[width=.8\linewidth]{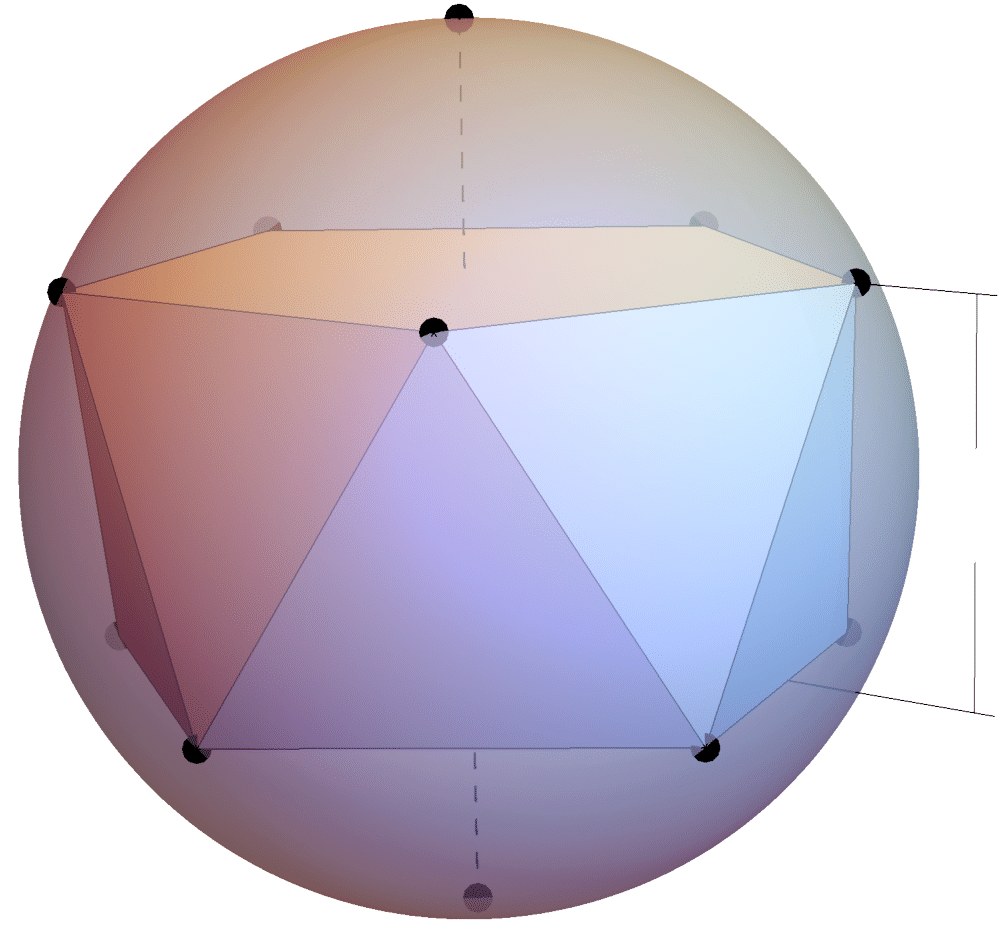}  
   \put (-8,42) {\small{$2\hat z_a$}}
  \caption{Anti-prism  with poles equilibrium. }
  \label{F:anti-prism-poles}
\end{subfigure}
\quad
\begin{subfigure}{.3\textwidth}
  \centering
  \includegraphics[width=.8\linewidth]{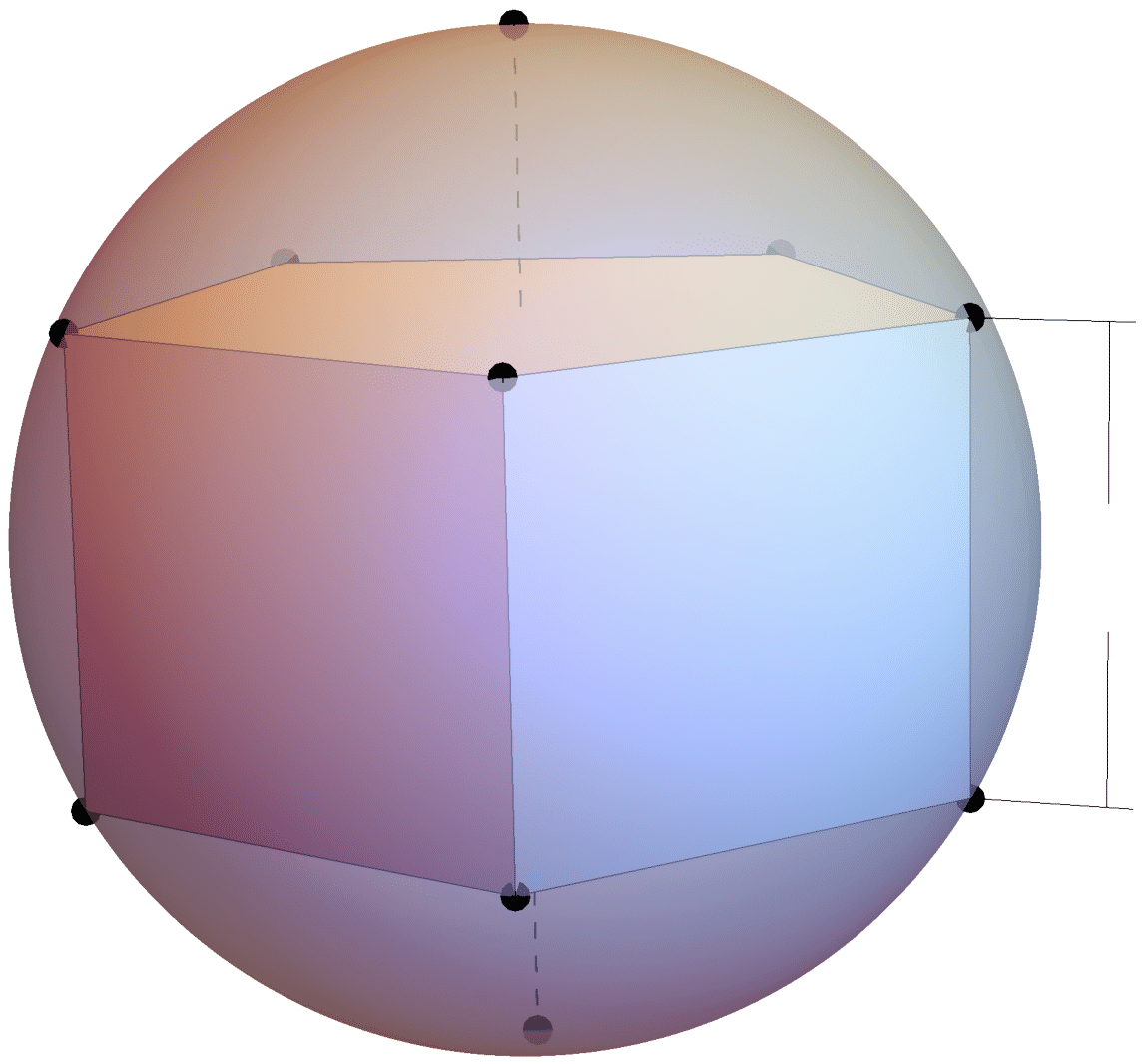}  
    \put (-8,43) {\small{$2\hat z_p$}}
  \caption{Prism with poles equilibrium. }
  \label{F:prism-poles}
\end{subfigure}
\quad
\begin{subfigure}{.3\textwidth}
  \centering
  \includegraphics[width=.7\linewidth]{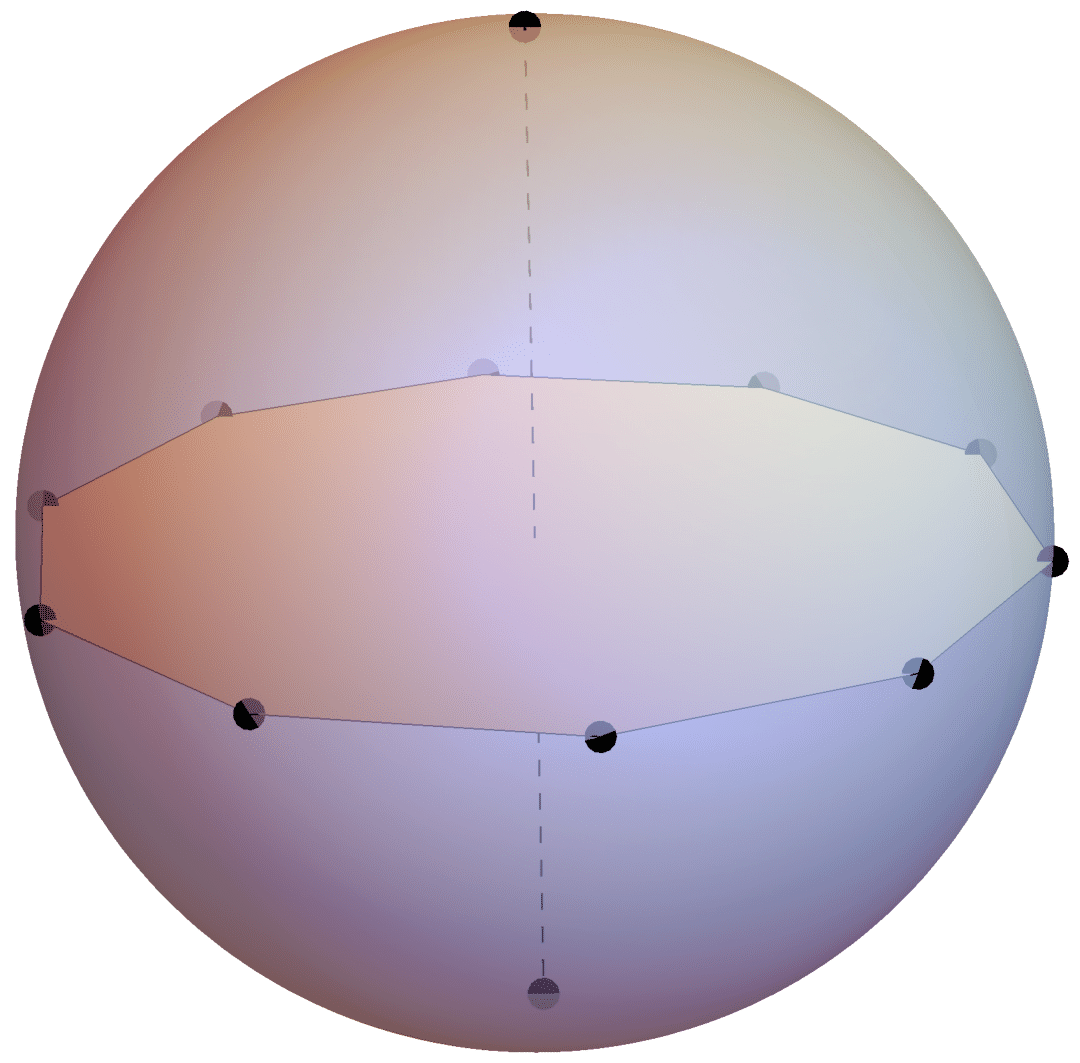}  
  \caption{Polygon with poles equilibrium.}
  \label{F:polygon-poles}
\end{subfigure}
 \\
\begin{subfigure}{.5\textwidth}
  \centering
  \includegraphics[width=.4\linewidth]{pole-col-bis-1.png}  
  \caption{Polar collision (($n+1$)-tuple collision  at \\ antipodal points).}
  \label{F:polar-coll}
\end{subfigure} 
\begin{subfigure}{.5\textwidth}
  \centering
  \includegraphics[width=.4\linewidth]{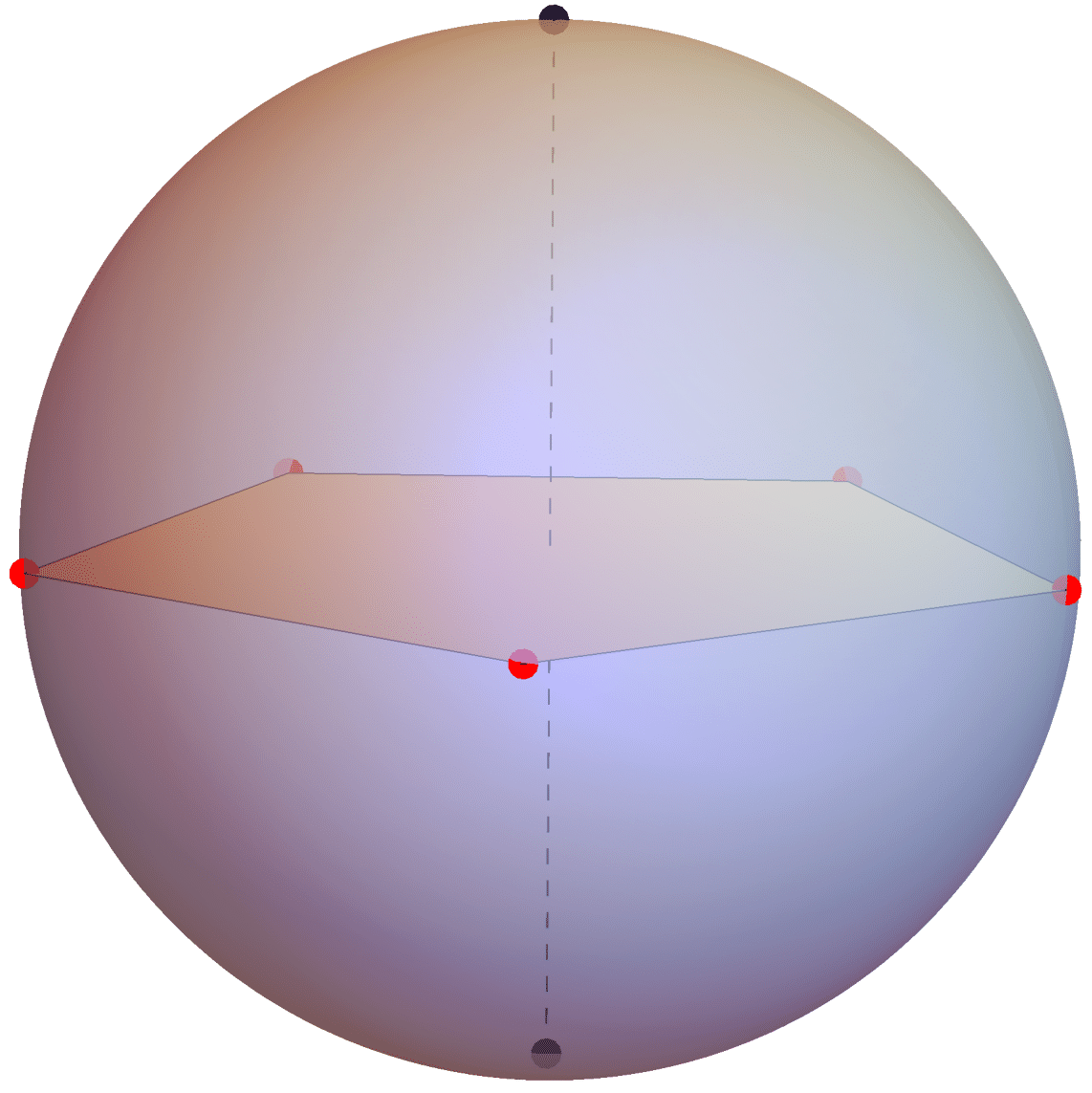}  
  \caption{Polygonal with poles collision  (binary collisions at the vertices of a regular $n$-gon).}
  \label{F:polygon-coll}
\end{subfigure} 
\caption{Non-collision and collision equilibrium configurations described in  Theorem \ref{th:dihedral-poles} for $n=5$ and $N=12$.}
\label{fig:dihedral-equilibria-poles}
\end{figure}

The  Tables below provide explicit expressions for the polynomials $\hat{\mathcal{P}}_a(\lambda)$,  $\hat{\mathcal{P}}_p(\lambda)$ 
and the numbers $\hat \lambda_a$,  $\hat z_a$, $\hat \lambda_p$ and  $\hat  z_p$,  in the statement of the theorem for $n=2, \dots, 5$.

\begin{equation}
\label{eq:table-D_n-za-roots-w-poles}
\small
\begin{array}
[c]{|c|c|c|c|}%
\hline
n  & \hat{\mathcal{P}}_a(\lambda)  &\hat \lambda_a &\hat z_a \\ \hline
2  &24 \lambda ^4-32 \lambda ^2+10  &-  &-   \\ \hline
3  & 128 \lambda ^6-240 \lambda ^4+108 \lambda ^2 &\frac{3}{2 \sqrt{2}} &\frac{1}{3} \\ \hline
4 &640 \lambda ^8-1536 \lambda ^6+1120 \lambda ^4-256
   \lambda ^2+18 &\frac{1}{2} \sqrt{\frac{1}{5}
   \left(14+\sqrt{106}\right)} &\frac{1}{3} \sqrt{2 \sqrt{106}-19} \\ \hline
5&   3072 \lambda ^{10}-8960 \lambda ^8+8960 \lambda
   ^6-3600 \lambda ^4+500 \lambda ^2 &\frac{\sqrt{5}}{2} &\frac{1}{\sqrt{5}}  \\ \hline
\end{array}
\end{equation}

\begin{equation}
\label{eq:table-D_n-zp-roots-w-poles}
\small
\begin{array}
[c]{|c|c|c|c|}%
\hline
n  &\hat{\mathcal{P}}_p(\lambda)  &\hat \lambda_p &\hat z_p \\ \hline
2  &24 \lambda ^4-32 \lambda ^2 &\frac{2}{\sqrt{3}}&\frac{1}{2}  \\ \hline
3  &128 \lambda ^6-240 \lambda ^4+108 \lambda ^2-14 &\frac{1}{4} \sqrt{13+\sqrt{57}} &\sqrt{\frac{1}{7} \left(\sqrt{57}-6\right)} \\ \hline
4 &640 \lambda ^8-1536 \lambda ^6+1120 \lambda ^4-256
   \lambda ^2 &\frac{1}{2} \sqrt{\frac{1}{5}
   \left(19+\sqrt{41}\right)}&\frac{1}{4} \sqrt{\sqrt{41}-3}\\ \hline
5&  3072 \lambda ^{10}-8960 \lambda ^8+8960 \lambda
   ^6-3600 \lambda ^4+500 \lambda ^2-22 &\approx 1.12677...  &\approx 0.460816...  \\ \hline
\end{array}
\end{equation}

\subsection{ Dynamics of $\mathbb{D}_n$-symmetric configurations of $N=2n$ vortices}

In analogy with Corollary \ref{cor:dihedral-osc}, we may  combine Theorem \ref{th:dihedral-poles}  
 with  Corollary \ref{cor:periodic-orbits-general} to establish the existence of three families 
of periodic orbits of the equations of motion \eqref{eq:motion}.
\begin{corollary} 
\label{cor:dihedral-osc-poles}
Let  $N=2n+2$.
\begin{enumerate}
\item For $n\geq 3$, there exists a 1-parameter family of periodic solutions $v_h(t)$ of  the equations of motion \eqref{eq:motion},  
emanating from the anti-prism with poles equilibrium configurations   described in  Theorem \ref{th:dihedral-poles}. Along these solutions, two vortices remain fixed at the north and south poles  and each remaining vortex travels around a small closed loop around a vertex 
of the $n$-gon anti-prism of height $2\hat z_a(n)$ 
(see Fig. \ref{F:antiprism-osc-poles}).

  \item For $n\geq 2$, there exists a 1-parameter family of periodic solutions $v_h(t)$ of  the equations of motion \eqref{eq:motion}
emanating from the polar collision  described in  Theorem \ref{th:dihedral-poles}.   Along these solutions, two vortices remain fixed at the north and south poles,  $n$ vortices travel along a closed
loop around the north pole and the remaining  $n$ vortices   travel along a closed
loop around the south pole in the opposite direction (see Fig. \ref{F:polar-oscillations-poles}). 

\item  For $n\geq 2$, there exists a 1-parameter family of periodic solutions $v_h(t)$ of  the equations of motion \eqref{eq:motion}
converging to the polygonal collisions with poles described in  Theorem \ref{th:dihedral-poles}.  Along these solutions,  two vortices remain fixed at the north and south poles and there is a pair of vortices that travels along a small closed
loop  around each of the vertices of the regular $n$-gon at the equator  (see Fig. \ref{F:polygonal-osc-poles}).
\end{enumerate}
Each of these families may be  parametrised by the energy $h$. In cases (ii) and (iii) we have 
 $h\to \infty$ as the solutions  approach collision, and  the period approaches zero in this limit.

For each solution described above, the distinct  closed loops traversed by the vortices, and the position the vortices within the loop at each instant, may be obtained from a single one by the action of $\mathbb{D}_n$.
\end{corollary}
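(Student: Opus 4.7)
The plan is to apply Corollary \ref{cor:periodic-orbits-general} directly to the equilibria and collision equilibria of the reduced Hamiltonian $\hat h_n = h_{(\mathbb{D}_n,\{(0,0,\pm 1)\})}$ classified in Theorem \ref{th:dihedral-poles}. The analytical work (finding the critical points, determining their type, and identifying the collision set $\mathcal{F}[\mathbb{D}_n]$) has already been carried out there; the remaining task is to translate the reduced dynamics back to the full system via the embedding $\rho_{(K_o,F_o)}$ of \eqref{eq:embedding} and to describe the motion of each individual vortex.

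First I would dispose of (i). For $n\geq 3$, Theorem \ref{th:dihedral-poles}(i)(a) asserts that $\hat A_1^+$ is a stable (local minimum) non-collision equilibrium of the reduced system, so Corollary \ref{cor:periodic-orbits-general}(i) provides a 1-parameter family of periodic orbits $u_h(t)$ of the reduced system around it. Lifting via $v_h(t)=\rho_{(K_o,F_o)}(u_h(t))$ produces the desired periodic solutions of \eqref{eq:motion}. By construction of the embedding the last two components of $v_h(t)$ are $(0,0,\pm 1)$, so two vortices remain stationary at the poles; the first $2n$ components are $g_j u_h(t)$ with $g_j\in \mathbb{D}_n$, and since each $g_j$ acts as a rigid motion of $S^2$, each such vortex traces a small closed loop around the vertex $g_j \hat A_1^+$ of the anti-prism of height $2\hat z_a$.

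Next, for (ii) and (iii), both $(0,0,\pm 1)$ and $\hat C_1$ lie in $\mathcal{F}[\mathbb{D}_n]$ and are therefore collision equilibria of the regularised reduced system by Theorem \ref{th:dihedral-poles}(ii). I would apply Corollary \ref{cor:periodic-orbits-general}(ii) with $u_0=(0,0,1)$ and $u_0=\hat C_1$ respectively to obtain the families of reduced periodic orbits whose energies tend to $+\infty$ and whose periods tend to $0$ as they approach collision. These asymptotic properties transfer to the lifted solutions $v_h(t)=\rho_{(K_o,F_o)}(u_h(t))$ because $\rho_{(K_o,F_o)}$ is a symplectic diffeomorphism onto $M_{(K_o,F_o)}$.

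The only step requiring genuine care, though not a real obstacle, is the geometric description of how individual vortices move in each family. For (ii) this follows by splitting $\mathbb{D}_n=\langle A\rangle \cup B\langle A\rangle$: the rotations $A^j$ preserve the north--south axis and send a small loop of $u_h(t)$ about the north pole to $n$ rotated copies around the north pole, while the reflections $BA^j$ send each such copy to the south pole with reversed orientation, since $B$ is an orientation-reversing isometry of $S^2$. For (iii) the orbit $\mathbb{D}_n \hat C_1$ is a regular $n$-gon at the equator with isotropy of order two, so as $u_h(t)$ traces a small loop around $\hat C_1$ the two elements of the nontrivial coset of the isotropy produce exactly two vortices encircling each vertex of the polygon, as claimed.
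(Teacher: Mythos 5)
Your proposal is correct and is essentially the argument the paper intends: Corollary \ref{cor:dihedral-osc-poles} is obtained, exactly as you do, by combining the classification of equilibria and collision equilibria in Theorem \ref{th:dihedral-poles} with items (i) and (ii) of Corollary \ref{cor:periodic-orbits-general}, and then reading off the vortex motions through the embedding $\rho_{(K_o,F_o)}$ and the $\mathbb{D}_n$-action. One small correction to your description of case (ii): $B=\diag(1,-1,-1)$ lies in $\SO(3)$ and is therefore an \emph{orientation-preserving} isometry of $S^2$; the reversal of the sense of travel of the southern loops comes instead from the fact that $B$ exchanges the poles and conjugates $A$ to $A^{-1}$ (so the azimuthal direction about the $z$-axis is reversed), not from any orientation reversal of the sphere.
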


\begin{figure}[ht]
\begin{subfigure}{.3\textwidth}
  \centering
  \includegraphics[width=.8\linewidth]{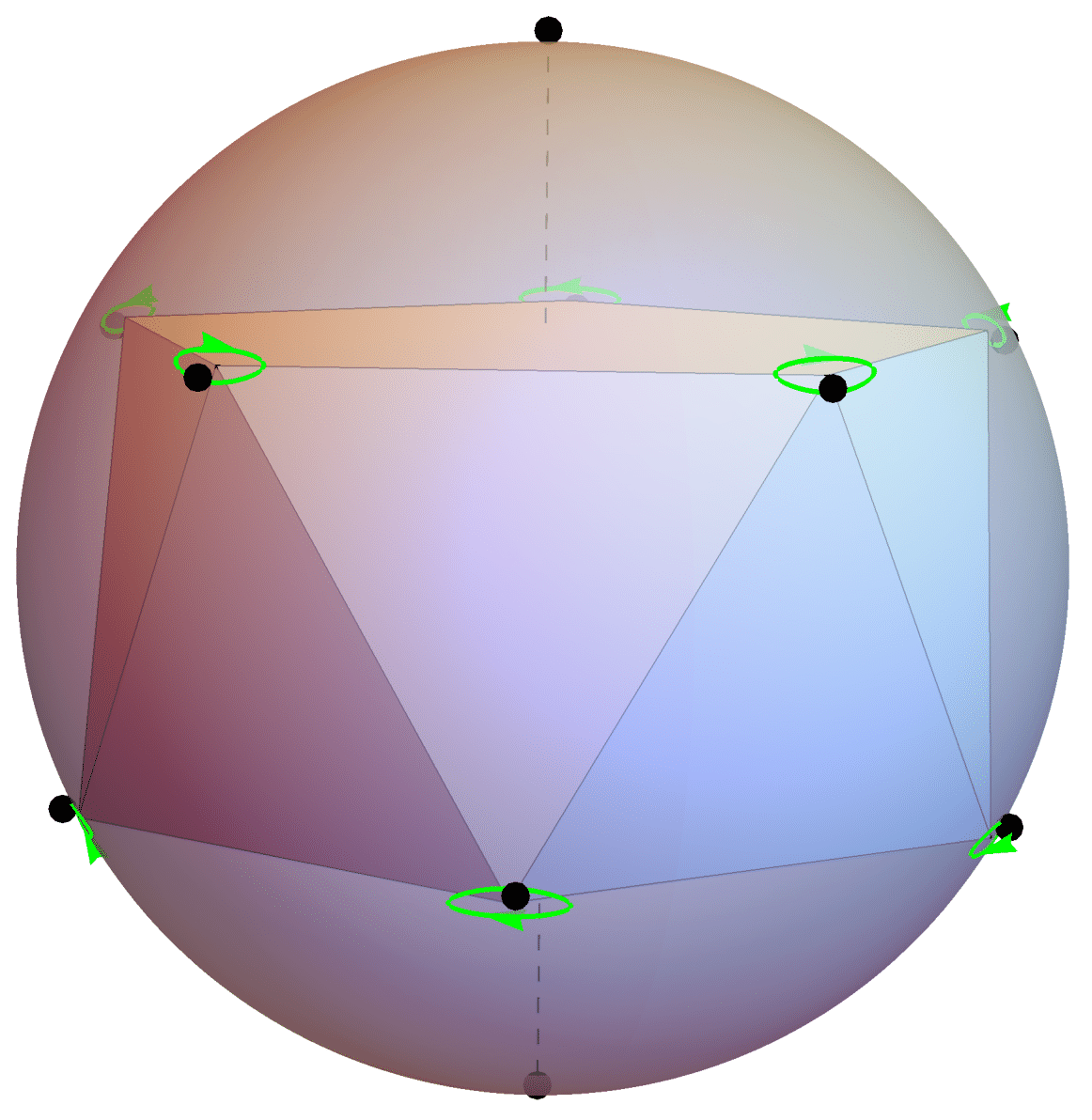}  
  \caption{Periodic solution near the anti-prism equilibrium with poles.}
  \label{F:antiprism-osc-poles}
\end{subfigure}
\quad
\begin{subfigure}{.3\textwidth}
 \centering
  \includegraphics[width=.7\linewidth]{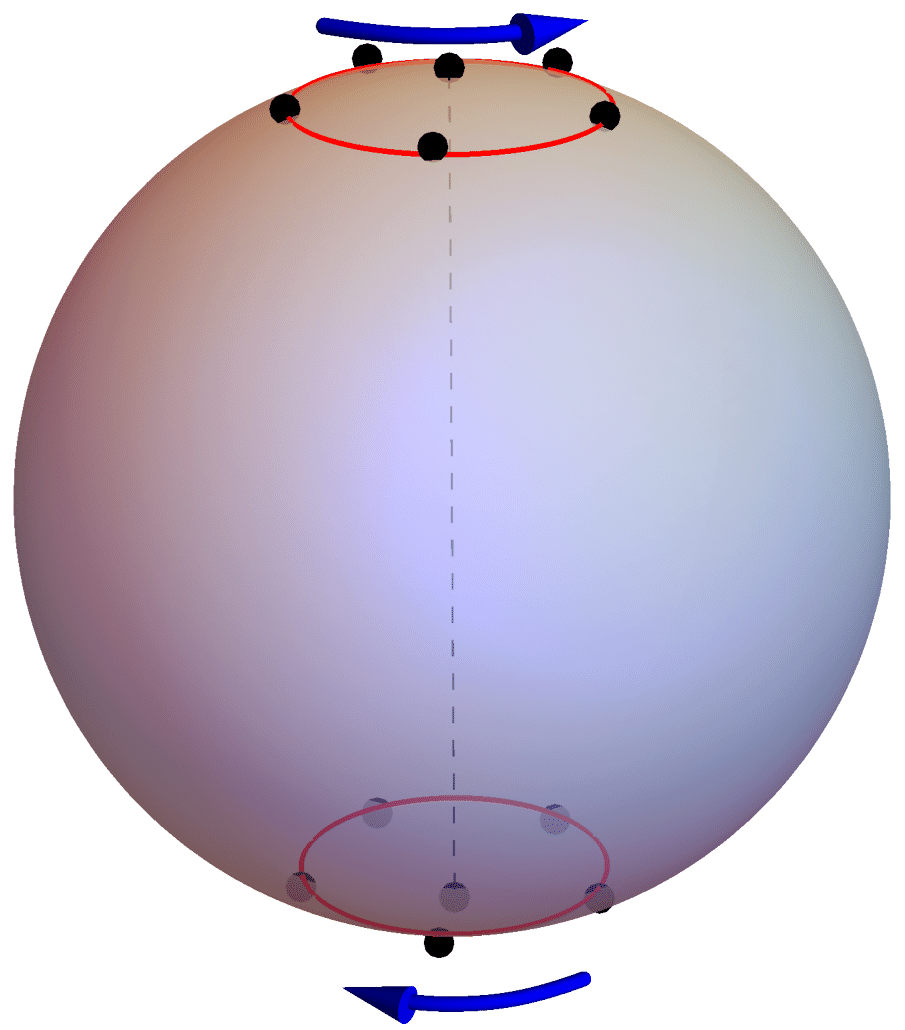}  
  \caption{Periodic solution near the
  polar-collision with poles.}
  \label{F:polar-oscillations-poles}
\end{subfigure}
\quad
\begin{subfigure}{.3\textwidth}
  \centering
  \includegraphics[width=.8\linewidth]{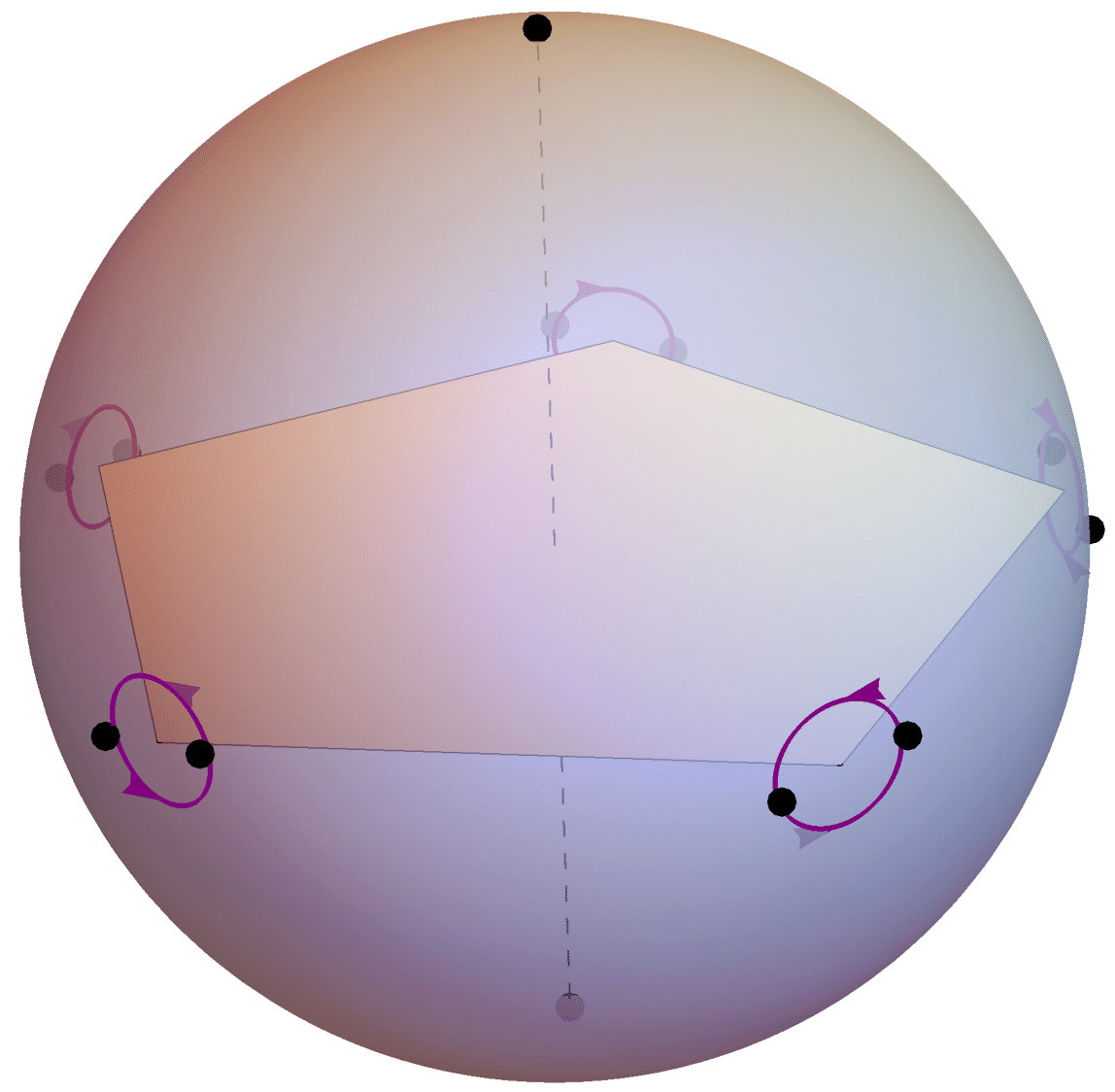}  
  \caption{Periodic solution near the
  polygonal-collision with poles.}
  \label{F:polygonal-osc-poles}
\end{subfigure}
\caption{Periodic solutions  described in Corollary \ref{cor:dihedral-osc-poles} for $n=5$, $N=12$.}
\label{fig:dihedral-cube-osc}
\end{figure}
 
We now specialise our discussion to the cases $n=2, 3, 5$ which lead to appearance of  platonic solids as either polygon with poles equilibria or 
as anti-prism with poles equilibria.

\subsubsection*{Case $n=2$, $N=6$. Nonlinear small oscillations around the octahedron.} 

In this case the polygonal with poles equilibrium configurations in Theorem \ref{th:dihedral-poles} are stable and correspond to octahedral 
equilibrium configurations of  \eqref{eq:motion}. A 1-parameter family of periodic solutions emanating from this configuration is 
established from Corollary \ref{cor:periodic-orbits-general}. Hence, we have a family of non-linear normal modes of oscillation with $\mathbb{D}_2$-symmetry around the octahedral
equilibrium. We emphasise  that this family is different from the one determined in the previous section
by looking at the octahedron as  the anti-prism equilibria with symmetry group  $\mathbb{D}_3$ and $F=\emptyset$.

On the other hand, according to Table \eqref{eq:table-D_n-zp-roots-w-poles} the prism with poles equilibrium configurations have height $1$. It is a simple
exercise to verify that the prisms degenerate and, together with the poles, form  hexagons which are contained on an equatorial plane.

The phase space of the (regularised) reduced dynamics obtained numerically is illustrated in Figure~\ref{fig:n6-D2} below. 
The polygonal with poles equilibrium points, $\hat Q_j$, corresponding to the octahedron configuration are indicated in green. The prism with poles 
equilibrium points, $\hat{P}_j^\pm$, corresponding to the hexagon configuration are illustrated in black. Finally, the polar collisions are red while the 
polygon with poles configurations are purple.  We have used
the same colour code to indicate either periodic orbits near the stable equilibria or heteroclinic orbits emanating from the unstable equilibria.

Finally, we note that the subset $F=\{(0,0,\pm 1)\}$ is not 
invariant under the action of $\mathbb{O}= N(\mathbb{D}_2)$. However, $F$ is invariant under $ \mathbb{D}_4$ and moreover, 
$\mathbb{D}_2<\mathbb{D}_4<\mathbb{O}=N(\mathbb{D}_2)$. So,
as  predicted by item (iii) of Theorem~\ref{th-main-symmetry}, 
 we observe a $\mathbb{D}_4$  symmetry in the reduced dynamics.

\subsubsection*{Case $n=3$, $N=8$. Nonlinear small oscillations around the cube.} 

For $n=3$, Table \eqref{eq:table-D_n-za-roots-w-poles}  indicates that  the height of the anti-prism  is  $2/3$. One may  verify 
that the resulting anti-prism with poles is in fact a cube whose edges have length $2/\sqrt{3}$. Despite the instability of these configurations
as equilibria of \eqref{eq:motion}, we conclude from item (i) of Corollary \ref{cor:dihedral-osc-poles} that there is a family of nonlinear small oscillations
emanating from these configurations.

The phase space of the (regularised) reduced dynamics obtained numerically is illustrated in Figure~\ref{fig:n8-D3} 
below and the colour code is similar to the
one used in Figures \ref{fig:n6-D3} and \ref{fig:n8-D4}.
The anti-prism equilibrium points $\hat{A}_j^{\pm}$ are indicated in green,
the prism equilibrium points $\hat{P}_j^\pm$ in blue, polygonal  equilibrium points  $\hat{Q}_j$ in black, polar  collisions in red and polygonal collisions  $\hat{C}_j$ in purple. 
The same colour is used to indicate either periodic orbits near the stable equilibria or heteroclinic/homoclinic orbits emanating from the unstable equilibria.
There is also a family  of periodic orbits that do not approach an equilibria or a collision that we have indicated in orange. 
Considering that the set $F=\{(0,0,\pm 1)\}$ is 
invariant under the action of $\mathbb{D}_6$, then, as  predicted by item (iii) of Theorem~\ref{th-main-symmetry}, 
 we observe a $\mathbb{D}_6$-symmetry in the reduced dynamics.

\subsubsection*{Case $n=5$, $N=12$.  Nonlinear small oscillations around the icosahedron.} 

For $n=5$, we read from Table \eqref{eq:table-D_n-zp-roots}  that  the height of the anti-prism with poles configuration is  $2/\sqrt{5}$ and one may show that this corresponds to an
inscribed icosahedron whose edges have  length $\sqrt{2-\frac{2}{\sqrt{5}}}$.  Figure \ref{F:anti-prism-poles} illustrates this. By connecting the north and south pole with each one of the vertices
on the top and bottom faces of the anti-prism we get an icosahedron. These configurations are known \cite{K} to be stable equilibria of  \eqref{eq:motion} and item (i) of  Corollary  \ref{cor:dihedral-osc-poles} 
shows the existence of a family of small oscillations
emanating from them.
The phase space of the (regularised) reduced dynamics obtained numerically is illustrated in Figure~\ref{fig:n12-D5} below. The colour 
code is identical to the one followed in the case $n=3$ described above. This time  we observe $\mathbb{D}_{10}$-symmetry in the reduced dynamics.

\begin{figure}[ht]
\begin{subfigure}{.32\textwidth}
  \centering
  \includegraphics[width=.8\linewidth]{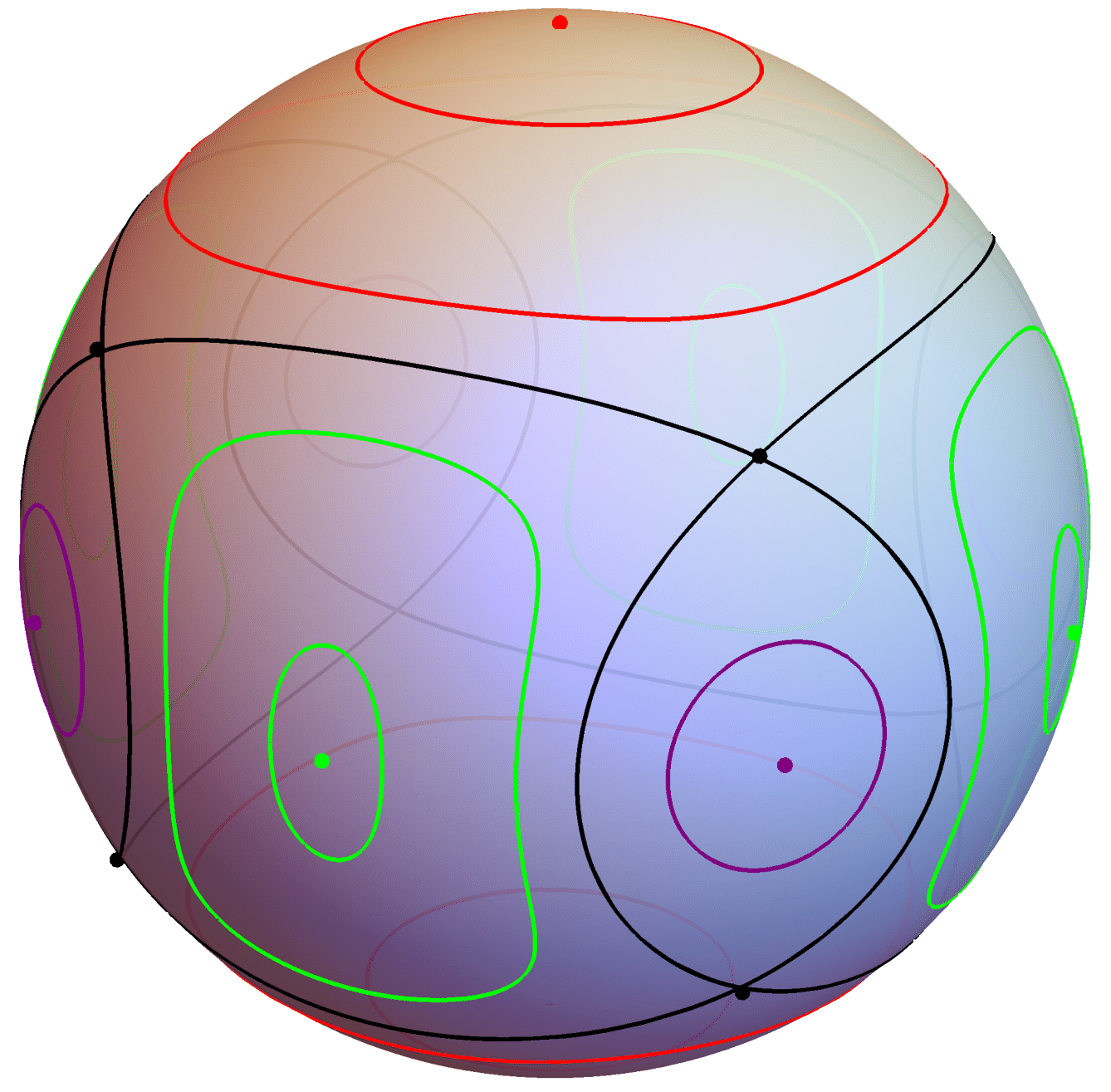}  
  \caption{$n=2$, $N=6$.}
  \label{fig:n6-D2}
\end{subfigure}
\;
\begin{subfigure}{.32\textwidth}
 \centering
  \includegraphics[width=.8\linewidth]{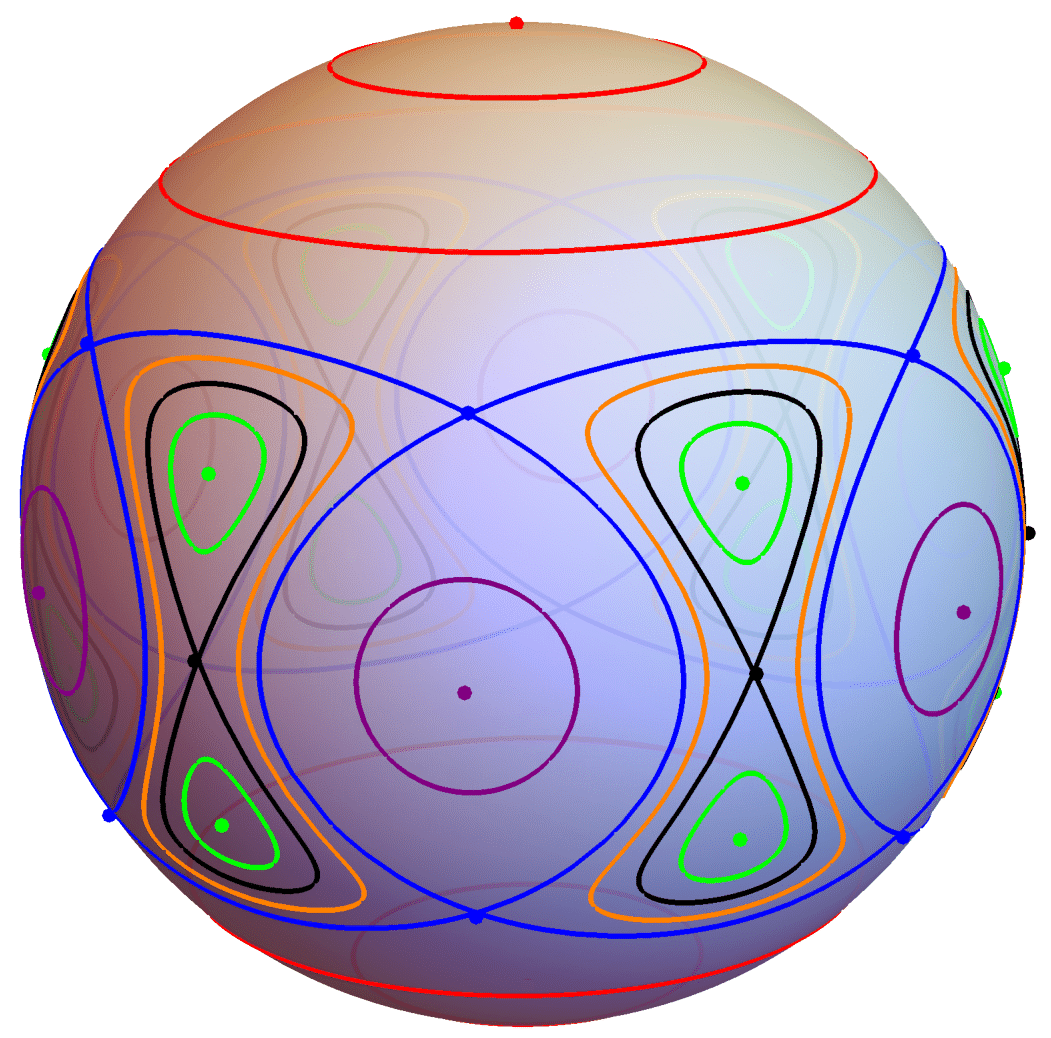}  
  \caption{$n=3$, $N=8$.}
  \label{fig:n8-D3}
\end{subfigure}
\;
\begin{subfigure}{.32\textwidth}
  \centering
  \includegraphics[width=.8\linewidth]{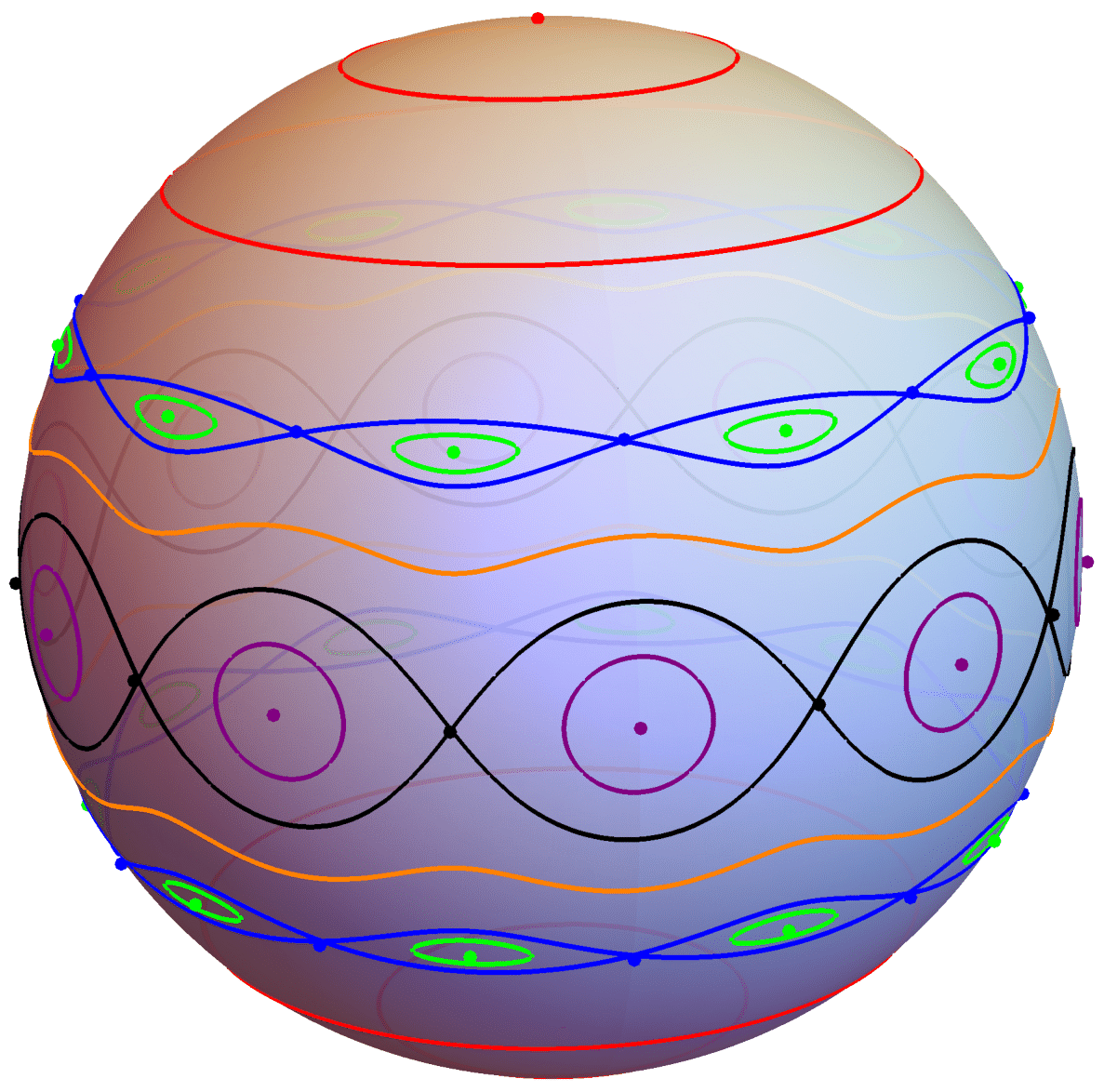}  
  \caption{$n=5$, $N=12$.}
  \label{fig:n12-D5}
\end{subfigure}
\caption{Phase space of the (regularised) reduced system \eqref{eq:reduced-system} for $K=\mathbb{D}_n$, $F=\{(0,0,\pm 1)\}$ and $N=2n+2$ for  $N=2, 3, 5$. See text for explanations and description of  the colour code. }
\label{fig:Dn-F-poles}
\end{figure}

\section{$\mathbb{T}$-symmetric solutions for $N=12$ vortices (with no fixed vortices)}
\label{sec:Tet}

We consider the tetrahedral subgroup $\mathbb{T}<\SO(3)$ generated by the matrices 
\begin{equation*}
\begin{pmatrix} 0 & 1 & 0 \\ 0 & 0  & 1 \\ 1 & 0 & 0  \end{pmatrix} \qquad \mbox{and} \qquad \begin{pmatrix}  1 & 0 & 0 \\ 0 & -1  & 0 \\ 0 & 0 & -1  \end{pmatrix}.
\end{equation*}
 Then $\mathbb{T}$ has order 12 and is isomorphic to the subgroup $A_4$ of even permutations of $4$ elements. An explicit group isomorphism may be defined in terms of the above generators as 
\begin{equation*}
\begin{pmatrix} 0 & 1 & 0 \\ 0 & 0  & 1 \\ 1 & 0 & 0  \end{pmatrix} \mapsto (1,2,3)(4), \qquad \begin{pmatrix} 0 & 1 & 0 \\ 0 & 0  & 1 \\ 1 & 0 & 0  \end{pmatrix} \mapsto (1,2)(3,4),
\end{equation*}
where we have used the standard cyclic notation for permutations. The group $\mathbb{T}$ consists of the orientation preserving symmetries of the tetrahedra $\mathcal{T}_1, \mathcal{T}_2$ with vertices at 
\begin{equation}
\label{eq:tetrahedra}
\begin{split}
\mathcal{T}_1=\{ c(1,1,1),\,  c(-1,-1,1), \, c(-1,1,-1),\, c(1,-1,-1) \},  \\
\mathcal{T}_2=\{ c(-1,-1,-1),\,  c(1,1,-1), \, c(1,-1,1),\, c(-1,1,1) \},
\end{split}
\end{equation}
where $c^{-1}=\sqrt{3}$. One may check that
\begin{equation}
\label{eq:F[T]}
\mathcal{F}[\mathbb{T}]=\mathcal{T}_1\cup \mathcal{T}_2 \cup \{(\pm 1,0,0), ( 0,\pm1,0),  ( 0,0,\pm1)\}.
\end{equation}

\subsection{Classification of $\mathbb{T}$-symmetric equilibrium configurations of $N=12$ vortices  }

We now analyse the  reduced system  \eqref{eq:reduced-system}  in detail in the case  $K=\mathbb{T}$ and $F=\emptyset$ so $N=12$.
Item (iii) of  Theorem \ref{th-main-symmetry} and Table \eqref{eq:table-normalizers} indicate that such
  system is $\mathbb{O}$-equivariant.
The theorem below gives the full classification of the collision and non-collision equilibria.

\begin{theorem}
\label{th:tetrahedral}
Let  $K=\mathbb{T}$,  $F=\emptyset$  and $N=12$. 
The classification and stability of the  equilibrium points of the reduced system  \eqref{eq:reduced-system} is 
as follows.
\begin{enumerate}
\item The only non-collision equilibria of  \eqref{eq:reduced-system} are: 
\begin{enumerate}

\item The   \defn{icosahedron  equilibrium configurations} occurring at all $24$ points obtained by permuting the entries and considering all  sign flips of 
\begin{equation*}
 \frac{1}{\sqrt{1+\phi^2}} \left (\pm \phi, \pm 1, 0 \right ),
\end{equation*} 
where $\phi=\frac{1+\sqrt{5}}{2}$ is the golden mean.
These are {\em stable} equilibria of  \eqref{eq:reduced-system}
which  correspond to equilibrium configurations of \eqref{eq:motion} where the vortices occupy  the 
vertices of an $S^2$-inscribed regular icosahedron    (see Fig.\ref{F:icosahedron}).

\item The   \defn{truncated tetrahedron configurations} occurring at all  $24$ points obtained by permuting the entries and considering all  sign flips of 
\begin{equation*}
\left (\pm \alpha , \pm \alpha, \pm \sqrt{1-2\alpha^2}\right ),
\end{equation*} 
where $0<\alpha\approx 0.269484\dots $ is characterised by the condition that $\alpha^2$ is the unique
zero  of the polynomial $p(\lambda)= 1-13\lambda -13\lambda^2+33\lambda^3$ between $0$ and $1/2$.
These are {\em unstable} equilibria of  \eqref{eq:reduced-system}
which  correspond to equilibrium configurations of \eqref{eq:motion} where the vortices occupy  the 
vertices of an irregular $S^2$-inscribed truncated tetrahedron    (see Fig.\ref{F:trunc-tetrahedron} and Remark \ref{rmk:dimensions-truc-tet}).

\item The   \defn{cub-octahedron configurations} occurring at all $12$ points obtained by permuting the entries and considering all  sign flips of 
\begin{equation*}
\left (\pm \frac{1}{\sqrt{2}}, \pm \frac{1}{\sqrt{2}}, 0 \right ).
\end{equation*}
These are {\em unstable} equilibria of  \eqref{eq:reduced-system}
which  correspond to equilibrium configurations of \eqref{eq:motion} where the vortices occupy  the 
vertices of a regular $S^2$-inscribed cuboctahedron    (see Fig.\ref{F:cuboctahedron}). 
\end{enumerate}

\item The only  collision equilibria of (the regularisation of)  \eqref{eq:reduced-system} are:
\begin{enumerate}
\item The \defn{tetrahedral collisions}  at the 8 points of  $\mathcal{T}_1\cup \mathcal{T}_2$. These correspond to collision configurations of \eqref{eq:motion} having
four simultaneous triple collisions at the vertices of a tetrahedron (see
Fig.\ref{F:tetrahedron-coll}).
\item The \defn{octahedral collisions} at  the $6$ points $(\pm 1,0,0), ( 0,\pm1,0),  ( 0,0,\pm1)$.
 These correspond to collision configurations of \eqref{eq:motion} having
$6$  simultaneous binary collisions at the vertices of an octahedron   (see
Fig.\ref{F:octahedron-coll}).  
\end{enumerate}
All collision configurations are stable equilibria of  (the regularisation of)  \eqref{eq:reduced-system}.
\end{enumerate}
\end{theorem}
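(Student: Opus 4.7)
The plan for item (ii) is to apply Proposition \ref{prop:collision-description}(ii) directly to the classification \eqref{eq:F[T]} of $\mathcal{F}[\mathbb{T}]$, which splits into three $\mathbb{T}$-orbits: the tetrahedra $\mathcal{T}_1$ and $\mathcal{T}_2$ (each of size $4$, with $\mathbb{T}$-isotropy $\mathbb{Z}_3$ at every vertex, coming from the $\pm 120^\circ$ rotations about the body diagonal through the vertex), and the six octahedron vertices (size $6$, with $\mathbb{T}$-isotropy $\mathbb{Z}_2$ given by the coordinate-axis $180^\circ$ rotations). By Proposition \ref{prop:collision-description}(ii) the corresponding symmetric configurations consist of $4$ simultaneous triple collisions at the vertices of a tetrahedron, respectively $6$ simultaneous binary collisions at the vertices of an octahedron; stability is immediate from Proposition \ref{prop:regul}(ii).

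For item (i) the plan mirrors that of the proof of Theorem \ref{th:dihedral}: locate all critical points of $h:=h_{(\mathbb{T},\emptyset)}$ on $S^2\setminus \mathcal{F}[\mathbb{T}]$ by exploiting its symmetries. The first observation is that, although $N(\mathbb{T})=\mathbb{O}$ by Table \eqref{eq:table-normalizers}, the formula \eqref{eq:redHam-mainthm} satisfies $h(-u)=h(u)$, so $h$ is actually invariant under the full octahedral group $\mathbb{O}_h=\mathbb{O}\cup(-I)\mathbb{O}$ of order $48$. Enumerating the $12$ elements of $\mathbb{T}$ (identity, three $180^\circ$ coordinate-axis rotations, and eight $\pm 120^\circ$ body-diagonal rotations) one obtains, up to an additive constant,
\begin{equation*}
h(u)=-3\sum_{i=1}^{3}\ln(1-u_i^2)-6\sum_{n_1 n_2 n_3=+1}\ln\bigl(3-(n_1 u_1+n_2 u_2+n_3 u_3)^2\bigr),
\end{equation*}
where the second sum runs over the four triples $(n_1,n_2,n_3)\in\{\pm 1\}^3$ with $n_1 n_2 n_3=+1$. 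The next step is to fix a fundamental domain $\Omega$ of $\mathbb{O}_h$ on $S^2$, for instance the spherical triangle with vertices $u_A=(1,0,0)$, $u_B=(1,1,0)/\sqrt{2}$ and $u_C=(1,1,1)/\sqrt{3}$, whose boundary arcs lie in the mirror planes $z=0$, $x=y$ and $y=z$, and classify all critical points of $h$ on $\bar\Omega$. The three vertices of $\Omega$ are isolated fixed points of subgroups of $\mathbb{O}_h$, hence critical points of $h$ by the principle of symmetric criticality: $u_A$ and $u_C$ are the collision equilibria of item (ii), while $u_B$ is identified as the cuboctahedron critical point. On each boundary arc the reflection symmetry forces the normal component of $\nabla h$ to vanish along the arc, so critical points of the one-dimensional restriction of $h$ to the arc are automatically critical points of $h$ on $S^2$. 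Parametrising the arc $z=0$ as $(\cos\theta,\sin\theta,0)$ with $\theta\in[0,\pi/4]$, the restriction of $h$ depends on $\theta$ only through $s=\sin^2(2\theta)$ and $dh/ds=0$ has unique solution $s=4/5$, corresponding to $\tan\theta=1/\phi$ and the icosahedron vertex $(\phi,1,0)/\sqrt{1+\phi^2}$. Parametrising the arc $y=z$ as $(\cos\psi,\sin\psi/\sqrt{2},\sin\psi/\sqrt{2})$ and setting $t=\tfrac{1}{2}\sin^2\psi\in[0,1/3]$, the critical-point equation reduces to the cubic $p(t):=33t^3-13t^2-13t+1=0$, which has a unique root $\alpha^2$ in $(0,1/3)$ (since $p(0)=1$, $p(1/3)=-32/9$ and $p'<0$ on this interval), yielding the truncated tetrahedron vertex; on the remaining arc $x=y$ the analogous parametrisation produces the same cubic in the range $t\in[1/3,1/2]$, where $p$ remains strictly negative, so no additional critical points arise. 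Standard Hessian calculations then yield the stability assertions, analogously to the last part of the proof of Theorem \ref{th:dihedral}.

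The principal obstacle is to rule out critical points of $h$ in the interior of $\Omega$, where the $\mathbb{O}_h$-action is free. A Morse-theoretic consistency check on the regularised Hamiltonian $\tilde h=\exp(-2h)$ (which extends smoothly to $S^2$) is already suggestive: the identified critical points contribute $14$ minima (the collision equilibria), $24$ maxima (icosahedra) and $36$ saddles ($24$ truncated tetrahedra plus $12$ cuboctahedra), producing the Morse sum $24-36+14=2=\chi(S^2)$. To rigorously close the argument one must exclude any further $\mathbb{O}_h$-orbit of interior critical points; the cleanest route is a direct analysis of the Lagrange multiplier system $\nabla h(u)=\lambda u$ on the open subset of $S^2$ where $u_1 u_2 u_3\ne 0$ and $|u_i|\ne |u_j|$ for all $i\ne j$, showing that the resulting equations $\partial_i h(u)/u_i=\lambda$ for $i=1,2,3$ are inconsistent unless one of the above non-degeneracy conditions fails (i.e.\ unless $u$ lies on the boundary of $\Omega$). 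Finally, the identification of the three families as the claimed polytopes, together with the counts $24=2\cdot 12$ (icosahedra and truncated tetrahedra) and $12=1\cdot 12$ (cuboctahedron), follows by explicit computation of the $\mathbb{T}$-orbits of the representatives: for the icosahedron and truncated tetrahedron the $\mathbb{O}_h$-orbit of size $24$ splits into two $\mathbb{T}$-orbits of size $12$, each realising one of the two mirror-image polytopes as $\rho_{(K_o,F_o)}$ applied to an ordering of that $\mathbb{T}$-orbit, whereas the $\mathbb{O}_h$-orbit of the cuboctahedron vertex is already a single $\mathbb{T}$-orbit of size $12$ giving the twelve vertices of the cuboctahedron.
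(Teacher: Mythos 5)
Your treatment of the collision equilibria (item (ii)) coincides with the paper's: both invoke the orbit decomposition of $\mathcal{F}[\mathbb{T}]$ in \eqref{eq:F[T]} together with Proposition \ref{prop:collision-description}(ii), and that part is complete. Your boundary analysis for item (i) is also essentially correct and checks out quantitatively: the restriction of $h_{(\mathbb{T},\emptyset)}$ to the mirror arc $z=0$ is, up to constants, $-3\ln s-12\ln(4-s)$ with $s=\sin^2 2\theta$, giving the unique critical value $s=4/5$ and the icosahedron vertex, and the cubic $33t^3-13t^2-13t+1$ on the arcs $y=z$ and $x=y$ matches the polynomial $p$ in the statement, with the sign discussion on $[1/3,1/2]$ correctly excluding extra points on the arc $x=y$.

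The genuine gap is the one you yourself flag: nothing in the proposal actually excludes critical points in the \emph{interior} of the fundamental domain, and this is the technical heart of the theorem. The Morse-count $24-36+14=2$ is only a consistency check; an interior $\mathbb{O}_h$-orbit of $48$ saddles balanced by a further interior orbit of $48$ extrema (or other cancelling combinations) is not ruled out by the Euler characteristic. The Lagrange-multiplier system $\nabla h(u)=\lambda u$ that you propose as ``the cleanest route'' is only named, not analysed, and it is not obvious that the three equations $\partial_i h(u)/u_i=\lambda$ are inconsistent off the mirror planes — that is precisely where the work lies. The paper closes this step by passing to the regularised Hamiltonian, writing $\tilde h_{(\mathbb{T},\emptyset)}=2^{84}a^6$ with $a$ an explicit polynomial expression, introducing gnomonic plus polar coordinates $(R,\Theta)$ on the fundamental region, and exhibiting the factorisation $\partial_\Theta a=\frac{R^6\sin 4\Theta}{4096(R^2+1)^{11}}\,p_1^+p_1^-p_2^+p_2^-\,p_3$ whose factors have controlled signs on $[0,\sqrt2]\times(0,\pi/4)$ (Lemma \ref{lemma:Tet}); this forces all critical points onto $\Theta=0,\pi/4$, after which the radial derivative reduces to explicit polynomial root-finding. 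Your proposal has no substitute for this argument. Secondarily, the stability assertions are deferred to ``standard Hessian calculations'' that are not carried out; in the paper these are explicit computations at the three representatives and are needed to distinguish the stable icosahedral family from the two saddle families.
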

\begin{remark}
\label{rmk:dimensions-truc-tet}
The polyhedron determined by the 
 truncated tetrahedron equilibria  consists of  4 irregular hexagonal faces and 4 four equilateral triangular faces. 
The distance between the vertices 
   forming an edge between adjacent hexagonal faces is $2\sqrt{2}\alpha\approx0.762215$ while the distance between vertices 
   forming an edge of an equilateral triangular  face is $\sqrt{2}(-\alpha +\sqrt{1-2\alpha^2})\approx 0.926377$.
\end{remark}

\begin{figure}[ht]
\begin{subfigure}{.30\textwidth}
  \centering
  \includegraphics[width=.7\linewidth]{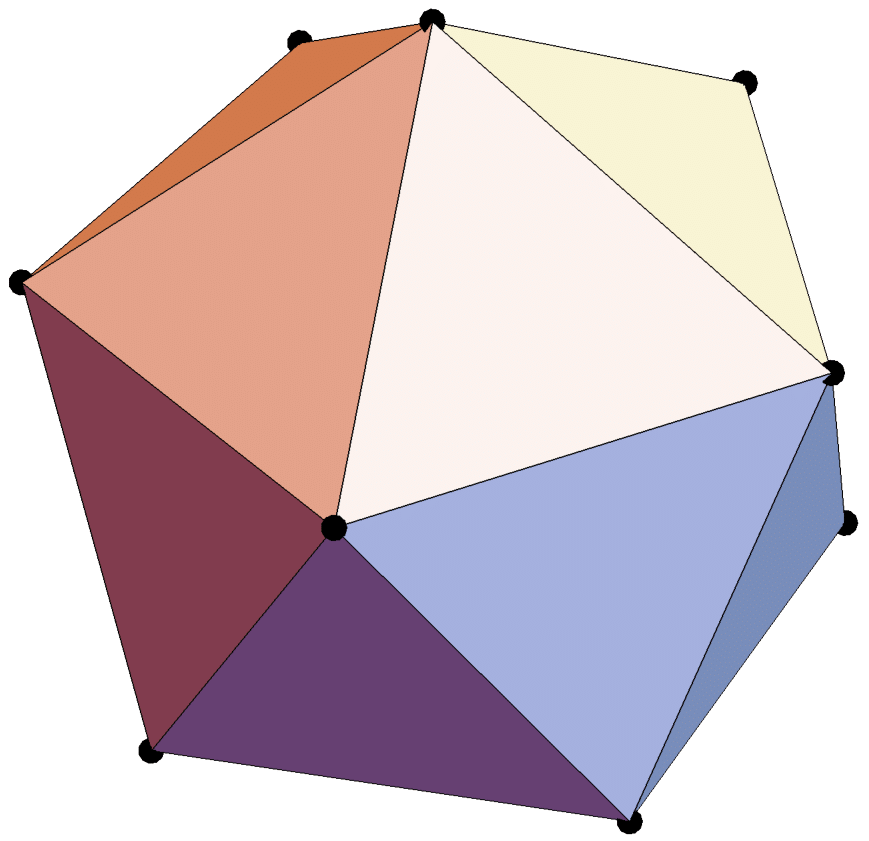}  
  \caption{Icosahedron  equilibrium. The distance between adjacent vertices is $2/\sqrt{3}$.}
  \label{F:icosahedron}
\end{subfigure}
\quad
\begin{subfigure}{.30\textwidth}
  \centering
   \includegraphics[width=.65\linewidth]{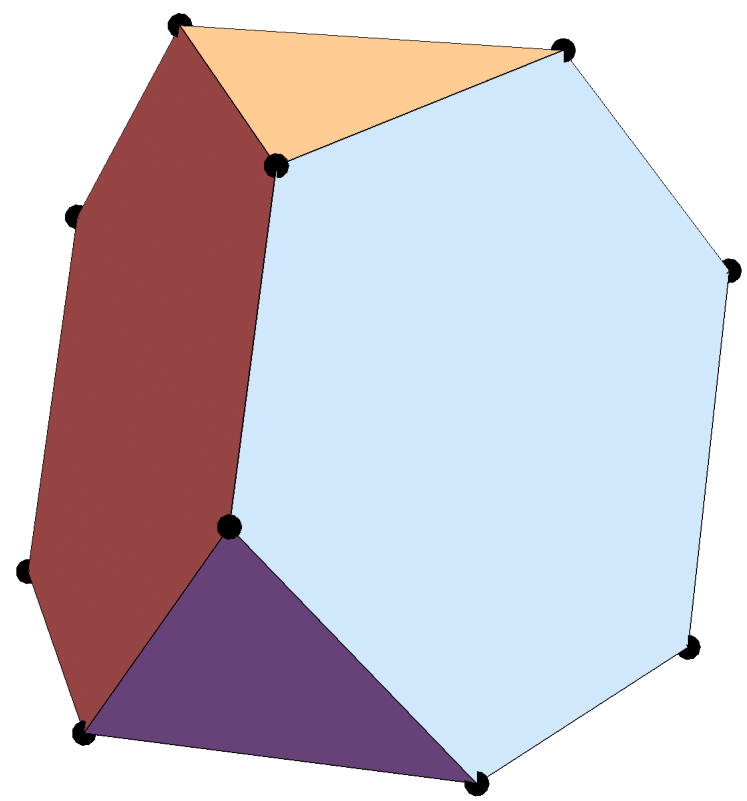}  
  \caption{Truncated tetrahedron equilibrium. See Remark \ref{rmk:dimensions-truc-tet} for details on the dimensions.}
  \label{F:trunc-tetrahedron}
\end{subfigure}
\quad 
\begin{subfigure}{.30\textwidth}
  \centering
 \includegraphics[width=.7\linewidth]{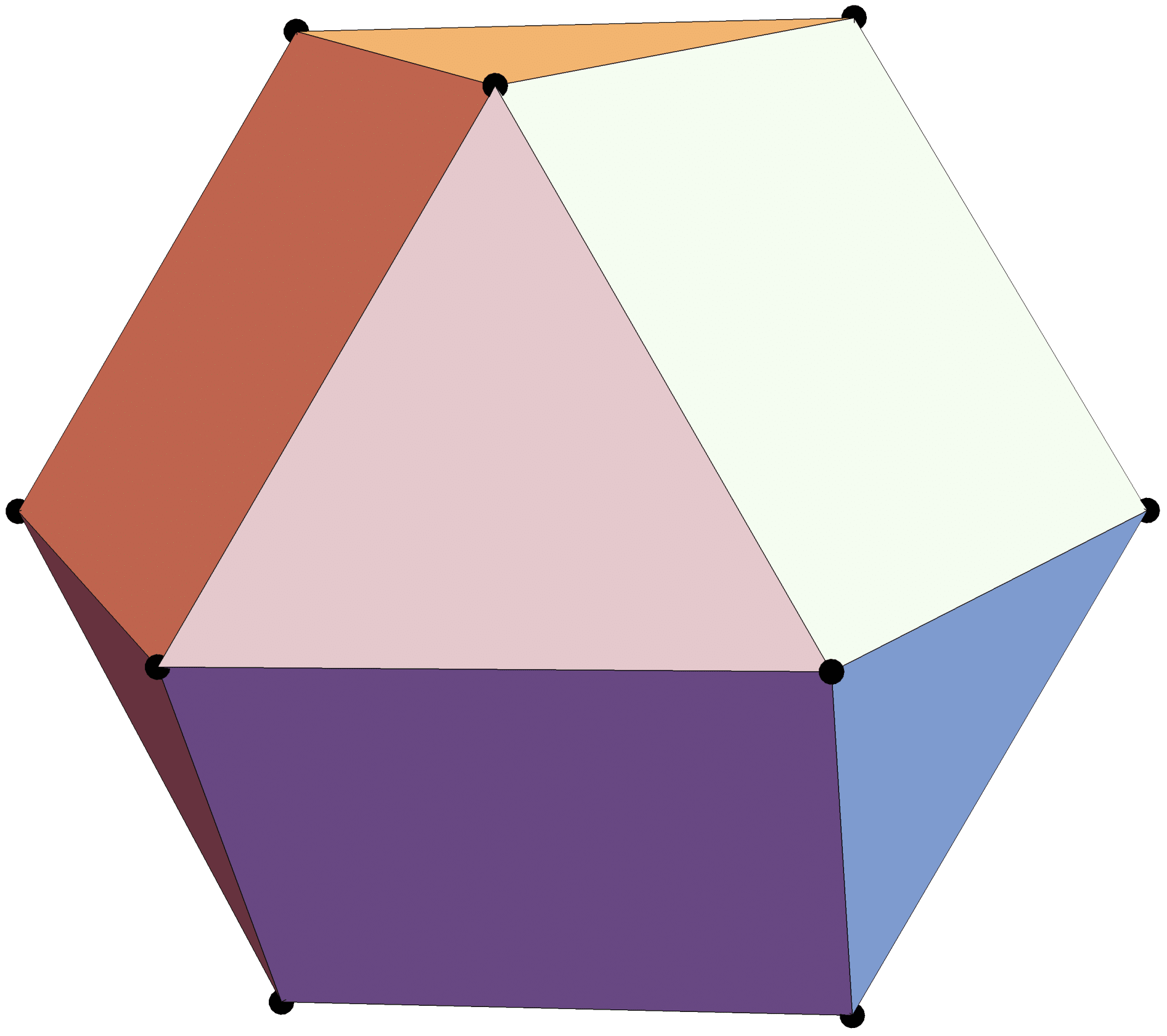}  
  \caption{Cuboctahedron equilibrium.  The distance between adjacent vertices is 1.}
  \label{F:cuboctahedron}
\end{subfigure}
 \\
\begin{subfigure}{.45\textwidth}
  \centering
  \includegraphics[width=.4\linewidth]{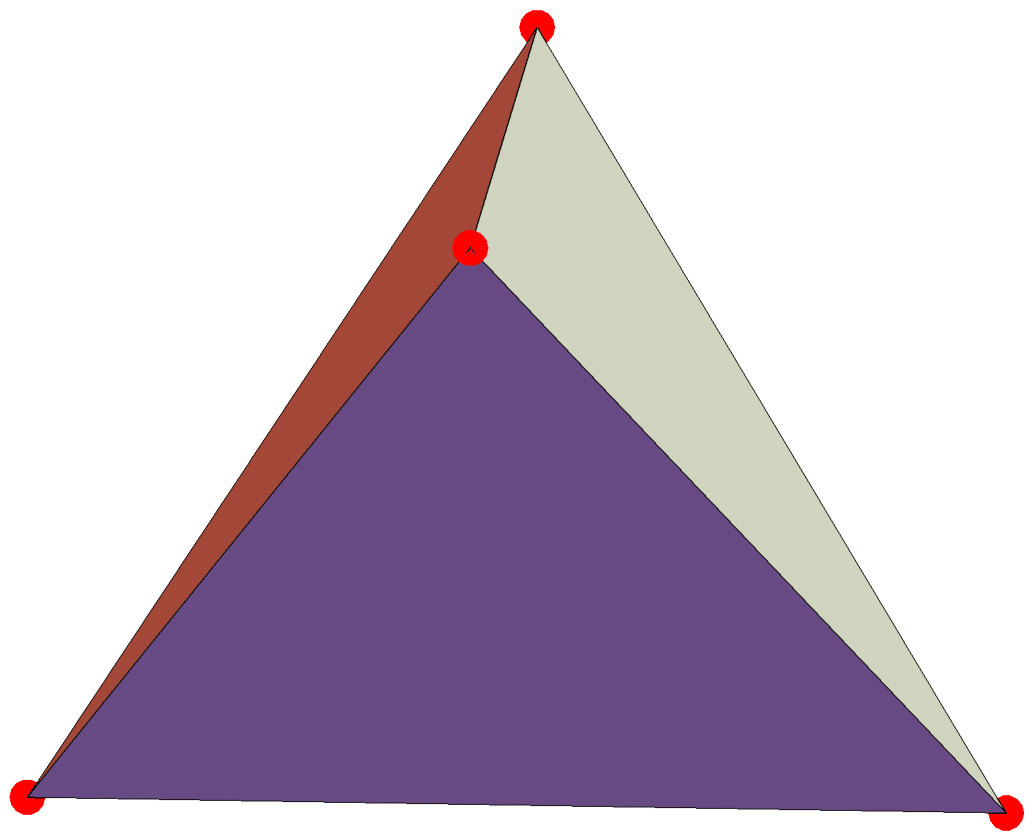}  
  \caption{Tetrahedron collision (simultaneous triple collision  at each vertex).}
  \label{F:tetrahedron-coll}
\end{subfigure} 
\quad
\begin{subfigure}{.45\textwidth}
  \centering
  \includegraphics[width=.4\linewidth]{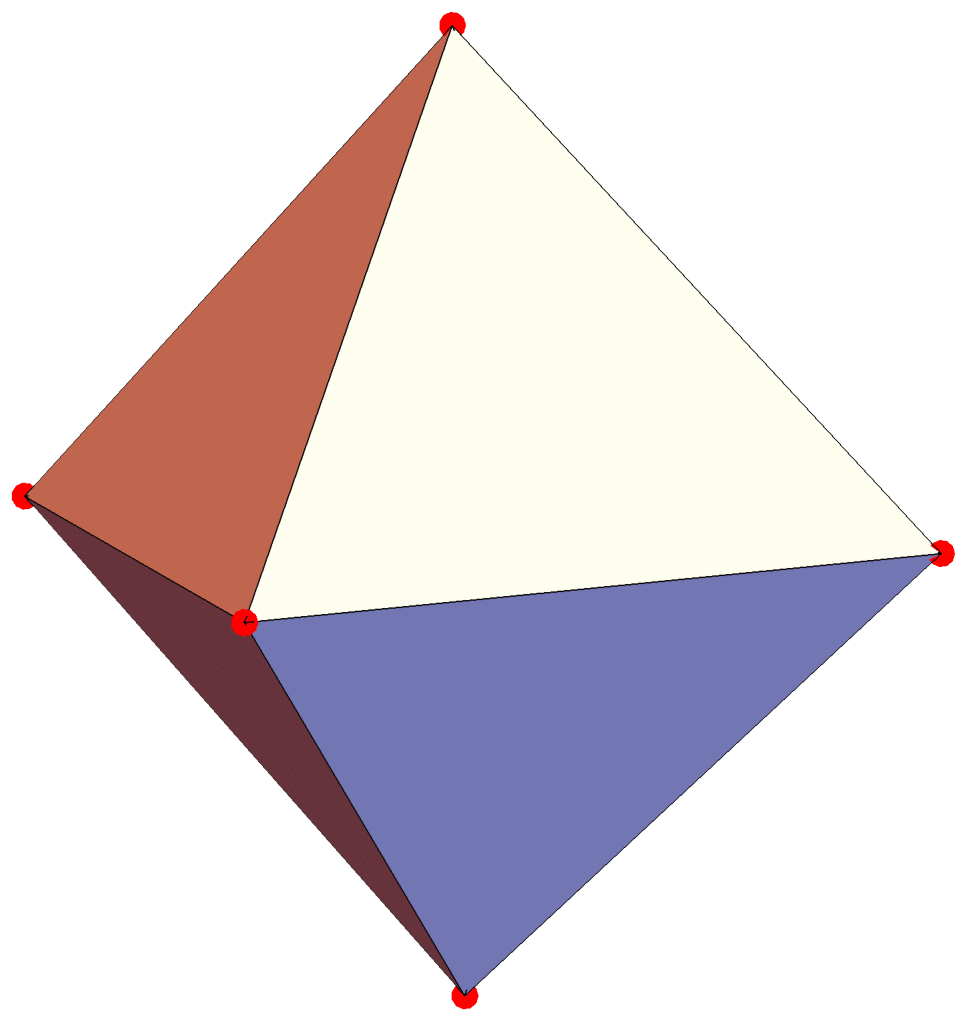}  
  \caption{Octahedron collision (simultaneous binary collision  at each vertex).}
  \label{F:octahedron-coll}
\end{subfigure} 
\caption{Non-collision and collision equilibrium configurations described in  Theorem \ref{th:tetrahedral}.}
\label{fig:n12-eq}
\end{figure}

Before giving the proof of the theorem we present:
\begin{lemma}
\label{lemma:Tet}
Let
\begin{equation*}
\begin{split}
   p_1^\pm(R,\Theta)&=2+2R^2 +R^2 \sin 2 \Theta  \pm  2 R( \cos \Theta - \sin \Theta  ) ,\\
 p_2^\pm  (R,\Theta) &= 2+2R^2 -R^2 \sin 2 \Theta \pm  2 R(  \cos \Theta + \sin \Theta  ), \\
  p_3(R,\Theta)&=   5 R^8 \cos 8 \Theta +76 \left(R^4+8 R^2+8\right) R^4 \cos 4 \Theta +47 R^8-864 R^6-4320 R^4-6912
   R^2-3456.
  \end{split}
\end{equation*}
For $(R,\Theta)\in [0,\sqrt{2}]\times (0,\pi/4)$ we have  $p_1^\pm(R,\Theta)>0$,  $p_2^\pm(R,\Theta)>0$ and $p_3(R,\Theta)<0$.
\end{lemma}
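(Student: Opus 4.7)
\emph{Proof proposal.} The statement splits into two independent tasks: the strict positivity of $p_1^\pm$ and $p_2^\pm$, which I would establish by exhibiting explicit sum-of-squares decompositions, and the strict negativity of $p_3$, which I would reduce to a one-variable convex quadratic inequality.

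The first task rests on the identities
\begin{align*}
p_1^\pm(R,\Theta) &= (1\pm R\cos\Theta)^2 + (1\mp R\sin\Theta)^2 + R^2(\cos\Theta+\sin\Theta)^2,\\
p_2^\pm(R,\Theta) &= (1\pm R\cos\Theta)^2 + (1\pm R\sin\Theta)^2 + R^2(\cos\Theta-\sin\Theta)^2,
\end{align*}
each of which is verified by direct expansion using $\sin 2\Theta = 2\sin\Theta\cos\Theta$. Non-negativity is then immediate, and strict positivity follows case by case: for $p_1^+$ and $p_2^+$ the summand $(1+R\cos\Theta)^2$ is $\geq 1$ on the region; for $p_1^-$ the summand $(1+R\sin\Theta)^2$ is $\geq 1$; and for $p_2^-$ the simultaneous vanishing of all three squares would force $R\cos\Theta=R\sin\Theta=1$ together with $\cos\Theta=\sin\Theta$, hence $\Theta=\pi/4$, which is excluded by the open interval $(0,\pi/4)$.

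For $p_3$ the key observation is $\cos 8\Theta = 2\cos^2 4\Theta - 1$. Introducing $u:=\cos 4\Theta$, the polynomial rewrites as
$$
p_3 = 10 R^8\, u^2 + 76\,R^4(R^4+8R^2+8)\,u + \bigl(42R^8 - 864R^6 - 4320R^4 - 6912R^2 - 3456\bigr),
$$
a quadratic in $u$ with non-negative leading coefficient. Since $\Theta\in(0,\pi/4)$ gives $u\in(-1,1)$, convexity reduces the problem to verifying $p_3<0$ at the two endpoints $u=\pm 1$. Setting $s=R^2\in[0,2]$, the evaluation at $u=-1$ collapses to $-24s^4-1472s^3-4928s^2-6912s-3456$, in which every coefficient is negative. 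The evaluation at $u=1$ rearranges as $128 s^3(s-2)-3712 s^2-6912 s -3456$; for $s\in[0,2]$ the first summand is non-positive and the remaining terms are bounded above by $-3456$, which seals strict negativity.

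The algebraic identities are the essential ingredient, so the main obstacle is simply spotting them and organising the arithmetic cleanly; once they are in place, the desired inequalities are one-line consequences, requiring neither calculus nor numerical estimates beyond evaluating a quadratic at the endpoints of a bounded interval.
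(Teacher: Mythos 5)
Your proposal is correct, and it takes a somewhat different route from the paper's proof, though in the same elementary spirit. For $p_1^\pm$ and $p_2^\pm$ the paper does not use your exact sum-of-squares identities (which do check out on expansion, using $\sin 2\Theta=2\sin\Theta\cos\Theta$ and $\cos^2\Theta+\sin^2\Theta=1$); instead it bounds the trigonometric factors on $(0,\pi/4)$, namely $\sin 2\Theta\in(0,1)$, $\cos\Theta-\sin\Theta\in(0,1)$, $\cos\Theta+\sin\Theta\in(1,\sqrt 2)$, and completes the square in $R$ to get $p_1^\pm>2(R^2-R+1)\geq 3/2$ and $p_2^\pm>R^2-2\sqrt 2 R+2\geq 0$. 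Your decompositions buy an exact identity and an immediate characterisation of when positivity could fail (only at $\Theta=\pi/4$, excluded), whereas the paper's estimates are quicker to write down and give an explicit quantitative lower bound for $p_1^\pm$. For $p_3$ the paper simply bounds $\cos 8\Theta\leq 1$ and $\cos 4\Theta\leq 1$ (legitimate since both coefficients $5R^8$ and $76(R^4+8R^2+8)R^4$ are nonnegative), arriving at $128R^8-256R^6-3712R^4-6912R^2-3456\leq 128R^8-3456\leq-1408$ on $[0,\sqrt2]$; your substitution $u=\cos4\Theta$ with $\cos8\Theta=2u^2-1$ and convexity in $u$ reduces to the same polynomial at the endpoint $u=1$ (your check at $u=-1$ is correct but not needed in the paper's version), so the two arguments essentially merge there. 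Both proofs are complete; yours is marginally more structured, the paper's marginally shorter.
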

\begin{proof}
For $\Theta \in (0,\pi/4)$ we have $\sin 2\Theta \in (0,1)$, $ \cos \Theta - \sin \Theta\in (0,1)$ and  $ \cos \Theta + \sin \Theta\in (1,\sqrt{2})$. Therefore,
\begin{equation*}
\begin{split}
 p_1^\pm(R,\Theta)&> 2+2R^2- 2R( \cos \Theta - \sin \Theta  )> 2(R^2-R+1)\geq 3/2,\\
  p_2^\pm(R,\Theta)&> 2+R^2- 2R( \cos \Theta + \sin \Theta  )> R^2-2\sqrt{2}R+2\geq0.
 \end{split}
\end{equation*}
On the other hand, we have
\begin{equation*}
\begin{split}
  p_3(R,\Theta) &\leq   5 R^8  +76 \left(R^4+8 R^2+8\right) R^4 +47 R^8-864 R^6-4320 R^4-6912
   R^2-3456 \\
   &=128R^8-256R^6-3712R^4-6912
   R^2-3456\\
    &\leq 128R^8-3456.
   \end{split}
\end{equation*}
Therefore, for  $R\in [0,\sqrt{2}]$ we may estimate $  p_3(R,\Theta) \leq 128(2^4)-3456=-1408$.
\end{proof}

\begin{proof}[Proof of Theorem  \ref{th:tetrahedral}] 
Recall that the collision equilibria are always stable and   occur at the points in
 $\mathcal{F}[\mathbb{T}]$. This set   is given by   \eqref{eq:F[T]} and  consists of three $\mathbb{T}$-orbits: the tetrahedra $\mathcal{T}_1$ and  $\mathcal{T}_2$, and 
  $\{ (\pm 1,0,0), ( 0,\pm1,0),$  $( 0,0,\pm1)\}$. The points on the latter orbit lie on the vertices of an octahedron. The proof of item (ii) in the theorem
  follows from these observations and item (ii) of 
  Proposition \ref{prop:collision-description}.
 
 In order to prove item (i), we will classify the critical points of the regularised reduced Hamiltonian $\tilde{h}_{(\mathbb{T},\emptyset)}$. 
 Using \eqref{eq:regul-red-Ham} and writing $u=(x,y,z)$ we find $\tilde{h}_{(\mathbb{T},\emptyset)}(x,y,z)=2^{84}a(x,y,z)^6$, where $a:S^2\to \R$ is given by
 \begin{equation*}
 \begin{split}
a(x,y,z)&= (x^2+y^2) (y^2+z^2)   (x^2+z^2) (1-xy-xz-yz)^2 (1+xy+xz-yz)^2  \\ &  \qquad \cdot (1-xy+xz+yz)^{2}(1+xy-xz+yz)^{2}.
\end{split}
\end{equation*}
The critical points of $\tilde{h}_{(\mathbb{T},\emptyset)}$ and $a$ coincide and are of the same type, so, in what follows, we 
instead classify the critical points of $a$. 

We  begin by noting that the value of $a$ does not change if $x$, $y$ and $z$ are permuted; and also if 
any of $x$,  $y$ or $z$ are changed into $-x$,  $-y$ or $-z$. This shows that $a$ is invariant under the action of the group $\mathbb{O}_h$   
consisting of all rotational and reflectional symmetries of a regular octahedron
(the   $\mathbb{O}$-symmetry of $a$ was expected from item (iii) of Theorem \ref{th-main-symmetry} and the reflectional part is
 inherited from the invariance of the
Hamiltonian $H:M\to \R$ under the diagonal 
action of $\mathrm{O}(3)$).

 The group $\mathbb{O}_h$ has order  48 and a fundamental region $\mathcal{R}\subset S^2$ is determined by $0\leq y\leq x \leq z \leq 1$.
 Without loss of generality, we will restrict the analysis of the critical points of $a$ to this region.
Our strategy is to introduce local coordinates on $S^2$ tracking carefully the parametrisation of $\mathcal{R}$. First consider the gnomonic (or stereographic) 
projection from the origin to the tangent plane 
to the north pole. This defines the coordinates $(X,Y)\in \R^2$
on the northern hemisphere by 
\begin{equation*}
x=\frac{X}{\sqrt{X^2+Y^2+1}}, \quad y=\frac{Y}{\sqrt{X^2+Y^2+1}}, \quad z=\frac{1}{\sqrt{X^2+Y^2+1}},
\end{equation*}
and the  fundamental region  $\mathcal{R}$ corresponds to  the triangle $0\leq Y\leq X\leq 1$. Now pass to polar coordinates
\begin{equation*}
X=R\cos \Theta, \qquad Y=R\sin \Theta.
\end{equation*}
The northern hemisphere is parametrised by $R\geq 0$ and $\Theta\in [0,2\pi)$, and the fundamental region $\mathcal{R}$ corresponds to
\begin{equation}
\label{eq:fund-region-polar}
0\leq R\leq \frac{1}{\cos\Theta}, \qquad 0\leq \Theta \leq \pi/4.
\end{equation}
In particular, it will be convenient to notice that $\mathcal{R}$ is contained in the region parametrised  by $(R,\Theta)\in [0,\sqrt{2}]\times [0,\pi/4]$. In 
these coordinates we have:

\begin{equation*}
\begin{split}
a(R,\Theta)=\frac{R^2 \left(R^2+2- R^2 \cos 2 \Theta \right) 
\left(R^2+2+R^2 \cos 2 \Theta \right) 
\left ( p_1^+(R,\Theta) p_1^-(R,\Theta) p_2^+(R,\Theta)
p_2^-(R,\Theta) \right )^2
}{1024 \left(R^2+1\right)^{11}},     
 \end{split}
\end{equation*}
where $p_1^\pm$, and  $p_2^\pm$ are defined in the statement of Lemma \ref{lemma:Tet}. 
With the help of a symbolic algebra software, one finds that the partial derivative $\partial_\Theta a(R,\Theta)$ may be written as
\begin{equation*}
\partial_\Theta a(R,\Theta)=\frac{R^6\sin 4\Theta}{4096 \left(R^2+1\right)^{11}}  p_1^+(R,\Theta)p_1^-(R,\Theta)p_2^+(R,\Theta)
p_2^-(R,\Theta)p_3(R,\Theta),
\end{equation*}
with  $p_3$ given in the statement of Lemma \ref{lemma:Tet}. Because of this lemma we conclude that, when
restricted to the fundamental region $\mathcal{R}$, the partial derivative $\partial_\Theta a(R,\Theta)$  can only vanish if $\Theta=0$ or $\Theta=\pi/4$.

Now, on the one hand one computes
\begin{equation*}
\begin{split}
\partial_R a(R,0)&=- 2 R \left(R^2+1\right)^{-11} \left(R^4+R^2+1\right)^3 \left(  R^{2}-1\right)  \left(  R^{4}-3R^{2}+1\right) , \end{split}
\end{equation*}
whose real roots are $R=0$, $R=\pm 1$ and $R=(\pm 1\pm\sqrt{5})/2$. Hence, in view of \eqref{eq:fund-region-polar}, for $\Theta=0$, the only critical points of $a$ on the fundamental region
$\mathcal{R}$ occur when $R=0$, $R=(\sqrt{5}-1)/2$ and $R=1$. These respectively correspond to the following points  on $S^2$:
\begin{equation*}
(0,0,1), \qquad  \frac{1}{\sqrt{1+\phi^2}} \left (1,0, \phi \right ), \qquad   \frac{1}{\sqrt{2}} \left (1,0, 1 \right ),
\end{equation*}
that are, respectively, representatives of the octahedral collisions of (ii)(b), of the icosahedral equilibria of (i)(a) and the
 cuboctahedron equilibria of (i)(c). 

On the other hand, one finds
\begin{equation}
\label{eq:aux-th-tet-symm}
\begin{split}
\partial_R a(R,\pi/4)=\frac{R}{512}\left (R^2+1\right)^{-12} \left(3 R^2+2\right)^3 \left(R^2-2 \right)^2 \left(R^4-4\right) q_3(R^2),
    \end{split}
\end{equation}
where $q_3$ is the cubic polynomial $q_3(\lambda)=37
   \lambda^{3}+106 \lambda^2+28 \lambda-8$. This  polynomial  has a unique positive root $\lambda=2\alpha^2/(1-2\alpha^2)$ with 
   $\alpha$ as defined in the statement of item (i)(b).  Therefore, the only real roots of the right hand side of \eqref{eq:aux-th-tet-symm}
are $R=0$,  $R=\pm \sqrt{2}$ and $R=\pm \sqrt{2}\alpha/(1-2\alpha^2)^{1/2}$, and, in view of  \eqref{eq:fund-region-polar}, we conclude that for 
$\Theta=\pi/4$, the only critical points of $a$ on the fundamental region $\mathcal{R}$ occur when $R=0$, $R= \sqrt{2}\alpha/(1-2\alpha^2)^{1/2}$ and $R=\sqrt{2}$. 
 These respectively correspond to the following points  on $S^2$:
\begin{equation*}
(0,0,1), \qquad  \left ( \alpha ,  \alpha,  \sqrt{1-2\alpha^2}\right ),  \qquad \frac{1}{\sqrt{3}} \left (1,1,1\right ),
\end{equation*}
that are, respectively, representatives of the octahedral collisions of (ii)(b), of the irregular truncated tetrahedron equilibria of (i)(b) and the
 tetrahedron collisions of (ii)(a). 

The analysis above proves that, indeed, the only equilibrium points of the (regularised) system are those described in the statement of the theorem. Now
recall that the collision equilibria are always stable. To investigate
the stability of the non-collision equilibria we compute the Hessian matrix of $a$ at the representatives of these points. Using a symbolic algebra program one obtains: 
\begin{equation*}
\begin{split}
&\mbox{Hess}(a)\left (\frac{\sqrt{5}-1}{2},0 \right ) =\frac{128}{3125} \begin{pmatrix} -3-\sqrt{5} & 0 \\ 0 & \frac{17}{5}(\sqrt{5}-5) \end{pmatrix}, \qquad
\mbox{Hess}(a)\left (1,0 \right ) =\frac{27}{512} \begin{pmatrix} 1 & 0 \\ 0 & - \frac{37}{2} \end{pmatrix}, \\
&\mbox{Hess}(a)\left ( \frac{\sqrt{2}\alpha}{\sqrt{1-2\alpha^2}},\pi /4 \right ) \approx \begin{pmatrix} -1.42703 & 0 \\ 0 & 0.0859734 \end{pmatrix}.
\end{split}
\end{equation*}
The first of these matrices is negative definite and  the other two are indefinite.  We conclude that icosahedral equilibria are local maxima of $a$,
whereas cub-octahedral and truncated tetrahedral equilibria are saddle points of $a$. The same is true for the 
regularised reduced  Hamiltonian $\tilde h_{(\mathbb{T},\emptyset)}$. On the other hand, this implies that the (non-regularised) reduced Hamiltonian
$h_{(\mathbb{T},\emptyset)}$ has local minima at the icosahedral equilibria and saddle points at the cub-octahedral and truncated tetrahedral equilibria.
In view of item (i) of Proposition \ref{prop:reduced-periodic}, these observations imply that the stability properties described in the theorem hold.

It remains to show that  the non-collision equilibrium points in items (i)(a)-(c) indeed correspond
to the polyhedron equilibria of \eqref{eq:motion} described in the statement of the theorem. 
Let  $\gamma=\phi/(1+\phi^2)^{1/2}$. The 24 points obtained by permuting the entries of $(\pm \gamma, \pm \sqrt{1-\gamma^2},0)$ lie at the vertices of a compound of two icosahedra. One of them
corresponds to the even and the other to the odd permutations.  Moreover, the vertices of each of these icosahedra lie on a $\mathbb{T}$-orbit. In particular, the embedding $\rho_{(\mathbb{T},\emptyset)}$
maps any of the 24 points into the vertices of a regular icosahedron.
A similar scenario occurs for the set   obtained by  permuting the entries of $(\pm \alpha, \pm \alpha, \pm \sqrt{1-2\alpha^2})$. 
Such set has 24 elements and consists of two  $\mathbb{T}$-orbits according to whether the product of the entries is  positive or negative. Each of these orbits
determine the vertices of a (irregular) truncated tetrahedron. 
The situation is simpler for the points obtained by permuting $1/\sqrt{2}(\pm 1, \pm 1, 0)$ since there are only 12 of them, they lie on a $\mathbb{T}$-orbit and lie on the vertices
of a cuboctahedron.
\end{proof}

\subsection{Dynamics of $\mathbb{T}$-symmetric configurations of $N=12$ vortices}

In view of Theorem \ref{th:tetrahedral} and  Corollary \ref{cor:periodic-orbits-general} 
we deduce the existence of  three families 
of periodic orbits of the equations of motion \eqref{eq:motion} for $N=12$ that we describe in the following corollary. We note that
the existence of the solutions described in items (ii) and (iii) had been already indicated by Souli\`ere \& Tokieda \cite[Section 5]{SouliereTokieda}.
\begin{corollary} 
\label{cor:tet-ico-osc}
Let $N=12$.
\begin{enumerate}
\item There exists a 1-parameter family of periodic solutions $v_h(t)$ of  the equations of motion \eqref{eq:motion},  
emanating from the icosahedral  equilibrium configurations. 
Along these solutions, each vortex travels around a small closed loop around a vertex of the icosahedron (see Fig. \ref{F:ico-osc}).

  \item There exists a 1-parameter family of periodic solutions $v_h(t)$ of  the equations of motion \eqref{eq:motion}
converging to the tetrahedral collision  described in  Theorem \ref{th:tetrahedral}.  
Along these solutions, three vortices travel along a closed
loop around each of the 4 vertices of a tetrahedron (see Fig. \ref{F:tet-coll-osc}).

\item There exists a 1-parameter family of periodic solutions $v_h(t)$ of  the equations of motion \eqref{eq:motion}
converging to the octahedral collisions  described in  Theorem \ref{th:tetrahedral}.  
Along these solutions,  a pair of vortices  travels along a small closed
loop  around each of the 6 vertices of an  octahedron (see Fig. \ref{F:oct-coll-osc}).
\end{enumerate}
Each of these families may be  parametrised by the energy $h$. In cases (ii) and (iii) we have 
 $h\to \infty$ as the solutions  approach collision, and  the period approaches zero in this limit.

For each solution described above, the distinct  closed loops traversed by the vortices, and the position the vortices within the loop at each instant, may be obtained from a single one by the action of $\mathbb{T}$.
\end{corollary}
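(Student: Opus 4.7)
The plan is to read off the result directly from Corollary \ref{cor:periodic-orbits-general} by feeding in the classification given in Theorem \ref{th:tetrahedral}, and then translating the conclusions about periodic orbits of the reduced system on $S^2$ into the geometric description of motion in $M$ via the embedding $\rho_{(\mathbb{T}_o,\emptyset)}$.

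For item (i), the icosahedron configurations listed in Theorem \ref{th:tetrahedral}(i)(a) are stable non-collision equilibria of the reduced system, hence local minima of $h_{(\mathbb{T},\emptyset)}$. Applying item (i) of Corollary \ref{cor:periodic-orbits-general} with $K=\mathbb{T}$, $F=\emptyset$ and $u_0$ any such minimum yields a 1-parameter family of periodic solutions $v_h(t)=\rho_{(\mathbb{T}_o,\emptyset)}(u_h(t))$ of \eqref{eq:motion}, parametrised by the energy $h$, where $u_h(t)$ is a small closed curve around $u_0$ in $S^2$. By the definition of $\rho_{(\mathbb{T}_o,\emptyset)}$, the $j$-th vortex traces the loop $g_j u_h(t)$ around $g_j u_0$. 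Since $u_0$ has trivial $\mathbb{T}$-isotropy, its $\mathbb{T}$-orbit consists of $12$ distinct points, and (as already observed in the proof of Theorem \ref{th:tetrahedral}) these are precisely the vertices of one of the two regular icosahedra inscribed in $S^2$. Each of the $12$ vortices therefore oscillates along a small loop around one vertex of the icosahedron.

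For items (ii) and (iii), the tetrahedral and octahedral collisions are, by Theorem \ref{th:tetrahedral}(ii), the collision equilibria of the (regularised) reduced system, i.e.\ points of $\mathcal{F}[\mathbb{T}]$. For each such $u_0$, item (ii) of Corollary \ref{cor:periodic-orbits-general} provides a 1-parameter family $v_h(t)=\rho_{(\mathbb{T}_o,\emptyset)}(u_h(t))$ of periodic solutions with $h\to\infty$ and period tending to $0$ as $u_h(t)\to u_0$. The geometric picture again comes from $\rho_{(\mathbb{T}_o,\emptyset)}$: the $j$-th vortex traces a small loop around $g_j u_0$, and the multiplicity count follows from Proposition \ref{prop:collision-description}(ii). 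For a tetrahedral collision $u_0\in\mathcal{T}_1\cup\mathcal{T}_2$ the $\mathbb{T}$-isotropy has order $3$, so $\mathbb{T}u_0$ consists of $12/3=4$ points (the vertices of a tetrahedron) with three vortices clustered near each; for an octahedral collision $u_0\in\{\pm e_1,\pm e_2,\pm e_3\}$ the isotropy has order $2$, so $\mathbb{T}u_0$ consists of $6$ points (the vertices of an octahedron) with two vortices near each.

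There is no substantive obstacle here; the work was done in proving Theorem~\ref{th:tetrahedral} and in developing the general theory of Section~\ref{sec:symmetric-framework}. The only book-keeping one must be careful about is ensuring that the isotropy counts match the geometric description of the polyhedral collisions, and that the final assertion about $\mathbb{T}$-relatedness of the distinct loops and the synchronisation of the vortices along them is nothing but a restatement of the form of $\rho_{(\mathbb{T}_o,\emptyset)}$ together with item (ii) of Theorem~\ref{th-main-symmetry}.
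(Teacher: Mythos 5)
Your proposal is correct and coincides with the paper's own argument: the paper likewise obtains Corollary \ref{cor:tet-ico-osc} directly by combining the classification in Theorem \ref{th:tetrahedral} with Corollary \ref{cor:periodic-orbits-general}, reading off the geometric description through $\rho_{(K_o,F_o)}$ and the isotropy counts of Proposition \ref{prop:collision-description}. Nothing essential is missing from your account.
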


\begin{figure}[ht]
\begin{subfigure}{.3\textwidth}
  \centering
  \includegraphics[width=.75\linewidth]{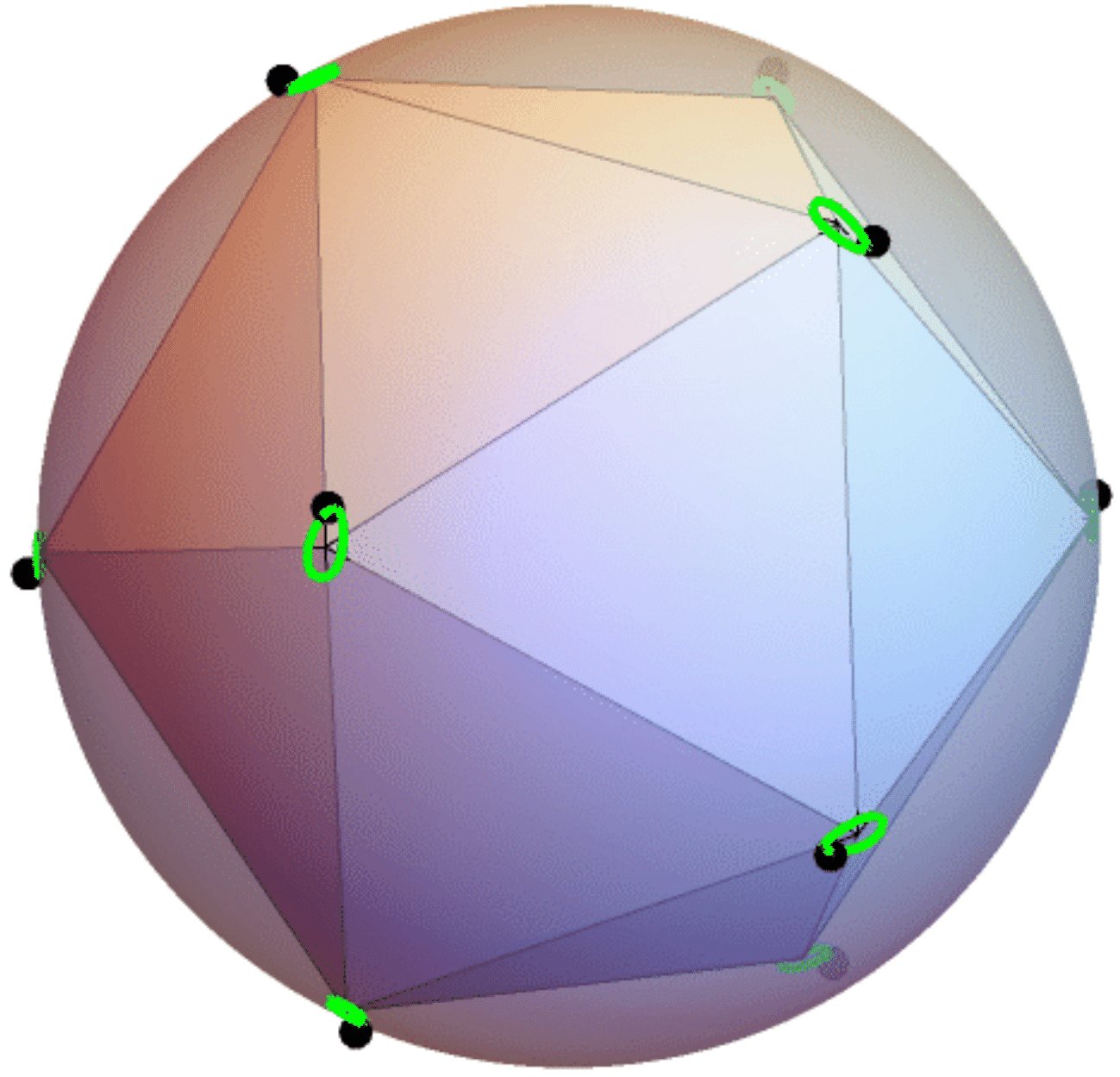}  
  \caption{Periodic solution near the icosahedron equilibrium.}
  \label{F:ico-osc}
\end{subfigure}
\quad
\begin{subfigure}{.3\textwidth}
 \centering
  \includegraphics[width=.75\linewidth]{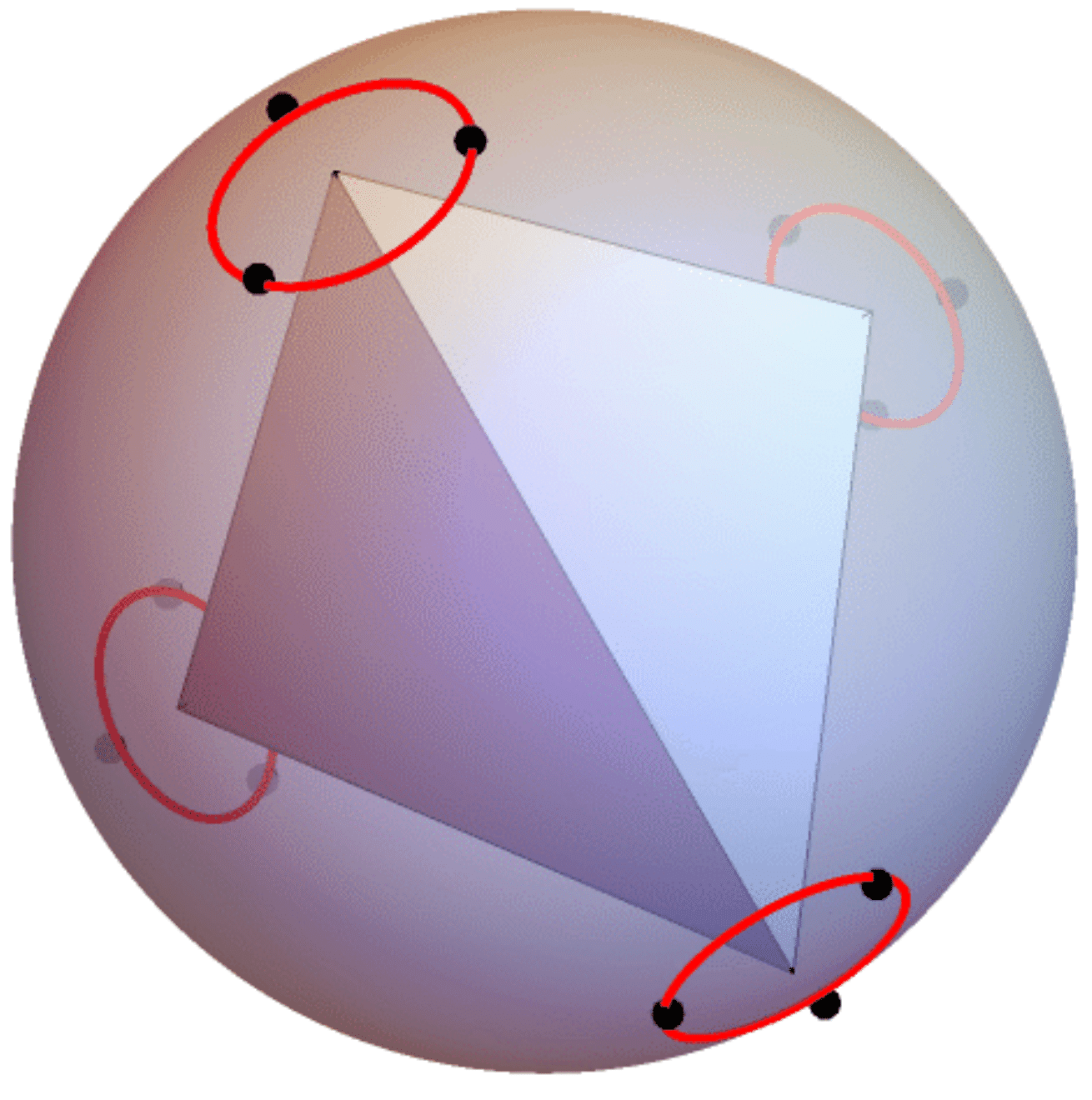}  
  \caption{Periodic solution near the
  tetrahedral collision.}
  \label{F:tet-coll-osc}
\end{subfigure}
\quad
\begin{subfigure}{.3\textwidth}
  \centering
  \includegraphics[width=.75\linewidth]{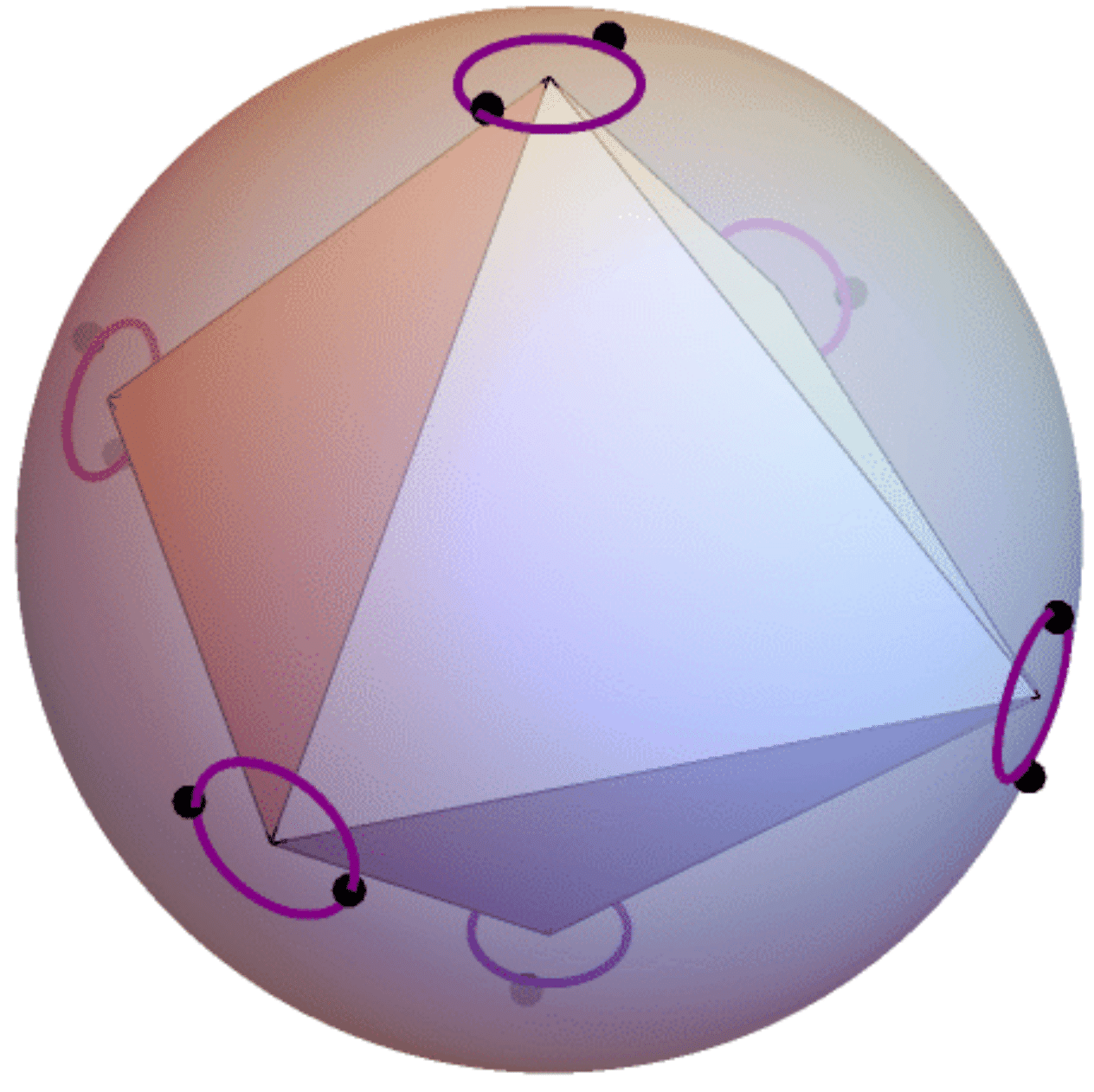}  
  \caption{Periodic solution near the
  octahedral collision.}
  \label{F:oct-coll-osc}
\end{subfigure}
\caption{Periodic solutions  described in Corollary \ref{cor:tet-ico-osc}. (The view angle is different from the one in Fig \ref{fig:n12-eq}.)}
\label{fig:ico-osc}
\end{figure}

 \begin{figure}[htp]
\centering
\includegraphics[width=0.4\textwidth]{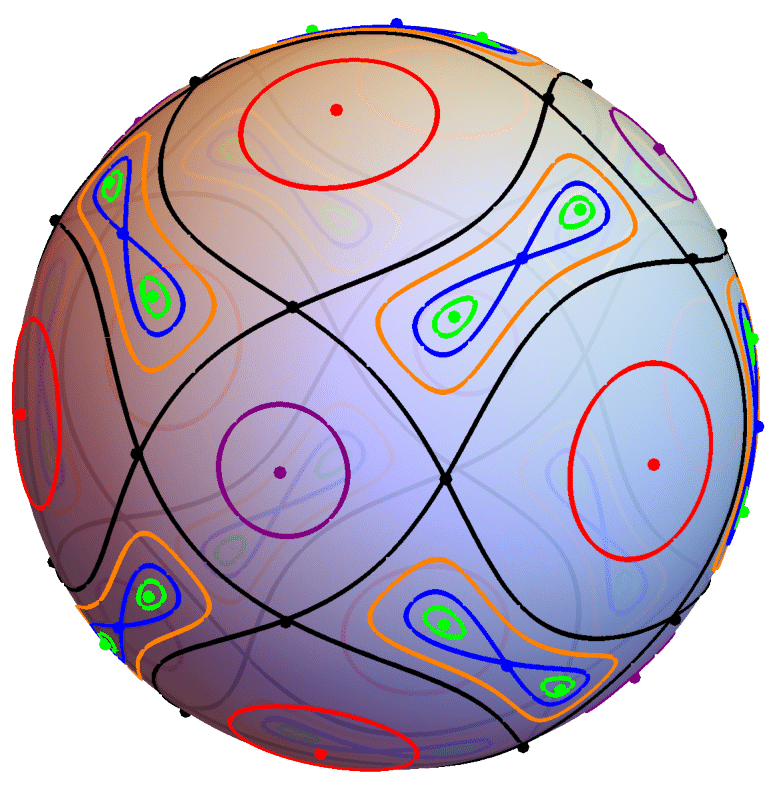}  
  \caption{The reduced phase space for $\mathbb{T}$-symmetric solutions of the 12-vortex problem.}
  \label{fig:Spheren12T}
\end{figure}

We emphasise that the family of periodic orbits emanating from the icosahedron configurations described in item (i)  above is different than the one obtained from item (i) of 
Corollary \ref{cor:dihedral-osc-poles} with $n=5$. 
Figure~\ref{fig:Spheren12T}  shows the phase space of the (regularised) reduced dynamics obtained numerically. 
The icosahedron equilibrium points  are indicated in green, 
the truncated tetrahedron equilibrium points in blue,  the cuboctahedron   equilibrium points   in black, tetrahedron collisions in red and octahedron  collisions   in purple. 
The same colour is used to indicate either periodic orbits near the stable equilibria or heteroclinic/homoclinic orbits emanating from the unstable equilibria.
There is also a family  of periodic orbits that do not approach an equilibria or a collision that we have indicated in orange.

\section{$\mathbb{T}$-symmetric solutions of  $N=20$ vortices  (8 vortices are fixed at  the vertices of a cube)   }
\label{sec:Tet-cube}

We again consider $K=\mathbb{T}$ but now we take $F=\mathcal{T}_1\cup \mathcal{T}_2$ (see Eq. \eqref{eq:tetrahedra}) so $N=20$.
The points in the set $F$ lie on the vertices of a cube so all solutions of \eqref{eq:motion} treated in this section will have  a fixed vortex at each vertex of this cube.
 We note that the set $F$ is  $\mathbb{T}$-invariant and, in view of \eqref{eq:F[T]},  is contained in 
 $\mathcal{F}[\mathbb{T}]$ so it satisfies both requirements in our setup.
We analyse the  reduced system  \eqref{eq:reduced-system}  in detail. Since the set $F$ is also  $\mathbb{O}$-invariant then, 
 in view of item (iii) of  Theorem \ref{th-main-symmetry} and Table \eqref{eq:table-normalizers},
 the reduced system is  $\mathbb{O}$-equivariant.

\subsection{ Classification of $\mathbb{T}$-symmetric equilibrium configurations of $N=20$ vortices}

The  analogous version of Theorem \ref{th:tetrahedral} on the classification and stability of the collision and non-collision equilibria 
of the reduced system  \eqref{eq:reduced-system} in this case is given next.

\begin{theorem}
\label{th:tetrahedral-cube}
Let  $K=\mathbb{T}$,  $F=\mathcal{T}_1\cup \mathcal{T}_2$ (see Eq. \eqref{eq:tetrahedra})  and $N=20$. 
The classification and stability of the  equilibrium points of the reduced system  \eqref{eq:reduced-system} is 
as follows.
\begin{enumerate}
\item The only non-collision equilibria of  \eqref{eq:reduced-system} are: 
\begin{enumerate}

\item The   \defn{dodecahedron  equilibrium configurations} occurring at all $24$ points obtained by permuting the entries and considering all  sign flips of 
\begin{equation*}
 \frac{1}{\sqrt{3}} \left  (\pm \phi , \pm  \phi^{-1}, 0 \right ),
\end{equation*} 
where $\phi=\frac{1+\sqrt{5}}{2}$ is the golden mean.
These are {\em stable} equilibria of  \eqref{eq:reduced-system}
which  correspond to equilibrium configurations of \eqref{eq:motion} where the vortices occupy  the 
vertices of an $S^2$-inscribed regular dodecahedron    (see Fig.\ref{F:icosahedron-w-cube}).

\item The   \defn{truncated tetrahedron - cube configurations} occurring at all  $24$ points obtained by permuting the entries and considering all  sign flips of 
\begin{equation*}
\left (\pm \hat \alpha , \pm \hat \alpha , \pm \sqrt{1-2\hat \alpha^2}\right ),
\end{equation*} 
where $\hat \alpha\approx 0.21228\dots $ is  characterised by the condition that $\hat \alpha^2$ is the unique zero of the polynomial
 $\hat p(\lambda)=57\lambda^3-29\lambda^2-21\lambda+1$ between $0$ and $1/2$.
These are {\em unstable} equilibria of  \eqref{eq:reduced-system}
which  correspond to equilibrium configurations of \eqref{eq:motion} where the vortices occupy  the 
vertices of the compound of an irregular $S^2$-inscribed truncated tetrahedron and a cube   (see Fig.\ref{F:trunc-tetrahedron-w-cube} and Remark \ref{rmk:dimensions-truc-tet-cube}).

\item The   \defn{cuboctahedron - cube configurations} occurring at all $12$ points obtained by permuting the entries and considering all  sign flips of 
\begin{equation*}
\left (\pm \frac{1}{\sqrt{2}}, \pm \frac{1}{\sqrt{2}}, 0 \right ).
\end{equation*}
These are {\em unstable} equilibria of  \eqref{eq:reduced-system}
which  correspond to equilibrium configurations of \eqref{eq:motion} where the vortices occupy  the 
vertices of the compound of a regular $S^2$-inscribed cuboctahedron and a cube   (see Fig.\ref{F:cuboctahedron-w-cube}). 
\end{enumerate}

\item The only  collision equilibria of (the regularisation of)  \eqref{eq:reduced-system} are:
\begin{enumerate}
\item The \defn{tetrahedral - cube collisions}  at the 8 points of  $\mathcal{T}_1\cup \mathcal{T}_2$. 
These correspond to collision configurations of \eqref{eq:motion} where the 20 vortices lie on the vertices of a cube. 
There are  four simultaneous quadruple collisions at the vertices of a tetrahedron  and the other four vortices lie at  each antipodal point (see
Fig.\ref{F:tetrahedron-coll-w-cube}).
\item The \defn{octahedral - cube collisions} at  the $6$ points $(\pm 1,0,0), ( 0,\pm1,0),  ( 0,0,\pm1)$. These correspond to collision configurations of \eqref{eq:motion} where the 20 vortices lie on the vertices 
of the compound of an octahedron and a cube and there are $6$  simultaneous binary collisions at the vertices of the octahedron   (see
Fig.\ref{F:octahedron-coll-w-cube}).  
 \end{enumerate}
All collision configurations are stable equilibria of  (the regularisation of)  \eqref{eq:reduced-system}.
\end{enumerate}
\end{theorem}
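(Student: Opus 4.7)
The plan is to mirror the proof of Theorem~\ref{th:tetrahedral}, with $h_{(\mathbb{T}, \emptyset)}$ replaced, in view of item (iv) of Theorem~\ref{th-main-symmetry}, by
\begin{equation*}
h_{(\mathbb{T}, F)}(u) = h_{(\mathbb{T}, \emptyset)}(u) - 6 \sum_{f \in F} \ln |u - f|^2,
\end{equation*}
and with corresponding regularisation $\tilde h_{(\mathbb{T},F)}(u) = \tilde h_{(\mathbb{T},\emptyset)}(u) \prod_{f \in F} |u - f|^{24}$. Since $F = \mathcal{T}_1 \cup \mathcal{T}_2$ is the vertex set of a cube it is invariant under the full octahedral group $\mathbb{O}$; item (iii) of Theorem~\ref{th-main-symmetry} therefore preserves the $\mathbb{O}$-equivariance already enjoyed in the $F=\emptyset$ case, and the $\mathrm{O}(3)$-invariance of $H$ extends this to the full $\mathbb{O}_h$ action, so the same fundamental region $\mathcal{R} = \{0 \le y \le x \le z \le 1\}$ may be used.

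For item (ii), the classification of collisions is immediate from Proposition~\ref{prop:collision-description}. The set $\mathcal{F}[\mathbb{T}]$ given in \eqref{eq:F[T]} decomposes into the three $\mathbb{T}$-orbits $\mathcal{T}_1$, $\mathcal{T}_2$ and $\{(\pm 1,0,0),(0,\pm 1,0),(0,0,\pm 1)\}$. A point $u \in \mathcal{T}_1 \cup \mathcal{T}_2$ has $\mathbb{T}$-isotropy of order $3$ and lies in $F$, so part (ii) of that proposition produces a $(3+1)$-tuple collision at each of the four vertices of its orbit tetrahedron while the four vertices of the other tetrahedron in $F$ remain single fixed vortices, giving the compound in (ii)(a). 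A representative of the octahedral orbit has isotropy of order $2$ and is not in $F$, giving six binary collisions at the octahedron vertices superimposed on the eight fixed cube vortices, which is (ii)(b). Stability is automatic from Proposition~\ref{prop:regul}.

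For the non-collision equilibria of item (i) I would use the same coordinate setup as in the proof of Theorem~\ref{th:tetrahedral}: gnomonic projection from the origin to the tangent plane at the north pole, followed by polar coordinates $(R,\Theta)$, so that $\mathcal{R}$ corresponds to a subset of $[0,\sqrt{2}] \times [0,\pi/4]$. The extra factor $\prod_{f \in F}|u-f|^2$ admits the closed form coming from $|u - c(\epsilon_1,\epsilon_2,\epsilon_3)|^2 = 2 - (2/\sqrt{3})(\epsilon_1 x + \epsilon_2 y + \epsilon_3 z)$ (with $c = 1/\sqrt{3}$) and pairing antipodal sign vectors, yielding an $\mathbb{O}_h$-invariant polynomial in $(R,\Theta)$. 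An analog of Lemma~\ref{lemma:Tet} adapted to this augmented factor should show that inside $\mathcal{R}$ the derivative $\partial_\Theta \tilde h_{(\mathbb{T},F)}$ vanishes only on the two edges $\Theta=0$ and $\Theta=\pi/4$. Along $\Theta=0$, the equation $\partial_R \tilde h_{(\mathbb{T},F)}(R,0)=0$ should factor to yield as positive roots in $(0,1]$ the values $R = 1/\phi^2$ (giving the dodecahedral representative $(1/\sqrt{3})(1/\phi,0,\phi)$) and $R=1$ (the cuboctahedron). Along $\Theta=\pi/4$, $\partial_R \tilde h_{(\mathbb{T},F)}(R,\pi/4)=0$ should factor so as to isolate the cubic $\hat p(\lambda) = 57\lambda^3 - 29\lambda^2 - 21\lambda + 1$, whose unique root $\hat\alpha^2 \in (0,1/2)$ gives the truncated-tetrahedron representative $(\hat\alpha,\hat\alpha,\sqrt{1-2\hat\alpha^2})$.

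Stability and geometric identification then follow exactly the template of Theorem~\ref{th:tetrahedral}. Symbolic computation of the Hessians at each critical point should make the dodecahedral points local maxima of $\tilde h_{(\mathbb{T},F)}$ (hence stable and local minima of $h_{(\mathbb{T},F)}$) and the other two families saddles; while the orbit counts split the $\mathbb{O}_h$-images of each representative into $\mathbb{T}$-orbits as follows: the 24 points of (i)(a) split as two $\mathbb{T}$-orbits of size $12$, each completing $F$ to a regular dodecahedron; the 24 points of (i)(b) similarly split into two $\mathbb{T}$-orbits of size $12$, each realising one of the two irregular truncated tetrahedra inscribed in the same cube; and the 12 points of (i)(c) form a single $\mathbb{T}$-orbit realising the cuboctahedron. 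The main obstacle is the heavy symbolic bookkeeping required to derive $\hat p$, to prove the transverse-vanishing lemma analogous to Lemma~\ref{lemma:Tet}, and to verify the Hessian signs; but all three follow the pattern already established in Section~\ref{sec:Tet} and are tractable with computer algebra.
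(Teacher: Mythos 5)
Your proposal is correct and follows essentially the same route as the paper: the collision part via Proposition \ref{prop:collision-description}, the closed-form evaluation of $\prod_{f\in F}|u-f|^{2}$ as the product of the four quadratic factors, and then the critical-point/Hessian analysis of the ($\mathbb{O}_h$-invariant) regularised Hamiltonian on the same fundamental region in gnomonic--polar coordinates, exactly as the paper does by declaring the argument ``analogous'' to Theorem \ref{th:tetrahedral} and omitting the symbolic details. Your extra remarks on the $\mathbb{T}$-orbit splittings and the location of the representatives ($R=1/\phi^{2}$, $R=1$ at $\Theta=0$; the root tied to $\hat p$ at $\Theta=\pi/4$) are consistent with the stated theorem.
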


\begin{remark}
\label{rmk:dimensions-truc-tet-cube}
The 
 truncated tetrahedron on the compound of item (i)(b)  consists of  4 irregular hexagonal faces and 4 four equilateral triangular faces. 
The distance between vortices 
   forming an edge between adjacent hexagonal faces is $2\sqrt{2}\hat \alpha\approx0.600421$. The distance between vortices 
   forming an edge of an equilateral triangular  face is $\sqrt{2}(-\hat \alpha +\sqrt{1-2\hat \alpha^2})\approx 1.04877$.
\end{remark}

\begin{figure}[ht]
\begin{subfigure}{.30\textwidth}
  \centering
  \includegraphics[width=.7\linewidth]{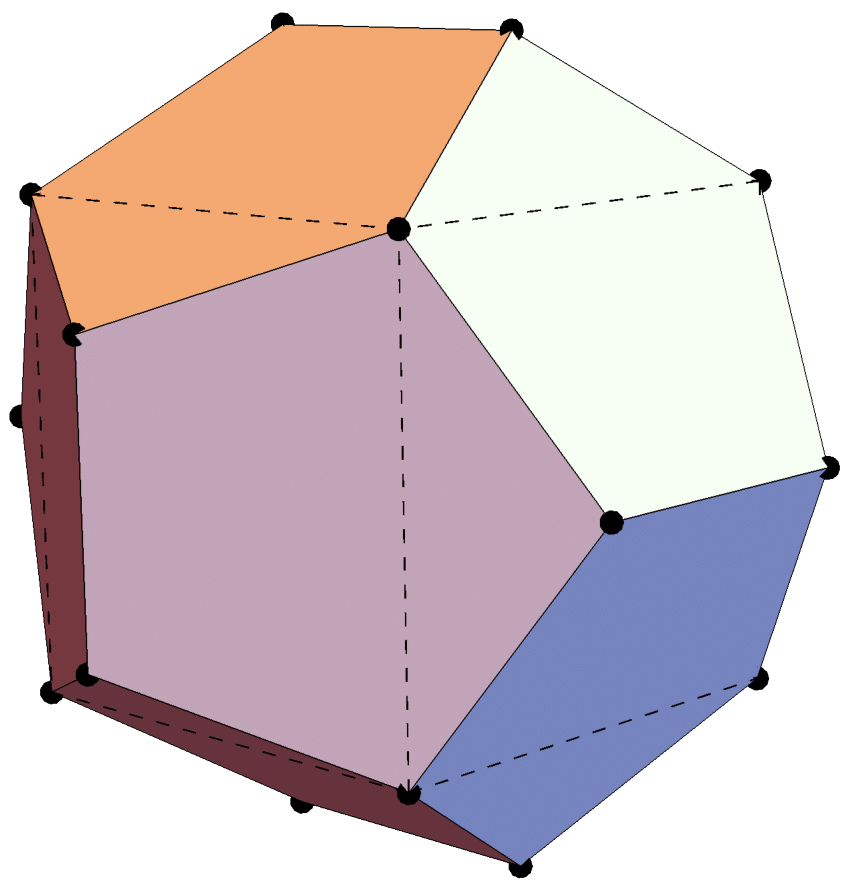}  
  \caption{Dodecahedron equilibrium. The distance between adjacent vertices  is $\sqrt{2-\frac{2 \sqrt{5}}{3}}$. }
  \label{F:icosahedron-w-cube}
\end{subfigure}
\quad
\begin{subfigure}{.30\textwidth}
  \centering
  \includegraphics[width=.7\linewidth]{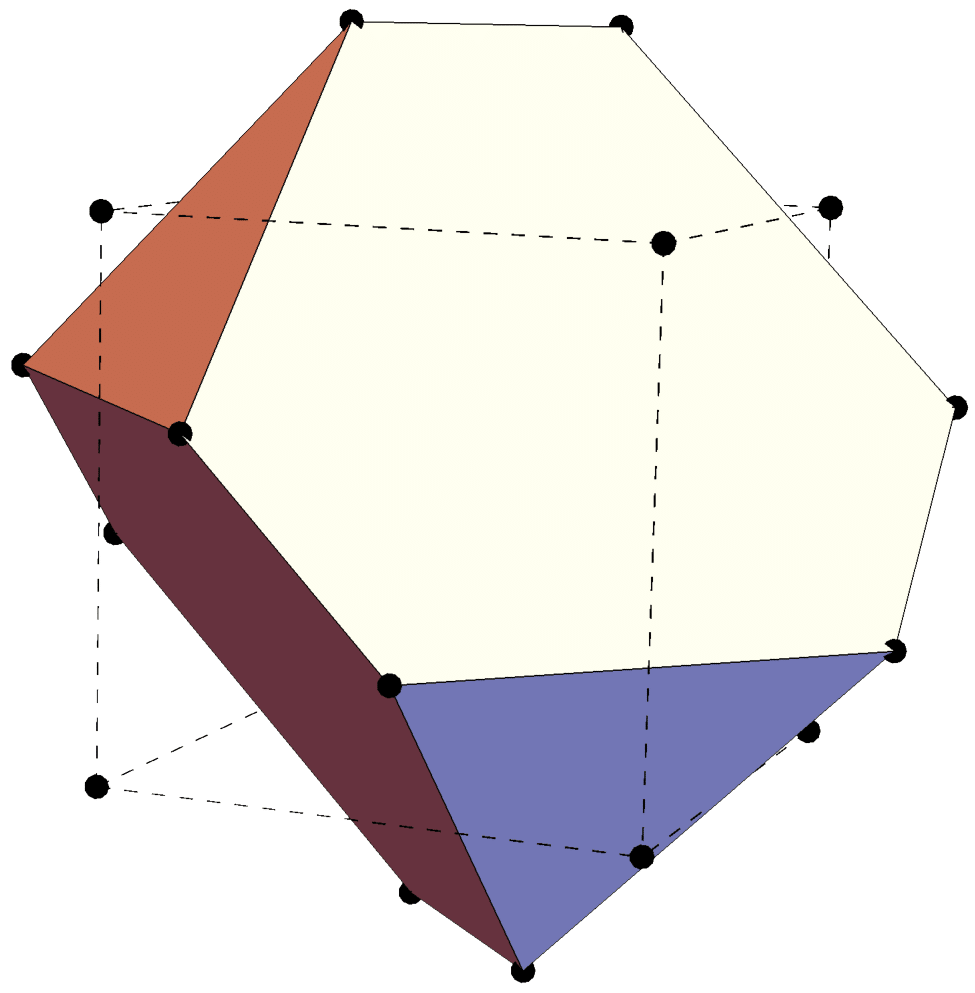}   
  \caption{Compound of a truncated tetrahedron and cube equilibrium configuration. See Remark \ref{rmk:dimensions-truc-tet-cube} for details on the dimensions of the
  truncated tetrahedron.}
  \label{F:trunc-tetrahedron-w-cube}
\end{subfigure}
\quad
\begin{subfigure}{.30\textwidth}
  \centering
  \includegraphics[width=.7\linewidth]{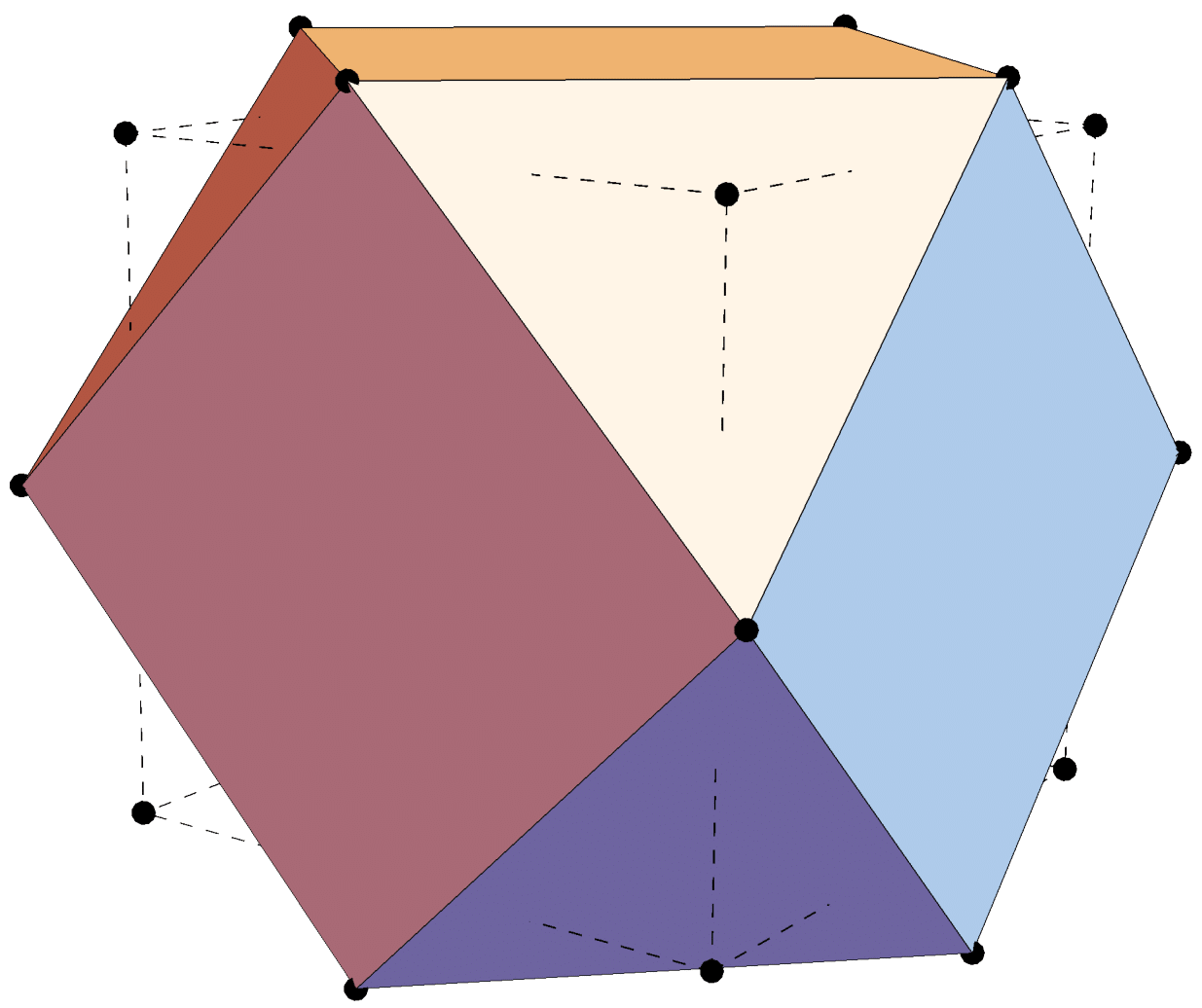} 
  \caption{Compound of a regular cuboctahedron and a  cube equilibrium configuration. The distance between adjacent vertices in the cuboctahedron is 1. }
  \label{F:cuboctahedron-w-cube}
\end{subfigure}
 \\
 \begin{subfigure}{.45\textwidth}
  \centering
  \includegraphics[width=.4\linewidth]{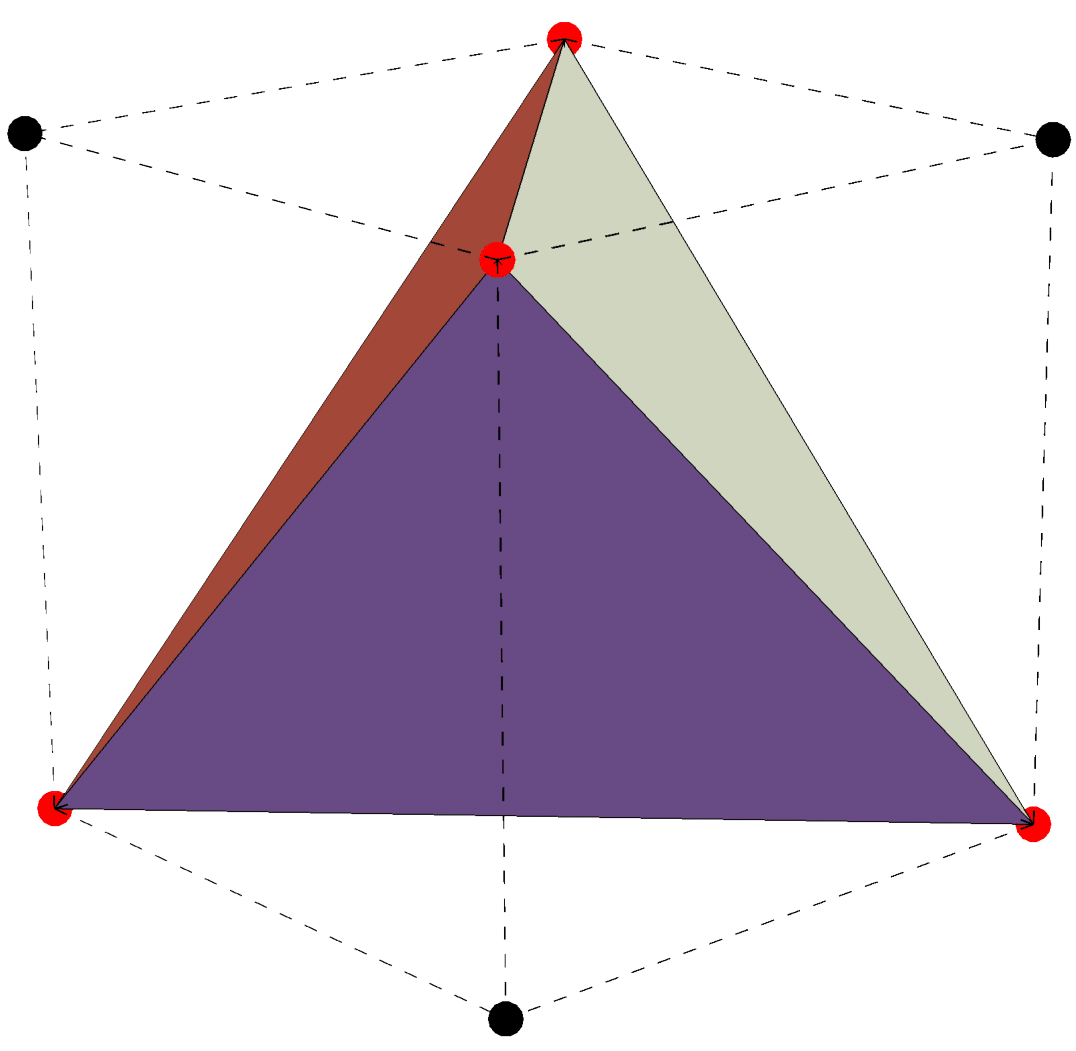}  
  \caption{Tetrahedron - cube collision. There is a quadruple collision  at each vertex of the tetrahedron and  no collisions at the other 4 vertices of the cube (marked in black).}
  \label{F:tetrahedron-coll-w-cube}
\end{subfigure} 
\quad 
\begin{subfigure}{.45\textwidth}
  \centering
  \includegraphics[width=.4\linewidth]{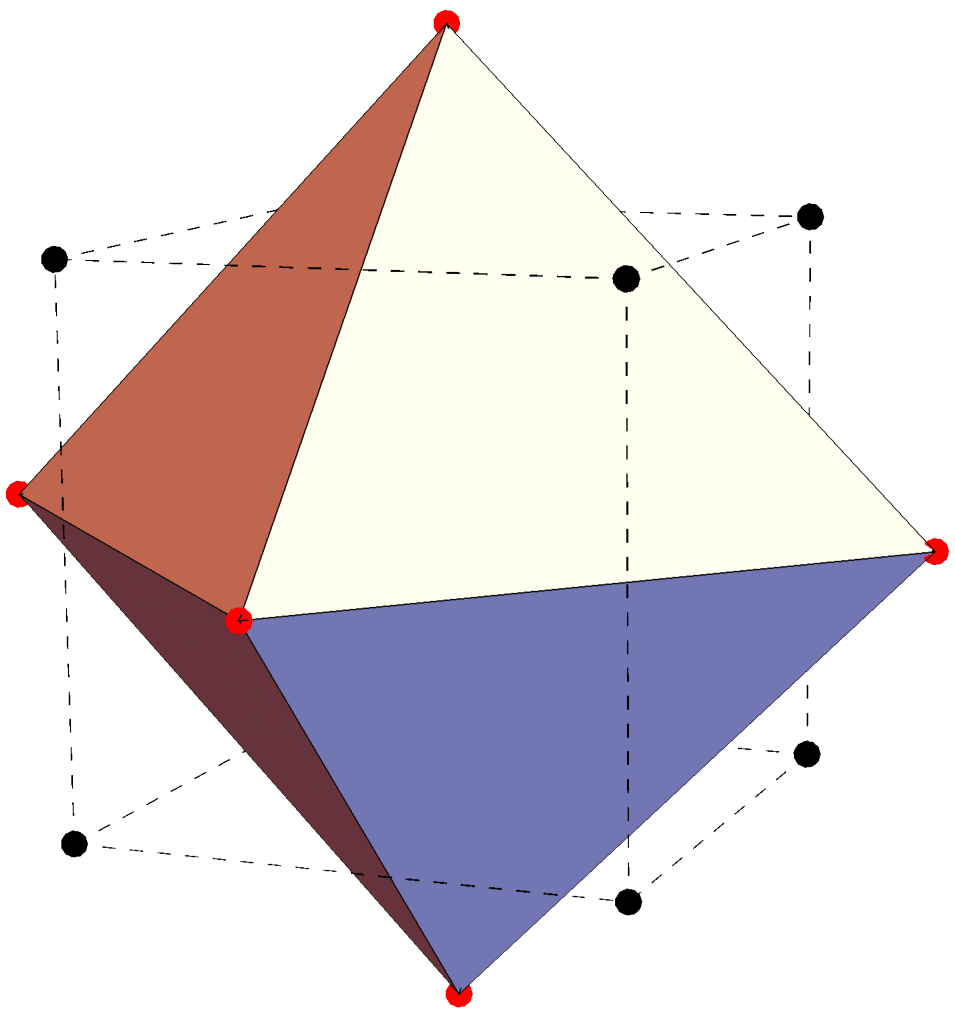}  
  \caption{Octahedron -cube collision. There is a  binary collision  at each vertex of the octahedron and  no collision at the 8 vertices of  the cube.}
  \label{F:octahedron-coll-w-cube}
\end{subfigure} 
\caption{Non-collision and collision equilibrium configurations described in  Theorem \ref{th:tetrahedral-cube}. The dashed lines connect the 
elements in $F=\mathcal{T}_1\cup \mathcal{T}_2$ that lie on the vertices of a cube.}
\label{fig:n20-eq}
\end{figure}

\begin{proof}
The conclusions of item (ii) about the collision equilibria follow from the description of $\mathcal{F}[\mathbb{T}]$ in   \eqref{eq:F[T]}  and  Proposition \ref{prop:collision-description} in analogy  with the proof of 
Theorem  \ref{th:tetrahedral}.

In order to analyse the non-collision equilibria, let $f_{13}, \dots, f_{20}$ denote the
points in $F=\mathcal{T}_1\cup \mathcal{T}_2$.  A direct calculation shows that  for $u=(x,y,z)\in S^2$ we have
\begin{equation*}
 \prod_{j=m+1}^N \left \vert u-f_j \right \vert^{2} = \prod_{j=13}^{20} \left \vert u-f_j \right \vert^{2}=\left ( \frac{8}{3} \right )^4 (1-xy-xz-yz) (1+xy+xz-yz)  (1-xy+xz+yz)(1+xy-xz+yz).
\end{equation*}
Therefore, in view of  \eqref{eq:regul-red-Ham}  and the proof of Theorem   \ref{th:tetrahedral}  we find $\tilde{h}_{(\mathbb{T},F)}(x,y,z)=2^{84}\left  ( \frac{8}{3} \right )^{48} \hat a(x,y,z)^6$, where $\hat a:S^2\to \R$ is given by
 \begin{equation*}
 \begin{split}
\hat a(x,y,z)&= (x^2+y^2) (y^2+z^2)   (x^2+z^2) (1-xy-xz-yz)^4 (1+xy+xz-yz)^4  \\ &  \qquad \cdot (1-xy+xz+yz)^{4}(1+xy-xz+yz)^{4}.
\end{split}
\end{equation*}
The proof proceeds by finding the critical points of $\hat a$ on the fundamental region $\mathcal{R}$ described in the proof of Theorem  \ref{th:tetrahedral} and is analogous to it. We omit the details.
\end{proof}

\subsection{Dynamics of $\mathbb{T}$-symmetric configurations of $N=20$ vortices}

By combining Theorem \ref{th:tetrahedral-cube} with  Corollary \ref{cor:periodic-orbits-general} 
we deduce the existence of  three families 
of periodic orbits of the equations of motion \eqref{eq:motion} for $N=20$ that we describe in the following:
\begin{corollary} 
\label{cor:tet-ico-osc-cube}
Let $N=20$.
\begin{enumerate}
\item There exists a 1-parameter family of periodic solutions $v_h(t)$ of  the equations of motion \eqref{eq:motion},  
emanating from the dodecahedral  equilibrium configurations. 
Along these solutions, 8  vortices are fixed at the vertices of a cube inscribed in the dodecahedron and  the remaining 12 vortices 
  travel around a small closed loop around the remaining  vertices  of the dodecahedron (see Fig. \ref{F:ico-osc}).

  \item There exists a 1-parameter family of periodic solutions $v_h(t)$ of  the equations of motion \eqref{eq:motion}
converging to the tetrahedral-cube collision  described in  Theorem \ref{th:tetrahedral-cube}.  
Along these solutions, 8  vortices are fixed on the vertices of a cube and there are 4 triples of vortices that 
 travel along a closed
loop around each of the 4 vertices of a tetrahedron inscribed in the cube (see Fig. \ref{F:tet-coll-osc}).

\item There exists a 1-parameter family of periodic solutions $v_h(t)$ of  the equations of motion \eqref{eq:motion}
converging to the octahedral collisions  described in  Theorem \ref{th:tetrahedral}.  
Along these solutions,  a pair of vortices  travels along a small closed
loop  around each of the 6 vertices of an  octahedron and the remaining 8 vortices are fixed at the vertices of the dual cube (see Fig. \ref{F:oct-coll-osc}) .
\end{enumerate}
Each of these families may be  parametrised by the energy $h$. In cases (ii) and (iii) we have 
 $h\to \infty$ as the solutions  approach collision, and  the period approaches zero in this limit.

For each solution described above, the distinct  closed loops traversed by the vortices, and the position the vortices within the loop at each instant, may be obtained from a single one by the action of $\mathbb{T}$.
\end{corollary}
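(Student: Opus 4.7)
The plan is to combine the classification in Theorem \ref{th:tetrahedral-cube} with the general existence result in Corollary \ref{cor:periodic-orbits-general}, exactly as was done for Corollary \ref{cor:tet-ico-osc} in the previous section. Fix orderings $K_o$ of $\mathbb{T}$ and $F_o$ of $F = \mathcal{T}_1 \cup \mathcal{T}_2$, so that configurations in $M_{(K_o,F_o)}$ have the form $v = \rho_{(K_o,F_o)}(u)$ for $u \in S^2$, with the 8 vortices indexed by $F_o$ pinned at the vertices of the cube throughout.

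For family (i), Theorem \ref{th:tetrahedral-cube}(i)(a) identifies the dodecahedral configurations as stable non-collision equilibria of the reduced system \eqref{eq:reduced-system}; since \eqref{eq:reduced-system} is a one-degree-of-freedom Hamiltonian system on $S^2$, stability forces these points to be local minima of $h_{(\mathbb{T},F)}$. Corollary \ref{cor:periodic-orbits-general}(i) then produces the desired 1-parameter family $v_h(t) = \rho_{(K_o,F_o)}(u_h(t))$, where $u_h(t)$ is a small closed loop around the corresponding $u_0 \in S^2$. The geometric picture follows because $u_0$ has trivial $\mathbb{T}$-isotropy: the 12 moving vortices $g_j u_h(t)$ with $g_j \in \mathbb{T}$ trace 12 distinct $\mathbb{T}$-related loops around the 12 vertices of the dodecahedron not occupied by $F$.

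For families (ii) and (iii) I would apply Corollary \ref{cor:periodic-orbits-general}(ii) to the collision equilibria of Theorem \ref{th:tetrahedral-cube}(ii). In case (ii), any $u_0 \in \mathcal{T}_1 \cup \mathcal{T}_2$ belongs to both $F$ and $\mathcal{F}[\mathbb{T}]$, and Proposition \ref{prop:collision-description}(ii) with $|\mathbb{T}_{u_0}| = 3$ yields $4$-tuple collisions: three vortices from the moving $\mathbb{T}$-orbit coalesce with the single fixed vortex of $F$ sitting at the same vertex. In case (iii), $u_0 \in \{(\pm1,0,0),(0,\pm1,0),(0,0,\pm1)\}$ has $|\mathbb{T}_{u_0}| = 2$ and lies outside $F$, producing binary collisions at the six vertices of the dual octahedron. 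In each case Corollary \ref{cor:periodic-orbits-general}(ii) supplies a 1-parameter family of periodic reduced orbits $u_h(t)$ encircling $u_0$ with $h \to \infty$ and period $\to 0$ as $u_h \to u_0$, and $\rho_{(K_o,F_o)}$ distributes these into the claimed patterns, always with the 8 vortices of $F$ remaining at the cube.

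There is no substantive obstacle beyond careful bookkeeping, since the statement reduces mechanically to the classification in Theorem \ref{th:tetrahedral-cube}. The only point requiring some attention is the description of the $4$-tuple collisions in family (ii): one must distinguish the three vortices from the moving $\mathbb{T}$-orbit (which separate from $u_0$ to trace small loops under perturbation) from the single vortex of $F$ which remains fixed at $u_0$. This is precisely the content of the case $u_0 \in F$ in Proposition \ref{prop:collision-description}(ii), and the $\mathbb{T}$-relatedness of the loops and of the simultaneous positions of the vortices is built into the definition of $\rho_{(K_o,F_o)}$.
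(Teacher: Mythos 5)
Your proposal is correct and follows essentially the same route as the paper, which deduces the corollary directly by combining the classification in Theorem \ref{th:tetrahedral-cube} with Corollary \ref{cor:periodic-orbits-general} (using Proposition \ref{prop:collision-description} for the structure of the collisions). The only slight imprecision is the claim that stability alone forces the dodecahedral points to be local minima of $h_{(\mathbb{T},F)}$; what actually matters is that the classification underlying Theorem \ref{th:tetrahedral-cube} establishes them as local extrema (indeed minima) of the reduced Hamiltonian, which is precisely the hypothesis needed to invoke Corollary \ref{cor:periodic-orbits-general}(i).
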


\begin{figure}[ht]
\begin{subfigure}{.3\textwidth}
  \centering
  \includegraphics[width=.75\linewidth]{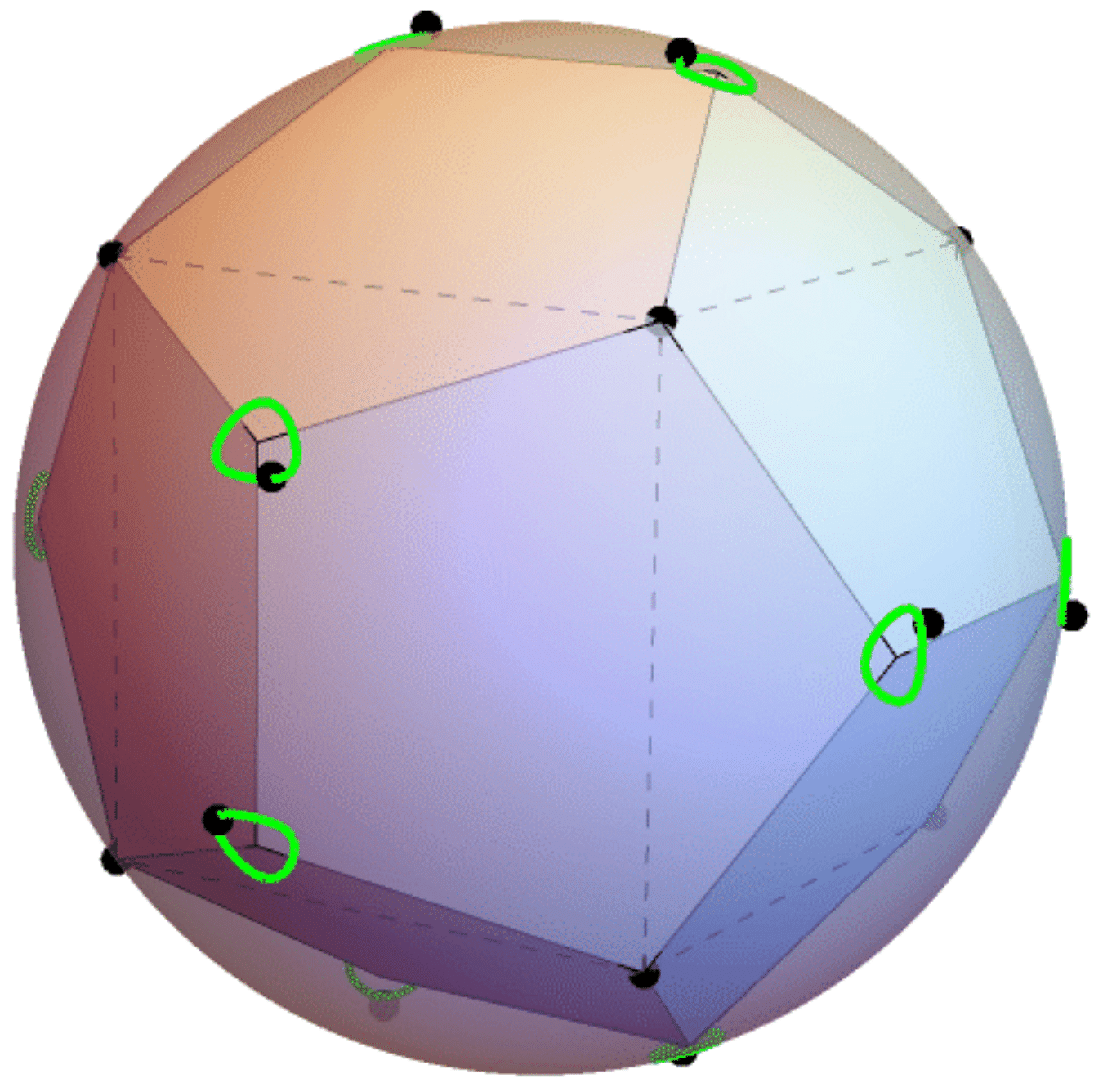}  
  \caption{Periodic solution near the dodecahedron equilibrium.}
  \label{F:ico-osc}
\end{subfigure}
\quad
\begin{subfigure}{.3\textwidth}
 \centering
  \includegraphics[width=.75\linewidth]{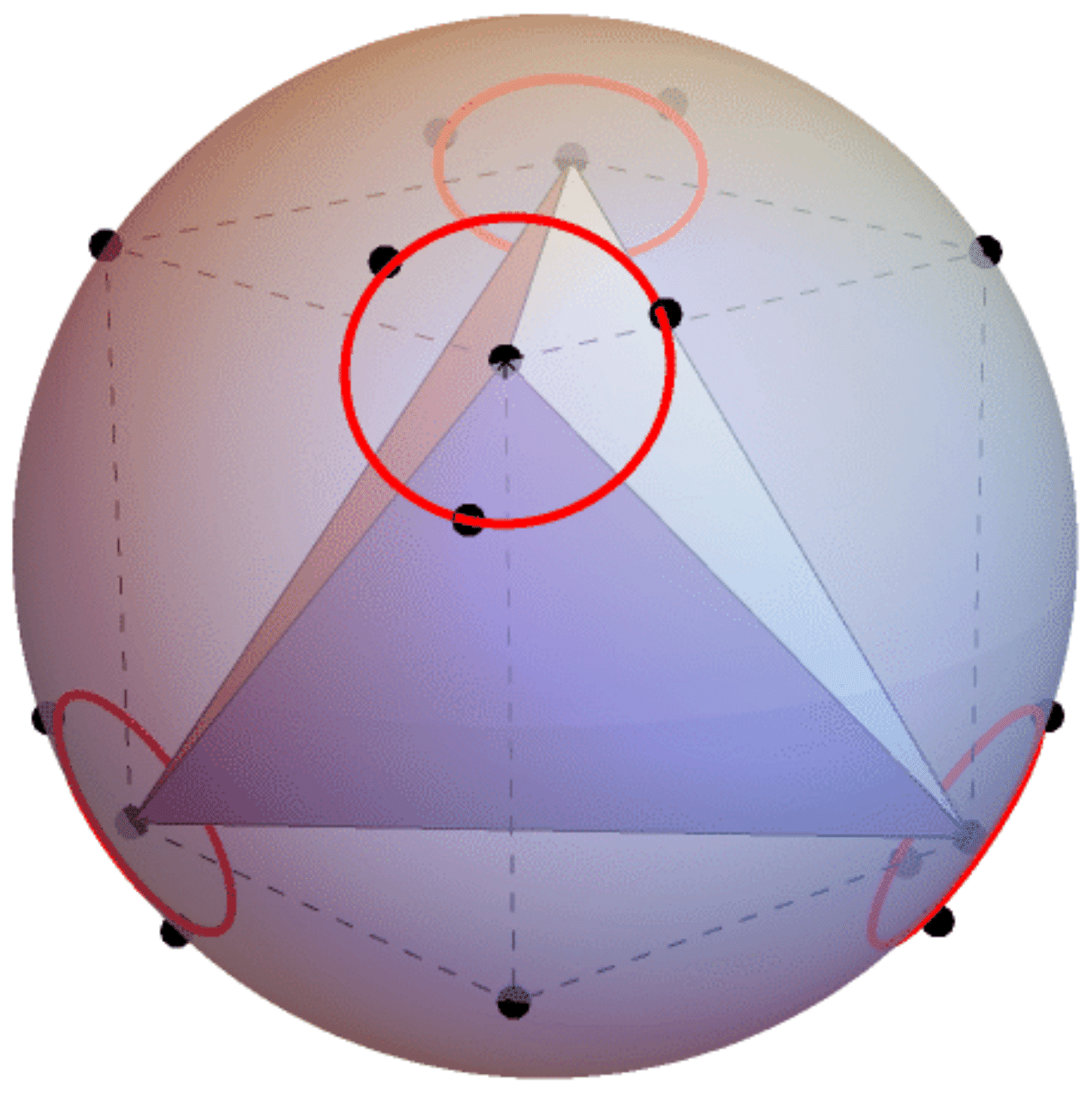}  
  \caption{Periodic solution near the
  tetrahedral-cube collision.}
  \label{F:tet-coll-osc}
\end{subfigure}
\quad
\begin{subfigure}{.3\textwidth}
  \centering
  \includegraphics[width=.75\linewidth]{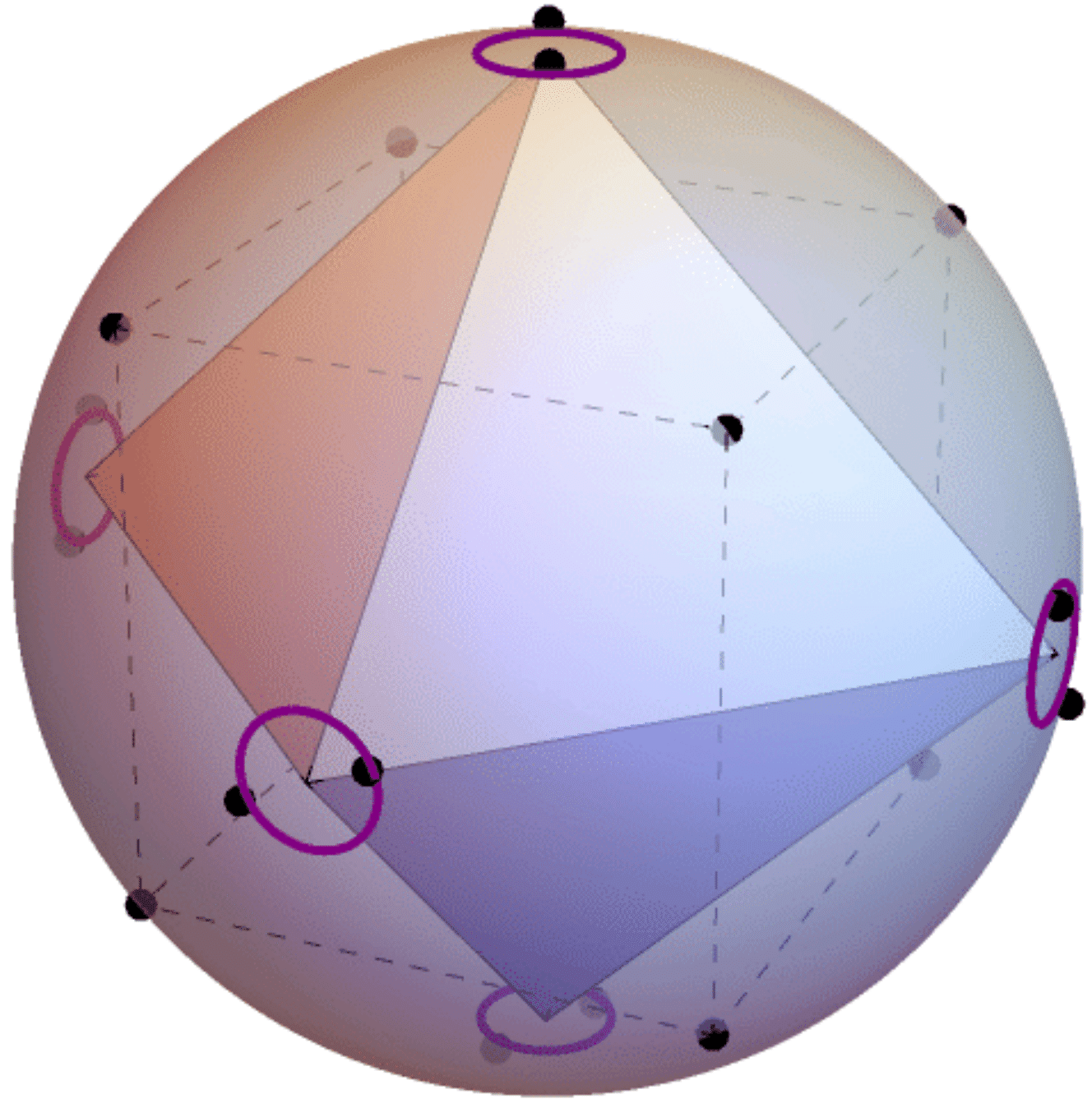}  
  \caption{Periodic solution near the
  octahedral-cube collision.}
  \label{F:oct-coll-osc}
\end{subfigure}
\caption{Periodic solutions  described in Corollary \ref{cor:tet-ico-osc-cube}. }
\label{fig:ico-osc-cube}
\end{figure}

 \begin{figure}[htp]
\centering
\includegraphics[width=0.4\textwidth]{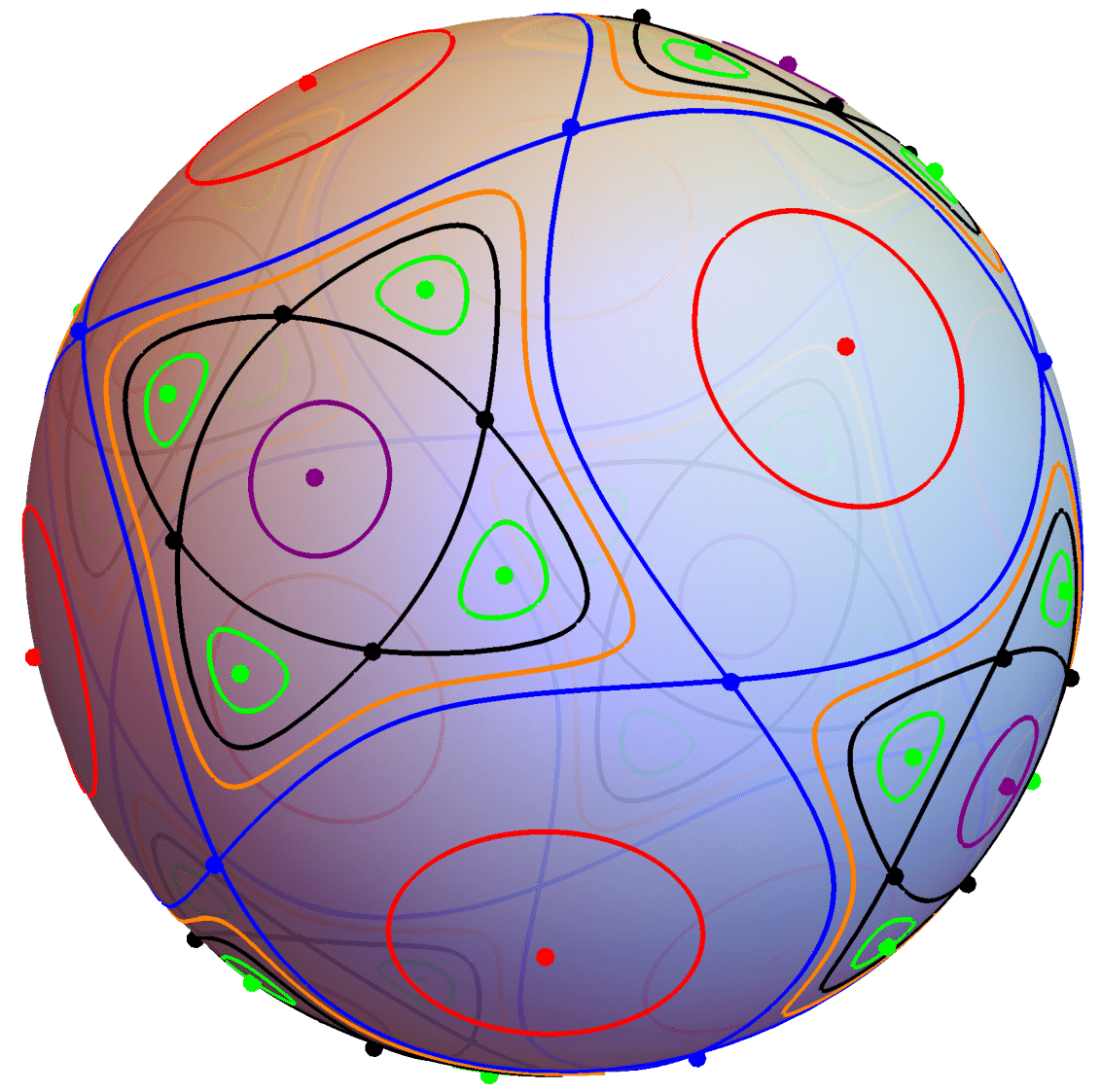}  
  \caption{The reduced phase space for $\mathbb{T}$-symmetric solutions of the 20-vortex problem.}
  \label{fig:Sphere-n20-T}
\end{figure}

Figure~\ref{fig:Sphere-n20-T}  shows the phase space of the (regularised) reduced dynamics obtained numerically. 
The colour code is similar to the one used in the previous section. The  dodecahedron equilibrium points  are indicated in green,
the truncated tetrahedron--cube equilibrium points in blue,  the cuboctahedron--cube   equilibrium points   in black, tetrahedral--cube collisions in red and octahedral--cube  collisions   in purple. 
As usual, we use the  same colour to indicate either periodic orbits near the stable equilibria or heteroclinic orbits emanating from the unstable equilibria and we indicate 
 a family  of periodic orbits that do not approach an equilibria  in orange.

\section*{Acknowledgements}
CGA and LGN respectively acknowledge support for their research from the Programs UNAM-PAPIIT-IN115019 and UNAM-PAPIIT-IN115820.

\vskip 1cm

\noindent CGA and LGN: Departamento de Matem\'aticas y Mec\'anica, 
IIMAS-UNAM.
Apdo. Postal 20-126, Col. San \'Angel,
Mexico City, 01000,  Mexico. cga@mym.iimas.unam.mx. luis@mym.iimas.unam.mx.
 
\end{document}